\documentclass[11pt]{amsart}
\usepackage[a4paper,margin=1in]{geometry}
\usepackage[T1]{fontenc}
\usepackage{lmodern,microtype,amsmath,amssymb,amsthm,mathtools,booktabs,needspace}
\usepackage{enumitem}
\usepackage{tikz}
\usetikzlibrary{arrows.meta,calc,decorations.pathreplacing}
\usepackage{aliascnt}
\usepackage[colorlinks=true,linkcolor=blue,citecolor=blue,urlcolor=blue]{hyperref}
\usepackage[nameinlink,capitalise,noabbrev]{cleveref}
\allowdisplaybreaks
\numberwithin{equation}{section}
\newtheorem{theorem}{Theorem}[section]
\newaliascnt{proposition}{theorem}
\newtheorem{proposition}[proposition]{Proposition}
\aliascntresetthe{proposition}
\newaliascnt{lemma}{theorem}
\newtheorem{lemma}[lemma]{Lemma}
\aliascntresetthe{lemma}
\newaliascnt{corollary}{theorem}
\newtheorem{corollary}[corollary]{Corollary}
\aliascntresetthe{corollary}
\theoremstyle{definition}
\newaliascnt{definition}{theorem}
\newtheorem{definition}[definition]{Definition}
\aliascntresetthe{definition}
\theoremstyle{remark}
\newaliascnt{remark}{theorem}
\newtheorem{remark}[remark]{Remark}
\aliascntresetthe{remark}
\crefname{theorem}{Theorem}{Theorems}
\Crefname{theorem}{Theorem}{Theorems}
\crefname{proposition}{Proposition}{Propositions}
\Crefname{proposition}{Proposition}{Propositions}
\crefname{lemma}{Lemma}{Lemmas}
\Crefname{lemma}{Lemma}{Lemmas}
\crefname{corollary}{Corollary}{Corollaries}
\Crefname{corollary}{Corollary}{Corollaries}
\crefname{definition}{Definition}{Definitions}
\Crefname{definition}{Definition}{Definitions}
\crefname{remark}{Remark}{Remarks}
\Crefname{remark}{Remark}{Remarks}

\usepackage{color}

\hypersetup{pdftitle={Condensation and Non-condensation in the Quasilinear 4-wave Kinetic Equation},pdfauthor={Siwei Luo and Jian-Guo Liu}}
\title[Condensation and Non-condensation in the Quasilinear 4-wave Kinetic Equation]
{Condensation and Non-condensation\\
in the Quasilinear 4-wave Kinetic Equation}
\author{Siwei Luo}
\address{School of the Gifted Young, University of Science and Technology
of China, Hefei 230026, China}
\email{luosw@mail.ustc.edu.cn}
\author{Jian-Guo Liu}
\address{Department of Mathematics and Department of Physics,
Duke University, Durham, NC 27708, USA}
\email{jian-guo.liu@duke.edu}
\date{October 2026}
\subjclass[2020]{35Q20, 35Q82, 45K05, 82C26}
\keywords{4-wave kinetic equation, measure solution, condensate formation}

\begin{document}
\begin{abstract}
We study finite-time condensation at zero frequency for the isotropic quasilinear
4-wave kinetic equation in three dimensions. With Schr\"odinger dispersion and a product interaction coefficient with exponent \(\beta\in (0,3/4]\), we construct global measure solutions \((g_t)_{t\geq 0}\) for every nonnegative initial measure \(g_0\) with finite wave action and energy. We then establish the sharp threshold \(\beta=1/4\) for finite-time condensation of the constructed solutions. For \(\beta\in [1/4,3/4],\) the constructed solutions satisfy \(g_t(\{0\})=g_0(\{0\})\) for any \(t\geq 0\). For \(\beta\in (0,1/4)\),
every constructed solution with \((g_0)_{\rm ac}\neq 0\) satisfies
\[\|g_t-\mathcal N_0\delta_0\|_{\mathrm{TV}}
\le C\mathcal N_0(1+\mathcal N_0^2t)^{-(1-4\beta)/40}\]
for all \(t\ge0\), where
\(\mathcal N_0=g_0([0,\infty))\) and
\(C=C(\beta,g_0/\mathcal N_0)>0\).
In particular, for every \(\zeta\in(0,1)\), there exists
\(T_\zeta<\infty\) such that
\(g_t(\{0\})\ge(1-\zeta)\mathcal N_0\)
for all \(t\ge T_\zeta\).
\end{abstract}
\maketitle
\tableofcontents

\section{Introduction}\label{sec:model-main}

The wave kinetic equation (WKE) describes the evolution of
the wave spectrum in the weak-turbulence approximation to the
cubic nonlinear Schr\"odinger equation \cite{DNPZ92}.
Resonant interactions can transfer wave action towards low
frequencies and energy towards high frequencies, giving rise
to an inverse wave-action cascade and a direct energy cascade
\cite{ZLF92,Nazarenko11}.
The accumulation of a positive amount of wave action at zero
frequency can form a condensate, while at the other end of the spectrum,
the transfer of a positive amount of energy to infinite frequency
leads to possible energy loss from the kinetic solution in finite time.

Wave kinetic equations can also describe more general Hamiltonian
wave systems
\cite{ZLF92,MMT97}.
In these systems, the coupling strength can depend on the
participating wave vectors through an interaction coefficient.
In this paper, we study how this dependence affects finite-time condensate
formation.
We consider a family of 4-wave kinetic equations
in three dimensions with Schr\"odinger dispersion \(\omega( k)=| k|^2\) and a product
interaction coefficient depending on an exponent $\beta \in (0,3/4]$. Varying $\beta$ can change the interaction strength for different frequency scales and lead to possible phase transition of action condensation.
Indeed, we construct global isotropic measure solutions and then prove
a sharp threshold in $\beta$ for finite-time condensate formation
for the constructed solutions.

\subsection{The model}
We consider the three-dimensional quasilinear Schr\"odinger equation
\begin{equation}\label{eq:parent-pde}
 i\partial_tu+\Delta u
 =\kappa|\nabla|^\beta
 \left(\bigl||\nabla|^\beta u\bigr|^2|\nabla|^\beta u\right),
 \qquad  x\in\mathbb R^3,
\end{equation}
where $\kappa\in\mathbb R\setminus\{0\}$, $\beta\in(0,3/4]$, and
$|\nabla|^\beta=(-\Delta)^{\beta/2}$.
It is generated by the Hamiltonian
\begin{equation}\label{eq:hamiltonian}
 \mathcal H(u)
 =\int_{\mathbb R^3}|\nabla u|^2\,d x
 +\frac{\kappa}{2}\int_{\mathbb R^3}
       \bigl||\nabla|^\beta u\bigr|^4\,d x,
\end{equation}
through $i\partial_tu=\delta\mathcal H/\delta\overline u$.
At $\beta=0$, this equation reduces to the cubic
nonlinear Schr\"odinger equation.

Now consider the corresponding wave kinetic equation of \eqref{eq:parent-pde}.
For a spatially homogeneous random wave field, let
$n(t, k)$ denote its wave-action spectrum.
The standard weak-turbulence theory formally leads to a
4-wave kinetic equation \cite{ZLF92,Nazarenko11}.
After absorbing the prefactor proportional to $\kappa^2$
into time, we obtain
\begin{align}\label{eq:kinetic-k}
\partial_tn_4 &=\mathcal C_\beta[n]( k_4),\qquad n_j=n(t, k_j),\quad \omega_j=\omega(k_j)=|k_j|^2,\quad j=1,2,3,4.\\
\mathcal C_\beta[n]( k_4)
 &={}\int_{\mathbb R^9}
 |T_{1234}|^2
  \delta^{(3)}(\Sigma)\delta(\Omega)
 \bigl[n_1n_2(n_3+n_4)-n_3n_4(n_1+n_2)\bigr]
 \,d k_1\,d k_2\,d k_3.
\end{align}
Here
$\Sigma= k_1+ k_2
- k_3- k_4$ and
$\Omega=\omega_1+\omega_2-\omega_3-\omega_4$. Since $|\nabla|^\beta$ has Fourier symbol $| k|^\beta$,
the quartic Hamiltonian \eqref{eq:hamiltonian} gives the interaction coefficient
$T_{1234}=(| k_1|| k_2|
| k_3|| k_4|)^\beta$. The Dirac delta functions impose the resonance relations
$\Sigma=0$ and $\Omega=0$ in \eqref{eq:kinetic-k}.

The periodic version of \eqref{eq:parent-pde} with $\kappa=1$ and its associated kinetic equation are discussed by Ampatzoglou and L\'eger \cite{AL25}. Zhu \cite{Zhu26} then considers the same product interaction coefficient with power-law dispersion $\omega( k)=| k|^a$, $1\le a\le5$. The derivative nonlinearity also appears in the one-dimensional family introduced by Majda, McLaughlin, and Tabak \cite{MMT97} to study dispersive wave turbulence, allowing both the dispersion relation and the interaction coefficient to vary. Here we fix the Schr\"odinger dispersion and study how the interaction exponent $\beta$ affects condensate formation.

Define
the total wave action and the total energy of the kinetic equation
\begin{equation}
     \mathcal N(t)=\int_{\mathbb R^3}
n(t, k)\,d k,\qquad \mathcal E(t)=\int_{\mathbb R^3}
| k|^2n(t, k)\,d k.
\end{equation}
For isotropic spectra, we write $n(t,\omega)$ for
$n(t,k)$, where $\omega=|k|^2$.
The change of variable gives the wave-action
measure $g_t(d\omega)=g(t,\omega)\,d\omega$, with
$g(t,\omega)=2\pi\sqrt{\omega}\,n(t,\omega)$.
Thus
\begin{equation}
     \mathcal N(t)=\int_{[0,\infty)}dg_t,\qquad  \mathcal E(t)=\int_{[0,\infty)}\omega\,dg_t.
\end{equation}
Note that we allow $g_t$ to be a measure on $[0,\infty)$ instead of a function to include possible condensates. We now state the radial equation in weak form of \eqref{eq:kinetic-k} with respect to \(g_t\).

Throughout the paper, we write $a\wedge b:=\min\{a,b\}$ and
$a\vee b:=\max\{a,b\}$.

Set
$\omega_4=\omega_1+\omega_2-\omega_3$. For $\omega_1,\omega_2,\omega_3>0$, define
\begin{equation}\label{eq:kernel}
 K_\beta(\omega_1,\omega_2,\omega_3)
 =(\omega_1\omega_2\omega_3\omega_4)^\beta
 \dfrac{\min\{\sqrt{\omega_1},\sqrt{\omega_2},\sqrt{\omega_3},\sqrt{\omega_4}\}}
       {\sqrt{\omega_1\omega_2\omega_3}}
\end{equation}
when $\omega_4>0$. We set \(K_\beta=0\) whenever one of the four frequencies is nonpositive.

Let $\mathcal M_1$ be the Banach space of finite signed Borel
measures $g$ on $[0,\infty)$ satisfying
\begin{equation}
\|g\|_{\mathcal M_1}:=\int_{[0,\infty)}(1+\omega)\,d|g|<\infty.
\end{equation}
We also write
\begin{equation}
\operatorname{Lip}_c([0,\infty))
=\{\phi\text{ Lipschitz on }[0,\infty):
\phi\text{ has compact support}\}.
\end{equation}

\begin{definition}[Isotropic measure solution]\label{def:weak-form}
    Let \(g_0\in\mathcal M_1\) be nonnegative. A narrowly continuous curve \((g_t)_{t\geq0}\) of nonnegative measures in \(\mathcal M_1\) is called an \emph{isotropic measure solution} of \eqref{eq:kinetic-k} if \(g_{t=0}=g_0\),
    \(
    \sup_{0\leq t\leq T}\bigl(\mathcal N(t)+\mathcal E(t)\bigr)<\infty
    \) for every \(T<\infty\), and for every \(\phi\in \operatorname{Lip_c}([0,\infty))\) and \(T<\infty\),
    \begin{equation}
    \int_0^T\iiint_{[0,\infty)^3}  K_\beta(\omega_1,\omega_2,\omega_3)|\Delta\phi|\, dg_\tau^{\otimes 3}\,d\tau<\infty,
    \end{equation}
    and for \(0\leq s\leq t<\infty\),
    \begin{equation}\label{eq:weak}
        \int_{[0,\infty)}\phi\, dg_t-\int_{[0,\infty)}\phi\, dg_s=\dfrac{1}{4}\int_s^t\iiint_{[0,\infty)^3} K_\beta(\omega_1,\omega_2,\omega_3)\Delta\phi(\omega_1,\omega_2,\omega_3)\, dg_\tau^{\otimes 3}d\tau,
    \end{equation}
    where \(dg_\tau^{\otimes 3}=dg_\tau(\omega_1)dg_\tau(\omega_2)dg_\tau(\omega_3)\) and \(\Delta\phi=\phi(\omega_3)+\phi(\omega_4)-\phi(\omega_1)-\phi(\omega_2)\) when \(\omega_4>0\). We set \(\Delta\phi=0\) when \(\omega_4\le0\).
\end{definition}

For $\beta=0$, the corresponding weak formulation with a
non-interacting condensate is given in \cite{EV15},
and a derivation with general interaction coefficients is given
in \cite{MerinoAceituno16}. For the product coefficient, one can refer to \cite{AL25} for the details.

We now define finite-time condensation. A \emph{condensate} is represented by a positive atom of $g_t$ at
$\omega=0$, i.e. \(g_t(\{0\})>0\) for some \(t<\infty\) \cite{EV15,ST26}. 
We say that \emph{finite-time condensate formation} occurs if
$g_0(\{0\})=0$ and $g_T(\{0\})>0$ for some $0<T<\infty$.

Our results identify the condensation threshold in the range $\beta\in (0,3/4]$:
condensate formation can occur precisely below $\beta=1/4$. For \(\beta>3/4\), finite wave action and energy no longer guarantee absolute convergence of the collision integral on the right-hand side of \eqref{eq:weak}, and this range is therefore not considered in the present paper.

\begin{remark}[Non-interacting condensate]
     We use a weak formulation with a non-interacting condensate here. Since the collision kernel \(K_\beta\) vanishes whenever one of the four frequencies is zero, a condensate atom already present at zero does not contribute directly to the collision integral.
\end{remark}
\begin{remark}[The coefficient \(1/4\) in the weak formulation]
     With the normalization in \eqref{eq:kinetic-k} and
\(g(t,\omega)=2\pi\sqrt{\omega}\,n(t,\omega)\),
the radial weak formulation has the coefficient \(1/4\)
in \eqref{eq:weak}.
The calculation is given in \cite{AL25}.
\end{remark}
\begin{remark}[The boundary \(\omega_4=0\)]
\label{rem:collision-boundary}
The zero extension of \(\Delta\phi\) in \cref{def:weak-form} may be discontinuous at
\(\omega_4=0\).
However, for the passage to the limit in the weak equation in \cref{sec:measure}, we use
a symmetrized integrand \(F_\beta[\phi]\).
By \cref{cor:boundary-removal}, this function is continuous on
\(\mathcal D_{\mathrm{ord}}\), and
\(F_{\beta,N}[\phi]\to F_\beta[\phi]\) locally uniformly there.
\end{remark}
\subsection{Main results} For a nonnegative initial measure $g_0\in\mathcal M_1$, write
\begin{equation}
    {\mathcal N}_0=\int_{[0,\infty)} dg_0,\qquad {\mathcal E}_0=\int_{[0,\infty)}\omega\,dg_0.
\end{equation}
In \cref{sec:measure}, we will construct isotropic measure solutions in the sense of \cref{def:weak-form} by introducing symmetric cutoff equations and passing to the limit as the cutoff is removed. Let \(g^N=(g_t^N)_{t\ge0}\) denote the cutoff solution with initial value \(g_0\).

\begin{theorem}[Global measure solutions]\label{thm:measure-main}
Let $\beta\in (0,3/4]$ and let $g_0\in\mathcal M_1$ be nonnegative. There exists a global isotropic measure solution \((g_t)_{t\geq0}\) of \eqref{eq:weak} in the sense of \cref{def:weak-form} with initial value \(g_0\) satisfying \begin{equation}
\mathcal N(t)=\mathcal N_0,\qquad \mathcal E(t)\leq \mathcal E(s)\leq \mathcal E_0, \qquad 0\leq s\leq t<\infty.
\end{equation}
\end{theorem}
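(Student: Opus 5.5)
We construct the solution as a limit of symmetric cutoff problems, following the strategy announced before the statement. For \(N\ge1\) we replace \(K_\beta\) by a truncated kernel \(K_{\beta,N}\). It equals \(K_\beta\) multiplied by a symmetric cutoff \(\chi_N(\omega_1,\omega_2,\omega_3,\omega_4)\), equal to \(1\) when all four frequencies lie in \([1/N,N]\) and vanishing when any frequency is \(\le 1/(2N)\) or \(\ge 2N\). The cutoff is chosen symmetric under \(1\leftrightarrow2\), \(3\leftrightarrow4\) and \((12)\leftrightarrow(34)\), so the weak form keeps its conservative structure. The truncated kernel is bounded and continuous, so the cutoff equation
\[
\int\phi\,dg^N_t-\int\phi\,dg^N_s=\frac14\int_s^t\iiint K_{\beta,N}\,\Delta\phi\,d(g^N_\tau)^{\otimes3}d\tau
\]
defines a locally Lipschitz vector field on nonnegative measures of bounded mass in \(\mathcal M_1\). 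To obtain a global nonnegative solution we write the equation in gain/loss form, \(\partial_t g=Q^+(g)-g\,L(g)\) with \(L(g)\ge0\) bounded. We then solve it by a Picard iteration in mild (Duhamel) form \(g_t=e^{-\int L}g_0+\int e^{-\int L}Q^+\), which preserves positivity. Taking \(\phi=1\) and \(\phi=\omega\) (admissible since \(K_{\beta,N}\) has compact support in frequency) gives \(\Delta\phi=0\). Hence \(g^N_t\) conserves \(\mathcal N\) and \(\mathcal E\) exactly, and the local solution extends globally.

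Next we establish compactness. The bound \(\mathcal N(g^N_t)+\mathcal E(g^N_t)\le\mathcal N_0+\mathcal E_0\) gives tightness. For equicontinuity in time we need a uniform estimate for \(\phi\in\operatorname{Lip}_c\):
\[
\iiint K_\beta|\Delta\phi|\,dg^{\otimes3}\le C_\phi(\mathcal N+\mathcal E)^3 .
\]
We would prove it by bounding \(|\Delta\phi|\le\|\phi'\|_\infty\min\{\cdots\}\), using the Taylor-type bound \(|\Delta\phi|\lesssim \|\phi'\|(\omega_{\min}\text{-difference})\). On the ordered domain, where say \(\omega_3\) is the smallest frequency, \(K_\beta\lesssim(\omega_1\omega_2\omega_3\omega_4)^\beta/\sqrt{\omega_{\rm mid}\omega_{\max}}\). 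Counting powers, with \(\omega_4\le\omega_1+\omega_2\), the total homogeneity \(4\beta-1\le 2\) is controlled by moments of order at most \(1\) in each variable precisely when \(\beta\le3/4\). Here \(\Delta\phi\) is used to absorb the small-frequency singularity \(\omega^{-1/2}\), and the compact support of \(\phi\) is used at large frequency. With this bound, Arzelà–Ascoli in the narrow topology (metrized on bounded sets by a dual-Lipschitz norm) yields a subsequence \(g^N\to g\) narrowly, locally uniformly in \(t\). Conservation of mass passes to the limit by tightness, so \(\mathcal N(t)=\mathcal N_0\). Energy is only lower semicontinuous, giving \(\mathcal E(t)\le\mathcal E_0\).

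The main obstacle is passing to the limit in the cubic term. The zero-extended integrand \(K_\beta\Delta\phi\) is discontinuous at \(\omega_4=0\) and is unbounded near the axes. As indicated in \cref{rem:collision-boundary}, we would symmetrize and write the collision term as \(\iiint F_{\beta,N}[\phi]\,dg^{\otimes3}\) over the ordered domain \(\mathcal D_{\mathrm{ord}}\). By \cref{cor:boundary-removal}, \(F_\beta[\phi]\) is continuous there and \(F_{\beta,N}[\phi]\to F_\beta[\phi]\) locally uniformly. We then split the domain into three regions. On a compact set away from infinity we use narrow convergence of \((g^N_\tau)^{\otimes3}\) together with local uniform convergence. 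Near infinity the tail is small by the energy bound, using that growth below order \(\omega^1\) is integrable against the tails, which is where \(\beta\le3/4\) enters. Near zero, continuity of \(F_\beta[\phi]\), with \(F_\beta[\phi]\to0\) on the boundary, controls the contribution uniformly. Dominated convergence in \(\tau\) then finishes the argument and also gives the integrability required in \cref{def:weak-form}.

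Finally, the monotonicity \(\mathcal E(t)\le\mathcal E(s)\) for \(s\le t\) needs more than lower semicontinuity. We would test with \(\phi_R(\omega)=\omega\wedge R\), which is concave. For concave \(\phi\) and \(\omega_1+\omega_2=\omega_3+\omega_4\), where \(\{\omega_3,\omega_4\}\) is the pair more spread than \(\{\omega_1,\omega_2\}\) after symmetrization, the symmetrized \(\Delta\phi\) has a sign making \(\int\phi_R\,dg_t\) nonincreasing in \(t\). This holds already for the limit equation, via the weak form approximated by \(\operatorname{Lip}_c\) truncations of \(\phi_R\). Letting \(R\to\infty\) by monotone convergence gives \(\mathcal E(t)\le\mathcal E(s)\le\mathcal E_0\).
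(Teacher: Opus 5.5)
Your outline follows the paper's proof: cutoff approximation; tightness from the energy bound, uniform equicontinuity and Arzel\`a--Ascoli; passage to the limit through the symmetrized integrand $F_\beta[\phi]$ on $\mathcal D_{\mathrm{ord}}$ with a truncation in $z$ and a tail estimate; and energy monotonicity from a convexity sign. There is one gap, at the endpoint $\beta=3/4$, and it is caused by your choice of cutoff.

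\textbf{The gap.} You add an \emph{upper} truncation ($\chi_N=0$ when some frequency is $\ge 2N$) and then invoke \cref{cor:boundary-removal} for your kernel. The cutoff statements of that corollary, in particular the $N$-uniform version of \eqref{eq:compact-high-tail}, are proved only for the paper's $K_{\beta,N}$, which has no upper truncation. There $S_{\beta,N}=(S_\beta\wedge N)\prod_j\ell_N(\omega_j)$ is nondecreasing in $\omega_4$ and $v\wedge N-u\wedge N\le v-u$. This yields $0\le G_{3,N}-G_{2,N}\le G_3-G_2$ on $\{z>2(y+L)\}$.

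At $\beta=3/4$ this cancellation cannot be dispensed with. The unsymmetrized bound \eqref{eq:raw-test-bound} has exponent $p=2\beta-1/2=1$, so the energy bound gives boundedness but no decay in $R$. Only the symmetrized exponent $q=5/8$ makes the tail small. Your claim that the growth is ``below order $\omega^1$'' for all $\beta\le 3/4$ therefore holds only after this cancellation.

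With an upper truncation the cancellation can fail. Take $x<L$, $y\sim1$, $z\sim N$ such that the quartet with fourth frequency $y+z-x$ is cut off but the one with $x+z-y$ is not. Then $|F_{\beta,N}[\phi]|=G_2|\phi(x)-\phi(y)|$ with $G_2\approx x^{3/4}y^{1/4}z$. Compare the $z$-independent size $G_3-G_2\approx\tfrac32x^{3/4}y^{1/4}(y-x)$. Nothing in the a priori bounds stops $g^N_t$ from keeping a fixed fraction of its conserved energy near the truncation scale. That is exactly the regime where energy may escape to infinity (cf.\ \cref{rem:energy-threshold}), so the tail need not be small uniformly in $N$.

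\textbf{How to repair it.} Either of two changes works:
\begin{itemize}
\item Drop the upper truncation, as the paper does. The cap $S_\beta\wedge N$ and the factors $\ell_N$ already give $K_{\beta,N}\le N(2N)^{3/2}$, and the fixed-point argument runs in $\mathcal M_1$ using the energy moment.
\item Use a product cutoff whose upper ramp has Lipschitz constant $O(1/N)$. Write $\chi_N^{(2)},\chi_N^{(3)}$ for the cutoff on the quartets with fourth frequency $x+z-y$ and $y+z-x$. The two differ only through the fourth-frequency factor, so $G_2|\chi_N^{(3)}-\chi_N^{(2)}|\lesssim G_2(y-x)/z\lesssim x^\beta y^{\beta+1/2}z^{2\beta-3/2}$. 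This is the same bound as for $G_3-G_2$, so the tail estimate survives.
\end{itemize}
For $\beta<3/4$ nothing needs to change: then $p<1$, and the crude bound works for any $0\le\chi_N\le 1$.

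\textbf{Energy step.} You do not need to approximate $\omega\wedge R$ by compactly supported truncations, which would not be concave anyway. Since $\Delta(\omega\wedge R)=-\Delta[(R-\cdot)_+]$, monotonicity follows directly from three facts: the weak form for the convex $(R-\cdot)_+\in\operatorname{Lip}_c$, \cref{lem:ordered}(ii), and $\mathcal N(t)=\mathcal N_0$. The sign comes from $G_3\ge G_2$, not from spreading alone. In the $G_2$ channel the outgoing pair $\{y,x+z-y\}$ is \emph{less} spread than the incoming pair $\{x,z\}$.
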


We now state the threshold \(\beta=1/4\) for finite-time action condensation of the solutions constructed in \cref{thm:measure-main}.

\begin{theorem}[Condensation and non-condensation threshold]\label{thm:action}
Let \(\beta\in (0,3/4]\). The following statements hold.
\begin{itemize}
    \item If $\beta\in [1/4,3/4]$, then every solution constructed in
\cref{thm:measure-main} and \cref{sec:measure} satisfies
$g_t(\{0\})=g_0(\{0\})$ for every $t\ge0$. In particular, if \(g_0(\{0\})=0\), no condensation forms in finite time.

\item If $\beta\in(0,1/4)$, let $g_0\in\mathcal M_1$ be nonnegative
and have a nonzero absolutely continuous part with respect to
Lebesgue measure. For every $0<\zeta<1$, there is a finite time
$T=T(\beta,g_0,\zeta)$ such that every solution constructed in
\cref{thm:measure-main} and \cref{sec:measure} with initial measure
$g_0$ satisfies
\begin{equation}\label{eq:main-condensation}
 g_t(\{0\})\ge(1-\zeta)\mathcal N_0,
 \qquad t\ge T.
\end{equation}
Moreover, there is a constant $C$, depending only on $\beta$ and
$g_0/\mathcal N_0$, such that all these solutions satisfy
\begin{equation}\label{eq:main-condensation-rate}
 \|g_t-\mathcal N_0\delta_0\|_{\mathrm{TV}}
 \le C\mathcal N_0(1+\mathcal N_0^2t)^{-(1-4\beta)/40},
 \qquad t\ge0.
\end{equation}
\end{itemize}
\end{theorem}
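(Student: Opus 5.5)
The plan has two parts, one per regime, and both rest on estimates that hold uniformly for the cutoff solutions $g^N$ and pass to the limit constructed in \cref{sec:measure}.

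\textbf{Non-condensation for $\beta\in[1/4,3/4]$.} The idea is to measure the mass near zero with the test function $\phi_\varepsilon(\omega)=(1-\omega/\varepsilon)_+$. The first step is a pointwise bound on the gain part of $K_\beta\Delta\phi_\varepsilon$. Since $\phi_\varepsilon$ is nonincreasing and $\omega_4=\omega_1+\omega_2-\omega_3$, the quantity $\Delta\phi_\varepsilon$ can be positive only when $\omega_3\wedge\omega_4<\varepsilon$. By symmetry, suppose the smallest frequency $\omega_{\min}$ is below $\varepsilon$. On that set
\[
K_\beta\lesssim \omega_{\min}^{\beta+1/2}\,\frac{(\omega_1\omega_2\omega_3\omega_4)^{\beta}}{\omega_{\min}^{\beta}\sqrt{\omega_1\omega_2\omega_3}}.
\]
The remaining variables give at most a factor $\langle\omega\rangle^{3\beta-1}$. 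With $\beta\le 3/4$ and total mass and energy bounded, this factor is integrable. The aim is then the differential inequality
\[
\frac{d}{dt}\int\phi_\varepsilon\,dg_t\le C(\mathcal N_0,\mathcal E_0)\,\varepsilon^{2\beta-1/2}\,\Psi_\varepsilon(t),
\]
where $\Psi_\varepsilon$ is comparable to $\int\phi_{2\varepsilon}\,dg_t$ or is bounded. Only the atom at $0$ cannot interact, because $K_\beta$ vanishes there. For $\beta\ge1/4$ the exponent satisfies $\varepsilon^{2\beta-1/2}\lesssim1$. This should give a Gronwall estimate for the mass in $(0,\varepsilon)$ that is uniform in $\varepsilon$, and hence $g_t((0,\varepsilon))\to0$ as $\varepsilon\to0$ uniformly on $[0,T]$. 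The atom at zero is constant, because $g_t(\{0\})=\lim_\varepsilon\int\phi_\varepsilon\,dg_t$ minus this vanishing contribution, and the loss term also cannot remove mass from $\{0\}$.

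The borderline case $\beta=1/4$ will need care. I would first try a dyadic decomposition of the region near zero, showing that the flux through each scale $2^{-j}$ is bounded by a summable quantity. The key input is that the mass at scale $2^{-j}$ is finite without being amplified. This step is done at the cutoff level with constants independent of $N$ and then passed to the limit.

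\textbf{Condensation for $\beta<1/4$.} The strategy is an entropy/Lyapunov argument: use a concave test function whose dissipation controls the mass that stays away from zero. Take $\phi(\omega)=\omega^{\gamma}$ or $\phi=\sqrt{\omega}\wedge\dots$. For concave $\phi$, the symmetrized $\Delta\phi$ has a sign on the resonant set. The step is to prove a lower bound of the form
\[
-\frac{d}{dt}\int\omega^{\gamma}\,dg_t\gtrsim \mathcal N_0^2\Big(\int_{(0,\infty)}\omega^{\gamma}\,dg_t\Big)^{1+\theta},
\]
with the help of the energy bound and of the homogeneity of $K_\beta$, which has degree $4\beta-1<0$. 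The negative degree is what makes low frequencies interact strongly, and $1-4\beta$ enters the exponent at this point. Integrating the inequality gives the polynomial decay $(1+\mathcal N_0^2t)^{-1/\theta}$ of the moment. Interpolation then converts moment decay into TV decay of $g_t\restriction(0,\infty)$. Conservation of mass gives $\|g_t-\mathcal N_0\delta_0\|_{\mathrm{TV}}=2g_t((0,\infty))$. The exponent $(1-4\beta)/40$ comes from combining the moment exponent with the interpolation losses.

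The hypothesis $(g_0)_{\rm ac}\neq0$ is used to rule out stationary states. Pure atomic configurations such as $\delta_a$, or sums of atoms off the resonant manifold, have zero collision term. To handle this, I would show that the absolutely continuous part keeps a quantitative spread, and that this spread feeds the lower bound on the dissipation.

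The main obstacle is this coercive lower bound on the dissipation. It requires lower bounds on $K_\beta$ over configurations where $\omega_1,\omega_2,\omega_3$ are spread out and $\omega_4>0$, uniformly along the flow and with constants depending only on $g_0/\mathcal N_0$. Here I would first try a scaling argument. Rescaling $t$ by $\mathcal N_0^2$ reduces to $\mathcal N_0=1$, and the time-independent quantity to exploit is the energy.
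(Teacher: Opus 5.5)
\textbf{First part ($\beta\ge1/4$).} This part can be closed, but your Gronwall route is unnecessary, and as written it stalls exactly at the case you flag. At $\beta=1/4$, a bound $\frac{d}{dt}\int\phi_\varepsilon\,dg_t\le C$ uniform in $\varepsilon$ only gives $g_t(\{0\})\le g_0(\{0\})+Ct$. You do not need a rate in $\varepsilon$; you only need the time-integrated collision term for $\phi_\varepsilon$ to tend to zero.

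Checking the three ordered branches shows $K_{1/4}\le1$. Hence for $\beta\in[1/4,3/4]$,
$K_\beta=K_{1/4}(\omega_1\omega_2\omega_3\omega_4)^{\beta-1/4}\le\prod_{j=1}^3(1+\omega_j)$; this is \cref{lem:critical-kernel}. So $|K_\beta\Delta\phi_\varepsilon|\le2\prod_j(1+\omega_j)$ is integrable against $g_\tau^{\otimes3}$, uniformly in $\varepsilon$ and $\tau\le T$. Moreover $K_\beta\Delta\phi_\varepsilon\to0$ pointwise: all four test values vanish once $\varepsilon\le\min_j\omega_j$, and $K_\beta=0$ if some frequency is $0$. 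Dominated convergence in \eqref{eq:weak} then gives $g_t(\{0\})=g_s(\{0\})$ for every isotropic measure solution. The borderline $\beta=1/4$ is no harder, and no dyadic decomposition or cutoff-level estimate is needed.

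\textbf{Second part ($\beta<1/4$): genuine gap.} Even if your dissipation inequality held, its conclusion would not give the theorem. Decay of $\int_{(0,\infty)}\omega^\gamma\,dg_t$ yields only narrow convergence to $\mathcal N_0\delta_0$. It does not control $\|g_t-\mathcal N_0\delta_0\|_{\mathrm{TV}}=2g_t((0,\infty))$; think of $\delta_\epsilon$ with $\epsilon\to0$. So no interpolation turns it into a TV rate, and it gives no atom at any finite time, since mass could slide toward zero forever.

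What is needed is that, after one common finite time $T_\zeta$, mass $1-\zeta/2$ lies in $[0,R_n]$ simultaneously for a sequence $R_n\downarrow0$. The paper obtains this by iteration (\cref{lem:cond-summable-transfer}). The stage from $R_n$ to $R_{n+1}$ takes time at most $CR_n^{d}\varepsilon_n^{-6}(1+|\log r_n|+s_n)$. With $R_n\sim2^{-n}$ and $\varepsilon_n\sim\zeta2^{-nd/12}$, these durations are summable precisely because $d=1-4\beta>0$. This is where $\beta<1/4$ enters. The exponent $d/40$ comes from $1+T_\zeta\le C\zeta^{-40/d}$ and the choice $\zeta\sim t^{-d/40}$, not from interpolation losses. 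Your scaling heuristic (homogeneity degree $4\beta-1$) is the right one, but it must bound arrival times scale by scale, not feed a Lyapunov inequality for one moment.

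Second, the coercive inequality is where all the work lies, and you leave it open. As a functional inequality it is false: a single atom $m\delta_a$ with $a>0$ is stationary and has positive moment. So it could only hold along the flow, via a quantitative spread of the absolutely continuous part at every small scale. Supplying that is the content of \cref{sec:density-estimates,sec:condensation}:
\begin{itemize}
\item a differential inequality $\partial_tf_N\ge L_N[f_N]-\omega^{-d}\ell_Nf_N+\mathcal G_N$ for the density of $(g_t^N)_{\mathrm{ac}}$;
\item a comparison principle with sine profiles;
\item propagation of positivity down to $f_N(t,\omega)\ge ce^{-Dt}\omega^{1-3\beta}$ near every small scale (\cref{lem:cond-density-lower}).
\end{itemize}
The transfer step is then an instability argument, not a dissipation bound. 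Suppose mass $\varepsilon$ stays in $[r,R]$. The kernel bound $G_3-G_2-G_1\gtrsim x^\beta y^{\beta+1/2}z^{2\beta-3/2}$ gives $V_N\ge\lambda-Ca^{-d}J_N$ at scale $a=r/(8L)$, and $\int J_N$ is controlled by a hinge-function budget. So the density there grows like $e^{\lambda t}$. It exceeds the total mass within a time that depends only logarithmically on the exponentially decaying lower bound $c_0$. Without substitutes for these two ingredients, your second part does not prove finite-time condensation or the rate.
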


The phase transition results for all $\beta\in(0,3/4]$ are
summarized in \cref{tab:transition}.

\begin{table}[t]
\centering
\small
\caption{Condensation threshold.}
\label{tab:transition}
\begin{tabular}{@{}p{.19\textwidth}p{.73\textwidth}@{}}
\toprule
Parameter \(\beta\) & Condensate formation\\
\midrule
$(0,1/4)$
& Every datum with a nonzero absolutely continuous part
and no atom at zero condenses in finite time.\\[4pt]

$[1/4,3/4]$
& The condensate mass is constant.\\
\bottomrule
\end{tabular}
\end{table}

\begin{remark}[Energy conservation and loss]
\label{rem:energy-threshold}
The energy inequality in \cref{thm:measure-main} allows for
energy loss through transfer to infinite frequency.
In a forthcoming work, we will address the threshold $\beta=2/3$
for the solutions constructed here: energy conservation for
$0<\beta\le2/3$, and finite-time energy loss for suitable initial
data when $2/3<\beta\le3/4$.
\end{remark}

\subsection{Related work}

Wave turbulence theory has its origins in the study of lattice
vibrations, water waves, and random dispersive waves, including the
classical works of Peierls \cite{Peierls29}, Phillips
\cite{Phillips60}, Hasselmann \cite{Hasselmann62}, Benney and
Saffman \cite{BS66}, Benney and Newell \cite{BN69}, and Zakharov
\cite{Zakharov65,Zakharov68}.
It provides a kinetic description of spectral transfer in weakly
nonlinear wave systems, with applications to fluid dynamics,
plasmas, Bose gases, and nonlinear optics.
General accounts of the theory and its physical applications can
be found in the books of Zakharov, Lvov, and Falkovich
\cite{ZLF92} and Nazarenko \cite{Nazarenko11}.

For the 4-wave kinetic equation with a constant interaction coefficient associated with the cubic nonlinear Schr\"odinger equation, Escobedo and Vel\'azquez \cite{EV15}
developed a theory of isotropic weak and mild solutions, including
local and global well-posedness, finite-time condensation, and long-time asymptotics.
Germain, Ionescu, and Tran \cite{GIT20} then established local
well-posedness in weighted $L^\infty$ spaces for a general class of radial
dispersion relations, and in weighted $L^2$ spaces for Schr\"odinger
dispersion, without assuming isotropy of the initial data.
Recently, for a class of convex radial dispersion relations, Staffilani and
Tran \cite{STUV26} constructed global mild radial solutions and
proved transfer of energy to high frequencies as $t\to\infty$
under suitable assumptions on the initial data.
They also established sufficient conditions for finite-time
condensation and estimates on non-condensation times \cite{ST26}.

Equations with frequency-dependent interaction coefficients have
also been studied.
The Majda--McLaughlin--Tabak model was introduced as a family of
one-dimensional equations for studying dispersive wave turbulence
\cite{MMT97}, and Germain, La, and Zhang \cite{GLZ25} established
local well-posedness for its kinetic equation.
Ampatzoglou and L\'eger \cite{AL25} studied the same family of
spatially homogeneous 4-wave kinetic equations considered here,
with quadratic dispersion and interaction weight
$|T_{1234}|^2=(\omega_1\omega_2\omega_3\omega_4)^\beta
=(|k_1||k_2||k_3||k_4|)^{2\beta}$,
associated with a family of quasilinear Schr\"odinger equations.
They proved local well-posedness in suitable weighted $L^\infty$
spaces for $0\le\beta\le1/4$ and ill-posedness in these spaces
for $1/4<\beta\le1$. They also identified $\beta=1/4$ as the boundary between finite
and infinite capacity for the inverse wave-action cascade
\cite[Remark~1.5]{AL25}.
For power-law dispersion $\omega(k)=|k|^a$, $1\le a\le5$,
Zhu \cite{Zhu26} obtained sharp local well-posedness and
ill-posedness criteria in weighted $L^\infty$ spaces, depending on
the interaction exponent and the decay of the initial data.

Condensation has also been studied for the closely related
Boltzmann--Nordheim equation for Bose gases.
A foundational theory of global isotropic measure solutions and
their condensation behavior was developed by Lu
\cite{Lu04,Lu13}.
A central contribution was made by Escobedo and Vel\'azquez
\cite{EV14,EVN15}, who proved finite-time blow-up for classical
solutions and finite-time formation of a condensate for weak
solutions, including the supercritical regime in which the
corresponding Bose--Einstein equilibrium contains a Dirac mass.
Cai and Lu \cite{CL19,CL26} subsequently studied convergence to
Bose--Einstein equilibrium and the influence of the interaction
potential on condensation.
More recently, under scattering assumptions with $0\le\eta<1$,
Luo and Liu \cite{LL26BEC} proved that every admissible initial
measure in the full subcritical regime
$\overline T/\overline T_c<1$ admits a conservative isotropic
solution that develops a condensate in finite time, without any
initial low-energy concentration assumption.

In addition to 4-wave equations, 3-wave kinetic models have also been
studied in several settings, including phonon interactions,
stratified ocean flows, capillary and acoustic waves, and Bose
gases
\cite{Spohn06,GST20,NT18,CTSB20,SoTr18,SoTr20}.
Related existence, well-posedness, condensation, and energy-transfer
phenomena have been investigated in these works.
More recently, Staffilani and Tran \cite{STMixed25} studied a
kinetic model containing both 3-wave and 4-wave interaction terms
and proved finite-time and immediate energy cascades for suitable
classes of initial data.

The rigorous derivation of wave kinetic equations from nonlinear
dispersive equations has also been studied as part of Hilbert's
sixth problem.
Early results concerned equilibrium correlations for the lattice
nonlinear Schr\"odinger equation \cite{LS11} and kinetic
approximations for the cubic nonlinear Schr\"odinger equation
on time scales shorter than the kinetic time
\cite{BGHS21,DH21,CGHom,CGLong}.
Deng and Hani \cite{DH23} derived the homogeneous 4-wave
kinetic equation from the cubic nonlinear Schr\"odinger equation
with random initial data in dimensions $d\ge3$, up to a
sufficiently small multiple of the kinetic time.
They considered the joint large-box and weak-nonlinearity limit
under a particular scaling law.
Their subsequent work covered the full range of admissible
scaling laws \cite{DH26Scaling} and extended the approximation
to every compact interval of rescaled time within the lifespan
of the smooth kinetic solution \cite{DHLong23}.
Staffilani and Tran \cite{STKdV21} justified a 3-wave kinetic
limit, in the resonance broadening sense, for a stochastic
Zakharov--Kuznetsov equation on a lattice.
Related work treats spatially inhomogeneous models
\cite{ACGInhom,HRSTInhom} and damped--driven nonlinear
Schr\"odinger equations \cite{DKKinetic,GHDriven}.

\subsection{Organization of the paper} The rest of the paper is organized as follows. In \cref{sec:measure}, we develop the framework for
measure solutions and prove \cref{thm:measure-main}.
\cref{sec:action-threshold} proves that the condensate
mass remains constant for $1/4\le\beta\le3/4$.
\cref{sec:density-estimates} establishes density inequalities and a
comparison principle for $0<\beta<1/4$.
Using these estimates, \cref{sec:condensation} proves finite-time condensation and
convergence to $\mathcal N_0\delta_0$ for initial measures
with a nonzero absolutely continuous part when $0<\beta<1/4$,
completing the proof of \cref{thm:action}.

\section{Measure solutions}\label{sec:measure}

The main purpose of this section is to introduce the framework
for measure solutions and prove \cref{thm:measure-main}.
The measure-solution framework used here is due to
Escobedo and Vel\'azquez \cite{EV15}.
We adapt their regularization, symmetrization, convex-test
estimates, and compactness argument to the product kernel $K_\beta$.
Throughout this section, $0<\beta\le3/4$ and
$g_0\in\mathcal M_1$ is nonnegative. We construct the solutions
of \cref{def:weak-form} by symmetric cutoff approximation,
with every cutoff equation starting from $g_0$.

\subsection{The cutoff equation}

For \(\omega_1,\omega_2,\omega_3,\omega_4>0\), define
\begin{equation}
 S_\beta(\omega_1,\omega_2,\omega_3,\omega_4)
 =(\omega_1\omega_2\omega_3\omega_4)^\beta
   \min_{1\le j\le4}\sqrt{\omega_j}.
\end{equation}
We extend it by zero whenever one of its arguments is nonpositive.
For each integer $N\ge1$, let
$\ell_N(\omega)=1 \wedge (2N\omega-1)_+$. This function is
continuous and nondecreasing, vanishes on $[0,(2N)^{-1}]$,
and equals one on $[N^{-1},\infty)$. Set
\begin{equation}\label{eq:cutoff}
 \begin{aligned}
 S_{\beta,N}(\omega_1,\omega_2,\omega_3,\omega_4)
   &=(S_\beta(\omega_1,\omega_2,\omega_3,\omega_4)\wedge N)
       \prod_{j=1}^4\ell_N(\omega_j),\\
 K_{\beta,N}(\omega_1,\omega_2,\omega_3)
   &=\frac{S_{\beta,N}(\omega_1,\omega_2,\omega_3,\omega_4)}
           {\sqrt{\omega_1\omega_2\omega_3}},
 \qquad \omega_4=\omega_1+\omega_2-\omega_3.
 \end{aligned}
\end{equation}
As for $K_\beta$, \(K_{\beta,N}\) is defined to be zero
if any of the four frequencies is nonpositive. Hence
$0\le K_{\beta,N}\uparrow K_\beta$ for every frequency argument.
Symmetric cutoffs of isotropic collision kernels are also used in
\cite{Lu14} and \cite{EV15}.

For $N\ge1$, we consider the cutoff equation
\begin{equation}\label{eq:cutoff-weak}
\int_{[0,\infty)}\phi\,dg_t^N
-\int_{[0,\infty)}\phi\,dg_s^N
=\frac14\int_s^t\iiint_{[0,\infty)^3}
K_{\beta,N}\Delta\phi\,d(g_\tau^N)^{\otimes3}\,d\tau,
\end{equation}
for $0\le s\le t$ and $\phi\in\operatorname{Lip}_c([0,\infty))$,
with initial value $g_0^N=g_0$.

\begin{proposition}\label{prop:integrated-cutoff-existence}
Let $g_0\in\mathcal M_1$ be nonnegative.
For every $N\ge1$, equation \eqref{eq:cutoff-weak} with initial
value $g_0$ has a unique global nonnegative solution
$g^N\in C^1([0,\infty);\mathcal M_1)$.
For every $t\ge0$,
\begin{equation}
    \int_{[0,\infty)}dg_t^N=\mathcal N_0,\qquad \int_{[0,\infty)}\omega\,dg_t^N=\mathcal E_0,\qquad g_t^N(\{0\})=g_0(\{0\}).
\end{equation}
Moreover, \eqref{eq:cutoff-weak} holds for every Borel function
$\phi:[0,\infty)\to\mathbb R$ satisfying
$|\phi(\omega)|\le C_\phi(1+\omega)$.
\end{proposition}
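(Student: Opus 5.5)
We write the cutoff equation as an ODE in the Banach space \(\mathcal M_1\) and solve it by a Picard--Lindel\"of argument. The weak form \eqref{eq:cutoff-weak} is equivalent to \(\frac{d}{dt}g_t=Q_N(g_t)\). Here \(Q_N(g)\) is the cubic measure-valued operator defined by duality,
\begin{equation*}
\langle Q_N(g),\phi\rangle=\frac14\iiint_{[0,\infty)^3}K_{\beta,N}\,\Delta\phi\,dg^{\otimes3}.
\end{equation*}
The first step is to check that \(Q_N\) is well defined as a bounded cubic map \(\mathcal M_1\to\mathcal M_1\), by writing it as a sum of pushforward measures. On the support of \(K_{\beta,N}\) all four frequencies are at least \((2N)^{-1}\). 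Hence \(K_{\beta,N}\le N(2N)^{3/2}\) is bounded. Also \(1+\omega_4\le 1+\omega_1+\omega_2\), and \(\omega_3\le\omega_1+\omega_2\) since \(\omega_4>0\). For every Borel \(\phi\) with \(|\phi|\le C_\phi(1+\omega)\) this gives
\begin{equation*}
|\langle Q_N(g),\phi\rangle|\le C_N C_\phi\|g\|_{\mathcal M_1}\|g\|_{\mathrm{TV}}^2\le C_NC_\phi\|g\|_{\mathcal M_1}^3.
\end{equation*}
Multilinearity then gives the local Lipschitz bound \(\|Q_N(g)-Q_N(h)\|_{\mathcal M_1}\le C_N(\|g\|_{\mathcal M_1}+\|h\|_{\mathcal M_1})^2\|g-h\|_{\mathcal M_1}\). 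Picard iteration yields a unique local solution \(g^N\in C^1([0,T_*);\mathcal M_1)\). By construction, the identity holds for all Borel \(\phi\) of linear growth, which gives the last claim of the proposition.

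The second step is nonnegativity, which we expect to be the main obstacle, since Picard iterates need not stay nonnegative. We split \(Q_N=Q_N^+-Q_N^-\). The loss part has the form \(Q_N^-(g)=a[g]\,g\), where
\begin{equation*}
a[g](\omega)=\frac14\iint\bigl(K_{\beta,N}(\omega,\omega_2,\omega_3)+K_{\beta,N}(\omega_1,\omega,\omega_3)\bigr)\,dg\,dg
\end{equation*}
is a bounded nonnegative function; this follows from the symmetry \(\omega_1\leftrightarrow\omega_2\) and the \(-\phi(\omega_1)-\phi(\omega_2)\) terms. The gain part \(Q_N^+(g)\), coming from \(\phi(\omega_3)+\phi(\omega_4)\), is a nonnegative measure whenever \(g\ge0\). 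We then use the Duhamel form
\begin{equation*}
g_t=e^{-\int_0^t a[g_s]ds}g_0+\int_0^t e^{-\int_s^t a[g_r]dr}Q_N^+(g_s)\,ds.
\end{equation*}
We run the fixed-point iteration on this formula, with \(|g_s|\) inserted in \(a\) and \(Q_N^+\). This map preserves the cone of nonnegative measures and is still a contraction on short intervals in \(\mathcal M_1\). Its fixed point is nonnegative, and by uniqueness it coincides with the Picard solution.

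The third step is the conservation laws and global existence. Testing with \(\phi\equiv1\) and \(\phi(\omega)=\omega\) (both admissible) gives \(\Delta\phi=0\) on \(\{\omega_4>0\}\), so \(\mathcal N\) and \(\mathcal E\) are constant. Testing with \(\phi=\mathbf 1_{\{0\}}\) gives \(\Delta\phi=0\) on the support of \(K_{\beta,N}\), since all frequencies there are at least \((2N)^{-1}\); hence \(g_t^N(\{0\})=g_0(\{0\})\). Since \(g^N_t\ge0\), we get \(\|g_t^N\|_{\mathcal M_1}=\mathcal N_0+\mathcal E_0\) on \([0,T_*)\). The local existence time depends only on this norm, so the solution extends to \([0,\infty)\). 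Uniqueness holds among \(C([0,\infty);\mathcal M_1)\) weak solutions, because any such solution satisfies the integral ODE and the local Lipschitz bound applies.
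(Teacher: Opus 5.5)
Your proposal is correct and follows essentially the same route as the paper: a bounded cutoff kernel and a pushforward representation of the collision operator in $\mathcal M_1$, a local Lipschitz bound, positivity through a Duhamel-type fixed point that preserves the nonnegative cone, conservation by testing with $1$, $\omega$ and $\mathbf 1_{\{0\}}$, continuation using the conserved $\mathcal M_1$ norm, and Gronwall uniqueness. The only difference is in the positivity step: the paper uses the constant integrating factor $e^{-At}$ with $A=C_NB^2$, so that $\mathcal I_N(g)+Ag\ge0$ for nonnegative $g$ with $\|g\|_{\mathcal M_1}\le B$, and runs the contraction directly on the closed set of nonnegative curves, which avoids your variable loss-rate exponential, the $|\nu|$ device, and the separate identification with a signed Picard solution.
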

\begin{proof}
Fix $N\ge1$ and set $C_N=N(2N)^{3/2}$.

\smallskip
\noindent\textit{Step 1. Estimates for the collision operator.}
Since $K_{\beta,N}$ vanishes whenever
$\min_{1\le j\le4}\omega_j\le(2N)^{-1}$ and \(S_{\beta,N}\leq N\), we have
\begin{equation}
0\le K_{\beta,N}(\omega_1,\omega_2,\omega_3)
\le N(2N)^{3/2}=C_N.
\end{equation}
Set \begin{equation}\mathcal D=\{(\omega_1,\omega_2,\omega_3)\in[0,\infty)^3:
\omega_1+\omega_2\ge\omega_3\}.\end{equation}
For each signed measure $g\in\mathcal M_1$, define the finite signed
measure $\Lambda_N[g]$ on $\mathcal D$ by
$\Lambda_N[g]=K_{\beta,N}(g^{\otimes3}|_{\mathcal D})/4$.
For $j=1,2,3,4$, let $\pi_j:\mathcal D\to[0,\infty)$ be given by
$\pi_j(\omega_1,\omega_2,\omega_3)=\omega_j$.
Define
\begin{equation}
\mathcal I_N(g)
=(\pi_3)_\#\Lambda_N[g]+(\pi_4)_\#\Lambda_N[g]
 -(\pi_1)_\#\Lambda_N[g]-(\pi_2)_\#\Lambda_N[g].
\end{equation}
We claim that \(\mathcal I_N(g)\in\mathcal M_1\). Indeed, since $ \sum_{j=1}^4(1+\omega_j)
=4+2\omega_1+2\omega_2$ on $\mathcal D$, we have
\begin{align}
\|\mathcal I_N(g)\|_{\mathcal M_1}
&\le \sum_{j=1}^4\int_{[0,\infty)}
(1+\omega)\,d|(\pi_j)_\#\Lambda_N[g]|(\omega)\notag\\
&\le \sum_{j=1}^4\iiint_{\mathcal D}
(1+\omega_j)\,d|\Lambda_N[g]|
=\frac14\iiint_{\mathcal D}
(4+2\omega_1+2\omega_2)K_{\beta,N}\,d|g|^{\otimes3}\notag\\
&\le \frac{C_N}{4}\iiint_{[0,\infty)^3}
(4+2\omega_1+2\omega_2)\,d|g|^{\otimes3}
=C_N\|g\|_{\mathrm{TV}}^2\|g\|_{\mathcal M_1}.
\end{align}
For $g,h\in\mathcal M_1$, since
\[
g^{\otimes3}-h^{\otimes3}
=(g-h)\otimes g\otimes g
 +h\otimes(g-h)\otimes g
 +h\otimes h\otimes(g-h),
\]
we have
\begin{align*}
\|\mathcal I_N(g)-\mathcal I_N(h)\|_{\mathcal M_1}
&\le \frac14\iiint_{\mathcal D}
(4+2\omega_1+2\omega_2)K_{\beta,N}
\,d|g^{\otimes3}-h^{\otimes3}|\\
&\le C_N\iiint_{[0,\infty)^3}
\prod_{j=1}^3(1+\omega_j)
\,d|g^{\otimes3}-h^{\otimes3}|\\
&\le C_N\bigl(
\|g\|_{\mathcal M_1}^2
+\|g\|_{\mathcal M_1}\|h\|_{\mathcal M_1}
+\|h\|_{\mathcal M_1}^2
\bigr)\|g-h\|_{\mathcal M_1}.
\end{align*}
Thus
\begin{equation}\label{eq:cutoff-local-lipschitz}
\|\mathcal I_N(g)-\mathcal I_N(h)\|_{\mathcal M_1}
\le C_N(\|g\|_{\mathcal M_1}+\|h\|_{\mathcal M_1})^2
\|g-h\|_{\mathcal M_1},
\end{equation}
so $\mathcal I_N$ is locally Lipschitz on $\mathcal M_1$.

\smallskip
\noindent\textit{Step 2. Local existence and nonnegativity.}
For $g\ge0$, define
\begin{equation}
\mathcal I_N^+(g)
=(\pi_3)_\#\Lambda_N[g]+(\pi_4)_\#\Lambda_N[g],
\qquad
L_N[g](\omega_1)
=\frac12\iint_{[0,\infty)^2}
K_{\beta,N}(\omega_1,\omega_2,\omega_3)
\,dg(\omega_2)\,dg(\omega_3).
\end{equation}
Note that both $\mathcal I_N^+(g)$ and $L_N[g]$ are nonnegative.
By the definition of $\Lambda_N[g]$ and the symmetry of
$K_{\beta,N}$ in $\omega_1,\omega_2$, we have
\begin{align}
\mathcal I_N(g)=\mathcal I_N^+(g)
-(\pi_1)_\#\Lambda_N[g]-(\pi_2)_\#\Lambda_N[g]=\mathcal I_N^+(g)-2(\pi_1)_\#\Lambda_N[g]
=\mathcal I_N^+(g)-L_N[g]g.
\end{align}
The boundedness of $K_{\beta,N}$ then implies
$0\le L_N[g](\omega_1)\le C_N\|g\|_{\mathrm{TV}}^2/2$
for every $\omega_1\ge0$.

Choose $B>\|g_0\|_{\mathcal M_1}$ and set $A=C_NB^2$.
For every nonnegative $g$ with $\|g\|_{\rm TV}\leq \|g\|_{\mathcal M_1}\le B$,
\begin{equation}
\mathcal I_N(g)+Ag
=\mathcal I_N^+(g)+(A-L_N[g])g\ge0.
\end{equation}
For $T>0$, consider
\begin{equation}
\mathcal X_{B,T}
=\left\{\nu\in C([0,T];\mathcal M_1):
\nu_0=g_0,\ \nu_t\ge0,\
\sup_{0\le t\le T}\|\nu_t\|_{\mathcal M_1}\le B\right\}.
\end{equation}
Since the nonnegative cone is closed in $\mathcal M_1$, $\mathcal X_{B,T}$ is a nonempty closed subset of
$C([0,T];\mathcal M_1)$ and is complete for the supremum norm.
Define
\begin{equation}
(\mathcal T\nu)_t
=e^{-At}g_0+\int_0^t e^{-A(t-s)}
\bigl(\mathcal I_N(\nu_s)+A\nu_s\bigr)\,ds,
\qquad 0\le t\le T,
\end{equation}
where the integral is a Bochner integral in $\mathcal M_1$.
Since the integrand is continuous in $s$ and takes values in the
nonnegative cone, $(\mathcal T\nu)_t\ge0$ and
$(\mathcal T\nu)_0=g_0$.
Moreover,
\begin{equation}
\sup_{0\le t\le T}\|(\mathcal T\nu)_t\|_{\mathcal M_1}
\le\|g_0\|_{\mathcal M_1}+T(C_NB^3+AB).
\end{equation}
For $\nu,\mu\in\mathcal X_{B,T}$,
\eqref{eq:cutoff-local-lipschitz} gives
\begin{equation}
\sup_{0\le t\le T}
\|(\mathcal T\nu)_t-(\mathcal T\mu)_t\|_{\mathcal M_1}
\le T(4C_NB^2+A)
\sup_{0\le t\le T}\|\nu_t-\mu_t\|_{\mathcal M_1}.
\end{equation}
Choose $T>0$ such that
$T(C_NB^3+AB)\le B-\|g_0\|_{\mathcal M_1}$ and
$T(4C_NB^2+A)<1$.
Then $\mathcal T$ maps $\mathcal X_{B,T}$ into itself and is a
contraction. The Banach fixed-point theorem gives a nonnegative
fixed point $g^N\in\mathcal X_{B,T}$.
Multiplying the fixed-point identity by $e^{At}$, we obtain
\[
e^{At}g_t^N
=g_0+\int_0^t e^{As}
\bigl(\mathcal I_N(g_s^N)+Ag_s^N\bigr)\,ds.
\]
Since $g^N\in C([0,T];\mathcal M_1)$ and $\mathcal I_N$ is
locally Lipschitz, the integrand is continuous in $\mathcal M_1$.
The fundamental theorem of calculus for Bochner integrals therefore
gives $t\mapsto e^{At}g_t^N\in C^1([0,T];\mathcal M_1)$.
Hence $g^N\in C^1([0,T];\mathcal M_1)$, and
\begin{align*}
\partial_tg_t^N=-Ag_t^N+e^{-At}\partial_t(e^{At}g_t^N)=-Ag_t^N+\mathcal I_N(g_t^N)+Ag_t^N
=\mathcal I_N(g_t^N).
\end{align*}

\smallskip
\noindent\textit{Step 3. Borel test functions and conservation laws.}
Let $\phi:[0,\infty)\to\mathbb R$ be Borel measurable and satisfy
$|\phi(\omega)|\le C_\phi(1+\omega)$.
Hence $ g\mapsto\int\phi\,dg$ is a bounded linear
functional on $\mathcal M_1$.
The estimate in Step~1 then gives, for every $g\in\mathcal M_1$,
\begin{equation}
\frac14\iiint_{[0,\infty)^3}K_{\beta,N}|\Delta\phi|
\,d|g|^{\otimes3}
\le C_\phi C_N\|g\|_{\mathrm{TV}}^2\|g\|_{\mathcal M_1}.
\end{equation}
Thus the collision integral is integrable.
Applying the functional to $\partial_tg_t^N=\mathcal I_N(g_t^N)$
and using the definition of the pushforwards, we obtain
\begin{align}
\dfrac{d}{dt}\int_{[0,\infty)}\phi\,dg_t^N
=\int_{[0,\infty)}\phi\,d\mathcal I_N(g_t^N)
=\frac14\iiint_{[0,\infty)^3}
K_{\beta,N}\Delta\phi\,d(g_t^N)^{\otimes3}. 
\end{align}
Since $t\mapsto\mathcal I_N(g_t^N)$ is continuous in
$\mathcal M_1$, the derivative above is continuous in $t$.
Integrating in time proves \eqref{eq:cutoff-weak} for every
such $\phi$ and $0\le s\le t\le T$.

Taking $s=0$ and testing \eqref{eq:cutoff-weak} with
$\phi(\omega)=1$ and $\phi(\omega)=\omega$,
we obtain
\begin{equation}
\int_{[0,\infty)}dg_t^N
=\int_{[0,\infty)}dg_0=\mathcal N_0,
\qquad
\int_{[0,\infty)}\omega\,dg_t^N
=\int_{[0,\infty)}\omega\,dg_0=\mathcal E_0.
\end{equation}
Moreover, testing \eqref{eq:cutoff-weak} with
$\phi=\mathbf1_{\{0\}}$ 
therefore yields
\begin{equation}
g_t^N(\{0\})-g_0(\{0\})
=\frac14\int_0^t\iiint_{[0,\infty)^3}
K_{\beta,N}\Delta\mathbf1_{\{0\}}\,
d(g_\tau^N)^{\otimes3}\,d\tau=0.
\end{equation}
Since $g_t^N\ge0$, it follows that, for every $t\in[0,T]$,
\begin{equation}
\|g_t^N\|_{\mathcal M_1}
=\int_{[0,\infty)}(1+\omega)\,dg_t^N
=\mathcal N_0+\mathcal E_0
=\|g_0\|_{\mathcal M_1}.
\end{equation}

\smallskip
\noindent\textit{Step 4. Global existence and uniqueness.}
Fix $B>\mathcal N_0+\mathcal E_0$ and choose $T>0$ as in
Step~2. This choice depends only on $N$, $B$, and
$\mathcal N_0+\mathcal E_0$.
The estimates in Step~2 then give a nonnegative solution on
$[0,T]$ for every initial measure $\mu\ge0$ satisfying
$\|\mu\|_{\mathcal M_1}=\mathcal N_0+\mathcal E_0$.
By Step~3, the solution has the same \(\mathcal M_1\) norm throughout this interval.
We may therefore apply this construction successively on
$[kT,(k+1)T]$, $k=0,1,\ldots$, with initial value $g_{kT}^N$.
The resulting curve belongs to
$C([0,\infty);\mathcal M_1)$, and adding the integral equations
on these intervals gives
\begin{equation}
g_t^N=g_0+\int_0^t\mathcal I_N(g_s^N)\,ds,
\qquad t\ge0.
\end{equation}
Since $t\mapsto\mathcal I_N(g_t^N)$ is continuous in
$\mathcal M_1$, this identity implies
$g^N\in C^1([0,\infty);\mathcal M_1)$ and
$\partial_tg_t^N=\mathcal I_N(g_t^N)$.
The conservation identities in Step~3 hold on each interval
$[kT,(k+1)T]$ and hence for every $t\ge0$.

Let $\widetilde g^N\in C^1([0,\infty);\mathcal M_1)$ be another
nonnegative solution of \eqref{eq:cutoff-weak} with initial value
$g_0$. For every $\phi\in\operatorname{Lip}_c([0,\infty))$,
differentiating \eqref{eq:cutoff-weak} in time gives
\begin{align*}
\int_{[0,\infty)}\phi\,d(\partial_t\widetilde g_t^N)=\dfrac{d}{dt}\int_{[0,\infty)}\phi\,d\widetilde g_t^N=\frac14\iiint_{[0,\infty)^3}
K_{\beta,N}\Delta\phi\,d(\widetilde g_t^N)^{\otimes3}
=\int_{[0,\infty)}\phi\,d\mathcal I_N(\widetilde g_t^N).
\end{align*}
Since compactly supported Lipschitz functions separate finite
signed measures, we obtain
$\partial_t\widetilde g_t^N=\mathcal I_N(\widetilde g_t^N)$
in $\mathcal M_1$. The argument of Step~3 therefore applies to
$\widetilde g^N$ and yields
$\|\widetilde g_t^N\|_{\mathcal M_1}
=\mathcal N_0+\mathcal E_0=\|g_t^N\|_{\mathcal M_1}$.
Subtracting the two integral equations and using
\eqref{eq:cutoff-local-lipschitz}, we find
\begin{align}
\|g_t^N-\widetilde g_t^N\|_{\mathcal M_1}
&=\left\|\int_0^t
\bigl(\mathcal I_N(g_s^N)-\mathcal I_N(\widetilde g_s^N)\bigr)
\,ds\right\|_{\mathcal M_1}\notag\\
&\le\int_0^t
\|\mathcal I_N(g_s^N)-\mathcal I_N(\widetilde g_s^N)\|_{\mathcal M_1}
\,ds\notag\\
&\le C_N\int_0^t
\bigl(\|g_s^N\|_{\mathcal M_1}
+\|\widetilde g_s^N\|_{\mathcal M_1}\bigr)^2
\|g_s^N-\widetilde g_s^N\|_{\mathcal M_1}\,ds\notag\\
&=4C_N(\mathcal N_0+\mathcal E_0)^2
\int_0^t\|g_s^N-\widetilde g_s^N\|_{\mathcal M_1}\,ds.
\end{align}
Gronwall's inequality gives
$\|g_t^N-\widetilde g_t^N\|_{\mathcal M_1}=0$ for every $t\ge0$,
which proves uniqueness.
\end{proof}

\subsection{Symmetrization and estimates for test functions}
We now state the symmetrization and estimates used to pass to the limit. Their proofs involve technical, elementary estimates and are
given in \cref{app:measure-estimates}.

Set
$\mathcal D_{\mathrm{ord}}=\{(x,y,z)\in[0,\infty)^3:x\le y\le z\}$,
and define
$\operatorname{sort}:[0,\infty)^3\to\mathcal D_{\mathrm{ord}}$
by
\begin{equation}
(x,y,z)=\operatorname{sort}(\omega_1,\omega_2,\omega_3)
=\left(
\min_{1\le j\le3}\omega_j,\;
\operatorname{med}_{1\leq j\leq 3} \omega_j,\;
\max_{1\le j\le3}\omega_j
\right).
\end{equation}
Each component is continuous, so $\operatorname{sort}$ is
continuous. For $0<x\le y\le z$, choosing $z$, $y$, or $x$
as the third argument of the collision kernel gives the fourth
frequency $x+y-z$, $x+z-y$, or $y+z-x$, respectively.
For a Borel function $\phi:[0,\infty)\to\mathbb R$, define
\begin{align}
\Delta_1\phi
&=\phi(z)+\phi(x+y-z)-\phi(x)-\phi(y),\qquad x+y>z,\\
\Delta_2\phi
&=\phi(y)+\phi(x+z-y)-\phi(x)-\phi(z),\\
\Delta_3\phi
&=\phi(x)+\phi(y+z-x)-\phi(y)-\phi(z).
\end{align}
We set $\Delta_1\phi=0$ when $x+y\le z$.
By the symmetry of $S_{\beta,N}$, define
\begin{align}
G_{1,N}(x,y,z)
&=K_{\beta,N}(x,y,z)
=\frac{S_{\beta,N}(x,y,z,x+y-z)}{\sqrt{xyz}},\\
G_{2,N}(x,y,z)
&=K_{\beta,N}(x,z,y)
=\frac{S_{\beta,N}(x,y,z,x+z-y)}{\sqrt{xyz}},\\
G_{3,N}(x,y,z)
&=K_{\beta,N}(y,z,x)
=\frac{S_{\beta,N}(x,y,z,y+z-x)}{\sqrt{xyz}}.
\end{align}
For \(K_\beta\), the same permutations give
\begin{align}
G_1(x,y,z)=K_\beta(x,y,z)&=(xyz)^{\beta-1/2}(x+y-z)_+^{\beta+1/2},\\
G_2(x,y,z)=K_\beta(x,z,y)&=x^\beta(yz)^{\beta-1/2}(x+z-y)^\beta,\\
G_3(x,y,z)=K_\beta(y,z,x)&=x^\beta(yz)^{\beta-1/2}(y+z-x)^\beta.
\end{align}
Indeed, when $x+y>z$, the smallest frequency in the first
quartet is $x+y-z$. In the other two quartets, the smallest
frequency is $x$. Define
\begin{equation}
F_{\beta,N}[\phi]=\sum_{i=1}^3G_{i,N}\Delta_i\phi,
\qquad
F_\beta[\phi]=\sum_{i=1}^3G_i\Delta_i\phi
\end{equation}
on $\mathcal D_{\mathrm{ord}}$ with
all coefficients set to zero on $\{x=0\}$.

\begin{lemma}\label{lem:ordered}
Let $N\ge1$. The following statements hold.
\begin{enumerate}[label=\textup{(\roman*)},leftmargin=*]
\item \textit{Symmetrization.}
Let $g^N$ be the solution constructed in
\cref{prop:integrated-cutoff-existence}.
For every Borel function $\phi:[0,\infty)\to\mathbb R$
satisfying $|\phi(\omega)|\le C_\phi(1+\omega)$,
\begin{equation}\label{eq:ordered}
\dfrac{d}{dt}\int_{[0,\infty)}\phi\,dg_t^N
=\frac1{12}\int_{\mathcal D_{\mathrm{ord}}}
F_{\beta,N}[\phi](x,y,z)\,
d\bigl(\operatorname{sort}_\#(g_t^N)^{\otimes3}\bigr),
\qquad t\ge0.
\end{equation}
For every nonnegative $g\in\mathcal M_1$ and every Borel
function $\phi:[0,\infty)\to\mathbb R$ satisfying
$\iiint_{[0,\infty)^3}
K_\beta|\Delta\phi|\,dg^{\otimes3}<\infty$, we have
\begin{equation}
\frac14\iiint_{[0,\infty)^3}
K_\beta\Delta\phi\,dg^{\otimes3}
=\frac1{12}\int_{\mathcal D_{\mathrm{ord}}}
F_\beta[\phi](x,y,z)\,
d\bigl(\operatorname{sort}_\#g^{\otimes3}\bigr).
\end{equation}

\item \textit{Convex test functions.}
For every convex function $\phi:[0,\infty)\to\mathbb R$,
\begin{equation}\label{eq:convex}
\begin{aligned}
0\le F_{\beta,N}[\phi]
&\le G_{1,N}\Delta_1\phi+G_{3,N}\Delta_3\phi,\\
0\le F_\beta[\phi]
&\le G_1\Delta_1\phi+G_3\Delta_3\phi
\end{aligned}
\qquad\text{on }\mathcal D_{\mathrm{ord}}.
\end{equation}
\end{enumerate}
\end{lemma}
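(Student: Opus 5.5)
For (i), I will start from the cutoff equation in the form obtained in Step 3 of the proof of \cref{prop:integrated-cutoff-existence}:
\[
\frac{d}{dt}\int\phi\,dg_t^N=\frac14\iiint K_{\beta,N}\Delta\phi\,d(g_t^N)^{\otimes3}.
\]
The plan is to split $[0,\infty)^3$ according to the ordering of the three variables. Since $(g_t^N)^{\otimes3}$ is invariant under permutations of the coordinates, I will write the integral as a sum over the six permutations $\sigma\in S_3$ and push each piece forward under $\operatorname{sort}$.

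Ties (sets where two coordinates coincide) need care, and I will handle them as follows. I will decompose $\iiint f\,dg^{\otimes3}=\int_{\mathcal D_{\mathrm{ord}}}\bigl(\sum_{\sigma}f\circ\sigma\bigr)\,w\,d(\operatorname{sort}_\#g^{\otimes3})$ with a weight $w$ equal to the reciprocal of the number of distinct permutations. Equivalently, I will use the identity $\operatorname{sort}_\#g^{\otimes3}(A)=\sum_\sigma\ldots$ applied to the symmetric average. Concretely, for any Borel $f$ one has
\[
\iiint f\,dg^{\otimes 3}=\iiint \tfrac16\sum_\sigma f\circ\sigma\,dg^{\otimes3},
\]
and the symmetric function $\frac16\sum_\sigma f\circ\sigma$ factors through $\operatorname{sort}$. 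This gives
\[
\iiint f\,dg^{\otimes3}=\int_{\mathcal D_{\mathrm{ord}}}\tfrac16\sum_\sigma f(\sigma(x,y,z))\,d(\operatorname{sort}_\#g^{\otimes3}),
\]
with no tie issue at all.

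Next I apply this to $f=K_{\beta,N}\Delta\phi$ and group the six permutations in pairs according to which of $x,y,z$ sits in the third slot. Because $K_{\beta,N}\Delta\phi$ is symmetric in its first two arguments, each pair contributes twice the same term: $z$ in the third slot gives $G_{1,N}\Delta_1\phi$, $y$ gives $G_{2,N}\Delta_2\phi$, and $x$ gives $G_{3,N}\Delta_3\phi$. I will check these identifications using the symmetry of $S_{\beta,N}$ in its four arguments, and the zero convention on $\{x=0\}$ and on $x+y\le z$. The total factor is $\frac14\cdot\frac16\cdot2=\frac1{12}$. Integrability is guaranteed by Step 3 for the cutoff and by hypothesis for $K_\beta$, so Fubini applies. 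The $K_\beta$ version is identical.

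For (ii), the key observation is that all three quartets share a common structure. With $a=x+y-z$, $b=x+z-y$, $c=y+z-x$, each $\Delta_i\phi$ has the form $\phi(p)+\phi(q)-\phi(r)-\phi(s)$ with $p+q=r+s$.
\begin{itemize}
\item For $\Delta_1$, the outer pair $\{a,z\}$ spreads the inner pair $\{x,y\}$, since $a\le x\le y\le z$. Convexity therefore gives $\Delta_1\phi\ge0$.
\item For $\Delta_3$, we have $c\ge z$ and $x\le y$, so $\{x,c\}$ spreads $\{y,z\}$ and $\Delta_3\phi\ge0$.
\item For $\Delta_2$, the pair $\{x,z\}$ spreads $\{y,b\}$, because $x\le y,b\le z$. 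Hence $\Delta_2\phi\le0$.
\end{itemize}
The upper bound $F\le G_1\Delta_1\phi+G_3\Delta_3\phi$ is then immediate. The main work is the lower bound $F\ge0$, i.e. $G_2|\Delta_2\phi|\le G_1\Delta_1\phi+G_3\Delta_3\phi$.

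For this I plan to show $G_{2}\le G_{3}$ and $|\Delta_2\phi|\le\Delta_3\phi$.

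For the kernels, compare $G_2=x^\beta(yz)^{\beta-1/2}b^\beta$ with $G_3=x^\beta(yz)^{\beta-1/2}c^\beta$. Since $b\le c$, we get $G_2\le G_3$. For the cutoff versions, $S_{\beta,N}(x,y,z,b)\le S_{\beta,N}(x,y,z,c)$ holds because $\min$ is $x$ in both quartets, $\wedge N$ is monotone, and $\ell_N$ is nondecreasing with $b\le c$.

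For the differences, I compute
\[
\Delta_3\phi+\Delta_2\phi=\phi(c)+\phi(b)-2\phi(z).
\]
Here $b+c=2z$, so convexity gives $\phi(b)+\phi(c)\ge2\phi(z)$, and hence $\Delta_3\phi\ge-\Delta_2\phi$.

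Combining the two bounds, $G_2|\Delta_2|\le G_3\Delta_3$, which yields $F\ge G_1\Delta_1\phi\ge0$. The expected obstacle is only bookkeeping: the degenerate cases $x=0$ (all coefficients are zero) and $x+y\le z$ ($\Delta_1$ and $G_1$ vanish, and the argument for $\Delta_2,\Delta_3$ is unchanged).
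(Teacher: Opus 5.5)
Your proposal is correct and follows essentially the paper's own proof. For (i), the permutation average of $K_{\beta,N}\Delta\phi$ equals $\frac13 F_{\beta,N}[\phi]\circ\operatorname{sort}$, which gives the factor $\frac1{12}$ with no tie issues, so the weighted tie-counting you first mention can be dropped. For (ii), your two inequalities $G_2\le G_3$ and $-\Delta_2\phi\le\Delta_3\phi$ are the paper's decomposition $F=G_1\Delta_1\phi+G_2(\Delta_2\phi+\Delta_3\phi)+(G_3-G_2)\Delta_3\phi$ in a slightly different form.
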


\begin{lemma}\label{lem:compact}
Let $\phi\in\operatorname{Lip}_c([0,\infty))$, and set
$p=0 \vee (2\beta-1/2)$, $q=0 \vee (3\beta-1)/2$.
Then $0\le q\le p\le1$ and $q\le5/8$.
For every $(x,y,z)\in\mathcal D_{\mathrm{ord}}$ and $N\ge1$,
\begin{align}
\sum_{i=1}^3G_{i,N}|\Delta_i\phi|
&\le C_{\phi,\beta}(1+x)^p(1+y)^p(1+z)^p,
\label{eq:raw-test-bound}\\
|F_{\beta,N}[\phi]|
&\le C_{\phi,\beta}(1+x)^q(1+y)^q(1+z)^q.
\label{eq:compact-envelope}
\end{align}
If $0\le x\le1$, then
\begin{equation}\label{eq:compact-small-frequency}
|F_{\beta,N}[\phi]|
\le C_{\phi,\beta}x^\beta(1+y)^q(1+z)^q.
\end{equation}
The same estimates hold with $G_i$ and $F_\beta$ in place of
$G_{i,N}$ and $F_{\beta,N}$.
\end{lemma}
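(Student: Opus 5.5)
The plan is to reduce everything to the uncut kernels and then do a case analysis on $\mathcal D_{\mathrm{ord}}$ according to how many of $x,y,z$ and of the fourth frequencies lie in the support of $\phi$.

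\emph{Step 0: preliminaries.} The exponent relations are arithmetic. Since $\beta\le 3/4$ we get $p\le 1$ and $q\le 5/8$. The inequality $q\le p$ holds because $(3\beta-1)/2\le 2\beta-1/2$ is equivalent to $0\le\beta$. Next, $S_\beta\wedge N\le S_\beta$ and $0\le\ell_N\le1$, so $0\le G_{i,N}\le G_i$. Hence \eqref{eq:raw-test-bound} for $G_{i,N}$ follows from the same bound for $G_i$.

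The bound for $F_{\beta,N}$ is more delicate, because it relies on cancellation. I would handle it by observing that in the cancellation region below (where $y,z$ are large and $x\le R$) the only difference between $G_{2,N}$ and $G_{3,N}$ is the factor $\ell_N$ and the truncation $\wedge N$ evaluated at the fourth frequency. Both of these are monotone functions of the fourth frequency, which is $x+z-y$ for $G_{2,N}$ and $y+z-x$ for $G_{3,N}$. So the same difference estimate can be run with these monotone factors included, and every constant will be uniform in $N$.

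Fix $R$ with $\operatorname{supp}\phi\subset[0,R]$, let $L$ be the Lipschitz constant of $\phi$, and set $M=\|\phi\|_\infty$. Write $w=x+y-z$ and $d=z-y\ge0$. Two elementary bounds are used throughout:
\begin{itemize}[leftmargin=*]
\item $|\Delta_1\phi|\le 2Ld\le 2Lx$, which is available when $w>0$;
\item $|\Delta_2\phi|,|\Delta_3\phi|\le 4M$, and each of them is also bounded by $2L$ times a difference of arguments.
\end{itemize}

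\emph{Step 1: the raw bound \eqref{eq:raw-test-bound}.} I treat each $G_i|\Delta_i\phi|$ separately.

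For $G_1=(xyz)^{\beta-1/2}w^{\beta+1/2}$ I use $w\le x\le y\le z$.
\begin{itemize}[leftmargin=*]
\item If $w>R$, then all four arguments of $\Delta_1\phi$ exceed $R$ and $\Delta_1\phi=0$.
\item If $w\le R$, I bound $|\Delta_1\phi|$ by $\min\{2Lx,4M\}$. Combined with $w^{\beta+1/2}\le x^{\beta+1/2}$, this gives polynomial factors of total degree at most $3(\beta-1/2)_+\le 3p$ in the large variables.
\end{itemize}

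For $G_2$ and $G_3$, the factor $x^\beta$ together with the fact that $y+z-x\ge z$ and $x+z-y\ge x$ shows that the term vanishes unless $x\le R$. Then the worst case is $y\sim R$ with $z$ large, where
\[
G_3\lesssim(yz)^{\beta-1/2}z^\beta\lesssim z^{2\beta-1/2}.
\]
This is exactly where the exponent $p$ comes from. When $y$ is also large, the exponent is distributed as $(1+y)^p(1+z)^p$.

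\emph{Step 2: the cancellation bound \eqref{eq:compact-envelope}, which is the main obstacle.} For bounded $(x,y,z)$, Step 1 already suffices. So suppose $x\le R$ and $y,z>R$. Then $\phi(y)=\phi(z)=\phi(y+z-x)=0$, and $\Delta_3\phi=\phi(x)$.
\begin{itemize}[leftmargin=*]
\item If moreover $x+z-y>R$, then $\Delta_2\phi=-\phi(x)$, and
\[
G_2\Delta_2\phi+G_3\Delta_3\phi=\phi(x)\,x^\beta(yz)^{\beta-1/2}\bigl[(y+z-x)^\beta-(x+z-y)^\beta\bigr].
\]
The mean value theorem gives $\lesssim x^\beta(yz)^{\beta-1/2}z^{\beta-1}y$, that is, a bound $y^{\beta+1/2}z^{2\beta-3/2}$. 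Because $y\le z$ and the power of $y$ dominates the power of $z$, this is $\le (yz)^{(3\beta-1)/2}$.
\item If instead $x+z-y\le R$, then $z-y\le R$, so $y\sim z$. In that case the crude bound $(yz)^{\beta-1/2}z^\beta\sim z^{3\beta-1}=(yz)^q$ is already enough.
\end{itemize}
The $G_1$ term is handled as in Step 1: it only needs $w\le R$, so it also gives at most $(yz)^q$-type growth. The cases where $x>R$ or $y\le R<z$ reduce to Step 1 with better exponents. Keeping track of the $\ell_N$ and $\wedge N$ factors in the $G_2$–$G_3$ difference is the delicate bookkeeping point, and I expect it to be the hardest part of the proof.

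\emph{Step 3: the small-frequency bound \eqref{eq:compact-small-frequency}.} For $x\le1$, $G_2$ and $G_3$ carry an explicit factor $x^\beta$, and the Step 2 bounds already hold in that form. For $G_1$ I use $w\le x$ and $|\Delta_1\phi|\le 2Lx$ to get
\[
G_1|\Delta_1\phi|\lesssim x^{2\beta+1}(yz)^{\beta-1/2}.
\]
When $\beta<1/2$, I absorb the negative powers of $y,z$ using $x\le y\le z$: in that case $x^{2\beta+1}(yz)^{\beta-1/2}\le x^{4\beta}\le x^\beta$. Otherwise I bound $(yz)^{\beta-1/2}$ by the envelope from Step 2. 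This leaves the required factor $x^\beta$.
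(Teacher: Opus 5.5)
\textbf{There is a genuine gap in Step 2.} You dismiss the case $x\le R$, $y\le R<z$ as reducing to Step 1 ``with better exponents''. It does not. Step 1 only gives
\[
\sum_{i=2}^3G_i|\Delta_i\phi|\lesssim x^\beta y^{\beta-1/2}z^{2\beta-1/2}\min\{y,1\},
\]
which is growth $(1+z)^{2\beta-1/2}=(1+z)^p$ as $z\to\infty$ with $y$ bounded. You noted yourself that this is where $p$ comes from. For every $\beta>1/4$ one has $p>q$, so this bound proves neither \eqref{eq:compact-envelope} nor \eqref{eq:compact-small-frequency}. Your Step 3 inherits the same hole, since it relies on Step 2 for the $G_2,G_3$ terms.

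This is exactly the case the lemma is about. At $\beta=3/4$ one has $p=1$, and only the $q$-envelope makes the tail estimate \eqref{eq:compact-high-tail} decay.

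Your cancellation argument is confined to $y>R$ because you use $\phi(y)=0$, but the cancellation does not need that. Take $R\ge1$. Once $z>2(y+R)$, you have:
\begin{itemize}
\item $x+y<z$, so $G_1=G_{1,N}=0$;
\item $x+z-y>z/2>R$, so $\Delta_2\phi=\phi(y)-\phi(x)=-\Delta_3\phi$;
\item hence $F_\beta[\phi]=(G_3-G_2)(\phi(x)-\phi(y))$, for every $y$.
\end{itemize}
Both fourth frequencies now exceed $z/2$, so the mean value theorem gives
\[
G_3-G_2\le Cx^\beta y^{\beta-1/2}z^{2\beta-3/2}(y-x)\le Cx^\beta y^{\beta+1/2}z^{2\beta-3/2}\le Cx^\beta y^qz^q,
\]
using $q\le\beta$, $3\beta-1\le2q$ and $z>1$. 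In the complementary region $z\le2(y+R)$ you have $1+z\le(2R+1)(1+y)$. There the crude bound $x^\beta(1+y)^{\beta-1/2}(1+z)^{2\beta-1/2}$ is at most $C(1+y)^{3\beta-1}\le C(1+y)^q(1+z)^q$. This split, $z>2(y+R)$ versus $z\le2(y+R)$, is the one the paper uses, and it replaces your split according to whether $y>R$ and $x+z-y>R$.

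\textbf{The same split repairs two weaker points.}

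\emph{The mean-value step.} In your first bullet, $x+z-y>R$ does not give $x+z-y\gtrsim z$, yet your bound with the factor $z^{\beta-1}$ needs it. The bound survives only because $y\sim z$ whenever $x+z-y\ll z$, and the split above makes this explicit.

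\emph{The cutoff.} Monotonicity of $\ell_N$ and of $\cdot\wedge N$ in the fourth frequency yields only $G_{3,N}\ge G_{2,N}$, a lower bound, not the upper bound you need. An $\ell_N$ factor that vanished at $x+z-y$ but not at $y+z-x$ would make $G_{3,N}-G_{2,N}=G_{3,N}$ and destroy the cancellation. What you need instead is:
\begin{itemize}
\item in the region $z>2(y+R)$, both fourth frequencies exceed $R\ge1\ge N^{-1}$, so $\ell_N=1$ at both;
\item the Lipschitz bound $0\le(v\wedge N)-(u\wedge N)\le v-u$ for $0\le u\le v$.
\end{itemize}
Together these give $0\le G_{3,N}-G_{2,N}\le G_3-G_2$, uniformly in $N$.
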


\begin{corollary}\label{cor:boundary-removal}
Let $\phi\in\operatorname{Lip}_c([0,\infty))$, $p$ and $q$ be as in \cref{lem:compact}.
Then $F_\beta[\phi]$ is continuous on $\mathcal D_{\mathrm{ord}}$,
and $F_{\beta,N}[\phi]\to F_\beta[\phi]$ locally uniformly on $\mathcal D_{\mathrm{ord}}$.
For every nonnegative $g\in\mathcal M_1$ with
$\|g\|_{\mathcal M_1}\le M$,
\begin{equation}\label{eq:absolute-collision-bound}
\iiint_{[0,\infty)^3}K_\beta|\Delta\phi|\,dg^{\otimes3}
\le C_{\phi,\beta}M^3.
\end{equation}
If $\mu=\operatorname{sort}_\#g^{\otimes3}$, then for $R\ge1$,
\begin{equation}\label{eq:compact-high-tail}
\int_{\mathcal D_{\mathrm{ord}}\cap\{z>R\}}
|F_\beta[\phi]|\,d\mu
\le C_{\phi,\beta}M^3(1+R)^{q-1}.
\end{equation}
Both integral estimates hold with the same constants for \(K_{\beta,N}\) and \(F_{\beta,N}[\phi]\).
\end{corollary}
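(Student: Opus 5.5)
The corollary has four parts: continuity of \(F_\beta[\phi]\), local uniform convergence \(F_{\beta,N}[\phi]\to F_\beta[\phi]\), and two integral bounds. All of them will be derived from \cref{lem:compact}, with the one genuinely new point being continuity at the boundary pieces of \(\mathcal D_{\mathrm{ord}}\).

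\emph{Continuity.} First consider the open region \(\{0<x\}\cap\{x+y\neq z\}\). There each \(G_i\) is continuous and each \(\Delta_i\phi\) is continuous, so \(F_\beta[\phi]\) is continuous. Next take a point with \(x>0\) on the surface \(x+y=z\). There \(G_1=(xyz)^{\beta-1/2}(x+y-z)_+^{\beta+1/2}\) tends to \(0\) because \(\beta+1/2>0\), while \(\Delta_1\phi\) stays bounded. Hence \(G_1\Delta_1\phi\), extended by zero on \(\{x+y\le z\}\), is continuous across the surface. The remaining two terms are continuous there, since \(x+z-y\ge x>0\) and \(y+z-x\ge z>0\).

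The delicate set is \(\{x=0\}\), where \(F_\beta[\phi]\) was set to \(0\). Here I will use \eqref{eq:compact-small-frequency} for \(F_\beta\): for \(x\le1\), \(|F_\beta[\phi]|\le C x^\beta(1+y)^q(1+z)^q\). This tends to \(0\) as \(x\to0\), uniformly for \(y,z\) in bounded sets, which gives continuity at every point of \(\{x=0\}\). This step is exactly where the cancellation built into \(F_\beta\) is needed, because the individual terms \(G_i\Delta_i\phi\) can blow up like \(y^{\beta-1/2}\) and are not continuous by themselves. It is the main obstacle, and it is handled by the lemma.

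\emph{Local uniform convergence.} Fix a compact set \(Q\subset\mathcal D_{\mathrm{ord}}\) and \(\varepsilon>0\).
\begin{itemize}
\item By \eqref{eq:compact-small-frequency}, which holds uniformly in \(N\), choose \(\delta\) so that \(|F_{\beta,N}[\phi]|+|F_\beta[\phi]|<\varepsilon\) on \(Q\cap\{x\le\delta\}\).
\item On \(Q\cap\{x\ge\delta\}\), all of \(y,z,x+z-y,y+z-x\) are at least \(\delta\). Moreover \(S_\beta\) is bounded on \(Q\), so for \(N\ge N_0(\delta,Q)\) the cutoffs are inactive: \(\ell_N\equiv1\) and \(S_\beta\wedge N=S_\beta\).
\item The exception is the first quartet, whose fourth frequency \(x+y-z\) may be small. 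There \(G_{1,N}\) and \(G_1\) differ only where \(x+y-z<1/N\). On that set \(G_1\le C_\delta N^{-(\beta+1/2)}\) and \(0\le G_{1,N}\le G_1\), and \(\Delta_1\phi\) is bounded.
\end{itemize}
Hence \(\sup_Q|F_{\beta,N}[\phi]-F_\beta[\phi]|\le 2\varepsilon\) for large \(N\).

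\emph{Integral bounds.} By the symmetrization identity in \cref{lem:ordered}, applied with absolute values (the sum of the three symmetric permutations), we get
\[
\iiint_{[0,\infty)^3}K_\beta|\Delta\phi|\,dg^{\otimes3}=\tfrac13\int_{\mathcal D_{\mathrm{ord}}}\sum_iG_i|\Delta_i\phi|\,d\mu .
\]
By \eqref{eq:raw-test-bound} and \(p\le1\), this is at most
\[
C\iiint\prod_j(1+\omega_j)\,dg^{\otimes3}\le CM^3,
\]
which proves \eqref{eq:absolute-collision-bound}.

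For \eqref{eq:compact-high-tail}, use \eqref{eq:compact-envelope}. On \(\{z>R\}\) we have \((1+z)^q\le(1+z)(1+R)^{q-1}\), since \(q\le1\). The factors \((1+x)^q(1+y)^q\) are at most \(\prod(1+\omega_j)\) before sorting, so the integral is bounded by \(C(1+R)^{q-1}\|g\|_{\mathcal M_1}^3\).

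Since all the bounds in \cref{lem:compact} are uniform in \(N\), the same arguments give the same constants for \(K_{\beta,N}\) and \(F_{\beta,N}[\phi]\).
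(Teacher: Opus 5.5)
Your proposal is correct and follows the paper's proof essentially step for step (the bound \eqref{eq:compact-small-frequency} near $\{x=0\}$, inactive cutoffs on compact sets away from it except for $G_{1,N}$ with its $O(N^{-\beta-1/2})$ error, and symmetrization with \eqref{eq:raw-test-bound} and \eqref{eq:compact-envelope} for the integrals); the only difference is that you verify continuity of $F_\beta[\phi]$ directly, whereas the paper deduces it from continuity of $F_{\beta,N}[\phi]$ and the local uniform convergence. One correction to your side remark: the cancellation inside $F_\beta[\phi]$ is not needed for continuity, because the Lipschitz bounds in Steps~1--2 of the proof of \cref{lem:compact} already make each $G_i\Delta_i\phi$ of size $O(x^\beta)$ locally uniformly in $(y,z)$, so each term is continuous at $\{x=0\}$ on its own; the cancellation is what gives the exponent $q$ rather than $p$ at large $z$, and hence the rate $(1+R)^{q-1}$ in \eqref{eq:compact-high-tail}.
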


\subsection{Proof of
\texorpdfstring{\cref{thm:measure-main}}
{Theorem \getrefnumber{thm:measure-main}}}

For nonnegative $g\in\mathcal M_1$ and
$\phi\in\operatorname{Lip}_c([0,\infty))$, \cref{cor:boundary-removal,lem:ordered} allow us to define
\begin{equation}\label{eq:collision-functional}
\mathcal I_\beta[\phi;g]
=\frac14\iiint_{[0,\infty)^3}K_\beta\Delta\phi\,dg^{\otimes3}
=\frac1{12}\int_{\mathcal D_{\mathrm{ord}}}F_\beta[\phi]
\,d\bigl(\operatorname{sort}_\#g^{\otimes3}\bigr).
\end{equation}
For the cutoff coefficients, set
\begin{equation}
\mathcal I_{\beta,N}[\phi;g]
=\frac14\iiint_{[0,\infty)^3}K_{\beta,N}\Delta\phi\,dg^{\otimes3}
=\frac1{12}\int_{\mathcal D_{\mathrm{ord}}}F_{\beta,N}[\phi]
\,d\bigl(\operatorname{sort}_\#g^{\otimes3}\bigr)
=\int_{[0,\infty)}\phi\,d\mathcal I_N(g).
\end{equation}
\begin{proof}[Proof of \cref{thm:measure-main}]
If $\mathcal N_0=0$, then $g_0=0$, and $g_t=0$ gives the
required solution. We therefore assume $\mathcal N_0>0$.
Set $M=\mathcal N_0+\mathcal E_0$, and let $g^N$ be the
solutions given by \cref{prop:integrated-cutoff-existence}.
In particular,
\begin{equation}
g_t^N\ge0,\qquad
\int_{[0,\infty)}dg_t^N=\mathcal N_0,\qquad
\int_{[0,\infty)}\omega\,dg_t^N=\mathcal E_0,
\qquad t\ge0.
\end{equation}

\smallskip
\noindent\textit{Step 1. Compactness.}
Define
\begin{equation}
\mathcal K
=\left\{\nu\in\mathcal M_1:
\nu\ge0,\quad
\int_{[0,\infty)}d\nu=\mathcal N_0,\quad
\int_{[0,\infty)}\omega\,d\nu\le\mathcal E_0
\right\}.
\end{equation}
For $\nu\in\mathcal K$ and $R>0$, we have
\begin{equation}
\nu((R,\infty))
=\int_{(R,\infty)}1\,d\nu
\le\frac1R\int_{(R,\infty)}\omega\,d\nu
\le\frac{\mathcal E_0}{R}.
\end{equation}
Given $\delta>0$, choose $R>\mathcal E_0/\delta$.
Then $[0,R]$ is compact and
$\nu([0,\infty)\setminus[0,R])<\delta$ for every
$\nu\in\mathcal K$. Thus $\mathcal K$ is uniformly tight.

We next verify that a narrow limit of measures in
$\mathcal K$ belongs to $\mathcal K$.
Suppose $\nu_n\in\mathcal K$ and $\nu_n$ converges narrowly
to a nonnegative finite measure $\nu$.
Testing the convergence with the constant function $1$ gives
\begin{equation}
\int_{[0,\infty)}d\nu
=\lim_{n\to\infty}\int_{[0,\infty)}d\nu_n
=\mathcal N_0.
\end{equation}
For every $L>0$, since $\omega\mapsto\omega\wedge L$
is bounded and continuous,
\begin{equation}
\int_{[0,\infty)}(\omega\wedge L)\,d\nu
=\lim_{n\to\infty}
\int_{[0,\infty)}(\omega\wedge L)\,d\nu_n
\le\mathcal E_0.
\end{equation}
Since $\omega\wedge L\uparrow\omega$ as $L\to\infty$,
the monotone convergence theorem yields
\begin{equation}
\int_{[0,\infty)}\omega\,d\nu
=\lim_{L\to\infty}
\int_{[0,\infty)}(\omega\wedge L)\,d\nu
\le\mathcal E_0.
\end{equation}
Therefore $\nu\in\mathcal K$.

Now let $(\nu_n)$ be any sequence in $\mathcal K$.
Hence $\nu_n/\mathcal N_0$ are probability measures
on the Polish space $[0,\infty)$, and
\[
\sup_n
\frac{\nu_n((R,\infty))}{\mathcal N_0}
\le\frac{\mathcal E_0}{\mathcal N_0R}
\longrightarrow0
\qquad\text{as }R\to\infty.
\]
Prokhorov's theorem then gives a subsequence
$\nu_{n_k}/\mathcal N_0$ converging narrowly to a probability
measure $\pi$.
Thus $\nu_{n_k}$ converges narrowly to
$\nu=\mathcal N_0\pi$, and the preceding argument shows
that $\nu\in\mathcal K$.
Hence every sequence in $\mathcal K$ has a
narrowly convergent subsequence with limit in $\mathcal K$.

\smallskip
\noindent\textit{Step 2. Equicontinuity and extraction of a limit.}
Choose a sequence
$(\varphi_j)_{j\ge1}\subset\operatorname{Lip}_c([0,\infty))$
that is dense in the unit ball of $C_0([0,\infty))$ for
the uniform norm, with $\|\varphi_j\|_\infty\le1$.
For $\nu,\widetilde\nu\in\mathcal K$, define
\begin{equation}
d(\nu,\widetilde\nu)
=\sum_{j=1}^\infty 2^{-j}
\left|
\int_{[0,\infty)}\varphi_j\,d(\nu-\widetilde\nu)
\right|.
\end{equation}
Since each summand is bounded by $2^{1-j}\mathcal N_0$, the
series converges uniformly on $\mathcal K\times\mathcal K$.
If $d(\nu,\widetilde\nu)=0$, the two measures have the same
integrals against every $\varphi_j$.
Density and
\[
\left|
\int_{[0,\infty)}f\,d(\nu-\widetilde\nu)
\right|
\le2\mathcal N_0\|f\|_\infty
\]
then give equality of their integrals against every
$f\in C_0([0,\infty))$.
Hence $\nu=\widetilde\nu$, and $d$ is a metric.

We now show the relation between $d$ and narrow convergence.
If $\nu_n$ converges narrowly to $\nu$ in $\mathcal K$,
then, for every integer $J\ge1$,
\begin{equation}
d(\nu_n,\nu)
\le
\sum_{j=1}^J2^{-j}
\left|\int_{[0,\infty)}\varphi_j\,d(\nu_n-\nu)\right|
+2^{1-J}\mathcal N_0.
\end{equation}
Letting $n\to\infty$ and then $J\to\infty$ gives
$d(\nu_n,\nu)\to0$.

Conversely, suppose $d(\nu_n,\nu)\to0$.
By Step~1, every subsequence of $(\nu_n)$ has a further
subsequence converging narrowly to some
$\widetilde\nu\in\mathcal K$.
Along that subsequence, the preceding implication gives
$d(\nu_n,\widetilde\nu)\to0$.
The triangle inequality yields
$d(\nu,\widetilde\nu)\leq d(\nu,\nu_n)+d(\widetilde \nu,\nu_n)\to0$ and \(d(\nu,\widetilde\nu)=0\), so $\widetilde\nu=\nu$.
Therefore $\nu_n$ converges narrowly to $\nu$.
Hence $d$ metrizes narrow convergence on
$\mathcal K$. Step~1 also shows that $(\mathcal K,d)$
is a compact metric space.

For $\phi\in\operatorname{Lip}_c([0,\infty))$,
\eqref{eq:cutoff-weak} and
\cref{cor:boundary-removal} give
\begin{equation}\label{eq:compact-time-bound}
\begin{aligned}
\left|
\int_{[0,\infty)}\phi\,dg_t^N
-\int_{[0,\infty)}\phi\,dg_s^N
\right|
&\le
\frac14\int_s^t\iiint_{[0,\infty)^3}
K_{\beta,N}|\Delta\phi|\,
d(g_\tau^N)^{\otimes3}\,d\tau\\
&\le C_{\phi,\beta}M^3(t-s),
\qquad 0\le s\le t.
\end{aligned}
\end{equation}
Consequently, for every $J\ge1$ and $s,t\ge0$,
\begin{equation}
d(g_t^N,g_s^N)
\le
M^3|t-s|
\sum_{j=1}^J2^{-j}C_{\varphi_j,\beta}
+2^{1-J}\mathcal N_0.
\end{equation}
Given $\delta>0$, choose $J$ such that
$2^{1-J}\mathcal N_0<\delta/2$.
Then choose $h>0$ such that
\[
M^3h\sum_{j=1}^J2^{-j}C_{\varphi_j,\beta}<\delta/2.
\]
These choices give
$d(g_t^N,g_s^N)<\delta$ whenever $|t-s|<h$,
uniformly in $N$.
Thus the curves $g^N$ are equicontinuous in
$(\mathcal K,d)$.

Since $g_t^N\in\mathcal K$, Arzel\`a--Ascoli gives a
uniformly convergent subsequence on each interval $[0,k]$.
A diagonal extraction over the integers $k\ge1$ produces
a single subsequence, still indexed by $N$, and a continuous
curve $g:[0,\infty)\to(\mathcal K,d)$ such that
\begin{equation}
\sup_{0\le t\le T}d(g_t^N,g_t)\longrightarrow0,
\qquad T<\infty.
\end{equation}
For every $f\in C_b([0,\infty))$,
$\nu\mapsto\int f\,d\nu$ is continuous on
$(\mathcal K,d)$ because $d$ metrizes narrow convergence.
It is uniformly continuous since $\mathcal K$ is compact.
Therefore
\begin{equation}\label{eq:uniform-narrow-convergence}
\sup_{0\le t\le T}
\left|
\int_{[0,\infty)}f\,d(g_t^N-g_t)
\right|
\longrightarrow0,
\qquad f\in C_b([0,\infty)),\quad T<\infty.
\end{equation}
Hence $g$ is narrowly continuous.
Moreover, $g_0^N=g_0$ and $g_t\in\mathcal K$ imply
\begin{equation}
g_{t=0}=g_0,\qquad
g_t\ge0,\qquad
\mathcal N(t)=\mathcal N_0,\qquad
\mathcal E(t)\le\mathcal E_0,\qquad
\|g_t\|_{\mathcal M_1}\le M.
\end{equation}

\smallskip
\noindent\textit{Step 3. Passage to the limit in the weak equation.}
We first establish convergence of the product measures
against compactly supported continuous functions.
Fix $t\ge0$.
For $f_1,f_2,f_3\in C_c([0,\infty))$,
\eqref{eq:uniform-narrow-convergence} gives
\begin{equation}
\iiint_{[0,\infty)^3}
\prod_{j=1}^3f_j(\omega_j)\,d(g_t^N)^{\otimes3}
=
\prod_{j=1}^3\int_{[0,\infty)}f_j\,dg_t^N
\longrightarrow
\prod_{j=1}^3\int_{[0,\infty)}f_j\,dg_t.
\end{equation}
The same convergence holds for finite sums of such products.

Let $H\in C_c([0,\infty)^3)$ and $\delta>0$.
Note that finite sums of products $\prod_{j=1}^3f_j(\omega_j)$ with $f_j\in C_c([0,\infty))$, form a subalgebra of
$C_0([0,\infty)^3)$, which separates points, and
at each point some element of the algebra is nonzero.
By the Stone--Weierstrass theorem for locally compact spaces, there are
$f_{\ell,j}\in C_c([0,\infty))$ such that
\begin{equation}
P(\omega_1,\omega_2,\omega_3)
=\sum_{\ell=1}^m\prod_{j=1}^3f_{\ell,j}(\omega_j),
\qquad
\|H-P\|_\infty<\delta.
\end{equation}
Hence
\begin{align*}
\left|
\iiint_{[0,\infty)^3}
H\,d\bigl((g_t^N)^{\otimes3}-g_t^{\otimes3}\bigr)
\right|\le2\mathcal N_0^3\delta+
\left|
\iiint_{[0,\infty)^3}
P\,d\bigl((g_t^N)^{\otimes3}-g_t^{\otimes3}\bigr)
\right|.
\end{align*}
Letting $N\to\infty$ and then $\delta\downarrow0$ proves
\begin{equation}
\iiint_{[0,\infty)^3}H\,d(g_t^N)^{\otimes3}
\longrightarrow
\iiint_{[0,\infty)^3}H\,dg_t^{\otimes3},
\qquad H\in C_c([0,\infty)^3).
\end{equation}

Now fix $\phi\in\operatorname{Lip}_c([0,\infty))$ and set
\begin{equation}
q=0 \vee (3\beta-1)/2,\qquad
\mu_t^N=\operatorname{sort}_\#(g_t^N)^{\otimes3},
\qquad
\mu_t=\operatorname{sort}_\#g_t^{\otimes3}.
\end{equation}
For $R\ge1$, define
$\eta_R(\omega)=1 \wedge (2-\omega/R)_+$.
Then $\eta_R=1$ on $[0,R]$, $\eta_R=0$ on $[2R,\infty)$,
and $0\le\eta_R\le1$.

Since $\operatorname{sort}$ is continuous, by \cref{cor:boundary-removal}, 
$(\eta_R(z)F_\beta[\phi])\circ\operatorname{sort}$
is continuous on $[0,\infty)^3$.
It vanishes when $\max_j\omega_j\ge2R$, so it belongs to
$C_c([0,\infty)^3)$. Letting \(H=(\eta_R(z)F_\beta[\phi])\circ\operatorname{sort}\) gives
\begin{equation}
\int_{\mathcal D_{\mathrm{ord}}}
\eta_R(z)F_\beta[\phi]\,d\mu_t^N
\longrightarrow
\int_{\mathcal D_{\mathrm{ord}}}
\eta_R(z)F_\beta[\phi]\,d\mu_t.
\end{equation}
Using also the local uniform convergence in
\cref{cor:boundary-removal}, we obtain
\begin{align}
&\left|
\int_{\mathcal D_{\mathrm{ord}}}
\eta_R(z)F_{\beta,N}[\phi]\,d\mu_t^N
-\int_{\mathcal D_{\mathrm{ord}}}
\eta_R(z)F_\beta[\phi]\,d\mu_t
\right|\notag\\
&\quad\le
\mathcal N_0^3
\sup_{0\le x\le y\le z\le2R}
|F_{\beta,N}[\phi]-F_\beta[\phi]|+
\left|
\int_{\mathcal D_{\mathrm{ord}}}
\eta_R(z)F_\beta[\phi]\,d(\mu_t^N-\mu_t)
\right|
\longrightarrow0.
\end{align}

To remove $\eta_R$, use
$0\le1-\eta_R(z)\le\mathbf1_{\{z>R\}}$.
Since $\|g_t^N\|_{\mathcal M_1},\|g_t\|_{\mathcal M_1}\le M$,
\eqref{eq:compact-high-tail} gives
\begin{align}
\int_{\mathcal D_{\mathrm{ord}}}
(1-\eta_R(z))|F_{\beta,N}[\phi]|\,d\mu_t^N
+\int_{\mathcal D_{\mathrm{ord}}}
(1-\eta_R(z))|F_\beta[\phi]|\,d\mu_t\le C_{\phi,\beta}M^3(1+R)^{q-1}.
\end{align}
Thus
\begin{align*}
\left|
\mathcal I_{\beta,N}[\phi;g_t^N]
-\mathcal I_\beta[\phi;g_t]
\right|
&\le\frac1{12}\left|
\int_{\mathcal D_{\mathrm{ord}}}
\eta_R(z)F_{\beta,N}[\phi]\,d\mu_t^N
-\int_{\mathcal D_{\mathrm{ord}}}
\eta_R(z)F_\beta[\phi]\,d\mu_t
\right|\\
&\quad+C_{\phi,\beta}M^3(1+R)^{q-1}.
\end{align*}
For fixed $R$, letting $N\to\infty$ gives
\begin{equation}
\limsup_{N\to\infty}
\left|
\mathcal I_{\beta,N}[\phi;g_t^N]
-\mathcal I_\beta[\phi;g_t]
\right|
\le C_{\phi,\beta}M^3(1+R)^{q-1}.
\end{equation}
Since $q\le5/8<1$, the right-hand side tends to zero as
$R\to\infty$. Therefore
\begin{equation}
\mathcal I_{\beta,N}[\phi;g_t^N]
\longrightarrow\mathcal I_\beta[\phi;g_t],
\qquad t\ge0.
\end{equation}

For each $N$, since
$t\mapsto\mathcal I_{\beta,N}[\phi;g_t^N]$
is continuous by
\cref{prop:integrated-cutoff-existence},
its pointwise limit
$t\mapsto\mathcal I_\beta[\phi;g_t]$ is therefore measurable.
Moreover, \eqref{eq:absolute-collision-bound} and its cutoff
version imply
\begin{equation}
|\mathcal I_{\beta,N}[\phi;g_t^N]|
+|\mathcal I_\beta[\phi;g_t]|
\le C_{\phi,\beta}M^3,
\qquad t\ge0,\quad N\ge1.
\end{equation}
For every $T<\infty$, dominated convergence gives
\begin{equation}
\int_0^T
\left|
\mathcal I_{\beta,N}[\phi;g_\tau^N]
-\mathcal I_\beta[\phi;g_\tau]
\right|\,d\tau
\longrightarrow0.
\end{equation}
Hence, for $0\le s\le t\le T$,
\begin{equation}
\int_s^t\mathcal I_{\beta,N}[\phi;g_\tau^N]\,d\tau
\longrightarrow
\int_s^t\mathcal I_\beta[\phi;g_\tau]\,d\tau.
\end{equation}
Passing to the limit in \eqref{eq:cutoff-weak} and using
\eqref{eq:uniform-narrow-convergence}, we obtain
\begin{equation}\label{eq:integrated-paired-weak}
\begin{aligned}
\int_{[0,\infty)}\phi\,dg_t
-\int_{[0,\infty)}\phi\,dg_s=\int_s^t\mathcal I_\beta[\phi;g_\tau]\,d\tau=\frac14\int_s^t\iiint_{[0,\infty)^3}
K_\beta\Delta\phi\,dg_\tau^{\otimes3}\,d\tau.
\end{aligned}
\end{equation}

Finally, since $K_\beta|\Delta\phi|$ is a nonnegative Borel
function, narrow continuity of $g$ proves measurability of
$\tau\mapsto\iiint K_\beta|\Delta\phi|\,dg_\tau^{\otimes3}$.
Applying \eqref{eq:absolute-collision-bound} at each time gives
\begin{equation}
\int_0^T\iiint_{[0,\infty)^3}
K_\beta|\Delta\phi|\,dg_\tau^{\otimes3}\,d\tau
\le
C_{\phi,\beta}\int_0^T
\|g_\tau\|_{\mathcal M_1}^3\,d\tau
\le C_{\phi,\beta}TM^3<\infty.
\end{equation}
Together with Step~2, this verifies all the requirements of
\cref{def:weak-form}.

\smallskip
\noindent\textit{Step 4. Monotonicity of the energy.}
For $R>0$, since
$\omega\mapsto(R-\omega)_+=(R-\omega) \vee 0$
is convex and belongs to
$\operatorname{Lip}_c([0,\infty))$,
\cref{lem:ordered} gives
$F_\beta[(R-\cdot)_+]\ge0$.
Applying \eqref{eq:integrated-paired-weak} and
\cref{lem:ordered}, we obtain
\begin{equation}
\int_{[0,\infty)}(R-\omega)_+\,dg_t
-\int_{[0,\infty)}(R-\omega)_+\,dg_s
=
\frac1{12}\int_s^t
\int_{\mathcal D_{\mathrm{ord}}}
F_\beta[(R-\cdot)_+]\,d\mu_\tau\,d\tau
\ge0.
\end{equation}
Since $\omega\wedge R=R-(R-\omega)_+$ and
$\mathcal N(t)=\mathcal N(s)=\mathcal N_0$,
\begin{align}
\int_{[0,\infty)}(\omega\wedge R)\,dg_t
&=R\mathcal N_0
-\int_{[0,\infty)}(R-\omega)_+\,dg_t\notag\\
&\le R\mathcal N_0
-\int_{[0,\infty)}(R-\omega)_+\,dg_s\notag\\
&=\int_{[0,\infty)}(\omega\wedge R)\,dg_s.
\end{align}
For each fixed time, $\omega\wedge R\uparrow\omega$ as
$R\to\infty$. Applying monotone convergence to both integrals
gives
$\mathcal E(t)\le\mathcal E(s)$ for $0\le s\le t$.
Since $g_{t=0}=g_0$, we conclude that
$\mathcal E(t)\le\mathcal E(s)\le\mathcal E_0$
for every $0\le s\le t<\infty$.
The same argument applies to every solution constructed in this section.
\end{proof}

\section{Absence of condensation}\label{sec:action-threshold}

In this section, we prove the non-condensation assertion in
\cref{thm:action}.

\begin{lemma}\label{lem:critical-kernel}
Let $1/4\le\beta\le3/4$. For every
$\omega_1,\omega_2,\omega_3\ge0$,
\begin{equation}\label{eq:noncondensation-kernel-bound}
0\le K_{1/4}(\omega_1,\omega_2,\omega_3)\le1,
\qquad
0\le K_\beta(\omega_1,\omega_2,\omega_3)
\le\prod_{j=1}^3(1+\omega_j).
\end{equation}
\end{lemma}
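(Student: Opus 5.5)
We prove both bounds directly from the explicit formula \eqref{eq:kernel}. On the region where some frequency is nonpositive, $K_\beta=0$ and there is nothing to show. Nonnegativity is immediate from the definition. So we fix $\omega_1,\omega_2,\omega_3>0$ with $\omega_4=\omega_1+\omega_2-\omega_3>0$, and write $m=\min_{1\le j\le4}\omega_j$.

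\emph{The case $\beta=1/4$.} Here
\[
K_{1/4}=\frac{(\omega_1\omega_2\omega_3\omega_4)^{1/4}\,m^{1/2}}{(\omega_1\omega_2\omega_3)^{1/2}}.
\]
Since $m\le\omega_j$ for every $j$, and the square root of $m$ may be assigned to any single factor, we bound $m^{1/2}\le m^{1/4}\cdot m^{1/4}$. We pair these against $\omega_4^{1/4}$ and one further factor. Concretely, $(\omega_4 m^2)^{1/4}\le(\omega_4\cdot\omega_4\cdot\omega_j)^{1/4}$ is not what we want. It is cleaner to write
\[
K_{1/4}=\Bigl(\frac{\omega_4\,m^2}{\omega_1\omega_2\omega_3}\Bigr)^{1/4}.
\]
It then suffices to show $\omega_4m^2\le\omega_1\omega_2\omega_3$.

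Because $\omega_1+\omega_2=\omega_3+\omega_4$, the largest of the four frequencies is either $\omega_1$ or $\omega_2$, or else it is $\omega_3$ or $\omega_4$. We split into cases.
\begin{itemize}
\item If $\omega_4\le\omega_3$, use $m\le\omega_1$ and $m\le\omega_2$. Then $\omega_4m^2\le\omega_3\omega_1\omega_2$.
\item If $\omega_4>\omega_3$, then $\omega_4=\omega_1+\omega_2-\omega_3\le\omega_1+\omega_2$, and $m\le\omega_3$. Without loss of generality $\omega_1\ge\omega_2$, so $m\le\omega_2$. The product $\omega_4m^2$ is then at most $\omega_4\,\omega_2\omega_3$.
\end{itemize}
In the second case we still need $\omega_4\le\omega_1$. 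This holds since $\omega_4-\omega_1=\omega_2-\omega_3$, and we may also use $m\le\omega_3$ paired against $\omega_2$. If $\omega_2\le\omega_3$ we are done. Otherwise $\omega_3<\omega_2\le\omega_1$ and $m=\omega_3$, so $\omega_4m^2=\omega_4\omega_3^2$. We then compare with $\omega_1\omega_2\omega_3$ by checking $\omega_4\omega_3\le\omega_1\omega_2$. This is $(\omega_1+\omega_2-\omega_3)\omega_3\le\omega_1\omega_2$, which is equivalent to $(\omega_1-\omega_3)(\omega_2-\omega_3)\ge0$, true here. The bookkeeping over orderings is the main (if elementary) obstacle. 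I would organize it through the identity $\omega_1\omega_2-\omega_3\omega_4=(\omega_1-\omega_3)(\omega_2-\omega_3)$ together with the ordering of $\omega_3$ relative to $\omega_1,\omega_2$.

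\emph{General $\beta\in[1/4,3/4]$.} Factor
\[
K_\beta=K_{1/4}\,(\omega_1\omega_2\omega_3\omega_4)^{\beta-1/4}.
\]
Since $\beta-1/4\in[0,1/2]$ and $\omega_4\le\omega_1+\omega_2$, we have $\omega_4\le(1+\omega_1)(1+\omega_2)$. Hence
\[
\omega_1\omega_2\omega_3\omega_4\le\prod_{j=1}^3(1+\omega_j)^2.
\]
Raising this to the power $\beta-1/4\le1/2$, and using that the base $\prod_{j=1}^3(1+\omega_j)^2$ is at least $1$, gives a bound by $\prod_{j=1}^3(1+\omega_j)$. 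Combining this with $K_{1/4}\le1$ yields the second inequality.
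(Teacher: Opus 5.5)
Your argument is correct and is essentially the paper's. The paper also reduces $K_{1/4}\le1$ to an elementary case analysis over orderings, according to whether $\omega_3$ is the smallest, middle, or largest of $\omega_1,\omega_2,\omega_3$. Your key identity $\omega_1\omega_2-\omega_3\omega_4=(\omega_1-\omega_3)(\omega_2-\omega_3)$ appears there as $G_3^4=1-(y-x)(z-x)/(yz)$. The paper proves the second bound by the same factorization $K_\beta=K_{1/4}(\omega_1\omega_2\omega_3\omega_4)^{\beta-1/4}$ together with $\omega_4\le(1+\omega_1)(1+\omega_2)$.

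When writing it up, drop the abandoned line about $(\omega_4\cdot\omega_4\cdot\omega_j)^{1/4}$ and the vacuous remark about the largest frequency. State the three cases up front, after the symmetric reduction $\omega_1\ge\omega_2$: $\omega_4\le\omega_3$; $\omega_4>\omega_3\ge\omega_2$; and $\omega_3<\omega_2\le\omega_1$.
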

\begin{proof}
If one of the four frequencies is not positive, \eqref{eq:noncondensation-kernel-bound} is direct.  We therefore assume that all four frequencies
are positive and set
$(x,y,z)=\operatorname{sort}(\omega_1,\omega_2,\omega_3)$.

From \cref{sec:measure}, the three possible values of $K_{1/4}$ are $G_1$, $G_2$ and $G_3$, evaluated at $\beta=1/4$.
Hence
\begin{align}
G_1^4
&=\frac{(x+y-z)^3}{xyz}
\le\frac{x^3}{xyz}\le1,
\qquad x+y>z,\\
G_2^4
&=\frac{x(x+z-y)}{yz}
\le\frac{x(y+z-x)}{yz}
=G_3^4
=1-\frac{(y-x)(z-x)}{yz}\le1.
\end{align}
When $x+y\le z$, we have $G_1=0$. Since \(G_1,G_2,G_3\geq 0\), $K_{1/4}\le1$.

Set $a=\beta-1/4\in [0,1/2]$.
By the definition,
$K_\beta=K_{1/4}(\omega_1\omega_2\omega_3\omega_4)^a$.
The resonance relation gives
$\omega_4\le\omega_1+\omega_2
\le(1+\omega_1)(1+\omega_2)$.
Consequently,
\begin{align}
K_\beta\le(\omega_1\omega_2\omega_3\omega_4)^a\le(1+\omega_1)^{2a}(1+\omega_2)^{2a}
      (1+\omega_3)^a\le\prod_{j=1}^3(1+\omega_j),
\end{align}
where the last inequality uses $2a\le1$ and $a\le1$.
This proves \eqref{eq:noncondensation-kernel-bound}.
\end{proof}

\begin{proposition}\label{thm:action-preservation}
Let $1/4\le\beta\le3/4$ and let $g_0\in\mathcal M_1$ be
nonnegative. Every isotropic measure solution with initial
measure $g_0$ satisfies $g_t(\{0\})=g_0(\{0\})$ for all
$t\ge0$.
Moreover, for every $T<\infty$, the cutoff solutions with
initial measure $g_0$ in \cref{sec:measure} satisfy
\begin{equation}\label{eq:uniform-positive-near-zero}
\lim_{r\downarrow0}\sup_{N\ge1}\sup_{0\le t\le T}
g_t^N((0,r])=0.
\end{equation}
\end{proposition}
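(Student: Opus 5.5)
Two claims have to be proved: the atom at zero is preserved for every isotropic measure solution, and the cutoff solutions satisfy the uniform small-frequency estimate \eqref{eq:uniform-positive-near-zero}.

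\textbf{Step 1: the atom.} I would test the weak formulation against $\phi_r(\omega)=(1-\omega/r)_+$, which lies in $\operatorname{Lip}_c([0,\infty))$, with $r\downarrow0$. Since $\phi_r\downarrow\mathbf 1_{\{0\}}$ pointwise and $|\phi_r|\le1$, dominated convergence gives $\int\phi_r\,dg_t\to g_t(\{0\})$ for each fixed $t$. For the right-hand side I would use two facts.
\begin{itemize}[leftmargin=*]
\item $K_\beta=0$ whenever a frequency vanishes, so the integrand is supported on $\{\omega_1,\omega_2,\omega_3,\omega_4>0\}$. There $\Delta\phi_r\to\Delta\mathbf 1_{\{0\}}=0$ pointwise.
\item $|\Delta\phi_r|\le4$, and by \cref{lem:critical-kernel}, $K_\beta\le\prod_j(1+\omega_j)$.
\end{itemize}
The dominating function $4\prod_j(1+\omega_j)$ is integrable against $dg_\tau^{\otimes3}\,d\tau$ on $[0,t]$, because $\sup_{\tau\le t}\|g_\tau\|_{\mathcal M_1}<\infty$ by the definition of an isotropic measure solution. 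Dominated convergence then sends the collision term to $0$, so $g_t(\{0\})=g_s(\{0\})$.

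\textbf{Step 2: the uniform estimate.} Only finite total mass and energy are used here; the solutions need not have compact support. I would use \cref{prop:integrated-cutoff-existence} with Borel test functions $\psi_r=\mathbf 1_{(0,r]}$, or the Lipschitz variant $\phi_r-\mathbf 1_{\{0\}}$, which is bounded by $1$. This gives
\[
g_t^N((0,r])\le g_0((0,r])+\frac14\int_0^t\iiint K_{\beta,N}\,|\Delta\psi_r|\,d(g^N_\tau)^{\otimes3}\,d\tau .
\]
The first term tends to $0$ as $r\downarrow0$. For the second term, note that $\Delta\psi_r\neq0$ requires some $\omega_j\in(0,r]$ with $j\le4$. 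I would split according to which index is small.

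\emph{Case $j\le3$.} Bound $K_{\beta,N}\le K_\beta\le\prod_j(1+\omega_j)$. The contribution is then at most
\[
C\,M^2\int_{(0,r]}(1+\omega)\,dg_\tau^N\le C\,M^2(1+r)\,g_\tau^N((0,r]).
\]
\emph{Case $j=4$.} Here $\omega_4=\omega_1+\omega_2-\omega_3\in(0,r]$, and I would use finer kernel information. I would take the $G_1$ form after sorting, or symmetrize via \cref{lem:ordered}, so that the small frequency becomes a sorted variable. When $\omega_4$ is the minimum, the factor $\min\sqrt{\omega_j}$ gives $K\lesssim\omega_4^{\beta+1/2}(\cdots)$, and this is at most $r^{\beta+1/2}$ times a polynomial weight.

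Together this yields
\[
u_N(t):=g_t^N((0,r])\le g_0((0,r])+C_M\,\varepsilon(r)\,t+C_M\int_0^t u_N(\tau)\,d\tau ,
\]
where $\varepsilon(r)\to0$ and all constants are independent of $N$. Gronwall then gives $\sup_N\sup_{t\le T}u_N(t)\le\bigl(g_0((0,r])+C\varepsilon(r)T\bigr)e^{CT}\to0$.

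\textbf{Main obstacle.} The hardest step is the case where $\omega_4$ is small. The polynomial weights in the $\omega_4$-small region involve $z^{2\beta-1/2}$ with $z$ up to $\infty$, and energy controls only the first moment. For $\beta\le3/4$, however, $2\beta-1/2\le1$ and $\omega_4\le\omega_1+\omega_2$, so \cref{lem:compact}-type envelopes should suffice. If a clean $r^{\beta+1/2}$ gain fails there, my fallback is to use instead the uniform integrability of the triple product measure near the hyperplane $\{\omega_1+\omega_2=\omega_3\}$, using $K_{1/4}\le1$ and the factor $\omega_4^a$.
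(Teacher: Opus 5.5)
Your Step~1 is the paper's argument. Step~2, however, has a genuine gap in the case where $\omega_4$ is the small frequency. In general $K_\beta\le(\omega_1\omega_2\omega_3)^{\beta-1/2}\omega_4^{\beta+1/2}$, with equality when $\omega_4$ is the minimum. For $\beta<1/2$ the factor $(\omega_1\omega_2\omega_3)^{\beta-1/2}$ has a negative exponent, so it is not a polynomial weight, and the claimed $r^{\beta+1/2}$ gain is false.

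Consider the part of $\{0<\omega_4\le r\}$ not already covered by your case $j\le3$, namely where $\omega_1,\omega_2,\omega_3>r$. For $\beta\le1/2$, the best you can say pointwise there is $K_\beta\le\omega_4^{4\beta-1}\le r^{4\beta-1}$. That rescues your single-scale inequality for $\beta>1/4$, but gives nothing at the endpoint $\beta=1/4$. There $K_{1/4}=(\omega_4^3/(\omega_1\omega_2\omega_3))^{1/4}$ is homogeneous of degree zero. For example, $(\omega_1,\omega_2,\omega_3)=(3r/2,3r/2,2r)$ gives $\omega_4=r$ and $K_{1/4}=(9/2)^{-1/4}$, although none of $\omega_1,\omega_2,\omega_3$ lies in $(0,r]$. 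Such contributions are neither $O(\varepsilon(r))$ nor bounded by $u_N(\tau)=g^N_\tau((0,r])$. Hence the inequality $u_N(t)\le g_0((0,r])+C_M\varepsilon(r)t+C_M\int_0^t u_N(\tau)\,d\tau$ does not follow.

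Your fallback does not close the gap either. At $\beta=1/4$ there is no factor $\omega_4^a$, and mass and energy bounds (or narrow compactness) give no uniform smallness of $(g^N_\tau)^{\otimes3}$ on the slabs $\{0<\omega_1+\omega_2-\omega_3\le r\}$. Note also that the difficulty is at low frequencies, not at large $z$; large $z$ is already handled by $K_\beta\le\prod_j(1+\omega_j)$.

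The missing idea is a second scale $\delta$ with $r\le\delta\le1$. Split the $\omega_4$-term according to the size of $\min_{j\le3}\omega_j$:
\begin{itemize}
\item If $\min_{j\le3}\omega_j\le\delta$, \cref{lem:critical-kernel} bounds the contribution by $CM^2g^N_\tau((0,\delta])$.
\item If $\min_{j\le3}\omega_j>\delta$, then $K_\beta\le\delta^{3\beta-3/2}r^{\beta+1/2}$ when $\beta\le1/2$, and $K_\beta\le r^{\beta+1/2}\prod_j(1+\omega_j)$ when $\beta>1/2$.
\end{itemize}
This yields $g^N_t((0,r])\le g_0((0,r])+C_M\varepsilon(r,\delta)t+C_M\int_0^t g^N_\tau((0,\delta])\,d\tau$, with $\varepsilon(r,\delta)\to0$ as $r\downarrow0$ for fixed $\delta$. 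Take $\sup_N$; this is a supremum of continuous functions of $\tau$, hence measurable. Let $r\downarrow0$ and then $\delta\downarrow0$. With $\ell(t)=\lim_{\rho\downarrow0}\sup_N g^N_t((0,\rho])$, you get $\ell(t)\le C_M\int_0^t\ell(\tau)\,d\tau$, so $\ell\equiv0$. The same inequality then gives uniformity over $t\le T$.

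The paper avoids kernel estimates here entirely and argues by contradiction. From cutoffs $N_k$ and times $t_k$ with $g^{N_k}_{t_k}((0,r_k])\ge\varepsilon$, it extracts a narrow limit $\nu$. This limit is either $g^{N_*}_{t_*}$ or $g_{t_*}$ for a limit isotropic measure solution, so $\nu(\{0\})=g_0(\{0\})$ by \cref{prop:integrated-cutoff-existence} or by Step~1. On the other hand, the Portmanteau theorem on the closed sets $[0,r]$ forces $\nu(\{0\})\ge g_0(\{0\})+\varepsilon$. Your repaired route is quantitative. The paper's route is shorter and reuses Step~1 instead of re-estimating the collision operator near the resonant hyperplane.
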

\begin{proof}
\noindent\textit{Step 1. Preservation of the atom at zero.}
Let $g$ be an isotropic measure solution and fix $T<\infty$.
By \cref{def:weak-form},
$M_T:=\sup_{0\le\tau\le T}\|g_\tau\|_{\mathcal M_1}<\infty$.
\cref{lem:critical-kernel} gives, for every
$\tau\in[0,T]$,
\begin{equation}
\iiint_{[0,\infty)^3}K_\beta\,dg_\tau^{\otimes3}
\le\iiint_{[0,\infty)^3}\prod_{j=1}^3(1+\omega_j)
\,dg_\tau^{\otimes3}
=\|g_\tau\|_{\mathcal M_1}^3\le M_T^3.
\end{equation}

For $0<r\le1$, take $\phi_r(\omega)=(1-\omega/r)_+$ in
\eqref{eq:weak}. Then
$\phi_r\in\operatorname{Lip}_c([0,\infty))$,
$0\le\phi_r\le1$, and $|\Delta\phi_r|\le2$. If $\min_{1\le j\le4}\omega_j\le0$, then $K_\beta=0$,
so $K_\beta\Delta\phi_r=0$.
If $\min_{1\le j\le4}\omega_j>0$, then
$0<r<\min_{1\le j\le4}\omega_j$ implies
$\phi_r(\omega_j)=(1-\omega_j/r)_+=0$ for $j=1,2,3,4$.
Thus $\Delta\phi_r=0$ for every such $r$.
Consequently,
\begin{equation}
\lim_{r\downarrow0}K_\beta\Delta\phi_r=0,
\qquad
(\omega_1,\omega_2,\omega_3)\in[0,\infty)^3.
\end{equation}

For $0\le s\le t\le T$, we have
$|K_\beta\Delta\phi_r|\le2K_\beta$ and
\begin{equation}
\int_s^t\iiint_{[0,\infty)^3}
2K_\beta\,dg_\tau^{\otimes3}\,d\tau
\le2\int_s^t\|g_\tau\|_{\mathcal M_1}^3\,d\tau
\le2(t-s)M_T^3<\infty.
\end{equation}
For each fixed $\tau\in[s,t]$, since
$|K_\beta\Delta\phi_r|\le2K_\beta$,  dominated convergence in the three
frequencies implies
$\displaystyle\iiint K_\beta\Delta\phi_r\,dg_\tau^{\otimes3}
\to0$ as $r\downarrow0$.
Since the absolute value of this integral is bounded by
$2M_T^3$, uniformly in $r$ and $\tau$,
dominated convergence in time then gives
\begin{equation}
\lim_{r\downarrow0}
\int_s^t\iiint_{[0,\infty)^3}
K_\beta\Delta\phi_r\,dg_\tau^{\otimes3}\,d\tau=0.
\end{equation}

Moreover,
$\lim_{r\downarrow0}\phi_r(\omega)
=\mathbf1_{\{0\}}(\omega)$ for every $\omega\ge0$.
Since
$|g_t-g_s|([0,\infty))
\le\|g_t\|_{\mathcal M_1}+\|g_s\|_{\mathcal M_1}
\le2M_T$,
dominated convergence also gives
\begin{equation}
\lim_{r\downarrow0}
\int_{[0,\infty)}\phi_r\,d(g_t-g_s)
=g_t(\{0\})-g_s(\{0\}).
\end{equation}
Passing to the limit in \eqref{eq:weak}, we obtain
$g_t(\{0\})-g_s(\{0\})=0$ for $0\le s\le t\le T$.
For any $t\ge0$, choosing $T\ge t$ and $s=0$ gives
$g_t(\{0\})=g_0(\{0\})$.

\smallskip
\noindent\textit{Step 2. Uniform control near zero.}
Fix $T<\infty$ and write $a_0=g_0(\{0\})$. By contradiction and
suppose that \eqref{eq:uniform-positive-near-zero} fails.
Then there are $\varepsilon>0$, a sequence $r_k\downarrow0$,
integers $N_k\ge1$, and times $t_k\in[0,T]$ such that
\begin{equation}\label{eq:noncondensation-contradiction}
g_{t_k}^{N_k}((0,r_k])\ge\varepsilon
\qquad\text{for every }k.
\end{equation}
After passing to a subsequence, we may assume that
$t_k\to t_*$ for some $t_*\in[0,T]$.
We now show that a further subsequence of $g_{t_k}^{N_k}$
converges narrowly to a measure $\nu$ with
$\nu(\{0\})=a_0$.

If $(N_k)$ has a bounded subsequence, we may take a further
subsequence on which $N_k=N_*$ is constant.
The continuity of $g^{N_*}$ in $\mathcal M_1$ gives
$g_{t_k}^{N_*}\to g_{t_*}^{N_*}$ in $\mathcal M_1$ and
hence narrowly. In this case, take $\nu=g_{t_*}^{N_*}$.
\cref{prop:integrated-cutoff-existence} gives
$\nu(\{0\})=a_0$.

Otherwise, $N_k\to\infty$. Applying Steps~1--3
in the proof of \cref{thm:measure-main} to the sequence $(g^{N_k})$, and extracting
a diagonal subsequence on bounded time intervals, we obtain
a narrowly continuous limit $g$ with initial measure $g_0$.
The passage to the limit in Step~3 shows that $g$ is an
isotropic measure solution. Along this subsequence,
\eqref{eq:uniform-narrow-convergence} holds, so for every
$f\in C_b([0,\infty))$,
\begin{equation}
\sup_{0\le t\le T}
\left|\int_{[0,\infty)}f\,d(g_t^{N_k}-g_t)\right|
\longrightarrow0.
\end{equation}
Since $g$ is narrowly continuous and $t_k\to t_*$,
\begin{align}
\left|\int_{[0,\infty)}f\,d(g_{t_k}^{N_k}-g_{t_*})\right|\le\sup_{0\le t\le T}
\left|\int_{[0,\infty)}f\,d(g_t^{N_k}-g_t)\right|+
\left|\int_{[0,\infty)}f\,d(g_{t_k}-g_{t_*})\right|
\longrightarrow0.
\end{align}
Thus $g_{t_k}^{N_k}$ converges narrowly to
$\nu=g_{t_*}$. By Step~1, $\nu(\{0\})=a_0$.

In either case, fix $r>0$.
For all sufficiently large $k$, we have $r_k\le r$.
Since the cutoff solutions preserve the atom $a_0$ at zero,
\eqref{eq:noncondensation-contradiction} implies
$g_{t_k}^{N_k}([0,r])\ge a_0+\varepsilon$.
Since $[0,r]$ is closed, the Portmanteau theorem gives
\begin{equation}
\nu([0,r])
\ge\limsup_{k\to\infty}g_{t_k}^{N_k}([0,r])
\ge a_0+\varepsilon.
\end{equation}
The sets $[0,1/m]$ decrease to $\{0\}$ as
$m\to\infty$, and $\nu([0,1])<\infty$.
Continuity for finite measures therefore gives
$\displaystyle\nu(\{0\})
=\lim_{m\to\infty}\nu([0,1/m])\ge a_0+\varepsilon$,
which contradicts
$\nu(\{0\})=a_0$.
\end{proof}

\section{Density estimates}\label{sec:density-estimates}
Throughout this section, $0<\beta<1/4$, $d=1-4\beta>0$, and
$g_0\in\mathcal M_1$ is nonnegative with $\mathcal N_0=1$.
Let $g^N$ be the cutoff solutions from
\cref{prop:integrated-cutoff-existence}; thus $g_t^N([0,\infty))=1$.
\subsection{Kernel estimates}\label{subsec:cond-kernel}

We use $(x,y,z)=\operatorname{sort}(\omega_1,\omega_2,\omega_3)$
as in \cref{sec:measure}. For $0<x\le y\le z$, recall that
\begin{align}
 G_1(x,y,z)=K_\beta(x,y,z)
 &=(xyz)^{\beta-1/2}(x+y-z)_+^{\beta+1/2},
 \label{eq:cond-G1}\\
 G_2(x,y,z)=K_\beta(x,z,y)
 &=x^\beta(yz)^{\beta-1/2}(x+z-y)^\beta,
 \label{eq:cond-G2}\\
 G_3(x,y,z)=K_\beta(y,z,x)
 &=x^\beta(yz)^{\beta-1/2}(y+z-x)^\beta.
 \label{eq:cond-G3}
\end{align}

\begin{lemma}\label{lem:cond-kernel-upper}
\begin{enumerate}[label=\textup{(\roman*)},leftmargin=*]
\item Let $\omega_1,\omega_2,\omega_3>0$ and
$y=\operatorname{med}\{\omega_1,\omega_2,\omega_3\}$.
For every $N\ge1$,
\begin{equation}\label{eq:condensation-kernel-bound}
 0\le K_{\beta,N}(\omega_1,\omega_2,\omega_3)
 \le K_\beta(\omega_1,\omega_2,\omega_3)
 \le2^\beta y^{-d}.
\end{equation}

\item Suppose that $\omega_1,\omega_2,\omega_3>0$ and
$\omega_4=\omega_1+\omega_2-\omega_3>0$.
For distinct $i,j\in\{1,2,3\}$,
\begin{equation}\label{eq:cond-negative-moment-kernel}
 \begin{aligned}
 K_\beta(\omega_1,\omega_2,\omega_3)
 &\le2^\beta(\omega_i\omega_j)^{-d/2},\\
 K_\beta(\omega_1,\omega_2,\omega_3)\omega_4^{-d/2}
 &\le2(\omega_1\omega_2\omega_3)^{-d/2}.
 \end{aligned}
\end{equation}

\item Let $\omega,\omega_1,\omega_2>0$ satisfy
$\omega_1 \vee \omega_2\ge\omega/2$. Then
\begin{equation}\label{eq:cond-potential-negative-kernel}
 K_\beta(\omega,\omega_1,\omega_2)
 +K_\beta(\omega,\omega_2,\omega_1)
 \le2^{3/2}\omega^{-d}\min\{1,\omega_1/\omega,\omega_2/\omega\}^\beta.
\end{equation}
\end{enumerate}
Estimates \eqref{eq:cond-negative-moment-kernel} and
\eqref{eq:cond-potential-negative-kernel} also hold with
$K_{\beta,N}$ in place of $K_\beta$, for every $N\ge1$.
\end{lemma}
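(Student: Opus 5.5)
The plan is to work entirely with the sorted representation $(x,y,z)=\operatorname{sort}(\omega_1,\omega_2,\omega_3)$. Depending on which argument plays the role of the third frequency, $K_\beta$ equals one of $G_1,G_2,G_3$ from \eqref{eq:cond-G1}--\eqref{eq:cond-G3}. Each of these is then bounded using only $x\le y\le z$, the sign of the exponents, and the location of $\omega_4$:
\[
\omega_4=x+y-z\in(0,x],\qquad \omega_4=x+z-y\in[x,z],\qquad \omega_4=y+z-x\in[z,2z].
\]
The key sign facts are $\beta-1/2<0$, $2\beta-1/2<0$ and $3\beta-1/2<1/4$, all coming from $\beta<1/4$. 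Since $0\le K_{\beta,N}\le K_\beta$ pointwise by construction, every cutoff version follows immediately from the corresponding estimate for $K_\beta$.

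\emph{Part (i).} From $(x+y-z)_+\le x$ we get $G_1\le x^{2\beta}(yz)^{\beta-1/2}$. For $G_2$ we use $x+z-y\le z$, and for $G_3$ we use $y+z-x\le 2z$. Together these give
\[
G_1,\;G_2,\;2^{-\beta}G_3\ \le\ x^{\beta}z^{\beta}(yz)^{\beta-1/2}.
\]
Using $x\le y$ and $z^{2\beta-1/2}\le y^{2\beta-1/2}$, each right-hand side is bounded by $y^{4\beta-1}=y^{-d}$, which gives \eqref{eq:condensation-kernel-bound}.

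\emph{Part (ii), first inequality.} The same intermediate bound, now with $x\le y$ and $z\ge y$ used only through $x^\beta z^\beta\le(yz)^\beta$, gives $K_\beta\le 2^\beta(yz)^{2\beta-1/2}=2^\beta(yz)^{-d/2}$. Since $(yz)^{-d/2}\le(\omega_i\omega_j)^{-d/2}$ for every pair $i\ne j$, this covers all pairs at once.

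\emph{Part (ii), second inequality.} The target is $(xyz)^{2\beta-1/2}$, and we split by which quartet occurs.
\begin{itemize}
\item In the $G_1$ case, $\omega_4\le x$ and the $\omega_4$-exponent becomes $3\beta\ge0$, so $G_1\omega_4^{-d/2}\le(xyz)^{\beta-1/2}x^{3\beta}\le(xyz)^{2\beta-1/2}$.
\item In the $G_2$ case the exponent of $\omega_4$ is $3\beta-1/2$, which may have either sign, so we split into two subcases. If it is negative, we use $\omega_4\ge x$, and the ratio to the target is $x^{2\beta}(yz)^{-\beta}\le1$. If it is nonnegative, we use $\omega_4\le z$, and the ratio becomes $x^{1/2-\beta}y^{-\beta}z^{2\beta-1/2}\le y^{1/2-2\beta}z^{2\beta-1/2}\le1$.
\item In the $G_3$ case we argue the same way, now with $\omega_4\in[z,2z]$; the factor $2^{\beta}2^{(3\beta-1/2)_+}\le2$ accounts for the constant.
\end{itemize}

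\emph{Part (iii).} Here the first argument is fixed at $\omega$, and the two terms are the quartets $(\omega,\omega_1;\omega_2,\omega+\omega_1-\omega_2)$ and $(\omega,\omega_2;\omega_1,\omega+\omega_2-\omega_1)$. By symmetry we may assume $\omega_1\le\omega_2$, so $\omega_2\ge\omega/2$. We then write each term via the explicit formula
\[
(\omega\,\omega_1\omega_2\omega_4)^\beta\,\frac{\min_j\sqrt{\omega_j}}{\sqrt{\omega\,\omega_1\omega_2}}
\]
and bound $\min_j\sqrt{\omega_j}\le\sqrt{\min\{\omega,\omega_1\}}$. Since $\omega_2\gtrsim\omega$ and $\omega_4\le \omega+\omega_2\lesssim\omega_2$, the factors involving $\omega_2$ and $\omega_4$ contribute $\omega_2^{2\beta-1/2}\lesssim\omega^{2\beta-1/2}$. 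The factors involving $\omega_1$ contribute $\omega_1^{\beta}\min\{\omega,\omega_1\}^{1/2}\omega_1^{-1/2}\le\omega^{\beta}\min\{1,\omega_1/\omega\}^{\beta}$. Multiplying out gives $\omega^{-d}\min\{1,\omega_1/\omega,\omega_2/\omega\}^\beta$, up to a constant. When $\omega_2$ is large compared to $\omega$, the negative power $\omega_2^{2\beta-1/2}$ supplies the extra decay, and the minimum with $\omega_2/\omega$ is then automatic.

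I expect part (iii) to be the main obstacle: there are several orderings of $\omega,\omega_1,\omega_2,\omega_4$, and the constants must be tracked carefully enough to reach the stated $2^{3/2}$. Parts (i) and (ii) reduce to the monotonicity checks above.
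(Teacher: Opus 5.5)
Your proposal is correct, and parts (i) and (ii) essentially reproduce the paper's proof. The paper also bounds the sorted forms, proving $0\le G_1\le G_2\le G_3\le 2^\beta x^\beta y^{\beta-1/2}z^{2\beta-1/2}$, and then applies $x\le y\le z$ together with the negative exponents. For the second inequality in (ii), the paper writes the single ratio $\omega_4^{3\beta-1/2}\sqrt{x\wedge\omega_4}/(xyz)^\beta$ and splits into $\omega_4\le x$ (your $G_1$ case) and $\omega_4\ge x$, the latter subdivided by the sign of $3\beta-1/2$ (your $G_2$ and $G_3$ cases).

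For (iii) your route differs slightly from the paper's. The paper sets $(x,y,z)=\operatorname{sort}(\omega,\omega_1,\omega_2)$ and notes that both terms are among $G_1,G_2,G_3$, hence at most $G_3(x,y,z)$. It then inserts $y\ge\omega/2$ and $z\ge\omega$ into $G_3\le 2^\beta x^\beta y^{\beta-1/2}z^{2\beta-1/2}$. This gives the constant $2\cdot2^\beta\cdot2^{1/2-\beta}=2^{3/2}$ directly, with $x=\min\{\omega,\omega_1,\omega_2\}$ producing the minimum, so no orderings need to be tracked.

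You instead use the unsorted formula after assuming $\omega_1\le\omega_2$. The constant you flagged as the main obstacle closes at once. In both terms $\omega_4\le\omega+\omega_2\le3\omega_2$; the first term has $\omega_4=\omega+\omega_1-\omega_2\le\omega$, and it vanishes when this is nonpositive. Hence
\[
\omega_2^{\beta-1/2}\omega_4^{\beta}\le 3^\beta\omega_2^{2\beta-1/2}\le 3^\beta2^{1/2-2\beta}\omega^{2\beta-1/2}.
\]
So each term is at most $3^\beta2^{1/2-2\beta}\omega^{-d}\min\{1,\omega_1/\omega\}^\beta$, and the sum is at most $2^{3/2}(3/4)^\beta\omega^{-d}\min\{1,\omega_1/\omega\}^\beta$, slightly better than required.

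Your closing remark about extra decay from $\omega_2^{2\beta-1/2}$ is unnecessary. The minimum with $\omega_2/\omega$ is automatic simply because $\omega_1\le\omega_2$ under your reduction. Both routes are equally short. The paper's reuses the ordered chain $G_1\le G_2\le G_3$, while yours avoids sorting at the cost of a two-line constant computation.
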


\begin{lemma}\label{lem:kernel}
Set $L=[2(2^\beta+1)]^{1/\beta}>2$.
\begin{enumerate}[label=\textup{(\roman*)},leftmargin=*]
\item If $0<2x\le y\le z$, then
\begin{equation}\label{eq:cond-kernel-gain-difference}
 G_3(x,y,z)-G_2(x,y,z)
 \ge \beta2^{\beta-1}x^\beta y^{\beta+1/2}z^{2\beta-3/2}.
\end{equation}

\item If $0<Lx\le y\le z$, then
\begin{equation}\label{eq:cond-high-potential}
 G_3(x,y,z)-G_2(x,y,z)-G_1(x,y,z)
 \ge \beta2^{\beta-1}x^\beta y^{\beta+1/2}z^{2\beta-3/2}.
\end{equation}
\end{enumerate}
\end{lemma}

\begin{lemma}\label{lem:cond-kernel-small-inputs}
Let $0<x\le y<z/2$.
\begin{enumerate}[label=\textup{(\roman*)},leftmargin=*]
\item For every $N\ge1$,
\begin{equation}\label{eq:cond-kernel-pairing}
 \begin{aligned}
 \sqrt z\,K_{\beta,N}(z,y,x)
 &=\sqrt{z+y-x}\,K_{\beta,N}(z+y-x,x,y),\\
 \sqrt z\,K_{\beta,N}(z,x,y)
 &=\sqrt{z+x-y}\,K_{\beta,N}(z+x-y,y,x).
 \end{aligned}
\end{equation}

\item If $N\ge2/z \vee (3z/2)^{4\beta+1/2}$, then
\begin{equation}\label{eq:cond-kernel-small-inputs}
 \begin{aligned}
 K_{\beta,N}(z,y,x)
 &=z^{2\beta-1/2}\left(1+\frac{y-x}{z}\right)^\beta
 x^\beta y^{\beta-1/2}\ell_N(x)\ell_N(y),\\
 K_{\beta,N}(z,x,y)
 &=z^{2\beta-1/2}\left(1-\frac{y-x}{z}\right)^\beta
 x^\beta y^{\beta-1/2}\ell_N(x)\ell_N(y).
 \end{aligned}
\end{equation}

\item
\begin{align}
 x^\beta y^{\beta-1/2}(y-x)
 &\le x^\beta y^{\beta+1/2}\le z^{2\beta+1/2},
 \label{eq:cond-kernel-first-difference}\\
 x^\beta y^{\beta-1/2}(y-x)^2
 &\le x^\beta y^{\beta+3/2}\le z^{2\beta+3/2}.
 \label{eq:cond-kernel-second-difference}
\end{align}
\end{enumerate}
\end{lemma}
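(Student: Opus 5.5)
The three parts are direct computations from the definitions \eqref{eq:cutoff} of $S_{\beta,N}$ and $K_{\beta,N}$. The only inputs are the symmetry of $S_{\beta,N}$ in its four arguments and the ordering $0<x\le y<z/2$. Throughout, recall that $K_{\beta,N}(\omega_1,\omega_2,\omega_3)=S_{\beta,N}(\omega_1,\omega_2,\omega_3,\omega_4)/\sqrt{\omega_1\omega_2\omega_3}$ with $\omega_4=\omega_1+\omega_2-\omega_3$.

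\emph{Part (i).} For $K_{\beta,N}(z,y,x)$ the fourth frequency is $z+y-x>0$, so the quartet is $\{z,y,x,z+y-x\}$. This gives
\[
\sqrt z\,K_{\beta,N}(z,y,x)=\frac{S_{\beta,N}(z,y,x,z+y-x)}{\sqrt{xy}} .
\]
For $K_{\beta,N}(z+y-x,x,y)$ the fourth frequency is $(z+y-x)+x-y=z$. The quartet is the same set, so $\sqrt{z+y-x}\,K_{\beta,N}(z+y-x,x,y)$ equals the same quotient by the symmetry of $S_{\beta,N}$.

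The second identity is identical with $x$ and $y$ interchanged. Here the fourth frequency $z+x-y\ge z-y>z/2>0$ is positive, so no zero extension interferes.

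\emph{Part (ii).} Take the quartet $\{x,y,z,\omega_4\}$ with $\omega_4=z\pm(y-x)$. Then $\omega_4\ge z-y>z/2>y\ge x$, so the smallest frequency is $x$. Hence
\[
S_\beta=(xyz\omega_4)^\beta\sqrt x .
\]
Next I check that both cutoffs are inactive.

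\begin{enumerate}[label=\textup{(\alph*)},leftmargin=*]
\item Since $x,y<z/2$ and $\omega_4\le z+y<3z/2$, we get $S_\beta\le(3z/2)^{4\beta+1/2}\le N$. So $S_\beta\wedge N=S_\beta$.
\item From $N\ge2/z$ we have $z\ge 2/N$ and $\omega_4>z/2\ge1/N$. Hence $\ell_N(z)=\ell_N(\omega_4)=1$, and only $\ell_N(x)\ell_N(y)$ survives.
\end{enumerate}

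Dividing by $\sqrt{xyz}$ gives
\[
K_{\beta,N}=x^\beta y^{\beta-1/2}z^{\beta-1/2}\omega_4^\beta\,\ell_N(x)\ell_N(y).
\]
Writing $\omega_4^\beta=z^\beta\bigl(1\pm(y-x)/z\bigr)^\beta$ yields both formulas in \eqref{eq:cond-kernel-small-inputs}. The $+$ sign corresponds to $K_{\beta,N}(z,y,x)$ and the $-$ sign to $K_{\beta,N}(z,x,y)$.

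\emph{Part (iii).} Since $0\le y-x\le y$, we have $x^\beta y^{\beta-1/2}(y-x)\le x^\beta y^{\beta+1/2}$. Because $x\le y\le z$ and all exponents are positive, $x^\beta y^{\beta+1/2}\le z^{2\beta+1/2}$. The squared version follows in the same way with $(y-x)^2\le y^2$.

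The only step requiring care is (ii). It needs the identification of the minimal frequency, which uses $y<z/2$, and the verification that the threshold on $N$ makes both $S_\beta\wedge N$ and $\ell_N(z),\ell_N(\omega_4)$ trivial. I do not expect a genuine obstacle.
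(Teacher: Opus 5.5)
Your proposal is correct and follows the paper's proof essentially step by step. You use the symmetry of $S_{\beta,N}$ over the common quartet for (i). For (ii) you identify $x$ as the smallest frequency and check that $S_\beta\le(3z/2)^{4\beta+1/2}\le N$ and $\ell_N(z)=\ell_N(\omega_4)=1$, and (iii) is elementary monotonicity.
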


The proofs of \cref{lem:cond-kernel-upper,lem:kernel,lem:cond-kernel-small-inputs}
are given in \cref{app:measure-estimates}.

\subsection{Density inequalities}
\label{subsec:cond-density-inequalities}

Define $f_N$ by
\begin{equation}\label{eq:cond-density-definition}
 (g_t^N)_{\mathrm{ac}}(d\omega)
 =\sqrt\omega\,f_N(t,\omega) d\omega.
\end{equation}
Since $\mu\mapsto\mu_{\mathrm{ac}}$ is linear and satisfies
$\|\mu_{\mathrm{ac}}\|_{\mathcal M_1}\le\|\mu\|_{\mathcal M_1}$, \cref{prop:integrated-cutoff-existence} gives
\begin{equation}\label{eq:cond-density-time-regularity}
 f_N\in C^1([0,\infty);L^1((0,\infty),\sqrt\omega d\omega)).
\end{equation}

In this and the next section, we extend $f_N(t,\cdot)$ and
$u$, $\phi$ introduced below by zero from $(0,\infty)$ to $\mathbb R$,
without changing notation. Inequalities involving these functions are
understood almost everywhere in $\omega$.

Consider \((\omega_1,\omega_2)\to(\omega_3,\omega_4)\), where
\(\omega_1+\omega_2=\omega_3+\omega_4\). For
$u\in L^1((0,\infty),\sqrt{\omega_4} d\omega_4)$, define
\begin{equation}\label{eq:cond-jump-operator}
 L_N(t)[u](\omega_4)=\frac12\iint_{(0,\omega_4/2)^2}
 \frac{S_{\beta,N}(\omega_1,\omega_2,\omega_3,\omega_4)}
      {\sqrt{\omega_2\omega_3\omega_4}}
 [u(\omega_1)-u(\omega_4)]
  g_t^N(d\omega_2)g_t^N(d\omega_3).
\end{equation}
Also define
\begin{equation}\label{eq:cond-potential}
 \begin{aligned}
 V_N(t,\omega_4)
 ={}&\frac14\iint_{(0,\infty)^2}
 \frac{S_{\beta,N}(\omega_1,\omega_2,\omega_3,\omega_4)}
      {\sqrt{\omega_1\omega_2\omega_4}}
  g_t^N(d\omega_1)g_t^N(d\omega_2)\\
 &-\frac12\iint_{(0,\infty)^2\setminus(0,\omega_4/2)^2}
 \frac{S_{\beta,N}(\omega_1,\omega_2,\omega_3,\omega_4)}
      {\sqrt{\omega_2\omega_3\omega_4}}
  g_t^N(d\omega_2)g_t^N(d\omega_3),
 \end{aligned}
\end{equation}
and
\[
 \mathcal G_N(t,\omega_4)
 :=\frac1{4\sqrt{\omega_4}}
 \iint_{(\omega_4/2,\infty)^2}
 S_{\beta,N}(\omega_1,\omega_2,\omega_3,\omega_4)
 f_N(t,\omega_1)f_N(t,\omega_2)f_N(t,\omega_3)
 d\omega_1d\omega_2\ge0.
\]
\begin{lemma}\label{lem:cond-density-comparison}
For every $N\ge1$, $t\ge0$, and a.e.\ $\omega_4>0$,
\begin{equation}\label{eq:cond-output-gain}
 \partial_t f_N(t,\omega_4)
 \ge L_N(t)[f_N(t,\cdot)](\omega_4)
 +V_N(t,\omega_4)f_N(t,\omega_4)
 +\mathcal G_N(t,\omega_4).
\end{equation}
For every $\omega_4>0$,
\(
 V_N(t,\omega_4)\ge-\omega_4^{-d}\ell_N(\omega_4).
\)
Consequently, for a.e.\ $\omega_4>0$,
\begin{equation}\label{eq:cond-density-forced}
 \partial_t f_N(t,\omega_4)
 \ge L_N(t)[f_N(t,\cdot)](\omega_4)
 -\omega_4^{-d}\ell_N(\omega_4)f_N(t,\omega_4)
 +\mathcal G_N(t,\omega_4).
\end{equation}
In particular,
\begin{equation}\label{eq:cond-density-homogeneous}
 \partial_t f_N(t,\omega_4)
 \ge L_N(t)[f_N(t,\cdot)](\omega_4)
 -\omega_4^{-d}\ell_N(\omega_4)f_N(t,\omega_4).
\end{equation}
\end{lemma}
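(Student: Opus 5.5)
We will start from the strong form $\partial_tg_t^N=\mathcal I_N(g_t^N)=\mathcal I_N^+(g_t^N)-L_N[g_t^N]g_t^N$ given by \cref{prop:integrated-cutoff-existence}. Taking absolutely continuous parts is linear and preserves the inequality $\mathcal I_N^+(g)\ge(\mathcal I_N^+(g))_{\mathrm{ac}}$. It also commutes with multiplication by the bounded Borel function $L_N[g]$. So we will compute the density of the gain term $(\pi_3)_\#\Lambda_N+(\pi_4)_\#\Lambda_N$ at a point $\omega_4$, discarding every singular or atomic contribution to it, since all discarded terms are nonnegative.

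The gain term is handled in three pieces.

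\emph{Piece (a).} From the pushforward $(\pi_4)_\#\Lambda_N$, we keep the contribution in which the measure variable $\omega_1$ has been replaced by its density $\sqrt{\omega_1}f_N$. Then $\omega_1=\omega_3+\omega_4-\omega_2$, and the change of variables $\omega_1\mapsto\omega_4$ has unit Jacobian. Writing the two remaining variables as $\omega_2,\omega_3$ gives a density
\[
\frac12\iint \frac{S_{\beta,N}}{\sqrt{\omega_2\omega_3\omega_4}}\,f_N(\omega_1)\,g(d\omega_2)g(d\omega_3).
\]
The factor $1/2$ comes from the symmetry in $(\pi_3,\pi_4)$ together with the $1/4$ normalization. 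The $\sqrt{\omega_1}$ cancels against $\sqrt{\omega_1\omega_2\omega_3}$ in $K_{\beta,N}$, leaving $\sqrt{\omega_4}$ after dividing by the weight $\sqrt{\omega_4}$ in the definition of $f_N$.

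\emph{Piece (b).} On the region $(0,\omega_4/2)^2$ for $(\omega_2,\omega_3)$, we split $f_N(\omega_1)=[f_N(\omega_1)-f_N(\omega_4)]+f_N(\omega_4)$. The first bracket gives exactly $L_N[f_N]$. The $f_N(\omega_4)$ part is combined with the loss term $-L_N[g]\,f_N(\omega_4)$. After relabeling, the loss term is
\[
-\frac14\iint \frac{S_{\beta,N}}{\sqrt{\omega_1\omega_2\omega_4}}\,g(d\omega_1)g(d\omega_2),
\]
with $\omega_4$ in the first slot.

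\emph{Piece (c).} On the complement region, the contribution $f_N(\omega_1)\ge0$ can only help. We keep from it the part where all three inputs are absolutely continuous and lie in $(\omega_4/2,\infty)$, using the $\pi_3$-pushforward with all three measures replaced by densities. This produces $\mathcal G_N\ge0$. The remaining nonnegative pieces are dropped.

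We need to check that the combinatorics reproduce precisely the sign and coefficients written in $V_N$. In particular, $V_N$ carries $+\frac14$ on the first integral and $-\frac12$ on the complement region, which suggests that the loss and the gain are arranged as in \eqref{eq:cond-potential} after bookkeeping with the symmetry of $S_{\beta,N}$. We will follow the displayed definitions and verify each pushforward density through the disintegration formula.

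The main obstacle is the lower bound $V_N\ge-\omega_4^{-d}\ell_N(\omega_4)$. First, we will note that $S_{\beta,N}$ contains the factor $\ell_N(\omega_4)$, so $V_N=0$ when $\ell_N(\omega_4)=0$, and otherwise we may bound the rest using $S_{\beta,N}\le S_\beta\,\ell_N(\omega_4)$. The negative term of $V_N$ is over the region where $\omega_2\vee\omega_3\ge\omega_4/2$. By the kernel symmetries, its integrand is $K_{\beta,N}(\omega_4,\cdot,\cdot)$ with the arguments in a suitable order, and for this region \eqref{eq:cond-potential-negative-kernel} of \cref{lem:cond-kernel-upper} is designed. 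It bounds the integrand, summed over the two orderings, by $2^{3/2}\omega_4^{-d}\min\{\cdots\}^\beta$.

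The constant $2^{3/2}\cdot\frac12$ exceeds $1$. So we will try to absorb the excess by the positive first term of $V_N$, or by using the factor $\min\{1,\omega_2/\omega_4,\omega_3/\omega_4\}^\beta\le1$ together with a symmetrization that halves the region. With $g([0,\infty))=1$ this gives the total bound $\omega_4^{-d}\ell_N(\omega_4)$. If the constant does not close directly, we will use the positive $\frac14$-term, which dominates a portion of the negative term by \eqref{eq:condensation-kernel-bound}.

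Finally, \eqref{eq:cond-density-forced} follows by substituting this bound into \eqref{eq:cond-output-gain} together with $f_N\ge0$, and \eqref{eq:cond-density-homogeneous} follows by dropping $\mathcal G_N\ge0$.
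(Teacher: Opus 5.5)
Pieces (a)--(c) are where \eqref{eq:cond-output-gain} is actually proved, and you defer checking them (``we need to check that the combinatorics reproduce\dots''). As sketched, they misassign terms in three places.

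First, $(\pi_3)_\#\Lambda_N[g_t^N]$ is neither a source of $\mathcal G_N$ nor a loss. Its output frequency is one of the three measure variables. By the symmetry of $S_{\beta,N}$ it therefore equals $h\,g_t^N$, with $h(\omega_4)=\frac14\iint_{(0,\infty)^2}S_{\beta,N}(\omega_1,\omega_2,\omega_3,\omega_4)(\omega_1\omega_2\omega_4)^{-1/2}\,g_t^N(d\omega_1)g_t^N(d\omega_2)$. Its density is exactly $h\,f_N(\omega_4)$, which is the positive part of $V_Nf_N$. Every contribution from $\pi_3$ carries the factor $f_N(\omega_4)$ and $\mathcal G_N$ does not, so piece (c) cannot work.

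Second, the loss $L_N[g_t^N]g_t^N$ contributes $-\frac12 f_N(\omega_4)\iint S_{\beta,N}(\omega_2\omega_3\omega_4)^{-1/2}\,g_t^N(d\omega_2)g_t^N(d\omega_3)$, where $\omega_1=\omega_3+\omega_4-\omega_2$. The coefficient is $\frac12$ and the kernel is $K_{\beta,N}(\omega_4,\omega_3,\omega_2)$; it is not the $-\frac14$-expression you write in piece (b). Its part off $(0,\omega_4/2)^2$ is the negative part of $V_N$, and its part on $(0,\omega_4/2)^2$ is the loss half of $L_N[f_N]$.

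Third, no symmetry between $\pi_3$ and $\pi_4$ justifies the coefficient $\frac12$ in piece (a), and on the full $(\omega_2,\omega_3)$-range it is false. For absolutely continuous $g$, the density of $(\pi_4)_\#\Lambda_N[g]$ with respect to $\sqrt{\omega_4}\,d\omega_4$ is exactly $\frac14\iint S_{\beta,N}(\omega_2\omega_3\omega_4)^{-1/2}f(\omega_1)\,g(d\omega_2)g(d\omega_3)$, so your pieces (a)--(c) double-count.

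The missing idea is a disjointness argument. The sets $\{0<\omega_2,\omega_3<\omega_4/2\}$, $\{0<\omega_1,\omega_3<\omega_4/2\}$ and $\{\omega_1,\omega_2>\omega_4/2\}$ are pairwise disjoint on $\mathcal D$; an intersection of the first two would force $\omega_1+\omega_2<\omega_4\le\omega_3+\omega_4=\omega_1+\omega_2$. So in $(\pi_4)_\#\Lambda_N$ you may replace $g(d\omega_1)$ on the first set, $g(d\omega_2)$ on the second, and all three factors on the third by their absolutely continuous parts.

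The first two pieces coincide under $\omega_1\leftrightarrow\omega_2$. This gives $2\cdot\frac14=\frac12$ exactly on $(0,\omega_4/2)^2$, which balances the loss there; that is why $L_N$ lives on this square. The third piece gives $\mathcal G_N$ after the substitution $\omega_3\mapsto\omega_4$ at fixed $(\omega_1,\omega_2)$.

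Your bound on $V_N$ via \eqref{eq:cond-potential-negative-kernel} does close once you symmetrize correctly. The $-\frac12$-integral of one ordering equals the $-\frac14$-integral of the sum of both orderings, so the constant is $\frac14\cdot2^{3/2}=2^{-1/2}<1$. The factor $\ell_N(\omega_4)$ comes from $S_{\beta,N}\le\ell_N(\omega_4)S_\beta$. The paper instead uses the median bound \eqref{eq:condensation-kernel-bound} with $y\ge\omega_4/2$ and gets $2^{-3\beta}$. Your fallback, letting the positive $\frac14$-term dominate part of the negative term, cannot rest on the upper bound \eqref{eq:condensation-kernel-bound}, but it is not needed.
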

\begin{proof}
Fix $N\ge1$. Since $g^N\in C^1([0,\infty);\mathcal M_1)$ by \cref{prop:integrated-cutoff-existence},
the bounded linear map $\mu\mapsto\mu_{\mathrm{ac}}$ gives
\begin{equation}\label{eq:cond-density-differentiation}
 \begin{aligned}\left\|\frac{(g_{t+h}^N)_{\mathrm{ac}}-(g_t^N)_{\mathrm{ac}}}{h}
       -(\partial_tg_t^N)_{\mathrm{ac}}\right\|_{\mathcal M_1}\le
 \left\|\frac{g_{t+h}^N-g_t^N}{h}
       -\partial_tg_t^N\right\|_{\mathcal M_1}
 \longrightarrow0
 \end{aligned}
\end{equation}
as \(h\to 0\) with \(t+h\geq 0.\) Thus $\partial_t(g_t^N)_{\mathrm{ac}}
=(\partial_tg_t^N)_{\mathrm{ac}}$.

Recall from the proof of \cref{prop:integrated-cutoff-existence}
that $\Lambda_N[g]=K_{\beta,N}(g^{\otimes3}|_{\mathcal D})/4$,
where $\mathcal D=\{(\omega_1,\omega_2,\omega_3)\in[0,\infty)^3:
\omega_1+\omega_2\ge\omega_3\}$ and $\pi_j(\omega_1,\omega_2,\omega_3)=\omega_j$,
for \(j=1,2,3,4\) with $\omega_4=\omega_1+\omega_2-\omega_3$. The symmetry in $\omega_1,\omega_2$ gives
$(\pi_2)_\#\Lambda_N[g_t^N]=(\pi_1)_\#\Lambda_N[g_t^N]$.
By \cref{prop:integrated-cutoff-existence} and
\eqref{eq:cond-density-definition},
\begin{equation}\label{eq:cond-density-measure-derivative}
 \begin{aligned}
 \partial_t f_N(t,\omega_4)
 &=\frac1{\sqrt{\omega_4}}
   \frac{d[\partial_t(g_t^N)_{\mathrm{ac}}]}{d\omega_4}
 =\frac1{\sqrt{\omega_4}}
   \frac{d(\partial_tg_t^N)_{\mathrm{ac}}}{d\omega_4}\\
 &=\frac1{\sqrt{\omega_4}}
   \frac{d[(\pi_3)_\#\Lambda_N[g_t^N]]_{\mathrm{ac}}}{d\omega_4}
  +\frac1{\sqrt{\omega_4}}
   \frac{d[(\pi_4)_\#\Lambda_N[g_t^N]]_{\mathrm{ac}}}{d\omega_4}-\frac2{\sqrt{\omega_4}}
   \frac{d[(\pi_1)_\#\Lambda_N[g_t^N]]_{\mathrm{ac}}}{d\omega_4}.
 \end{aligned}
\end{equation}

Let $E\subset(0,\infty)$ be a Borel set.
By the resonance relation and symmetry of \(S_{\beta,N}\),
\begin{equation}\label{eq:cond-output-frequency-symmetry}
 S_{\beta,N}(\omega_1,\omega_2,\omega_3,\omega_4)
 =S_{\beta,N}(\omega_1,\omega_2,\omega_4,
             \omega_1+\omega_2-\omega_4).
\end{equation}
Using \eqref{eq:cutoff}, we obtain
\begin{equation}\label{eq:cond-third-projection}
 \begin{aligned}
 ((\pi_3)_\#\Lambda_N[g_t^N])(E)&=\frac14\iiint_{\mathcal D}\mathbf1_E(\omega_3)
 K_{\beta,N}(\omega_1,\omega_2,\omega_3)
 g_t^N(d\omega_1)g_t^N(d\omega_2)g_t^N(d\omega_3)\\
 &=\frac14\int_E
 \left[\iint_{(0,\infty)^2}
 \frac{S_{\beta,N}(\omega_1,\omega_2,\omega_3,
                   \omega_1+\omega_2-\omega_3)}
      {\sqrt{\omega_1\omega_2\omega_3}}
 g_t^N(d\omega_1)g_t^N(d\omega_2)\right]g_t^N(d\omega_3)\\
 &=\frac14\int_E
 \left[\iint_{(0,\infty)^2}
 \frac{S_{\beta,N}(\omega_1,\omega_2,\omega_4,
                   \omega_1+\omega_2-\omega_4)}
      {\sqrt{\omega_1\omega_2\omega_4}}
 g_t^N(d\omega_1)g_t^N(d\omega_2)\right]g_t^N(d\omega_4)\\
 &=\frac14\int_E
 \left[\iint_{(0,\infty)^2}
 \frac{S_{\beta,N}(\omega_1,\omega_2,\omega_3,\omega_4)}
      {\sqrt{\omega_1\omega_2\omega_4}}
 g_t^N(d\omega_1)g_t^N(d\omega_2)\right]g_t^N(d\omega_4).
 \end{aligned}
\end{equation}
For the first projection, 
\begin{equation}\label{eq:cond-first-projection}
 \begin{aligned}
 ((\pi_1)_\#\Lambda_N[g_t^N])(E)
 &=\frac14\iiint_{\mathcal D}\mathbf1_E(\omega_1)
 K_{\beta,N}(\omega_1,\omega_2,\omega_3)
 g_t^N(d\omega_1)g_t^N(d\omega_2)g_t^N(d\omega_3)\\
 &=\frac14\int_E
 \left[\iint_{(0,\infty)^2}
 \frac{S_{\beta,N}(\omega_1,\omega_2,\omega_3,
                   \omega_1+\omega_2-\omega_3)}
      {\sqrt{\omega_1\omega_2\omega_3}}
 g_t^N(d\omega_2)g_t^N(d\omega_3)\right]g_t^N(d\omega_1)\\
 &=\frac14\int_E
 \left[\iint_{(0,\infty)^2}
 \frac{S_{\beta,N}(\omega_4,\omega_3,\omega_2,
                   \omega_4+\omega_3-\omega_2)}
      {\sqrt{\omega_4\omega_3\omega_2}}
 g_t^N(d\omega_2)g_t^N(d\omega_3)\right]g_t^N(d\omega_4)\\
 &=\frac14\int_E
 \left[\iint_{(0,\infty)^2}
 \frac{S_{\beta,N}(\omega_3+\omega_4-\omega_2,
                   \omega_2,\omega_3,\omega_4)}
      {\sqrt{\omega_2\omega_3\omega_4}}
 g_t^N(d\omega_2)g_t^N(d\omega_3)\right]g_t^N(d\omega_4)\\
 &=\frac14\int_E
 \left[\iint_{(0,\infty)^2}
 \frac{S_{\beta,N}(\omega_1,\omega_2,\omega_3,\omega_4)}
      {\sqrt{\omega_2\omega_3\omega_4}}
 g_t^N(d\omega_2)g_t^N(d\omega_3)\right]g_t^N(d\omega_4).
 \end{aligned}
\end{equation}
Taking absolutely continuous parts and using
\eqref{eq:cond-density-definition} yields
\begin{equation}\label{eq:cond-projection-densities}
 \begin{aligned}
 \frac1{\sqrt{\omega_4}}
 \frac{d[(\pi_3)_\#\Lambda_N[g_t^N]]_{\mathrm{ac}}}{d\omega_4}
 &=\frac{f_N(t,\omega_4)}4\iint_{(0,\infty)^2}
 \frac{S_{\beta,N}(\omega_1,\omega_2,\omega_3,\omega_4)}
      {\sqrt{\omega_1\omega_2\omega_4}}
 g_t^N(d\omega_1)g_t^N(d\omega_2),\\
 \frac2{\sqrt{\omega_4}}
 \frac{d[(\pi_1)_\#\Lambda_N[g_t^N]]_{\mathrm{ac}}}{d\omega_4}
 &=\frac{f_N(t,\omega_4)}2\iint_{(0,\infty)^2}
 \frac{S_{\beta,N}(\omega_1,\omega_2,\omega_3,\omega_4)}
      {\sqrt{\omega_2\omega_3\omega_4}}
 g_t^N(d\omega_2)g_t^N(d\omega_3).
 \end{aligned}
\end{equation}
Substituting \eqref{eq:cond-projection-densities} into
\eqref{eq:cond-density-measure-derivative} and then using
\eqref{eq:cond-potential}, we find
\begin{equation}\label{eq:cond-density-identity}
 \begin{aligned}
 &\partial_t f_N(t,\omega_4)
 =\frac{f_N(t,\omega_4)}4\iint_{(0,\infty)^2}
 \frac{S_{\beta,N}(\omega_1,\omega_2,\omega_3,\omega_4)}
      {\sqrt{\omega_1\omega_2\omega_4}}
 g_t^N(d\omega_1)g_t^N(d\omega_2)\\
 &\quad-\frac{f_N(t,\omega_4)}2\iint_{(0,\infty)^2}
 \frac{S_{\beta,N}(\omega_1,\omega_2,\omega_3,\omega_4)}
      {\sqrt{\omega_2\omega_3\omega_4}}
 g_t^N(d\omega_2)g_t^N(d\omega_3)
 +\frac1{\sqrt{\omega_4}}
 \frac{d[(\pi_4)_\#\Lambda_N[g_t^N]]_{\mathrm{ac}}}{d\omega_4}\\
 &=V_N(t,\omega_4)f_N(t,\omega_4)
 +\frac1{\sqrt{\omega_4}}
 \frac{d[(\pi_4)_\#\Lambda_N[g_t^N]]_{\mathrm{ac}}}{d\omega_4}\\
 &\quad-\frac{f_N(t,\omega_4)}2\iint_{(0,\omega_4/2)^2}
 \frac{S_{\beta,N}(\omega_1,\omega_2,\omega_3,\omega_4)}
      {\sqrt{\omega_2\omega_3\omega_4}}
 g_t^N(d\omega_2)g_t^N(d\omega_3).
 \end{aligned}
\end{equation}

Now we estimate $(\pi_4)_\#\Lambda_N[g_t^N]$.
We first need the following inequality.
\begin{equation}\label{eq:cond-disjoint-gain-regions}
 \mathbf1_{\{0<\omega_2,\omega_3<\omega_4/2\}}
 +\mathbf1_{\{0<\omega_1,\omega_3<\omega_4/2\}}
 +\mathbf1_{\{\omega_1,\omega_2>\omega_4/2\}}
 \le1\qquad\text{on }\mathcal D.
\end{equation}
Indeed, the first two sets in \eqref{eq:cond-disjoint-gain-regions}
are disjoint on $\mathcal D$, since their intersection would imply
$\omega_1+\omega_2<\omega_4\le\omega_3+\omega_4
=\omega_1+\omega_2$.
The third set requires $\omega_1,\omega_2>\omega_4/2$,
so it is disjoint from each of the first two sets. 

Since $g_t^N(d\omega_j)\ge
\sqrt{\omega_j}f_N(t,\omega_j)d\omega_j$ for $j=1,2,3$,
\eqref{eq:cond-disjoint-gain-regions} gives
\begin{equation}\label{eq:cond-restricted-output}
 \begin{aligned}
 &((\pi_4)_\#\Lambda_N[g_t^N])(E)\\
 &\quad=\frac14\iiint_{\mathcal D}\mathbf1_E(\omega_4)
 K_{\beta,N}(\omega_1,\omega_2,\omega_3)
 g_t^N(d\omega_1)g_t^N(d\omega_2)g_t^N(d\omega_3)\\
 &\quad\ge\frac14
 \iiint_{\substack{\mathcal D\\0<\omega_2,\omega_3<\omega_4/2}}
 \mathbf1_E(\omega_4)
 \frac{S_{\beta,N}(\omega_1,\omega_2,\omega_3,\omega_4)}
      {\sqrt{\omega_2\omega_3}}
 f_N(t,\omega_1)d\omega_1g_t^N(d\omega_2)g_t^N(d\omega_3)\\
 &\quad+\frac14
 \iiint_{\substack{\mathcal D\\0<\omega_1,\omega_3<\omega_4/2}}
 \mathbf1_E(\omega_4)
 \frac{S_{\beta,N}(\omega_1,\omega_2,\omega_3,\omega_4)}
      {\sqrt{\omega_1\omega_3}}
 f_N(t,\omega_2)g_t^N(d\omega_1)d\omega_2g_t^N(d\omega_3)\\
 &\quad+\frac14
 \iiint_{\substack{\mathcal D\\\omega_1,\omega_2>\omega_4/2}}
 \mathbf1_E(\omega_4)S_{\beta,N}(\omega_1,\omega_2,\omega_3,\omega_4)
 f_N(t,\omega_1)f_N(t,\omega_2)f_N(t,\omega_3)
 d\omega_1d\omega_2d\omega_3.
 \end{aligned}
\end{equation}
The first two terms on the right of \eqref{eq:cond-restricted-output}
are equal under relabeling $(\omega_1,\omega_2)\mapsto(\omega_2,\omega_1)$, i.e. 
\begin{equation}\label{eq:cond-output-symmetry}
 \begin{aligned}
 &\iiint_{\substack{\mathcal D\\0<\omega_1,\omega_3<\omega_4/2}}
 \mathbf1_E(\omega_4)
 \frac{S_{\beta,N}(\omega_1,\omega_2,\omega_3,\omega_4)}
      {\sqrt{\omega_1\omega_3}}
 f_N(t,\omega_2)
 g_t^N(d\omega_1)d\omega_2g_t^N(d\omega_3)\\
 &\quad=\iiint_{\substack{\mathcal D\\0<\omega_2,\omega_3<\omega_4/2}}
 \mathbf1_E(\omega_4)
 \frac{S_{\beta,N}(\omega_1,\omega_2,\omega_3,\omega_4)}
      {\sqrt{\omega_2\omega_3}}
 f_N(t,\omega_1)
 d\omega_1g_t^N(d\omega_2)g_t^N(d\omega_3).
 \end{aligned}
\end{equation}
For fixed $\omega_2,\omega_3>0$,
$\omega_4>2(\omega_2 \vee \omega_3)\iff\omega_1>2(\omega_2 \vee \omega_3)+\omega_3-\omega_2$. Tonelli's theorem then implies
\begin{equation}\label{eq:cond-small-output-change}
 \begin{aligned}
 &\frac12
 \iiint_{\substack{\mathcal D\\0<\omega_2,\omega_3<\omega_4/2}}
 \mathbf1_E(\omega_4)
 \frac{S_{\beta,N}(\omega_1,\omega_2,\omega_3,\omega_4)}
      {\sqrt{\omega_2\omega_3}}
 f_N(t,\omega_1)
 d\omega_1g_t^N(d\omega_2)g_t^N(d\omega_3)\\
 &=\frac12\iint_{(0,\infty)^2}
 \int_{2(\omega_2\vee\omega_3)+\omega_3-\omega_2}^{\infty}
 \mathbf1_E(\omega_1+\omega_2-\omega_3)
 \frac{S_{\beta,N}(\omega_1,\omega_2,\omega_3,
                   \omega_1+\omega_2-\omega_3)}
      {\sqrt{\omega_2\omega_3}}\\[-0.3ex]
 &\qquad\times f_N(t,\omega_1)
 d\omega_1g_t^N(d\omega_2)g_t^N(d\omega_3)\\
 &=\frac12\iint_{(0,\infty)^2}
 \int_{2(\omega_2\vee\omega_3)}^{\infty}
 \mathbf1_E(\omega_4)
 \frac{S_{\beta,N}(\omega_3+\omega_4-\omega_2,
                   \omega_2,\omega_3,\omega_4)}
      {\sqrt{\omega_2\omega_3}}f_N(t,\omega_3+\omega_4-\omega_2)
 d\omega_4g_t^N(d\omega_2)g_t^N(d\omega_3)\\
 &=\frac12\int_E\iint_{(0,\omega_4/2)^2}
 \frac{S_{\beta,N}(\omega_3+\omega_4-\omega_2,
                   \omega_2,\omega_3,\omega_4)}
      {\sqrt{\omega_2\omega_3}}f_N(t,\omega_3+\omega_4-\omega_2)
 g_t^N(d\omega_2)g_t^N(d\omega_3)d\omega_4.
 \end{aligned}
\end{equation}
For fixed $\omega_1,\omega_2>0$, 
$0<\omega_4<2(\omega_1 \wedge \omega_2)\iff|\omega_1-\omega_2|<\omega_3<\omega_1+\omega_2$.
Since $E\subset(0,\infty)$, the third term in
\eqref{eq:cond-restricted-output} satisfies
\begin{equation}\label{eq:cond-cubic-output-change}
 \begin{aligned}
 &\frac14
 \iiint_{\substack{\mathcal D\\\omega_1,\omega_2>\omega_4/2}}
 \mathbf1_E(\omega_4)
 S_{\beta,N}(\omega_1,\omega_2,\omega_3,\omega_4)
 f_N(t,\omega_1)f_N(t,\omega_2)f_N(t,\omega_3)
 d\omega_1d\omega_2d\omega_3\\
 &=\frac14\iint_{(0,\infty)^2}
 \int_{|\omega_1-\omega_2|}^{\omega_1+\omega_2}
 \mathbf1_E(\omega_1+\omega_2-\omega_3)
 S_{\beta,N}(\omega_1,\omega_2,\omega_3,
             \omega_1+\omega_2-\omega_3)\\[-0.3ex]
 &\qquad\times f_N(t,\omega_1)f_N(t,\omega_2)f_N(t,\omega_3)
 d\omega_3d\omega_1d\omega_2\\
 &=\frac14\iint_{(0,\infty)^2}
 \int_0^{2(\omega_1\wedge\omega_2)}
 \mathbf1_E(\omega_4)
 S_{\beta,N}(\omega_1,\omega_2,
             \omega_1+\omega_2-\omega_4,\omega_4)\\[-0.3ex]
 &\qquad\times f_N(t,\omega_1)f_N(t,\omega_2)
 f_N(t,\omega_1+\omega_2-\omega_4)
 d\omega_4d\omega_1d\omega_2\\
 &=\frac14\int_E\iint_{(\omega_4/2,\infty)^2}
 S_{\beta,N}(\omega_1,\omega_2,
             \omega_1+\omega_2-\omega_4,\omega_4)\\[-0.3ex]
 &\qquad\times f_N(t,\omega_1)f_N(t,\omega_2)
 f_N(t,\omega_1+\omega_2-\omega_4)
 d\omega_1d\omega_2d\omega_4
 =\int_E\sqrt{\omega_4}\,\mathcal G_N(t,\omega_4)d\omega_4.
 \end{aligned}
\end{equation}
Combining \eqref{eq:cond-restricted-output}--\eqref{eq:cond-cubic-output-change}, we obtain
\begin{equation}\label{eq:cond-output-measure-bound}
 \begin{aligned}
 ((\pi_4)_\#\Lambda_N[g_t^N])(E)
 &\ge\int_E\sqrt{\omega_4}\,\mathcal G_N(t,\omega_4)d\omega_4\\
 &\quad+\frac12\int_E\iint_{(0,\omega_4/2)^2}
 \frac{S_{\beta,N}(\omega_1,\omega_2,\omega_3,\omega_4)}
      {\sqrt{\omega_2\omega_3}}
 f_N(t,\omega_1)
 g_t^N(d\omega_2)g_t^N(d\omega_3)d\omega_4.
 \end{aligned}
\end{equation}
Taking absolutely continuous parts and dividing
the resulting densities by $\sqrt{\omega_4}$ yields
\begin{equation}\label{eq:cond-retained-output}
 \begin{aligned}
 \frac1{\sqrt{\omega_4}}
 \frac{d[(\pi_4)_\#\Lambda_N[g_t^N]]_{\mathrm{ac}}}{d\omega_4}
 &\ge\mathcal G_N(t,\omega_4)\\
 &\quad+\frac12\iint_{(0,\omega_4/2)^2}
 \frac{S_{\beta,N}(\omega_1,\omega_2,\omega_3,\omega_4)}
      {\sqrt{\omega_2\omega_3\omega_4}}
 f_N(t,\omega_1)g_t^N(d\omega_2)g_t^N(d\omega_3).
 \end{aligned}
\end{equation}
Substituting \eqref{eq:cond-retained-output} into
\eqref{eq:cond-density-identity} gives
\begin{equation}\label{eq:cond-density-gain-expanded}
 \begin{aligned}
 \partial_t f_N(t,\omega_4)
 \ge{}&V_N(t,\omega_4)f_N(t,\omega_4)+\mathcal G_N(t,\omega_4)\\
 &+\frac12\iint_{(0,\omega_4/2)^2}
 \frac{S_{\beta,N}(\omega_1,\omega_2,\omega_3,\omega_4)}
      {\sqrt{\omega_2\omega_3\omega_4}}
 [f_N(t,\omega_1)-f_N(t,\omega_4)]
 g_t^N(d\omega_2)g_t^N(d\omega_3),
 \end{aligned}
\end{equation}
which is \eqref{eq:cond-output-gain}.

It remains to estimate $V_N$. In the second integral of
\eqref{eq:cond-potential}, let
$(x,y,z)=\operatorname{sort}(\omega_2,\omega_3,\omega_4)$.
By \eqref{eq:cutoff} and \eqref{eq:condensation-kernel-bound},
\begin{equation}\label{eq:cond-loss-median}
 \frac{S_{\beta,N}(\omega_1,\omega_2,\omega_3,\omega_4)}
      {\sqrt{\omega_2\omega_3\omega_4}}
 \le\ell_N(\omega_4)
 \frac{S_\beta(\omega_1,\omega_2,\omega_3,\omega_4)}
      {\sqrt{\omega_2\omega_3\omega_4}}
 \le2^\beta y^{-d}\ell_N(\omega_4).
\end{equation}
If $(\omega_2,\omega_3)\notin(0,\omega_4/2)^2$, then
$\omega_2 \vee \omega_3\ge\omega_4/2$ and $y\ge\omega_4/2$.
Using \eqref{eq:cond-potential}, \eqref{eq:cond-loss-median}, we obtain
\begin{equation}\label{eq:cond-potential-crude}
 \begin{aligned}
 V_N(t,\omega_4)
 &\ge-\frac12\iint_{(0,\infty)^2\setminus(0,\omega_4/2)^2}
 \frac{S_{\beta,N}(\omega_1,\omega_2,\omega_3,\omega_4)}
      {\sqrt{\omega_2\omega_3\omega_4}}
 g_t^N(d\omega_2)g_t^N(d\omega_3)\\
 &\ge-2^{\beta-1}\ell_N(\omega_4)
 \iint_{(0,\infty)^2\setminus(0,\omega_4/2)^2}
 y^{-d}g_t^N(d\omega_2)g_t^N(d\omega_3)\\
 &\ge-2^{\beta+d-1}\omega_4^{-d}\ell_N(\omega_4)
       [g_t^N((0,\infty))]^2\\
 &\ge-2^{-3\beta}\omega_4^{-d}\ell_N(\omega_4)
 \ge-\omega_4^{-d}\ell_N(\omega_4).
 \end{aligned}
\end{equation}
Combining \eqref{eq:cond-output-gain} and \eqref{eq:cond-potential-crude}
with $f_N\ge0$ gives \eqref{eq:cond-density-forced}.
Discarding $\mathcal G_N\ge0$ gives \eqref{eq:cond-density-homogeneous}.
\end{proof}

\subsection{Comparison principle}
\label{subsec:cond-comparison}

\begin{lemma}[Boundedness of \(L_N\) and \(V_N\)]\label{lem:cond-cutoff-boundedness}
For every $N\ge1$ and $t\ge0$, \eqref{eq:cond-jump-operator} defines a
bounded linear operator on $L^1((0,\infty),\sqrt\omega d\omega)$.
For every $u\in L^1((0,\infty),\sqrt\omega d\omega)$,
\begin{equation}\label{eq:cond-jump-operator-boundedness}
 \int_0^\infty\sqrt\omega |L_N(t)[u](\omega)| d\omega
 \le N(2N)^{3/2}\int_0^\infty\sqrt\omega |u(\omega)| d\omega.
\end{equation}
For every $\omega>0$,
\begin{equation}\label{eq:cond-bounded-cutoff-coefficients}
 |V_N(t,\omega)|\le3N(2N)^{3/2}/4,
 \qquad 0\le\omega^{-d}\ell_N(\omega)\le(2N)^d.
\end{equation}
\end{lemma}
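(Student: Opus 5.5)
The plan is to use only the crude pointwise bound $S_{\beta,N}\le N\prod_{j}\ell_N(\omega_j)$, which holds because $S_{\beta,N}=(S_\beta\wedge N)\prod_j\ell_N(\omega_j)$ with $0\le\ell_N\le1$. The factor $\ell_N$ vanishes on $(0,(2N)^{-1}]$. So whenever $S_{\beta,N}\neq0$, every frequency exceeds $(2N)^{-1}$, and any product of three reciprocal square roots is at most $(2N)^{3/2}$. Hence each of the three kernels
\[
\frac{S_{\beta,N}}{\sqrt{\omega_2\omega_3\omega_4}},\qquad \frac{S_{\beta,N}}{\sqrt{\omega_1\omega_2\omega_4}},\qquad K_{\beta,N}
\]
is bounded by $C_N=N(2N)^{3/2}$. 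The same fact gives the second bound in \eqref{eq:cond-bounded-cutoff-coefficients}: $\ell_N(\omega)=0$ for $\omega\le(2N)^{-1}$, so $\omega^{-d}\ell_N(\omega)\le(2N)^d$, since $d>0$ and $\ell_N\le1$.

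For $V_N$, I will use $g_t^N((0,\infty))\le1$ (recall $\mathcal N_0=1$ in this section) and the bound $C_N$ on each integrand. The first term of \eqref{eq:cond-potential} is then at most $C_N/4$ and the second at most $C_N/2$. Since the two terms have opposite signs, $|V_N|\le\max\{C_N/4,C_N/2\}\le 3C_N/4$, which is the stated bound.

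For $L_N$, note that for fixed $\omega_2,\omega_3<\omega_4/2$ we have $\omega_1=\omega_3+\omega_4-\omega_2>0$, so $u(\omega_1)$ is defined (extended by zero otherwise). I will split $L_N[u]=A[u]-B[u]$, where $B[u](\omega_4)=u(\omega_4)\cdot\frac12\iint(\cdots)\,dg\,dg$. The bracket here is bounded by $C_N/2$, so
\[
\int\sqrt{\omega_4}\,|B[u]|\le \frac{C_N}{2}\|u\|,
\]
where $\|u\|=\int\sqrt\omega\,|u|\,d\omega$.

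The term $A[u]$ contains the shifted argument $u(\omega_3+\omega_4-\omega_2)$, and bounding it is the only step needing care. I will apply Tonelli and, for fixed $(\omega_2,\omega_3)$, change variables $\omega_1=\omega_3+\omega_4-\omega_2$, so that $d\omega_4=d\omega_1$. What has to be controlled is the weight $\sqrt{\omega_4}$, and I handle it by absorbing it into the kernel rather than comparing $\sqrt{\omega_4}$ with $\sqrt{\omega_1}$ (that comparison can be lossy when $\omega_2>\omega_3$). After multiplying by $\sqrt{\omega_4}$, the integrand becomes $S_{\beta,N}/\sqrt{\omega_2\omega_3}$, and I need a bound on this in terms of $\sqrt{\omega_1}$. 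Since $S_{\beta,N}\le N\prod_j\ell_N(\omega_j)$ and $\ell_N(\omega_1)\le1$, on the support we have $1\le(2N)^{1/2}\sqrt{\omega_1}$. Together with $1/\sqrt{\omega_2\omega_3}\le 2N$, this gives
\[
\frac{S_{\beta,N}}{\sqrt{\omega_2\omega_3}}\le N(2N)^{3/2}\sqrt{\omega_1}.
\]
Therefore
\[
\int\sqrt{\omega_4}\,|A[u]|\,d\omega_4\le\frac12\iint C_N\int\sqrt{\omega_1}\,|u(\omega_1)|\,d\omega_1\,dg\,dg\le\frac{C_N}{2}\|u\|.
\]

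Adding the bounds for $A[u]$ and $B[u]$ gives \eqref{eq:cond-jump-operator-boundedness}. Linearity of $L_N(t)$ is clear from \eqref{eq:cond-jump-operator}, so $L_N(t)$ is a bounded linear operator on $L^1((0,\infty),\sqrt\omega\,d\omega)$. Measurability of $L_N(t)[u]$ follows from Tonelli once the absolute integrability above is known.
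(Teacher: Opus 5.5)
Your proof is correct and follows essentially the same route as the paper. It rests on the crude bound $S_{\beta,N}\le N\prod_j\mathbf 1_{\{\omega_j>(2N)^{-1}\}}$ and uses Tonelli with the shift $\omega_1=\omega_3+\omega_4-\omega_2$ for the gain term, where the paper writes your estimate $S_{\beta,N}/\sqrt{\omega_2\omega_3}\le N(2N)^{3/2}\sqrt{\omega_1}$ as $\sqrt{\omega_1}\,K_{\beta,N}(\omega_1,\omega_2,\omega_3)\le N(2N)^{3/2}\sqrt{\omega_1}$. The remaining differences are cosmetic: your opposite-sign argument gives the slightly sharper $|V_N|\le N(2N)^{3/2}/2$ where the paper uses the triangle inequality, and the paper spells out that applying the bound to $u-\widetilde u$ makes $L_N(t)$ independent of the representative, which your linearity plus bound also yields.
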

\begin{proof}
Fix $N\ge1$ and $t\ge0$, and set $C_N=N(2N)^{3/2}$. Action conservation and \eqref{eq:cutoff} give
\begin{equation}\label{eq:cond-cutoff-basic-bounds}
 \begin{aligned}
 0\le S_{\beta,N}(\omega_1,\omega_2,\omega_3,\omega_4)
 &\le N\prod_{j=1}^4\mathbf1_{\{\omega_j>(2N)^{-1}\}},\\
 0\le K_{\beta,N}(\omega_1,\omega_2,\omega_3)
 &\le N\prod_{j=1}^3
 \frac{\mathbf1_{\{\omega_j>(2N)^{-1}\}}}{\sqrt{\omega_j}}
 \le C_N,\\
 g_t^N((0,\infty))&\le g_t^N([0,\infty))=1.
 \end{aligned}
\end{equation}

Choose a finite Borel representative of $u$. 
For fixed $\omega_2,\omega_3>0$,
$\omega_2,\omega_3<\omega_4/2\iff \omega_4>2(\omega_2 \vee \omega_3)$.
Tonelli's theorem yields, for the gain term in \eqref{eq:cond-jump-operator},
\begin{equation}\label{eq:cond-cutoff-gain-bound}
 \begin{aligned}
 &\frac12\int_0^\infty\iint_{(0,\omega_4/2)^2}
 \frac{S_{\beta,N}(\omega_1,\omega_2,\omega_3,\omega_4)}
      {\sqrt{\omega_2\omega_3}}
 |u(\omega_1)| g_t^N(d\omega_2)g_t^N(d\omega_3)d\omega_4\\
 &=\frac12\iint_{(0,\infty)^2}
 \int_{2(\omega_2 \vee \omega_3)+\omega_3-\omega_2}^{\infty}
 K_{\beta,N}(\omega_1,\omega_2,\omega_3)
 \sqrt{\omega_1}|u(\omega_1)| d\omega_1
 g_t^N(d\omega_2)g_t^N(d\omega_3)\\
 &\le\frac{C_N}{2}[g_t^N((0,\infty))]^2
 \int_0^\infty\sqrt{\omega_1}|u(\omega_1)| d\omega_1\le\frac{C_N}{2}
 \int_0^\infty\sqrt\omega |u(\omega)| d\omega.
 \end{aligned}
\end{equation}
For the loss term, symmetry of $S_{\beta,N}$ and
\eqref{eq:cond-cutoff-basic-bounds} give
\begin{equation}\label{eq:cond-cutoff-loss-bound}
 \begin{aligned}
 &\frac12\int_0^\infty\iint_{(0,\omega_4/2)^2}
 \frac{S_{\beta,N}(\omega_1,\omega_2,\omega_3,\omega_4)}
      {\sqrt{\omega_2\omega_3}}
 |u(\omega_4)| g_t^N(d\omega_2)g_t^N(d\omega_3)d\omega_4\\
 &=\frac12\int_0^\infty\sqrt{\omega_4}|u(\omega_4)|
 \iint_{(0,\omega_4/2)^2}
 K_{\beta,N}(\omega_4,\omega_3,\omega_2)
 g_t^N(d\omega_2)g_t^N(d\omega_3)d\omega_4\\
 &\le\frac{C_N}{2}[g_t^N((0,\infty))]^2
 \int_0^\infty\sqrt{\omega_4}|u(\omega_4)| d\omega_4
 \le\frac{C_N}{2}\int_0^\infty\sqrt\omega |u(\omega)| d\omega.
 \end{aligned}
\end{equation}
Since $u\in L^1((0,\infty),\sqrt\omega d\omega)$, the triangle inequality,
\eqref{eq:cond-cutoff-gain-bound}, and \eqref{eq:cond-cutoff-loss-bound} give
\begin{equation}\label{eq:cond-cutoff-operator-estimate}
 \begin{aligned}
 &\int_0^\infty\sqrt{\omega_4}|L_N(t)[u](\omega_4)| d\omega_4\\
 &\le\frac12\int_0^\infty\iint_{(0,\omega_4/2)^2}
 \frac{S_{\beta,N}(\omega_1,\omega_2,\omega_3,\omega_4)}
      {\sqrt{\omega_2\omega_3}}
 \bigl(|u(\omega_1)|+|u(\omega_4)|\bigr)
 g_t^N(d\omega_2)g_t^N(d\omega_3)d\omega_4\\
 &\le\left(\frac{C_N}{2}+\frac{C_N}{2}\right)
 \int_0^\infty\sqrt\omega |u(\omega)| d\omega
 =C_N\int_0^\infty\sqrt\omega |u(\omega)| d\omega,
 \end{aligned}
\end{equation}
which proves \eqref{eq:cond-jump-operator-boundedness}.

If $u$ and $\widetilde u$ are two representatives of the same weighted
$L^1$ class, the same estimates give
\begin{equation}\label{eq:cond-cutoff-representative-independence}
 \begin{aligned}
 \int_0^\infty\sqrt\omega
 |L_N(t)[u](\omega)-L_N(t)[\widetilde u](\omega)| d\omega\le C_N\int_0^\infty\sqrt\omega
 |u(\omega)-\widetilde u(\omega)| d\omega=0.
 \end{aligned}
\end{equation}
Hence $L_N(t)$ is well defined. The linearity follows directly
from its integral formula.

For \eqref{eq:cond-bounded-cutoff-coefficients}, symmetry of $S_{\beta,N}$ and
\eqref{eq:cond-cutoff-basic-bounds} yield
\begin{equation}\label{eq:cond-cutoff-potential-bound}
 \begin{aligned}
 |V_N(t,\omega_4)|
 &\le\frac14\iint_{(0,\infty)^2}
 K_{\beta,N}(\omega_1,\omega_2,\omega_4)
 g_t^N(d\omega_1)g_t^N(d\omega_2)\\
 &\quad+\frac12\iint_{(0,\infty)^2\setminus(0,\omega_4/2)^2}
 K_{\beta,N}(\omega_4,\omega_3,\omega_2)
 g_t^N(d\omega_2)g_t^N(d\omega_3)\\
 &\le\left(\frac14+\frac12\right)C_N[g_t^N((0,\infty))]^2
 \le\frac34C_N.
 \end{aligned}
\end{equation}
Finally, since $0\le\ell_N\le1$ and
$\ell_N=0$ on $[0,(2N)^{-1}]$,
\begin{equation}\label{eq:cond-cutoff-multiplier-bound}
 0\le\omega^{-d}\ell_N(\omega)
 \le\omega^{-d}\mathbf1_{\{\omega>(2N)^{-1}\}}
 \le(2N)^d,\qquad \omega>0.
\end{equation}
We have thus proved \eqref{eq:cond-bounded-cutoff-coefficients}.
\end{proof}

\begin{lemma}\label{lem:cond-fixed-profile}
Let $0<a<b$ and set
\begin{align}
 u(\omega_4)
 &=\sin\!\left(\frac{\pi(\omega_4-a)}{b-a}\right)
   \mathbf1_{[a,b]}(\omega_4),
 \label{eq:cond-sine-profile}\\
 C&=\frac{\pi^2b^{2-d}}{4(b-a)^2}+\frac5{4a^d}.
 \label{eq:cond-profile-constant}
\end{align}
\begin{enumerate}[label=\textup{(\roman*)},leftmargin=*]
\item For $z\in[a,b]$ and $0<x\le y<z/2$,
\begin{equation}\label{eq:cond-paired-sine-bound}
 \begin{aligned}
 &\left(1+\frac{y-x}{z}\right)^\beta[u(z+y-x)-u(z)]
 +\left(1-\frac{y-x}{z}\right)^\beta[u(z+x-y)-u(z)]\\
 &\qquad\ge-\left(\frac{\pi^2(y-x)^2}{(b-a)^2}
                     +\frac{y-x}{z}\right)u(z).
 \end{aligned}
\end{equation}
\item For every $N\ge2/a \vee (3b/2)^{4\beta+1/2}$,
\begin{equation}\label{eq:cond-fixed-profile}
 L_N(t)[u](\omega_4)-\omega_4^{-d}u(\omega_4)
 \ge-Cu(\omega_4),\qquad t\ge0,\quad\omega_4>0.
\end{equation}
\end{enumerate}
\end{lemma}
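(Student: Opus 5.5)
The plan is to prove (i) by an elementary comparison of $u$ with the untruncated sine, and then to obtain (ii) by pairing the two gain configurations in $L_N(t)[u]$ with \cref{lem:cond-kernel-small-inputs}, so that (i) can be inserted directly.

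\emph{Part (i).} Set $h=y-x\in[0,z/2)$, $s=h/z\in[0,1/2)$, and $\theta=\pi h/(b-a)$. I split into two cases.

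First suppose $\theta^2\ge2$. Since $u\ge0$, the left-hand side of \eqref{eq:cond-paired-sine-bound} is at least $-\bigl((1+s)^\beta+(1-s)^\beta\bigr)u(z)\ge-2u(z)$, because $(1+s)^\beta+(1-s)^\beta\le2$ by concavity. This is already bounded below by $-\theta^2u(z)$.

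Now suppose $\theta<\sqrt2<\pi$. Then $z\pm h$ lie within one period length $b-a$ of $[a,b]$. On $[2a-b,2b-a]$ the untruncated function $w(\omega)=\sin(\pi(\omega-a)/(b-a))$ satisfies $u\ge w$, since $w\le0$ off $[a,b]$ there. Hence
\[
u(z+h)+u(z-h)-2u(z)\ge 2u(z)(\cos\theta-1)\ge-\theta^2u(z).
\]
I rewrite the left-hand side of \eqref{eq:cond-paired-sine-bound} as
\[
(1-s)^\beta\bigl[u(z+h)+u(z-h)-2u(z)\bigr]
+\bigl[(1+s)^\beta-(1-s)^\beta\bigr]\bigl[u(z+h)-u(z)\bigr].
\]
The first term is $\ge-\theta^2u(z)$, since it is $(1-s)^\beta\le1$ times a quantity $\ge-\theta^2u(z)$. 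For the second term, the mean value theorem gives $(1+s)^\beta-(1-s)^\beta\le2\beta s(1-s)^{\beta-1}\le2^{2-\beta}\beta s\le s$, using $\beta<1/4$ and $s<1/2$. Together with $u(z+h)-u(z)\ge-u(z)$, the second term is $\ge-s\,u(z)$. This proves (i).

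\emph{Part (ii).} If $\omega_4\notin[a,b]$, then $u(\omega_4)=0$ and $L_N(t)[u](\omega_4)\ge0$ because $u\ge0$, so the inequality holds. Fix instead $z=\omega_4\in[a,b]$. In \eqref{eq:cond-jump-operator} the fourth frequency is $\omega_1=\omega_3+z-\omega_2$, and the integrand weight equals
\[
\frac{S_{\beta,N}}{\sqrt{\omega_2\omega_3z}}=K_{\beta,N}(z,\omega_3,\omega_2).
\]
I split $(0,z/2)^2$ into $\{\omega_2<\omega_3\}$ and $\{\omega_2>\omega_3\}$; the diagonal contributes $u(z)-u(z)=0$. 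Writing $x=\min$ and $y=\max$ of $\{\omega_2,\omega_3\}$, the two halves combine into an integral over $\{x<y\}$ of
\[
K_{\beta,N}(z,y,x)[u(z+y-x)-u(z)]+K_{\beta,N}(z,x,y)[u(z+x-y)-u(z)],
\]
with total $g_t^N\otimes g_t^N$ mass at most $1$ and the prefactor $1/2$.

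The hypothesis $N\ge2/a\vee(3b/2)^{4\beta+1/2}$ implies the condition in \cref{lem:cond-kernel-small-inputs}(ii) for every $z\in[a,b]$. So both kernels share the nonnegative factor $z^{2\beta-1/2}x^\beta y^{\beta-1/2}\ell_N(x)\ell_N(y)$ multiplied by $(1\pm(y-x)/z)^\beta$, and part (i) applies. Then \eqref{eq:cond-kernel-first-difference} and \eqref{eq:cond-kernel-second-difference} bound the resulting quantities:
\[
z^{2\beta-1/2}x^\beta y^{\beta-1/2}(y-x)^2\le z^{4\beta+1}=z^{2-d}\le b^{2-d},
\qquad
z^{2\beta-1/2}x^\beta y^{\beta-1/2}\frac{y-x}{z}\le z^{4\beta-1}=z^{-d}\le a^{-d}.
\]
Integrating against the probability mass then gives
\[
L_N(t)[u](z)\ge-\Bigl(\frac{\pi^2b^{2-d}}{4(b-a)^2}+\frac1{4a^d}\Bigr)u(z)
\]
(the precise prefactor depends on how the ordered-region mass is counted, but it is at most this). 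Adding $-z^{-d}u(z)\ge-a^{-d}u(z)$ yields \eqref{eq:cond-fixed-profile} with the stated $C$.

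The main obstacle is part (i) when $z\pm h$ leaves the support $[a,b]$. There the comparison with the untruncated sine holds only within one period, which is why I separate the large-$\theta$ regime and handle it by the crude bound $-2u(z)$.
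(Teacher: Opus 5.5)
Your proposal is correct and follows the paper's proof essentially step for step. The shared steps are the comparison with the untruncated sine within one period, the crude bound $-2u(z)$ when $y-x$ is large, the splitting $(1-s)^\beta[\text{second difference}]+[(1+s)^\beta-(1-s)^\beta][u(z+h)-u(z)]$, and, in (ii), the pairing of the two gain configurations via \cref{lem:cond-kernel-small-inputs} followed by \eqref{eq:cond-kernel-first-difference}--\eqref{eq:cond-kernel-second-difference}.

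The only point to tighten is the prefactor. As written, ``mass at most $1$ with prefactor $\tfrac12$'' gives only $\tfrac12$ and hence a constant larger than $C$. What yields $\tfrac14$ is that the ordered region $\{\omega_2<\omega_3\}\cap(0,z/2)^2$ has $g_t^N\otimes g_t^N$-mass at most $\tfrac12[g_t^N((0,z/2))]^2\le\tfrac12$ by symmetry. Equivalently, average the integrand with its swap over the full square, as the paper does.
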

\begin{proof}
Fix $N\ge2/a \vee (3b/2)^{4\beta+1/2}$ and $t\ge0$.
For $\omega_4\in[a,b]$, \eqref{eq:cond-jump-operator} gives
\begin{equation}\label{eq:cond-sine-jump-exchange}
 \begin{aligned}
 L_N(t)[u](\omega_4)&=\frac12\iint_{(0,\omega_4/2)^2}
 \frac{S_{\beta,N}(\omega_1,
                   \omega_2,\omega_3,\omega_4)}
      {\sqrt{\omega_2\omega_3\omega_4}}[u(\omega_1)-u(\omega_4)]
  g_t^N(d\omega_2)g_t^N(d\omega_3)\\
 &=\frac12\iint_{(0,\omega_4/2)^2}
 \frac{S_{\beta,N}(\omega_2+\omega_4-\omega_3,
                   \omega_3,\omega_2,\omega_4)}
      {\sqrt{\omega_2\omega_3\omega_4}}[u(\omega_2+\omega_4-\omega_3)-u(\omega_4)]
  g_t^N(d\omega_2)g_t^N(d\omega_3).
 \end{aligned}
\end{equation}
For each $(\omega_2,\omega_3)\in(0,\omega_4/2)^2$, set
$(x,y,z)=\operatorname{sort}(\omega_2,\omega_3,\omega_4)$.
Then $z=\omega_4$ and $0<x\le y<z/2$.
Hence \eqref{eq:cond-sine-jump-exchange}
yields
\begin{equation}\label{eq:cond-symmetrized-jump}
 \begin{aligned}
 L_N(t)[u](\omega_4) &=\frac14\iint_{(0,\omega_4/2)^2}
 \biggl\{
 \frac{S_{\beta,N}(z+y-x,x,y,z)}{\sqrt{xyz}}
 [u(z+y-x)-u(z)]\\
 &\hspace{24mm}+
 \frac{S_{\beta,N}(z+x-y,y,x,z)}{\sqrt{xyz}}
 [u(z+x-y)-u(z)]
 \biggr\}
  g_t^N(d\omega_2)g_t^N(d\omega_3).
 \end{aligned}
\end{equation}
We then estimate the sum inside braces in
\eqref{eq:cond-symmetrized-jump} for
$z\in[a,b]$ and $0<x\le y<z/2$.

\underline{Case 1. $y-x\le b-a$.}
Then $z\pm(y-x)\in[2a-b,2b-a]$.
Since $\sin(\pi(\omega_4-a)/(b-a))\le0$ for
$\omega_4\in[2a-b,a]\cup[b,2b-a]$,
\eqref{eq:cond-sine-profile} gives
\begin{equation}\label{eq:cond-sine-difference}
 \begin{aligned}
 &u(z+y-x)+u(z+x-y)-2u(z)\\
 &\quad\ge
 \sin\!\left(\frac{\pi(z+y-x-a)}{b-a}\right)
 +\sin\!\left(\frac{\pi(z+x-y-a)}{b-a}\right)
 -2\sin\!\left(\frac{\pi(z-a)}{b-a}\right)\\
 &\quad=2\left[\cos\!\left(\frac{\pi(y-x)}{b-a}\right)-1\right]u(z)\ge-\frac{\pi^2(y-x)^2}{(b-a)^2}u(z).
 \end{aligned}
\end{equation}
The last inequality uses
$1-\cos r=2\sin^2(r/2)\le r^2/2$ for $r\in\mathbb R$.

\underline{Case 2. $y-x>b-a$.} Then since \(u\geq 0\),
\begin{equation}\label{eq:cond-sine-large-difference}
 u(z+y-x)+u(z+x-y)-2u(z)
 \ge-2u(z)
 \ge-\frac{\pi^2(y-x)^2}{(b-a)^2}u(z).
\end{equation}
Thus the last bound in \eqref{eq:cond-sine-difference}
holds for every $0<x\le y<z/2$.

Since $z\in[a,b]$,
$N\ge2/a\ge2/z$ and
$N\ge(3b/2)^{4\beta+1/2}\ge(3z/2)^{4\beta+1/2}$. \eqref{eq:cutoff} and
\eqref{eq:cond-kernel-small-inputs} give
\begin{equation}\label{eq:cond-sine-kernel-coefficients}
 \begin{aligned}
 \frac{S_{\beta,N}(z+y-x,x,y,z)}{\sqrt{xyz}}
 &=z^{2\beta-1/2}x^\beta y^{\beta-1/2}
   \ell_N(x)\ell_N(y)
   \left(1+\frac{y-x}{z}\right)^\beta,\\
 \frac{S_{\beta,N}(z+x-y,y,x,z)}{\sqrt{xyz}}
 &=z^{2\beta-1/2}x^\beta y^{\beta-1/2}
   \ell_N(x)\ell_N(y)
   \left(1-\frac{y-x}{z}\right)^\beta.
 \end{aligned}
\end{equation}
Besides,
\begin{equation}\label{eq:cond-sine-coefficient-difference}
 \begin{aligned}
 0\le\left(1+\frac{y-x}{z}\right)^\beta
      -\left(1-\frac{y-x}{z}\right)^\beta
 =\beta\int_{1-(y-x)/z}^{1+(y-x)/z}r^{\beta-1}dr\le4\beta\frac{y-x}{z}
 \le\frac{y-x}{z}.
 \end{aligned}
\end{equation}
Here we used $(y-x)/z<1/2$, $0<\beta<1/4$, and
$r^{\beta-1}\le2^{1-\beta}\le2$ for $r\ge1/2$.

Since $0<(1-(y-x)/z)^\beta\le1$ and
$u(z+y-x)-u(z)\ge-u(z)$,
\eqref{eq:cond-sine-difference}--\eqref{eq:cond-sine-coefficient-difference} imply
\begin{equation}
 \begin{aligned}
 &\left(1+\frac{y-x}{z}\right)^\beta[u(z+y-x)-u(z)]
 +\left(1-\frac{y-x}{z}\right)^\beta[u(z+x-y)-u(z)]\\
 &=\left(1-\frac{y-x}{z}\right)^\beta
   [u(z+y-x)+u(z+x-y)-2u(z)]\\
 &\quad+\left[\left(1+\frac{y-x}{z}\right)^\beta
              -\left(1-\frac{y-x}{z}\right)^\beta\right]
   [u(z+y-x)-u(z)]\\
 &\ge-\left[
   \left(1-\frac{y-x}{z}\right)^\beta
   \frac{\pi^2(y-x)^2}{(b-a)^2}
   +\left(1+\frac{y-x}{z}\right)^\beta
   -\left(1-\frac{y-x}{z}\right)^\beta
   \right]u(z)\\
 &\ge-\left(\frac{\pi^2(y-x)^2}{(b-a)^2}
            +\frac{y-x}{z}\right)u(z).
 \end{aligned}
\end{equation}
This proves \eqref{eq:cond-paired-sine-bound}.
Substituting \eqref{eq:cond-sine-kernel-coefficients}
and using \eqref{eq:cond-paired-sine-bound}, we obtain
\begin{equation}\label{eq:cond-paired-jump-bound}
 \begin{aligned}
 &\frac{S_{\beta,N}(z+y-x,x,y,z)}{\sqrt{xyz}}
   [u(z+y-x)-u(z)]\\
 &\quad+\frac{S_{\beta,N}(z+x-y,y,x,z)}{\sqrt{xyz}}
   [u(z+x-y)-u(z)]\\
 &\ge-z^{2\beta-1/2}\ell_N(x)\ell_N(y)
   \left[
   \frac{\pi^2x^\beta y^{\beta-1/2}(y-x)^2}{(b-a)^2}
   +\frac{x^\beta y^{\beta-1/2}(y-x)}{z}
   \right]u(z)\\
 &\ge-\left[
   \frac{\pi^2z^{2\beta-1/2}z^{2\beta+3/2}}{(b-a)^2}
   +z^{2\beta-3/2}z^{2\beta+1/2}
   \right]u(z)\\
 &=-\left(\frac{\pi^2z^{2-d}}{(b-a)^2}+z^{-d}\right)u(z).
 \end{aligned}
\end{equation}
The second inequality follows from $0\le\ell_N\le1$,
\eqref{eq:cond-kernel-first-difference}, and
\eqref{eq:cond-kernel-second-difference}.

Substituting \eqref{eq:cond-paired-jump-bound} into
\eqref{eq:cond-symmetrized-jump}, with $z=\omega_4$, gives
\begin{equation}\label{eq:cond-sine-jump-bound}
 \begin{aligned}
 L_N(t)[u](\omega_4)
 &\ge-\frac14
 \left(\frac{\pi^2\omega_4^{2-d}}{(b-a)^2}+\omega_4^{-d}\right)
 u(\omega_4)
 \iint_{(0,\omega_4/2)^2}
 g_t^N(d\omega_2)g_t^N(d\omega_3)\\
 &=-\frac14
 \left(\frac{\pi^2\omega_4^{2-d}}{(b-a)^2}+\omega_4^{-d}\right)
 u(\omega_4)[g_t^N((0,\omega_4/2))]^2\\
 &\ge-\frac14
 \left(\frac{\pi^2\omega_4^{2-d}}{(b-a)^2}+\omega_4^{-d}\right)
 u(\omega_4).
 \end{aligned}
\end{equation}
The last inequality uses $g_t^N((0,\omega_4/2))\le1$.
Since $0<d<1$ and $a\le\omega_4\le b$,
\begin{equation}\label{eq:cond-sine-global-bound}
 \begin{aligned}
 L_N(t)[u](\omega_4)-\omega_4^{-d}u(\omega_4)
 &\ge-\left[
 \frac{\pi^2\omega_4^{2-d}}{4(b-a)^2}
 +\left(\frac14+1\right)\omega_4^{-d}
 \right]u(\omega_4)\\
 &\ge-\left[
 \frac{\pi^2b^{2-d}}{4(b-a)^2}+\frac5{4a^d}
 \right]u(\omega_4)
 =-Cu(\omega_4).
 \end{aligned}
\end{equation}

For $\omega_4\notin[a,b]$, $u(\omega_4)=0$.
With $\omega_1=\omega_3+\omega_4-\omega_2$,
\eqref{eq:cond-jump-operator} gives
\begin{equation}\label{eq:cond-sine-outside-support}
 \begin{aligned}
 L_N(t)[u](\omega_4)-\omega_4^{-d}u(\omega_4)
 &=\frac12\iint_{(0,\omega_4/2)^2}
 \frac{S_{\beta,N}(\omega_1,\omega_2,\omega_3,\omega_4)}
      {\sqrt{\omega_2\omega_3\omega_4}}
 u(\omega_1) g_t^N(d\omega_2)g_t^N(d\omega_3)\\
 &\ge0=-Cu(\omega_4).
 \end{aligned}
\end{equation}
Equations \eqref{eq:cond-sine-global-bound} and
\eqref{eq:cond-sine-outside-support} prove
\eqref{eq:cond-fixed-profile}.
\end{proof}

\needspace{5\baselineskip}
\begin{lemma}[Comparison principle]\label{lem:cond-sine-comparison}
Let $u,C,N$ be as in \cref{lem:cond-fixed-profile}\textup{(ii)}, and let $s,c\ge0$.
\begin{enumerate}[label=\textup{(\roman*)},leftmargin=*]
\item If $f_N(s,\omega_4)\ge cu(\omega_4)$, then
\begin{equation}\label{eq:cond-persistent-sine}
 f_N(t,\omega_4)\ge ce^{-C(t-s)}u(\omega_4),
 \qquad t\ge s.
\end{equation}
\item If $D\ge0$, $T>s$, and
\begin{equation}\label{eq:cond-forced-assumption}
 \begin{aligned}
 \partial_t f_N(t,\omega_4)
 \ge L_N(t)[f_N(t,\cdot)](\omega_4)
 -\omega_4^{-d}\ell_N(\omega_4)f_N(t,\omega_4)+ce^{-Dt}u(\omega_4),
 \qquad s\le t\le T,
 \end{aligned}
\end{equation}
then
\begin{equation}\label{eq:cond-forced-sine}
 f_N(t,\omega_4)
 \ge cu(\omega_4)\int_s^t e^{-C(t-\tau)-D\tau}d\tau,
 \qquad s\le t\le T.
\end{equation}
\end{enumerate}
\end{lemma}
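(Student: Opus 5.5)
The plan is to compare $f_N$ with an explicit subsolution built from the fixed profile $u$, using the positivity structure of the linear operator $\mathcal A_N(t)[w]:=L_N(t)[w]-\omega^{-d}\ell_N(\omega)w$. By \cref{lem:cond-cutoff-boundedness}, $\mathcal A_N(t)$ is bounded on $X:=L^1((0,\infty),\sqrt\omega\,d\omega)$, uniformly in $t$. It also splits as
\[
\mathcal A_N(t)=P_N(t)-m_N(t,\cdot),
\]
where $P_N(t)[w](\omega_4)$ is the gain part of $L_N$, namely the integral of $w(\omega_1)$ against a nonnegative kernel, and hence a positive operator. The multiplier
\[
m_N(t,\omega_4)=\tfrac12\iint_{(0,\omega_4/2)^2}\frac{S_{\beta,N}}{\sqrt{\omega_2\omega_3\omega_4}}\,dg_t^N\,dg_t^N+\omega_4^{-d}\ell_N(\omega_4)
\]
is nonnegative and bounded by some constant $\Lambda_N$.

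For (i), set $w(t)=ce^{-C(t-s)}u$. By \cref{lem:cond-fixed-profile}(ii), using $\ell_N\le1$ and $u\ge0$, we have
\[
\partial_tw=-Cw\le \mathcal A_N(t)[w],
\]
so $w$ is a subsolution. By \eqref{eq:cond-density-homogeneous}, $f_N$ is a supersolution. Put $h=f_N-w$, which lies in $C^1([0,\infty);X)$ by \eqref{eq:cond-density-time-regularity}, with $h(s)\ge0$ and $\partial_th\ge \mathcal A_N(t)[h]$ a.e.

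To show $h\ge0$, set $k(t)=e^{\Lambda_N t}h(t)$. Then
\[
\partial_tk\ge e^{\Lambda_Nt}\bigl(P_N(t)[h]+(\Lambda_N-m_N)h\bigr)=P_N(t)[k]+(\Lambda_N-m_N)k=:Q(t)[k],
\]
where $Q(t)$ is a positive bounded operator. Write $\partial_t k=Q(t)[k]+r(t)$ with $r\ge0$, and take negative parts. A clean route is the Duhamel/Picard argument: the linear equation $\partial_t\kappa=Q(t)[\kappa]+r$ with $\kappa(s)=k(s)\ge0$ has a unique $C^1$ solution, given by a convergent Dyson series in which every term is nonnegative, because $Q$ is positive, $r\ge0$, and $k(s)\ge0$. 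Hence $k=\kappa\ge0$. Measurability of $t\mapsto Q(t)$ in operator norm is not needed. It suffices that $t\mapsto Q(t)[v(t)]$ is continuous in $X$ for continuous $v$, which follows from $g^N\in C^1([0,\infty);\mathcal M_1)$ and the kernel bounds in \cref{lem:cond-cutoff-boundedness}.

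Alternatively, one can estimate $\|k_-(t)\|_X$ directly. Pair with the sign function to get
\[
\tfrac{d}{dt}\|k_-\|_X\le \|Q\|\,\|k_-\|_X,
\]
using that $Q[k]\ge -Q[k_-]$ on $\{k<0\}$, and then apply Gronwall. I would use this Gronwall version, since only a.e.\ inequalities are available.

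For (ii), take
\[
w(t)=c\,u\int_s^te^{-C(t-\tau)-D\tau}\,d\tau,
\]
which vanishes at $t=s$. It satisfies $\partial_tw=-Cw+ce^{-Dt}u$, and since $-Cw\le \mathcal A_N(t)[w]$ by \cref{lem:cond-fixed-profile}(ii),
\[
\partial_tw\le \mathcal A_N(t)[w]+ce^{-Dt}u.
\]
Subtracting this from \eqref{eq:cond-forced-assumption}, $h=f_N-w$ again satisfies $\partial_th\ge \mathcal A_N[h]$ on $[s,T]$ with $h(s)=f_N(s)\ge0$. The same positivity argument then gives $h\ge0$.

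The main obstacle is the rigorous positivity (maximum principle) step for a differential inequality in $X$ holding only a.e.\ in $\omega$. I will handle it with the absolute-continuity-in-time of $\|k_-(t)\|_X$, which is valid since $k\in C^1([0,T];X)$ and $v\mapsto\|v_-\|_X$ is Lipschitz and convex, together with the one-sided derivative bound above.
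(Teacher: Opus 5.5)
Your proposal is correct and follows essentially the paper's argument. You use the same subsolutions $ce^{-C(t-s)}u$ and $cu\int_s^te^{-C(t-\tau)-D\tau}\,d\tau$, justified by linearity of $L_N(t)$, Lemma~\ref{lem:cond-fixed-profile}(ii) and $\ell_N\le1$. You then apply a Kato-type inequality to the negative part of $f_N-v$ (the paper's $w_+$ with $w=v-f_N$), use positivity of the gain part of $L_N(t)$, and close with Gronwall on the $\sqrt\omega$-weighted $L^1$ norm starting from zero.

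The differences are cosmetic. Your shift by $e^{\Lambda_Nt}$ is unnecessary: the loss and multiplier terms enter with a nonpositive sign against $w_+\ge0$, and the paper simply discards them. You also justify the chain rule through the Lipschitz convex functional $v\mapsto\|v_-\|_X$, whereas the paper uses a time-absolutely-continuous representative for a.e.\ $\omega$.
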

\begin{proof}
\noindent\textit{Step 1. The difference $w=v-f_N$.}
Fix $T>s$, arbitrary in \textup{(i)}, and set
\begin{equation}\label{eq:cond-sine-comparison-profiles}
 v(t,\omega_4)=
 \begin{cases}
 ce^{-C(t-s)}u(\omega_4),&\text{in \textup{(i)}},\\[1ex]
 cu(\omega_4)\displaystyle\int_s^t e^{-C(t-\tau)-D\tau}d\tau,
 &\text{in \textup{(ii)}}.
 \end{cases}
\end{equation}
With either definition, $v$ is nonnegative and belongs to
$C^1([s,T];L^1((0,\infty),\sqrt\omega d\omega))$.
For each fixed $t$, $v(t,\cdot)$ is a nonnegative multiple of $u$.
By linearity of $L_N(t)$, \eqref{eq:cond-fixed-profile}, and
$0\le\ell_N\le1$,
\begin{equation}\label{eq:cond-sine-comparison-subsolution}
 \begin{aligned}
 L_N(t)[v](\omega_4)-\omega_4^{-d}\ell_N(\omega_4)v(t,\omega_4)
 &\ge L_N(t)[v](\omega_4)-\omega_4^{-d}v(t,\omega_4)\\
 &\ge-Cv(t,\omega_4).
 \end{aligned}
\end{equation}
Set $w=v-f_N$. We check its initial value and differential inequality
separately in the two cases.

In \textup{(i)}, $w(s,\omega_4)=cu(\omega_4)-f_N(s,\omega_4)\le0$.
Since $\partial_t v=-Cv$, \eqref{eq:cond-density-homogeneous}
and \eqref{eq:cond-sine-comparison-subsolution} give
\begin{equation}\label{eq:cond-comparison-unforced-difference}
 \begin{aligned}
 \partial_t w(t,\omega_4)
 &=-Cv(t,\omega_4)-\partial_t f_N(t,\omega_4)\\
 &\le-Cv(t,\omega_4)-L_N(t)[f_N](\omega_4)
       +\omega_4^{-d}\ell_N(\omega_4)f_N(t,\omega_4)\\
 &\le L_N(t)[v](\omega_4)-\omega_4^{-d}\ell_N(\omega_4)v(t,\omega_4)\\
 &\quad-L_N(t)[f_N](\omega_4)
       +\omega_4^{-d}\ell_N(\omega_4)f_N(t,\omega_4)\\
 &=L_N(t)[w](\omega_4)-\omega_4^{-d}\ell_N(\omega_4)w(t,\omega_4).
 \end{aligned}
\end{equation}

In \textup{(ii)}, $w(s,\omega_4)=-f_N(s,\omega_4)\le0$, and
\begin{equation}\label{eq:cond-sine-comparison-sources}
 \begin{aligned}
 \partial_t v(t,\omega_4)
 =cu(\omega_4)\left[e^{-Dt}
       -C\int_s^t e^{-C(t-\tau)-D\tau}d\tau\right]=ce^{-Dt}u(\omega_4)-Cv(t,\omega_4).
 \end{aligned}
\end{equation}
Using \eqref{eq:cond-forced-assumption}, we obtain
\begin{equation}\label{eq:cond-comparison-forced-difference}
 \begin{aligned}
 \partial_t w(t,\omega_4)
 &=ce^{-Dt}u(\omega_4)-Cv(t,\omega_4)-\partial_t f_N(t,\omega_4)\\
 &\le ce^{-Dt}u(\omega_4)-Cv(t,\omega_4)\\
 &\quad-\bigl[L_N(t)[f_N](\omega_4)
       -\omega_4^{-d}\ell_N(\omega_4)f_N(t,\omega_4)
       +ce^{-Dt}u(\omega_4)\bigr]\\
 &=-Cv(t,\omega_4)-L_N(t)[f_N](\omega_4)
       +\omega_4^{-d}\ell_N(\omega_4)f_N(t,\omega_4)\\
 &\le L_N(t)[v](\omega_4)-\omega_4^{-d}\ell_N(\omega_4)v(t,\omega_4)\\
 &\quad-L_N(t)[f_N](\omega_4)
       +\omega_4^{-d}\ell_N(\omega_4)f_N(t,\omega_4)\\
 &=L_N(t)[w](\omega_4)-\omega_4^{-d}\ell_N(\omega_4)w(t,\omega_4).
 \end{aligned}
\end{equation}
Thus, in both cases,
\begin{equation}\label{eq:cond-crude-comparison-assumption}
 w(s,\omega_4)\le0,
 \qquad
 \partial_t w(t,\omega_4)
 \le L_N(t)[w](\omega_4)
 -\omega_4^{-d}\ell_N(\omega_4)w(t,\omega_4).
\end{equation}

\smallskip
\noindent\textit{Step 2. The positive part of $w$.}
By \eqref{eq:cond-density-time-regularity},
$w\in C^1([s,T];L^1((0,\infty),\sqrt\omega d\omega))$, and hence
\begin{equation}\label{eq:cond-comparison-time-integrability}
 \begin{aligned}
 \int_s^T\int_0^\infty\sqrt{\omega_4}
 |\partial_\tau w(\tau,\omega_4)|d\omega_4d\tau\le(T-s)\sup_{\tau\in[s,T]}
 \int_0^\infty\sqrt{\omega_4}
 |\partial_\tau w(\tau,\omega_4)|d\omega_4<\infty.
 \end{aligned}
\end{equation}
The fundamental theorem of calculus in $L^1$ and Fubini's theorem
give an absolutely continuous representative of \(w\) in time such that,
for a.e.\ $\omega_4>0$ and every $t\in[s,T]$,
\begin{equation}\label{eq:cond-comparison-time-representative}
 w(t,\omega_4)=w(s,\omega_4)
       +\int_s^t\partial_\tau w(\tau,\omega_4)d\tau.
\end{equation}
By \eqref{eq:cond-cutoff-representative-independence}, this choice
does not change $L_N(t)[w]$ as a weighted $L^1$ class.
Hence 
\begin{equation}\label{eq:cond-comparison-positive-chain-rule}
 \begin{aligned}
 \partial_t w_+(t,\omega_4)
 &=\mathbf1_{\{w(t,\omega_4)>0\}}\partial_t w(t,\omega_4),\\
 |\partial_t w_+(t,\omega_4)|
 &\le|\partial_t w(t,\omega_4)|.
 \end{aligned}
\end{equation}
Using \eqref{eq:cond-crude-comparison-assumption} and
\eqref{eq:cond-jump-operator}, we obtain
\begin{equation}\label{eq:cond-comparison-positive-part-expanded}
 \begin{aligned}
 \partial_t w_+(t,\omega_4)
 &\le\mathbf1_{\{w(t,\omega_4)>0\}}L_N(t)[w](\omega_4)
       -\omega_4^{-d}\ell_N(\omega_4)w_+(t,\omega_4)\\
 &=\frac12\iint_{(0,\omega_4/2)^2}
 \frac{S_{\beta,N}(\omega_1,\omega_2,\omega_3,\omega_4)}
      {\sqrt{\omega_2\omega_3\omega_4}}
 \bigl[\mathbf1_{\{w(t,\omega_4)>0\}}w(t,\omega_1)-w_+(t,\omega_4)\bigr]
 g_t^N(d\omega_2)g_t^N(d\omega_3)\\
 &\quad-\omega_4^{-d}\ell_N(\omega_4)w_+(t,\omega_4).
 \end{aligned}
\end{equation}
Since $\mathbf1_{\{w(t,\omega_4)>0\}}w(t,\omega_1)\le w_+(t,\omega_1)$ and \(S_{\beta,N}\geq 0\),
\begin{equation}\label{eq:cond-comparison-positive-part}
 \begin{aligned}
 \partial_t w_+(t,\omega_4)
 &\le\frac12\iint_{(0,\omega_4/2)^2}
 \frac{S_{\beta,N}(\omega_1,\omega_2,\omega_3,\omega_4)}
      {\sqrt{\omega_2\omega_3\omega_4}}
 [w_+(t,\omega_1)-w_+(t,\omega_4)]\\
 &\quad\times g_t^N(d\omega_2)g_t^N(d\omega_3)
 -\omega_4^{-d}\ell_N(\omega_4)w_+(t,\omega_4)\\
 &\le\frac12\iint_{(0,\omega_4/2)^2}
 \frac{S_{\beta,N}(\omega_1,\omega_2,\omega_3,\omega_4)}
      {\sqrt{\omega_2\omega_3\omega_4}}
 w_+(t,\omega_1)g_t^N(d\omega_2)g_t^N(d\omega_3).
 \end{aligned}
\end{equation}

\smallskip
\noindent\textit{Step 3. The weighted integral of $w_+$.}
By \eqref{eq:cond-comparison-time-integrability}--\eqref{eq:cond-comparison-positive-chain-rule}
and Fubini's theorem,
\begin{equation}\label{eq:cond-comparison-integrated-chain-rule}
 \begin{aligned}
 \int_0^\infty\sqrt{\omega_4}w_+(t,\omega_4)d\omega_4
 -\int_0^\infty\sqrt{\omega_4}w_+(s,\omega_4)d\omega_4=\int_s^t\int_0^\infty\sqrt{\omega_4}
 \partial_\tau w_+(\tau,\omega_4)d\omega_4d\tau.
 \end{aligned}
\end{equation}
Thus the weighted integral is absolutely continuous in $t$.
For a.e.\ $t\in[s,T]$, \eqref{eq:cond-comparison-positive-part} gives
\begin{equation}\label{eq:cond-comparison-integral-bound}
 \begin{aligned}
 &\frac{d}{dt}\int_0^\infty\sqrt{\omega_4}w_+(t,\omega_4)d\omega_4
 =\int_0^\infty\sqrt{\omega_4}\,\partial_t w_+(t,\omega_4)d\omega_4\\
 &\quad\le\frac12\int_0^\infty\iint_{(0,\omega_4/2)^2}
 \frac{S_{\beta,N}(\omega_1,\omega_2,\omega_3,\omega_4)}
      {\sqrt{\omega_2\omega_3}}
 w_+(t,\omega_1)g_t^N(d\omega_2)g_t^N(d\omega_3)d\omega_4\\
 &\quad=\frac12\iint_{(0,\infty)^2}
 \int_{2(\omega_2\vee\omega_3)+\omega_3-\omega_2}^\infty
 K_{\beta,N}(\omega_1,\omega_2,\omega_3)
 \sqrt{\omega_1}w_+(t,\omega_1)d\omega_1
 g_t^N(d\omega_2)g_t^N(d\omega_3)\\
 &\quad\le\frac{N(2N)^{3/2}}2[g_t^N((0,\infty))]^2
 \int_0^\infty\sqrt{\omega_1}w_+(t,\omega_1)d\omega_1\\
 &\quad\le\frac{N(2N)^{3/2}}2
 \int_0^\infty\sqrt{\omega_4}w_+(t,\omega_4)d\omega_4.
 \end{aligned}
\end{equation}
Here we used Tonelli's theorem as in
\eqref{eq:cond-cutoff-gain-bound}.
Since $w_+(s,\cdot)=0$, Gronwall's inequality yields, for every $t\in[s,T]$,
\begin{equation}\label{eq:cond-crude-comparison-conclusion}
 \begin{aligned}
 0\le\int_0^\infty\sqrt{\omega_4}w_+(t,\omega_4)d\omega_4\le e^{N(2N)^{3/2}(t-s)/2}
 \int_0^\infty\sqrt{\omega_4}w_+(s,\omega_4)d\omega_4=0.
 \end{aligned}
\end{equation}
Hence $v(t,\cdot)\le f_N(t,\cdot)$ a.e.\ for every $t\in[s,T]$.
Substituting \eqref{eq:cond-sine-comparison-profiles} gives the two conclusions,
and $T>s$ is arbitrary in \textup{(i)}.
\end{proof}

\section{Finite-time condensation}\label{sec:condensation}
Throughout this section, $0<\beta<1/4$, $d=1-4\beta>0$ and \(0\leq g_0\in\mathcal M_1\).
We use the solutions constructed in \cref{thm:measure-main}
and \cref{sec:measure}, with the corresponding cutoff
solutions from \cref{prop:integrated-cutoff-existence}.

\begin{theorem}\label{thm:smooth-condensation}
Let $g_0\in\mathcal M_1$ be nonnegative with a nonzero absolutely
continuous part. The following estimates hold uniformly for all solutions
constructed in \cref{sec:measure} from $g_0$.
\begin{enumerate}[label=\textup{(\roman*)},leftmargin=*]
\item For every $0<\zeta<1$, there is a finite time
$T=T(\beta,g_0,\zeta)$ such that
\begin{equation}\label{eq:cond-general-conclusion}
 g_t(\{0\})\ge(1-\zeta)\mathcal N_0,
 \qquad t\ge T.
\end{equation}
\item There is a constant $C>0$, depending only on $\beta$ and
$g_0/\mathcal N_0$, such that
\begin{equation}\label{eq:cond-tv-rate}
 \|g_t-\mathcal N_0\delta_0\|_{\mathrm{TV}}
 \le C\mathcal N_0(1+\mathcal N_0^2t)^{-d/40},
 \qquad t\ge0.
\end{equation}
\end{enumerate}
\end{theorem}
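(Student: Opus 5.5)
By the homogeneity of $K_\beta$ under $g\mapsto\lambda g$, replacing $g_t$ by $\mathcal N_0^{-1}g_{t/\mathcal N_0^2}$ reduces both statements to the case $\mathcal N_0=1$. This is the normalization of \cref{sec:density-estimates}, and it accounts for the factor $\mathcal N_0^2t$ in \eqref{eq:cond-tv-rate}. All estimates will be proved for the cutoff solutions $g^N$, uniformly in $N$ large, and then transferred to any limit $g$ built in \cref{sec:measure} via \eqref{eq:uniform-narrow-convergence} and the Portmanteau theorem on closed sets $[0,r]$. Since $\mathcal N(t)=1$ and $g_t(\{0\})\ge\limsup_{r\downarrow 0}\ldots$ fails in general for narrow limits, I would instead prove upper bounds for $g_t^N([r,\infty))$. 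These pass to the limit through open sets $(r,\infty)$. I would then prove $g_t((0,r))\to0$ uniformly only after showing that the total mass in $(0,\infty)$ decays. The key intermediate quantity is therefore $m(t)=g_t((0,\infty))$, and \eqref{eq:cond-tv-rate} is equivalent to $m(t)\le C(1+t)^{-d/40}$, since $\|g_t-\delta_0\|_{\rm TV}=2m(t)$ when $\mathcal N(t)=1$.

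\emph{Step 1 (a persistent density bump).} Since $(g_0)_{\rm ac}\ne0$, there are $0<a<b$ and $c_0>0$ with $f_N(0,\cdot)\ge c_0u$, where $u$ is the sine profile of \cref{lem:cond-fixed-profile}. The constants $a,b,c_0$ depend only on $g_0$. \cref{lem:cond-sine-comparison}(i) then gives $f_N(t,\cdot)\ge c_0e^{-Ct}u$ for all $N\ge N_0(a,b)$. This lower bound decays exponentially, so by itself it only provides a quantitative positive density on a fixed time window $[0,1]$. To get a bound that does not degrade, I would feed it into the cubic gain $\mathcal G_N$ of \eqref{eq:cond-density-forced}. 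Three copies of the bump near $[a,b]$ produce a source $c\,e^{-Dt}u'$ on a lower interval $[a',b']$. \cref{lem:cond-sine-comparison}(ii) then propagates this lower bound, and iterating it yields quantitative density at every dyadic scale below $a$ on time intervals of controlled length.

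\emph{Step 2 (a Lyapunov functional driving mass to zero).} For $0<\theta\le1$ I would test with the concave function $\phi_\theta(\omega)=\omega^\theta\wedge R$, or equivalently with $-\phi_\theta$, which is convex. The symmetrization in \cref{lem:ordered} shows that $\frac{d}{dt}\int\phi_\theta\,dg_t^N\le0$. The dissipation is bounded below on the region $Lx\le y\le z$ by \cref{lem:kernel}(ii), since $G_3-G_2-G_1\ge c\,x^\beta y^{\beta+1/2}z^{2\beta-3/2}$ multiplies a strictly positive second difference of $\phi_\theta$. The mass at small $x$ is weighted by $x^{\beta}\cdot x^{\theta}$-type factors, and this is where $\beta<1/4$ enters. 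Using \cref{lem:cond-kernel-upper} (the bound $2^\beta y^{-d}$ and the potential bound \eqref{eq:cond-potential-negative-kernel}), I would show that the dissipation is bounded below by a power of $\int_{(0,\infty)}\omega^{\theta}dg_t^N$ times a factor involving the mass near $[a,b]$ provided by Step~1. Combined with the energy bound $\mathcal E\le\mathcal E_0$, this gives a differential inequality of the form $\dot Y\le -cY^{1+\kappa}$ for $Y=\int\phi_\theta\,dg_t^N$. Hence $Y(t)\lesssim(1+t)^{-1/\kappa}$.

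\emph{Step 3 (from moments to mass, and the rate).} The moment $Y$ controls $g_t^N([r,\infty))\le r^{-\theta}Y$. Positive mass in $(0,r)$ for small $r$ must be ruled out using the fact that, for $\beta<1/4$, the loss rate $\omega^{-d}$ of \eqref{eq:cond-density-forced} is integrable in the right sense: mass in $(0,r)$ is transferred into the atom at a rate independent of $N$. One way to see this is to test with $(1-\omega/r)_+$, which is convex, and to use \eqref{eq:compact-small-frequency}. The flux into $\{0\}$ survives in the limit precisely because $F_\beta[(1-\cdot/r)_+]$ does not vanish as $r\downarrow0$. This is the opposite of \cref{thm:action-preservation}. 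Optimizing $r$ against $t$, and $\theta$ against $d$, produces the exponent $d/40$. Statement (i) follows with $T_\zeta$ defined by $C(1+T_\zeta)^{-d/40}\le\zeta/2$.

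The main obstacle is Step~2, namely obtaining a coercive lower bound on the dissipation that is uniform in $N$ and does not decay along with the exponentially weakening bump of Step~1. I would first try to close a self-improving bootstrap: the mass remaining in $(0,\infty)$ regenerates density through the gain term $\mathcal G_N$ and \cref{lem:cond-sine-comparison}(ii), which in turn sustains the dissipation.
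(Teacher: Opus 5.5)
\textbf{There is a genuine gap: Step~2 cannot be closed, and the paper proves the theorem by a different mechanism.} No inequality $\dot Y\le -cY^{1+\kappa}$ can hold without extra input. The reason is that $\delta_{\omega_0}$ with $\omega_0>0$ is stationary, for the cutoff equations too: all inputs equal $\omega_0$, so $\omega_4=\omega_0$ and $\Delta\phi=0$. This state has $Y>0$ and zero dissipation. The dissipation of $\int(\omega^\theta\wedge R)\,dg_t^N$ must therefore be quantified through the small-frequency density seeded in Step~1, and that seed decays like $e^{-Dt}$. Used multiplicatively, it gives $\dot Y\le -c\,e^{-Dt}Y^{1+\kappa}$. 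The total dissipation is then finite, so $Y$ need not tend to zero.

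The missing idea is an instability argument. It uses \cref{lem:kernel}(ii) as a growth rate, not a dissipation bound: $G_3-G_2-G_1$ is the coefficient of $f_N(\omega_4)$ when $\omega_4$ is the smallest frequency.
\begin{itemize}
\item Suppose a fraction $\varepsilon$ of the mass stays in $[r,R]$ on $[s,s+\tau]$. Then at scale $a=r/(8L)$ one has $V_N\ge\lambda-Ca^{-d}J_N$ (\cref{lem:cond-potential-jump-lower}).
\item The convex hinge $(1-2\omega/r)_+$ gives the budget $\lambda\int_s^{s+\tau}J_N\le1$ (\cref{lem:cond-weighted-mass-integral}).
\item Hence the density at scale $a$ grows like $e^{\lambda(t-s)-Ca^{-d}/\lambda}$ times its initial size (\cref{lem:cond-density-amplification}). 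This contradicts $\int\sqrt\omega f_N\le1$ after a time of order $\lambda^{-1}(1+|\log r|+|\log c_0|)+a^{-d}\lambda^{-2}$ (\cref{lem:cond-smooth-transfer}).
\end{itemize}
The seed enters only through $|\log c_0|$. So an exponentially small seed $c_0=ce^{-Ds_n}$ costs only an additive $Ds_n$ in the stage duration, which removes exactly the obstacle you identify. Iterating over dyadic radii $R_n$ with the monotone hinges $\chi_n=(1-\omega/R_n)_+$ gives summable stage times and $1+T_\zeta\le C\zeta^{-40/d}$ (\cref{lem:cond-summable-transfer}). The rate $d/40$ is simply the inversion of this bound, not the output of a moment ODE.

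\textbf{Other steps also fail as written.}
\begin{itemize}
\item \emph{Step~1, initial bump.} A nonzero absolutely continuous part need not have density bounded below on any interval; it may be bounded below only on a nowhere dense set of positive measure. So $f_N(0,\cdot)\ge c_0u$ is unavailable. The paper gets an interval only at a positive time, using continuity of the triple convolution of $m\mathbf 1_E$ and uniform-in-$N$ continuity at $t=0$ (\cref{lem:cond-ac-initial-continuity,lem:cond-ac-interval}).
\item \emph{Step~1, iteration over scales.} Iterating the cubic gain scale by scale maps $(A,D)\mapsto(cA^3,3D)$, so it cannot reach infinitely many scales. The paper does finitely many expansions, then obtains a single source $\mathcal G_N\ge Be^{-Dt}\omega^\beta$ valid for all $\omega<\omega_*/8$. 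This yields $f_N\ge ce^{-Dt}\omega^{1-3\beta}$ uniformly in the scale (\cref{lem:cond-low-frequency-source,lem:cond-density-lower}).
\item \emph{Step~3, transfer into the atom.} Cutoff solutions conserve $g_t^N(\{0\})=g_0(\{0\})$, so there is no $N$-independent transfer into the atom.
\item \emph{Step~3, the flux claim.} $F_\beta[(1-\cdot/r)_+]\to0$ pointwise as $r\downarrow0$, because all test values vanish once $r$ is below the smallest of the four frequencies.
\item \emph{Step~3, decay of $Y$.} Decay of $Y$ alone gives no control of $g_t((0,r))$.
\end{itemize}
The correct limit passage is simpler. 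The bounds $\int\chi_n\,dg_t^N\ge m_n$ for $t\ge s_n$, uniform in $N\ge\widehat N_n$, transfer to $g_t$ because $\chi_n$ is bounded and continuous. Then $\chi_n\downarrow\mathbf 1_{\{0\}}$ gives $g_t(\{0\})\ge\lim_n m_n$.
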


For the rest of this section, we may assume without loss of generality that $\mathcal N_0=1$.
Indeed, let $g_0$ have wave
action $\mathcal N_0>0$, and let $g^N$ be the corresponding
cutoff solution from \cref{sec:measure}. Define
\begin{equation}\label{eq:cond-mass-normalization}
 \widetilde g_\tau^N
 =\mathcal N_0^{-1}g_{\tau/\mathcal N_0^2}^N,
 \qquad
 \widetilde g_0=\mathcal N_0^{-1}g_0.
\end{equation}
The mass conservation in \cref{prop:integrated-cutoff-existence} then gives
\begin{equation}\label{eq:cond-normalized-unit-mass}\int_{[0,\infty)}\widetilde{g}^N_\tau(d\omega)=\mathcal N_0^{-1}\int_{[0,\infty)}g^N_{\tau/\mathcal{N}_0^2}(d\omega)=\mathcal N_0^{-1}\int_{[0,\infty)}g^N_0(d\omega)=1,\qquad \tau\geq 0.\end{equation}
Besides, \(0\leq \widetilde g_0\in\mathcal M_1\) and has a nonzero absolutely continuous part.

We check that $\widetilde g^N$ satisfies the same cutoff
equation \eqref{eq:cutoff-weak} with initial value \(\widetilde g_0\). For every $\phi\in\operatorname{Lip}_c([0,\infty))$,
the chain rule and \eqref{eq:cutoff-weak} give
\begin{align}
 \dfrac{d}{d\tau}
 \int_{[0,\infty)}\phi(\omega) \widetilde g_\tau^N(d\omega)
 &=
 \frac1{\mathcal N_0^3}
 \left.
 \dfrac{d}{dt}\right|_{t=\tau/\mathcal N_0^2}\int_{[0,\infty)}\phi(\omega) g_t^N(d\omega)
 \notag\\
 &=
 \frac1{4\mathcal N_0^3}
 \iiint_{[0,\infty)^3}
 K_{\beta,N}\Delta\phi 
 d\bigl(g_{\tau/\mathcal N_0^2}^N\bigr)^{\otimes3}
 \notag\\
 &=
 \frac14\iiint_{[0,\infty)^3}
 K_{\beta,N}\Delta\phi 
 d(\widetilde g_\tau^N)^{\otimes3}.
 \label{eq:cond-normalized-equation}
\end{align}

Set $\widetilde g_\tau
=\mathcal N_0^{-1}g_{\tau/\mathcal N_0^2}$.
By \eqref{eq:uniform-narrow-convergence}, there is a subsequence
$(N_j)$ such that, for every $\phi\in C_b([0,\infty))$ and $T>0$, as \(j\to\infty,\)
\begin{align}
 \sup_{0\le\tau\le T}
 \left|\int_{[0,\infty)}\phi(\omega)
 \bigl[\widetilde g_\tau^{N_j}(d\omega)
       -\widetilde g_\tau(d\omega)\bigr]\right|=
 \mathcal N_0^{-1}
 \sup_{0\le t\le T/\mathcal N_0^2}
 \left|\int_{[0,\infty)}\phi(\omega)
 \bigl[g_t^{N_j}(d\omega)-g_t(d\omega)\bigr]\right|
 \to 0.
 \label{eq:cond-normalized-convergence}
\end{align}
Moreover, mass and energy conservation in \cref{prop:integrated-cutoff-existence} 
give
\begin{equation}\label{eq:cond-normalized-moment-bound}
 \int_{[0,\infty)}(1+\omega) 
 \widetilde g_\tau^{N_j}(d\omega)
 =1+\frac{\mathcal E_0}{\mathcal N_0},
 \qquad \tau\ge0,\quad j\ge1.
\end{equation}
Since each $\widetilde g^{N_j}$ satisfies \eqref{eq:cutoff-weak}
with initial measure $\widetilde g_0$, the passage-to-the-limit argument in Step~3 of the proof
of \cref{thm:measure-main} applies to this sequence and shows
that $\widetilde g$ satisfies \eqref{eq:weak}.
Consequently, $\widetilde g$ is a solution constructed in
\cref{sec:measure} from $\widetilde g_0$.

Suppose that \cref{thm:smooth-condensation} for \(\mathcal N_0=1\) gives
$\widetilde g_\tau(\{0\})\ge1-\zeta$ for all
$\tau\ge\widetilde T$. Returning to the original variables yields
\begin{equation}\label{eq:cond-normalization-condensate}
 g_t(\{0\})
 =\mathcal N_0\widetilde g_{\mathcal N_0^2t}(\{0\})
 \ge(1-\zeta)\mathcal N_0,
 \qquad t\ge\widetilde T/\mathcal N_0^2.
\end{equation}
Besides,
\begin{equation}\label{eq:cond-normalization-tv}
 \|g_t-\mathcal N_0\delta_0\|_{\mathrm{TV}}
 =\mathcal N_0
 \|\widetilde g_{\mathcal N_0^2t}-\delta_0\|_{\mathrm{TV}},
\end{equation}
and
\begin{equation}\label{eq:cond-normalization-tv-equivalence}
    \|g_t-\mathcal N_0\delta_0\|_{\rm TV}\to 0\iff\|\widetilde{g_t}-\delta_0\|_{\rm TV}\to 0.
\end{equation}
It therefore suffices to prove the assertions for \(\mathcal N_0=1\).
\subsection{Positive density on an interval}
\label{subsec:cond-positive-interval}

\begin{lemma}
\label{lem:cond-ac-initial-continuity}
Suppose that $\mathcal N_0=1$ and $g_0$ has a nonzero absolutely
continuous part. There exist $r,m>0$ and a measurable set
$E\subset[r,5r/4]$ with $|E|>0$ such that
$f_N(0,\cdot)\ge m\mathbf1_E$ for every $N\ge1$.
With $w_N(t,\omega)=f_N(t,\omega)\wedge m\mathbf1_E(\omega)$,
\[
 \lim_{\tau\downarrow0}\sup_{N\ge1}\sup_{0\le t\le\tau}
 \|w_N(t,\cdot)-m\mathbf1_E\|_{L^1(\mathbb R)}=0.
\]
\end{lemma}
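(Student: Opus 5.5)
Two facts about the initial density drive the argument. Since $g^N_0=g_0$ for every $N$, the function $f_N(0,\cdot)=f_0$ is the same for all $N$, where $(g_0)_{\mathrm{ac}}=\sqrt\omega f_0\,d\omega$. Since $(g_0)_{\rm ac}\neq0$, the set $\{f_0>0\}\subset(0,\infty)$ has positive Lebesgue measure.

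\emph{Choice of $r,m,E$.} Pick $m>0$ so that $\{f_0\ge m\}$ still has positive measure; this is possible by monotone convergence as $m\downarrow0$. Cover $(0,\infty)$ by the countably many intervals $[(5/4)^k,(5/4)^{k+1}]$, $k\in\mathbb Z$. One of them, say with $r=(5/4)^k$, meets $\{f_0\ge m\}$ in a set of positive measure. Set $E=\{f_0\ge m\}\cap[r,5r/4]$. Then $f_N(0,\cdot)=f_0\ge m\mathbf1_E$ for all $N$, and $w_N(0,\cdot)=m\mathbf1_E$.

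\emph{Uniform-in-$N$ continuity.} Since $0\le w_N\le m\mathbf1_E$, we have
\[
\|w_N(t)-m\mathbf1_E\|_{L^1}=\int_E\bigl(m-f_N(t,\omega)\bigr)_+\,d\omega .
\]
The pointwise bound $(m-f_N(t))_+\le(f_0-f_N(t))_+$ holds on $E$, because $f_0\ge m$ there. Using $\sqrt\omega\ge\sqrt r$ on $E$, this gives
\[
\|w_N(t)-m\mathbf1_E\|_{L^1}\le r^{-1/2}\int_E\sqrt\omega\,(f_0-f_N(t))_+\,d\omega
\le r^{-1/2}\bigl((g_0)_{\rm ac}-(g^N_t)_{\rm ac}\bigr)_+(E).
\]
So the task reduces to controlling how much absolutely continuous mass on $E$ can be lost by time $t$, uniformly in $N$. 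The natural route is to use the loss part of the cutoff equation. Write $\partial_tg^N=\mathcal I_N^+(g^N)-L_N[g^N]g^N$, where $L_N[g]$ is the loss rate from \cref{prop:integrated-cutoff-existence}, and note $\mathcal I^+_N\ge0$. Then
\[
\frac{d}{dt}g^N_t\ \ge\ -L_N[g^N_t]\,g^N_t \qquad\text{as measures.}
\]
Taking absolutely continuous parts gives $\partial_tf_N\ge-L_N[g^N_t](\omega)f_N$.

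\emph{Bounding $L_N$ uniformly in $N$ on $E$.} For $\omega_1\in[r,5r/4]$, \cref{lem:cond-kernel-upper}(ii) gives $K_{\beta,N}(\omega_1,\omega_2,\omega_3)\le 2^\beta(\omega_1\omega_j)^{-d/2}$ for either $j\in\{2,3\}$. Combine this with \eqref{eq:condensation-kernel-bound}, which bounds the kernel by the median to the power $-d$. If $\omega_1$ is the median or the maximum, the kernel is at most $2^\beta r^{-d}$. If $\omega_1$ is the minimum, the median is at least $r$, so again the bound is $2^\beta r^{-d}$. Hence $K_{\beta,N}\le 2^\beta\omega_1^{-d}\wedge\ldots\le 2^\beta r^{-d}$ when $\omega_1\ge r$. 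With unit mass this yields
\[
0\le L_N[g^N_t](\omega)\le \Lambda:=2^{\beta-1}r^{-d}\qquad\text{on }E,\ \text{for all }N,t .
\]
The loss then gives $f_N(t,\omega)\ge e^{-\Lambda t}f_0(\omega)$ for a.e.\ $\omega\in E$. To justify this, apply the $L^1$-in-time representative argument of \cref{lem:cond-sine-comparison}, Step 2, to $e^{\Lambda t}f_N-f_0$ restricted to $E$; its time derivative is nonnegative there. Consequently $(f_0-f_N(t))_+\le(1-e^{-\Lambda t})f_0$ on $E$, and
\[
\|w_N(t)-m\mathbf1_E\|_{L^1}\le\int_E(m-e^{-\Lambda t}f_0)_+\,d\omega\le m|E|(1-e^{-\Lambda t}),
\]
which tends to $0$ as $t\downarrow0$ uniformly in $N$.

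The main obstacle is the bound on $L_N$. The median bound alone fails when $\omega_1$ is the maximum and the other two frequencies are tiny, and this case is handled by the kernel bound with the pair $(\omega_1,\omega_j)$ together with $\omega_1\ge r$. If that case analysis falls short, I would instead use \eqref{eq:cond-potential-negative-kernel} with $\omega=\omega_1$, which bounds the symmetrized kernel by $2^{3/2}\omega^{-d}$ when one input is at least $\omega/2$. The remaining region, where both other inputs lie below $\omega_1/2$, is controlled by the median bound with median at most $\omega_1/2$; here one uses that $K_{\beta,N}$ then contains the factor $\min\sqrt{\omega_j}$ coming from the fourth frequency.
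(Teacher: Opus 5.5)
Your choice of $r,m,E$ and the reduction to $\|w_N(t)-m\mathbf 1_E\|_{L^1}=\int_E(m-f_N(t,\omega))_+\,d\omega$ are fine and agree with the paper. The step that carries your argument, however, is false. That step is an $N$-independent bound $\Lambda$ on the loss rate $L_N[g_t^N]$ over $E$.

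The claim ``$K_{\beta,N}\le2^\beta r^{-d}$ whenever $\omega_1\ge r$'' fails exactly in the case you flagged: $\omega_1$ is the largest frequency and $\omega_2,\omega_3$ are small. For $0<y<r$ and $N$ large enough that no cutoff is active, one has $\omega_4=r$ and
\[
K_{\beta,N}(r,y,y)=K_\beta(r,y,y)=\frac{(r^2y^2)^\beta\sqrt y}{\sqrt r\,y}=(ry)^{-d/2}\longrightarrow\infty\qquad(y\downarrow0).
\]
Neither of your bounds rescues this case:
\begin{itemize}
\item The pair bound $2^\beta(\omega_1\omega_j)^{-d/2}$ and the median bound $2^\beta y^{-d}$ both blow up as the partners shrink.
\item In the fallback region $\omega_2,\omega_3<\omega_1/2$, the smallest frequency is $\min\{\omega_2,\omega_3\}$, not $\omega_4$. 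That factor is precisely what produces $K_\beta\approx\omega_1^{-d/2}x^\beta y^{\beta-1/2}$, with $x\le y$ the two partners.
\end{itemize}
Consequently
\[
L_N[g_t^N](\omega)\gtrsim\omega^{-d/2}\iint_{(0,\omega/2)^2}x^\beta y^{\beta-1/2}\ell_N\ell_N\,dg_t^N\,dg_t^N,
\]
and mass and energy do not control the right-hand side. For example, let $g_0$ carry, besides its absolutely continuous part, atoms of mass proportional to $2^{-kd/4}$ at the points $2^{-k}$. Then the diagonal terms alone make this quantity of order $\log N$ already at $t=0$. So no uniform $\Lambda$ exists, and $f_N(t)\ge e^{-\Lambda t}f_0$ on $E$ gives nothing uniform in $N$.

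The missing idea is that this singular loss must stay paired with the gain from the same collisions, and the gain--loss difference can only be controlled in $L^1$ against smooth functions. The paper isolates the collisions with $(\omega_2,\omega_3)\in(0,r/8)^2$ in the jump operator
\[
L_N^r(t)[\phi](\omega_4)=\frac12\iint_{(0,r/8)^2}\frac{S_{\beta,N}(\omega_1,\omega_2,\omega_3,\omega_4)}{\sqrt{\omega_2\omega_3\omega_4}}\,[\phi(\omega_1)-\phi(\omega_4)]\,g_t^N(d\omega_2)g_t^N(d\omega_3).
\]
It bounds only the remaining loss by the median estimate, which gives $\partial_tf_N\ge L_N^r[f_N]-2^{\beta-1}(8/r)^df_N$ on $(r/2,2r)$. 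It then uses two properties of $L_N^r$:
\begin{itemize}
\item Its $\sqrt\omega$-weighted integral vanishes.
\item For $\phi\in C_c^\infty((r/2,2r))$, the jump $|\omega_1-\omega_4|=|\omega_3-\omega_2|\le y$ compensates the singular rate, because $x^\beta y^{\beta-1/2}(y-x)\le y^{2\beta+1/2}\le(r/8)^{2\beta+1/2}$. This yields $\int\sqrt\omega\,|L_N^r[\phi]|\,d\omega\le C_r\|\phi'\|_{L^1}$ uniformly in $N$.
\end{itemize}
A positive-part comparison for $(\phi-f_N)_+$ then gives
\[
\int\sqrt\omega\,(\phi-f_N(t))_+\,d\omega\le\int\sqrt\omega\,(\phi-f_N(0))_+\,d\omega+C_{r,\phi}\,t .
\]
Choosing $\phi$ close to $m\mathbf 1_E$ in $L^1$ finishes the proof. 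This is why the lemma is an $L^1$ statement about the truncation $w_N$, not a pointwise lower bound on $E$.
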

\begin{proof}
\noindent\textit{Step 1. A fixed frequency interval.}
By \eqref{eq:cond-density-definition}, $f_N(0,\cdot)$ is independent of
$N$ and
\begin{equation}\label{eq:cond-ac-positive-initial-mass}
 \int_0^\infty \sqrt\omega\,f_N(0,\omega) d\omega>0.
\end{equation}
Since $(0,\infty)=\bigcup_{j\in\mathbb Z}[(5/4)^j,(5/4)^{j+1}]$,
there are $r,m>0$ such that
\begin{equation}\label{eq:cond-ac-initial-set}
 E=\{\omega\in[r,5r/4]:f_N(0,\omega)\ge m\},
 \qquad |E|>0,
 \qquad f_N(0,\omega)\ge m\mathbf1_E(\omega).
\end{equation}
Define
\begin{equation}\label{eq:cond-ac-fixed-operator}
 L_N^r(t)[\phi](\omega_4)
 =\frac12\iint_{(0,r/8)^2}
 \frac{S_{\beta,N}(\omega_1,\omega_2,\omega_3,\omega_4)}
      {\sqrt{\omega_2\omega_3\omega_4}}
 [\phi(\omega_1)-\phi(\omega_4)]
  g_t^N(d\omega_2)g_t^N(d\omega_3).
\end{equation}
For $\omega_4\in(r/2,2r)$, discarding $\mathcal G_N\ge0$ in
\eqref{eq:cond-output-gain} and using \eqref{eq:cond-loss-median} give
\begin{align}
 &\partial_t f_N(t,\omega_4)
 \ge L_N(t)[f_N(t,\cdot)](\omega_4)
       +V_N(t,\omega_4)f_N(t,\omega_4)\notag\\
 &\quad= L_N^r(t)[f_N(t,\cdot)](\omega_4)\notag\\*
 &\qquad+\frac12
 \iint_{(0,\omega_4/2)^2\setminus(0,r/8)^2}
 \frac{S_{\beta,N}}{\sqrt{\omega_2\omega_3\omega_4}}
 f_N(t,\omega_1)g_t^N(d\omega_2)g_t^N(d\omega_3)\notag\\*
 &\qquad+\frac{f_N(t,\omega_4)}4
 \iint_{(0,\infty)^2}
 \frac{S_{\beta,N}(\omega_1,\omega_2,\omega_1+\omega_2-\omega_4,\omega_4)}
      {\sqrt{\omega_1\omega_2\omega_4}}
 g_t^N(d\omega_1)g_t^N(d\omega_2)\notag\\*
 &\qquad-\frac{f_N(t,\omega_4)}2
 \iint_{(0,\omega_4/2)^2\setminus(0,r/8)^2}
 \frac{S_{\beta,N}}{\sqrt{\omega_2\omega_3\omega_4}}
 g_t^N(d\omega_2)g_t^N(d\omega_3)\notag\\*
 &\qquad-\frac{f_N(t,\omega_4)}2
 \iint_{(0,\infty)^2\setminus(0,\omega_4/2)^2}
 \frac{S_{\beta,N}}{\sqrt{\omega_2\omega_3\omega_4}}
 g_t^N(d\omega_2)g_t^N(d\omega_3)\notag\\
 &\quad\ge L_N^r(t)[f_N(t,\cdot)](\omega_4)\notag\\*
 &\qquad-\frac{f_N(t,\omega_4)}2
 \iint_{(0,\infty)^2\setminus(0,r/8)^2}
 \frac{S_{\beta,N}}{\sqrt{\omega_2\omega_3\omega_4}}
 g_t^N(d\omega_2)g_t^N(d\omega_3)\notag\\
 &\quad\ge L_N^r(t)[f_N(t,\cdot)](\omega_4)-2^{\beta-1}(8/r)^d[g_t^N((0,\infty))]^2
 f_N(t,\omega_4)\notag\\
 &\quad\ge L_N^r(t)[f_N(t,\cdot)](\omega_4)
 -2^{\beta-1}(8/r)^d f_N(t,\omega_4).
 \label{eq:cond-ac-local-evolution}
\end{align}

\smallskip
\noindent\textit{Step 2. Two estimates for $L_N^r(t)$.}
Let $0\le\phi\in L^1((0,\infty),\sqrt\omega d\omega)$.
By the symmetry of $S_{\beta,N}$,
\begin{equation}\label{eq:cond-ac-weighted-gain-loss}
 \begin{aligned}
 &\int_0^\infty\iint_{(0,r/8)^2}
 \frac{S_{\beta,N}(\omega_3+\omega_4-\omega_2,
                   \omega_2,\omega_3,\omega_4)}
      {\sqrt{\omega_2\omega_3}}
 \phi(\omega_3+\omega_4-\omega_2)
  g_t^N(d\omega_2)g_t^N(d\omega_3)d\omega_4\\
 &=\iint_{(0,r/8)^2}\int_{\omega_3-\omega_2}^{\infty}
 \frac{S_{\beta,N}(\omega_1,\omega_2,\omega_3,
                   \omega_1+\omega_2-\omega_3)}
      {\sqrt{\omega_2\omega_3}}
 \phi(\omega_1) d\omega_1 
 g_t^N(d\omega_2)g_t^N(d\omega_3)\\
 &=\iint_{(0,r/8)^2}\int_0^{\infty}
 \frac{S_{\beta,N}(\omega_1,\omega_3,\omega_2,
                   \omega_1+\omega_3-\omega_2)}
      {\sqrt{\omega_2\omega_3}}
 \phi(\omega_1) d\omega_1 
 g_t^N(d\omega_2)g_t^N(d\omega_3)\\
 &=\int_0^\infty\iint_{(0,r/8)^2}
 \frac{S_{\beta,N}(\omega_3+\omega_4-\omega_2,
                   \omega_2,\omega_3,\omega_4)}
      {\sqrt{\omega_2\omega_3}}
 \phi(\omega_4)
  g_t^N(d\omega_2)g_t^N(d\omega_3)d\omega_4.
 \end{aligned}
\end{equation}
By \eqref{eq:cond-cutoff-basic-bounds}, their common value satisfies
\begin{equation}\label{eq:cond-ac-fixed-loss-bound}
 \begin{aligned}
 &\int_0^\infty\iint_{(0,r/8)^2}
 \frac{S_{\beta,N}(\omega_3+\omega_4-\omega_2,
                   \omega_2,\omega_3,\omega_4)}
      {\sqrt{\omega_2\omega_3}}
 \phi(\omega_4)
 g_t^N(d\omega_2)g_t^N(d\omega_3)d\omega_4\\
 &\quad\le N(2N)^{3/2}[g_t^N((0,r/8))]^2
 \int_0^\infty\sqrt{\omega_4}\phi(\omega_4)d\omega_4\\
 &\quad\le N(2N)^{3/2}
 \int_0^\infty\sqrt{\omega_4}\phi(\omega_4)d\omega_4<\infty.
 \end{aligned}
\end{equation}
Thus both integrals are finite, and
\begin{equation}\label{eq:cond-ac-weighted-cancellation}
 \begin{aligned}
 &\int_0^\infty\sqrt{\omega_4}
 L_N^r(t)[\phi](\omega_4)d\omega_4\\
 &=\frac12\int_0^\infty\iint_{(0,r/8)^2}
 \frac{S_{\beta,N}(\omega_1,\omega_2,\omega_3,\omega_4)}
      {\sqrt{\omega_2\omega_3}}
 [\phi(\omega_1)-\phi(\omega_4)]
 g_t^N(d\omega_2)g_t^N(d\omega_3)d\omega_4\\
 &=\frac12\int_0^\infty\iint_{(0,r/8)^2}
 \frac{S_{\beta,N}(\omega_1,\omega_2,\omega_3,\omega_4)}
      {\sqrt{\omega_2\omega_3}}
 \phi(\omega_1)g_t^N(d\omega_2)g_t^N(d\omega_3)d\omega_4\\
 &\quad-\frac12\int_0^\infty\iint_{(0,r/8)^2}
 \frac{S_{\beta,N}(\omega_1,\omega_2,\omega_3,\omega_4)}
      {\sqrt{\omega_2\omega_3}}
 \phi(\omega_4)g_t^N(d\omega_2)g_t^N(d\omega_3)d\omega_4\\
 &=0.
 \end{aligned}
\end{equation}

Now take $\phi\in C_c^\infty((r/2,2r))$. Since
$\omega_1-\omega_4=\omega_3-\omega_2$,
\begin{equation}\label{eq:cond-ac-translation-estimate}
 \int_{\mathbb R}|\phi(\omega_1)-\phi(\omega_4)| d\omega_4
 \le |\omega_3-\omega_2|\,\|\phi'\|_{L^1(\mathbb R)}.
\end{equation}
For $(\omega_2,\omega_3)\in(0,r/8)^2$, set
$(x,y,z)=\operatorname{sort}(\omega_2,\omega_3,\omega_4)$.
If $\phi(\omega_1)-\phi(\omega_4)\ne0$, at least one of
$\omega_1,\omega_4$ lies in $(r/2,2r)$.
Since $|\omega_1-\omega_4|=|\omega_3-\omega_2|<r/8$, they\ 
both lie in
$(r/2-r/8,2r+r/8)=(3r/8,17r/8)$.
Hence $z=\omega_4$, $x=\omega_2\wedge\omega_3$ and
$y=\omega_2\vee\omega_3$ on the support. In particular,
$x,y$ are independent of $\omega_4$ if \((\omega_2,\omega_3)\in (0,r/8)^2\).
By \eqref{eq:cutoff} and the definition of $S_\beta$,
\begin{equation}\label{eq:cond-ac-translation-kernel}
 \begin{gathered}
 0<x\le y<r/8<3r/8<\omega_1\wedge z,
 \qquad \min\{\omega_1,\omega_2,\omega_3,\omega_4\}=x,\\[0.7ex]
 \begin{aligned}
 \frac{S_{\beta,N}(\omega_1,\omega_2,\omega_3,\omega_4)}
      {\sqrt{\omega_2\omega_3}}
 &\le\frac{(\omega_1\omega_4xy)^\beta\sqrt{x}}{\sqrt{xy}}
 \le(17r/8)^{2\beta}x^\beta y^{\beta-1/2}
 \le(3r)^{2\beta}x^\beta y^{\beta-1/2},\\[0.7ex]
 x^\beta y^{\beta-1/2}|\omega_3-\omega_2|
 &=x^\beta y^{\beta-1/2}(y-x)\le x^\beta y^{\beta+1/2}
 \le y^{2\beta+1/2}
 \le(r/8)^{2\beta+1/2}.
 \end{aligned}
 \end{gathered}
\end{equation}
Consequently,
\begin{align}
 &\int_0^\infty\sqrt{\omega_4}
 |L_N^r(t)[\phi](\omega_4)|d\omega_4\notag\\*
 &\quad\le\frac12\int_0^\infty\iint_{(0,r/8)^2}
 \frac{S_{\beta,N}(\omega_1,\omega_2,\omega_3,\omega_4)}
      {\sqrt{\omega_2\omega_3}}
 |\phi(\omega_1)-\phi(\omega_4)|
 g_t^N(d\omega_2)g_t^N(d\omega_3)d\omega_4\notag\\
 &\quad=\frac12\iint_{(0,r/8)^2}\int_0^\infty
 \frac{S_{\beta,N}(\omega_1,\omega_2,\omega_3,\omega_4)}
      {\sqrt{\omega_2\omega_3}}
 |\phi(\omega_1)-\phi(\omega_4)|d\omega_4
 g_t^N(d\omega_2)g_t^N(d\omega_3)\notag\\
 &\quad\le\frac{(3r)^{2\beta}}2
 \iint_{(0,r/8)^2}x^\beta y^{\beta-1/2}
 \left(\int_0^\infty
 |\phi(\omega_1)-\phi(\omega_4)|d\omega_4\right)
 g_t^N(d\omega_2)g_t^N(d\omega_3)\notag\\
 &\quad\le\frac{(3r)^{2\beta}}2
 \iint_{(0,r/8)^2}x^\beta y^{\beta-1/2}
 \left(\int_{\mathbb R}
 |\phi(\omega_1)-\phi(\omega_4)|d\omega_4\right)
 g_t^N(d\omega_2)g_t^N(d\omega_3)\notag\\
 &\quad\le\frac{(3r)^{2\beta}}2\|\phi'\|_{L^1(\mathbb R)}
 \iint_{(0,r/8)^2}x^\beta y^{\beta-1/2}
 |\omega_3-\omega_2|g_t^N(d\omega_2)g_t^N(d\omega_3)\notag\\
 &\quad\le\frac{(3r)^{2\beta}}2(r/8)^{2\beta+1/2}
 \|\phi'\|_{L^1(\mathbb R)}
 \iint_{(0,r/8)^2}g_t^N(d\omega_2)g_t^N(d\omega_3)\notag\\
 &\quad=\frac{(3r)^{2\beta}}2(r/8)^{2\beta+1/2}
 [g_t^N((0,r/8))]^2\|\phi'\|_{L^1(\mathbb R)}
 \le\frac{(3r)^{2\beta}}2(r/8)^{2\beta+1/2}
 \|\phi'\|_{L^1(\mathbb R)}.
 \label{eq:cond-ac-smooth-bound}
\end{align}

\smallskip
\noindent\textit{Step 3. Uniform continuity at the initial time.}
Set
\begin{equation}\label{eq:cond-ac-bounded-density}
 w_N(t,\omega)=f_N(t,\omega) \wedge m\mathbf1_E(\omega).
\end{equation}
Then $0\le w_N\le m\mathbf1_E$ and
$w_N(0,\omega)=m\mathbf1_E(\omega)$. Fix
$0\le\phi\in C_c^\infty((r/2,2r))$. By \eqref{eq:cond-ac-fixed-operator},
\begin{equation}\label{eq:cond-ac-positive-part}
 \mathbf1_{\{\phi>f_N\}}L_N^r(t)[\phi-f_N]
 \le L_N^r(t)[(\phi-f_N)_+]
 \qquad\text{a.e. on }(0,\infty).
\end{equation}
On $\{\phi>f_N\}$, we have $0\le f_N<\phi$, so
$\omega_4\in\operatorname{supp}\phi\Subset(r/2,2r)$ and
\eqref{eq:cond-ac-local-evolution} applies.
Using the time representatives and positive-part chain rule in
\eqref{eq:cond-comparison-time-representative}--\eqref{eq:cond-comparison-positive-chain-rule} and \eqref{eq:cond-ac-positive-part}, 
for a.e.\ $(t,\omega_4)$,
\begin{equation}\label{eq:cond-ac-positive-part-differential}
 \begin{aligned}
 \partial_t(\phi-f_N)_+
 &=\mathbf1_{\{\phi>f_N\}}\partial_t(\phi-f_N)
 =-\mathbf1_{\{\phi>f_N\}}\partial_t f_N\\
 &\le-\mathbf1_{\{\phi>f_N\}}L_N^r(t)[f_N]
       +2^{\beta-1}(8/r)^d\mathbf1_{\{\phi>f_N\}}f_N\\
 &=\mathbf1_{\{\phi>f_N\}}L_N^r(t)[\phi-f_N]
   +\mathbf1_{\{\phi>f_N\}}
     \bigl(-L_N^r(t)[\phi]+2^{\beta-1}(8/r)^d f_N\bigr)\\
 &\le L_N^r(t)[(\phi-f_N)_+]
   +\mathbf1_{\{\phi>f_N\}}
     \bigl(|L_N^r(t)[\phi]|+2^{\beta-1}(8/r)^d\phi\bigr)\\
 &\le L_N^r(t)[(\phi-f_N)_+]
       +|L_N^r(t)[\phi]|+2^{\beta-1}(8/r)^d\phi.
 \end{aligned}
\end{equation}
The same chain rule and Fubini's theorem give, for a.e.\ $t\ge0$,
\begin{equation}\label{eq:cond-ac-positive-part-time}
 \begin{aligned}
 &\frac{d}{dt}\int_0^\infty\sqrt{\omega_4}
 (\phi-f_N)_+(t,\omega_4)d\omega_4\\
 &=\int_0^\infty\sqrt{\omega_4}
 \partial_t(\phi-f_N)_+(t,\omega_4)d\omega_4\\
 &\le\int_0^\infty\sqrt{\omega_4}
 L_N^r(t)[(\phi-f_N)_+](\omega_4)d\omega_4\\
 &\quad+\int_0^\infty\sqrt{\omega_4}
 |L_N^r(t)[\phi](\omega_4)|d\omega_4
 +2^{\beta-1}(8/r)^d
 \int_0^\infty\sqrt{\omega_4}\phi(\omega_4)d\omega_4\\
 &=\int_0^\infty\sqrt{\omega_4}
 |L_N^r(t)[\phi](\omega_4)|d\omega_4
 +2^{\beta-1}(8/r)^d
 \int_0^\infty\sqrt{\omega_4}\phi(\omega_4)d\omega_4\\
 &\le\frac{(3r)^{2\beta}}2(r/8)^{2\beta+1/2}
 \|\phi'\|_{L^1(\mathbb R)}+2^{\beta-1}(8/r)^d
 \int_0^\infty\sqrt{\omega_4}\phi(\omega_4)d\omega_4.
 \end{aligned}
\end{equation}
The second equality uses \eqref{eq:cond-ac-weighted-cancellation}, and the
last inequality uses \eqref{eq:cond-ac-smooth-bound}.
Since $w_N=f_N\wedge m\mathbf1_E$ and
$f_N(0,\cdot)\ge m\mathbf1_E$, we have
\begin{equation}\label{eq:cond-ac-truncation-bridge}
 \begin{aligned}
 0\le m\mathbf1_E-w_N(t,\cdot)
 &=(m\mathbf1_E-f_N(t,\cdot))_+\le|m\mathbf1_E-\phi|+(\phi-f_N(t,\cdot))_+,\\
 (\phi-f_N(0,\cdot))_+
 &\le(\phi-m\mathbf1_E)_+\le|\phi-m\mathbf1_E|.
 \end{aligned}
\end{equation}
Since $m\mathbf1_E-w_N$ vanishes outside $E\subset[r,5r/4]$,
using \eqref{eq:cond-ac-truncation-bridge} and integrating
\eqref{eq:cond-ac-positive-part-time} from $0$ to $t$, we obtain
\begin{equation}\label{eq:cond-ac-initial-continuity-estimate}
 \begin{aligned}
 &\sqrt r\,\|w_N(t,\cdot)-m\mathbf1_E\|_{L^1(\mathbb R)}
 =\sqrt r\int_E[m-w_N(t,\omega)]d\omega\\
 &\quad\le\int_0^\infty\sqrt\omega
 [m\mathbf1_E(\omega)-w_N(t,\omega)]d\omega\\
 &\quad\le\int_0^\infty\sqrt\omega
 |m\mathbf1_E(\omega)-\phi(\omega)|d\omega
 +\int_0^\infty\sqrt\omega
 (\phi(\omega)-f_N(t,\omega))_+d\omega\\
 &\quad\le\int_0^\infty\sqrt\omega
 |m\mathbf1_E(\omega)-\phi(\omega)|d\omega
 +\int_0^\infty\sqrt\omega
 (\phi(\omega)-f_N(0,\omega))_+d\omega\\
 &\qquad+\int_0^t\left[
 \frac{(3r)^{2\beta}}2(r/8)^{2\beta+1/2}
 \|\phi'\|_{L^1(\mathbb R)}
 +2^{\beta-1}(8/r)^d\int_0^\infty\sqrt\omega\,\phi(\omega)d\omega
 \right]ds\\
 &\quad\le2\int_0^\infty\sqrt\omega
 |m\mathbf1_E(\omega)-\phi(\omega)|d\omega\\
 &\qquad+t\left[
 \frac{(3r)^{2\beta}}2(r/8)^{2\beta+1/2}
 \|\phi'\|_{L^1(\mathbb R)}
 +2^{\beta-1}(8/r)^d\int_0^\infty\sqrt\omega\,\phi(\omega)d\omega
 \right].
 \end{aligned}
\end{equation}

Let $\varepsilon>0$. Since $E\subset[r,5r/4]\Subset(r/2,2r)$,
choose $0\le\phi\in C_c^\infty((r/2,2r))$ with
$\|m\mathbf1_E-\phi\|_{L^1(\mathbb R)}<\varepsilon$.
Keep this $\phi$ fixed. Since the bound
\eqref{eq:cond-ac-initial-continuity-estimate} is uniform in $N$,
\begin{equation}\label{eq:cond-ac-approximation-limit}
 \begin{aligned}
 &\limsup_{\tau\downarrow0}\sup_{N\ge1}\sup_{0\le t\le\tau}
 \|w_N(t,\cdot)-m\mathbf1_E\|_{L^1(\mathbb R)}\\
 &\quad\le\frac2{\sqrt r}\int_{r/2}^{2r}\sqrt\omega
 |m\mathbf1_E(\omega)-\phi(\omega)|d\omega\\
 &\quad\le2\sqrt2\int_{r/2}^{2r}
 |m\mathbf1_E(\omega)-\phi(\omega)|d\omega
 =2\sqrt2\|m\mathbf1_E-\phi\|_{L^1(\mathbb R)}
 <2\sqrt2\varepsilon.
 \end{aligned}
\end{equation}
Since $\varepsilon>0$ is arbitrary,
\begin{equation}\label{eq:cond-ac-uniform-continuity}
 \lim_{\tau\downarrow0}\sup_{N\ge1}\sup_{0\le t\le\tau}
 \|w_N(t,\cdot)-m\mathbf1_E\|_{L^1(\mathbb R)}=0.
\end{equation}
\end{proof}

\begin{lemma}[Positive density on an interval]\label{lem:cond-ac-interval}
Suppose that $\mathcal N_0=1$ and $g_0$ has a nonzero absolutely
continuous part. There exist $s,c>0$ and
$0<a<\alpha_0<\gamma_0<b$, depending only on $\beta,g_0$, such that,
for all sufficiently large $N$,
\[
 \mathcal G_N(t,\omega_4)\ge c,
 \qquad 0\le t\le s,\quad a\le\omega_4\le b,
\]
and
\[
 f_N(s,\omega_4)\ge c,
 \qquad \alpha_0\le\omega_4\le\gamma_0.
\]
\end{lemma}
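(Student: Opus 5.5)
Start from \cref{lem:cond-ac-initial-continuity}, which gives $r,m>0$ and a set $E\subset[r,5r/4]$ with $|E|>0$ such that the truncations $w_N=f_N\wedge m\mathbf1_E$ stay $L^1$-close to $m\mathbf1_E$, uniformly in $N$, on a short time interval $[0,\tau_0]$. The plan is to feed this persistent positive density into the cubic gain term $\mathcal G_N$, which only needs integrals of $f_N$ and no pointwise bounds. That turns a lower bound on a set of positive measure into a pointwise lower bound on a whole interval. Then integrate the forced inequality \eqref{eq:cond-density-forced} in time to get a pointwise lower bound on $f_N(s,\cdot)$.

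\emph{Step 1: lower bound on $\mathcal G_N$.} Take $a,b$ so that every $\omega_4\in[a,b]$ is reached by configurations with $\omega_1,\omega_2,\omega_3=\omega_1+\omega_2-\omega_4$ all in $E$. Since $E\subset[r,5r/4]$, these need $\omega_4\in[3r/4,3r/2]$. I would choose points near a Lebesgue density point of $E$, so the interval comes from a small neighbourhood of a density value $\omega_*$. The set $E+E-E$ contains an interval around $\omega_*$ by Steinhaus' theorem, quantified through the convolution $\mathbf1_E*\mathbf1_E*\mathbf1_{-E}$, which is continuous and positive at $\omega_*$. This gives $a<b$ near $\omega_*$ with $\omega_*/2<\omega_j$ automatically, since all frequencies are comparable.

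For $\omega_4\in[a,b]$ and $N$ large, $S_{\beta,N}=S_\beta\ge c_r>0$ on this region, because the frequencies lie in $[r/2,2r]$ and the cutoffs $\wedge N$ and $\ell_N$ are inactive once $N\ge C(r)$. Then
\[
\mathcal G_N(t,\omega_4)\ge c_r\,\omega_4^{-1/2}\iint w_N(t,\omega_1)w_N(t,\omega_2)w_N(t,\omega_1+\omega_2-\omega_4)\,d\omega_1d\omega_2 .
\]
Replace $w_N$ by $m\mathbf1_E$. Each replacement costs at most $m^2\|w_N(t)-m\mathbf1_E\|_{L^1}$, because the trilinear form with two $L^\infty$ arguments and one $L^1$ argument is bounded by $\|\cdot\|_\infty^2\|\cdot\|_1$. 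So for $t\le s$, with $s\le\tau_0$ small, the triple integral is at least half of $m^3(\mathbf1_E*\mathbf1_E*\mathbf1_{-E})(\omega_4)$. By continuity of this convolution, shrinking $[a,b]$ around $\omega_*$ makes that bounded below by a constant. This gives $\mathcal G_N\ge c$ on $[0,s]\times[a,b]$. The key point is that the uniformity in $N$ comes entirely from \cref{lem:cond-ac-initial-continuity}.

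\emph{Step 2: lower bound on $f_N(s,\cdot)$.} Pick $a<\alpha_0<\gamma_0<b$, and let $u$ be the sine profile of \cref{lem:cond-fixed-profile} on $[a,b]$. Then $u\ge c'>0$ on $[\alpha_0,\gamma_0]$ and $u\le1$, so $\mathcal G_N\ge c\,u$ for $t\le s$. Apply \eqref{eq:cond-density-forced} together with \cref{lem:cond-sine-comparison}(ii), taking $D=0$ and the forcing $c\,u$, for $N\ge 2/a\vee(3b/2)^{4\beta+1/2}$. This yields $f_N(s,\cdot)\ge c\,u\int_0^se^{-C(s-\tau)}d\tau$, which is pointwise positive on $[\alpha_0,\gamma_0]$. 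Renaming the constant gives the claim.

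\emph{Main obstacle.} The delicate step is Step 1: turning the measure-theoretic information, positivity only on a set $E$ and $L^1$-closeness, into a pointwise lower bound in $\omega_4$. The trilinear $L^\infty$–$L^1$ estimate plus continuity of the triple convolution should handle it. Everything must also be checked to depend only on $\beta$ and $g_0$, which holds because $r,m,E$ and $\tau_0$ do. One must additionally verify that the cutoffs are inactive: $\ell_N(\omega_j)=1$ needs $\omega_j\ge 1/N$, and $S_\beta\le N$ holds for large $N$.
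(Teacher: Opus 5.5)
Your proposal is correct and is essentially the paper's proof: the same continuous triple convolution $H=m^3\,\mathbf 1_E*\mathbf 1_E*\mathbf 1_{-E}$, the same $L^1$-perturbation by $w_N$ via \cref{lem:cond-ac-initial-continuity} giving $\mathcal G_N\ge c$ on $[0,s]\times[a,b]$, and the same sine-profile comparison \cref{lem:cond-sine-comparison}\textup{(ii)} with $D=0$. Two small remarks. First, the paper finds $[a,b]$ simply from the continuity of $H$ and $\int H=m^3|E|^3>0$, so no density-point or Steinhaus argument is needed. Second, each replacement actually costs $m^2|E|\,\|w_N(t)-m\mathbf 1_E\|_{L^1}$, not $m^2\|w_N(t)-m\mathbf 1_E\|_{L^1}$: two $L^\infty$ bounds and one $L^1$ bound do not control the trilinear form unless one of the $L^\infty$ factors is also integrated over its support $E$. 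This changes nothing in the smallness argument.
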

\begin{proof}
Take $r,m,E,w_N$ from \cref{lem:cond-ac-initial-continuity}.

\noindent\textit{Step 1. Positivity of the triple convolution.}
For $\omega_3=\omega_1+\omega_2-\omega_4$, define
\begin{equation}\label{eq:cond-ac-initial-convolution}
 H(\omega_4)=m^3\iint_{(0,\infty)^2}
 \mathbf1_E(\omega_1)\mathbf1_E(\omega_2)\mathbf1_E(\omega_3)
  d\omega_1d\omega_2.
\end{equation}
Since $E\subset[r,5r/4]$,
\begin{equation}\label{eq:cond-ac-convolution-support}
 \operatorname{supp}H\subset[r+r-5r/4,5r/4+5r/4-r]=[3r/4,3r/2].
\end{equation}
For every $\omega_4,h\in\mathbb R$, we have
\begin{equation}\label{eq:cond-ac-convolution-continuity}
 \begin{aligned}
 &|H(\omega_4+h)-H(\omega_4)|\\
 &\quad=m^3\left|\int_E\int_E
 [\mathbf1_E(\omega_1+\omega_2-\omega_4-h)
 -\mathbf1_E(\omega_1+\omega_2-\omega_4)]d\omega_2d\omega_1\right|\\
 &\quad\le m^3\int_E\int_E
 |\mathbf1_E(\omega_1+\omega_2-\omega_4-h)
 -\mathbf1_E(\omega_1+\omega_2-\omega_4)|d\omega_2d\omega_1\\
 &\quad\le m^3\int_E\int_{\mathbb R}
 |\mathbf1_E(\omega_1+\omega_2-\omega_4-h)
 -\mathbf1_E(\omega_1+\omega_2-\omega_4)|d\omega_2d\omega_1\\
 &\quad=m^3\int_E\int_{\mathbb R}
 |\mathbf1_E(\omega_3-h)-\mathbf1_E(\omega_3)|d\omega_3d\omega_1\\
 &\quad=m^3|E|\,
 \|\mathbf1_E(\cdot-h)-\mathbf1_E\|_{L^1(\mathbb R)}
 \longrightarrow0,\qquad\text{ as }h\to0,
 \end{aligned}
\end{equation}
by the translation continuity in $L^1(\mathbb R)$.
Hence $H$ is continuous.
Tonelli's theorem also gives
\begin{equation}\label{eq:cond-ac-convolution-mass}
 \int_{\mathbb R}H(\omega_4) d\omega_4=m^3|E|^3>0.
\end{equation}
Hence there are
$3r/4<a<b<3r/2$ and $c_0>0$ such that
\begin{equation}\label{eq:cond-ac-positive-convolution-interval}
 H(\omega_4)\ge2c_0,
 \qquad a\le\omega_4\le b.
\end{equation}
By \eqref{eq:cond-ac-uniform-continuity}, choose $s>0$ such that
\begin{equation}\label{eq:cond-ac-time-choice}
 3m^2|E|\sup_{N\ge1}\sup_{0\le t\le s}
 \|w_N(t,\cdot)-m\mathbf1_E\|_{L^1(\mathbb R)}\le c_0.
\end{equation}
Since
\begin{equation}\label{eq:cond-ac-convolution-difference}
 \begin{aligned}
 &m^3\mathbf1_E(\omega_1)\mathbf1_E(\omega_2)\mathbf1_E(\omega_3)
 -w_N(t,\omega_1)w_N(t,\omega_2)w_N(t,\omega_3)\\
 &\quad=m^2[m\mathbf1_E(\omega_1)-w_N(t,\omega_1)]
 \mathbf1_E(\omega_2)\mathbf1_E(\omega_3)\\
 &\qquad+m w_N(t,\omega_1)
 [m\mathbf1_E(\omega_2)-w_N(t,\omega_2)]\mathbf1_E(\omega_3)\\
 &\qquad+w_N(t,\omega_1)w_N(t,\omega_2)
 [m\mathbf1_E(\omega_3)-w_N(t,\omega_3)],
 \end{aligned}
\end{equation}
$0\le w_N\le m\mathbf1_E\le m$ gives
\begin{equation}\label{eq:cond-ac-convolution-error-bounds}
 m^2\iint_{E^2}[m-w_N(t,\omega_1)]\mathbf1_E(\omega_3)d\omega_1d\omega_2
 \le m^2|E|\|m\mathbf1_E-w_N(t)\|_{L^1},
\end{equation}
\begin{equation}\label{eq:cond-ac-convolution-error-second}
 m\iint_{E^2}w_N(t,\omega_1)[m-w_N(t,\omega_2)]
 \mathbf1_E(\omega_3)d\omega_1d\omega_2
 \le m^2|E|\|m\mathbf1_E-w_N(t)\|_{L^1},
\end{equation}
\begin{equation}\label{eq:cond-ac-convolution-error-third}
 \begin{aligned}
 &\iint_{E^2}w_N(t,\omega_1)w_N(t,\omega_2)
 [m\mathbf1_E(\omega_3)-w_N(t,\omega_3)]d\omega_1d\omega_2\\
 &\quad\le m^2\int_E\int_{\mathbb R}
 [m\mathbf1_E(\omega_3)-w_N(t,\omega_3)]d\omega_3d\omega_1
 =m^2|E|\|m\mathbf1_E-w_N(t)\|_{L^1}.
 \end{aligned}
\end{equation}
Thus \eqref{eq:cond-ac-positive-convolution-interval} and
\eqref{eq:cond-ac-time-choice} imply
\begin{equation}\label{eq:cond-ac-convolution-lower}
 \begin{aligned}
 &\iint_{(0,\infty)^2}
 w_N(t,\omega_1)w_N(t,\omega_2)w_N(t,\omega_3)d\omega_1d\omega_2\\
 &\quad\ge H(\omega_4)
 -3m^2|E|\|w_N(t)-m\mathbf1_E\|_{L^1}\ge2c_0-c_0=c_0,
 \end{aligned}
\end{equation}
for $0\le t\le s$ and $a\le\omega_4\le b$.

\smallskip
\noindent\textit{Step 2. Comparison on an interval.}
Take
\begin{equation}\label{eq:cond-ac-output-cutoff-threshold}
 N\ge4/r \vee (3r)^{4\beta+1/2}.
\end{equation}
On the support of \(w_N(t,\omega_1)w_N(t,\omega_2)w_N(t,\omega_3)\),
$\omega_1,\omega_2,\omega_3\in[r,5r/4]$ and
$\omega_4\in[a,b]\subset[3r/4,3r/2]$. Hence \(S_\beta=S_{\beta,N}\) and
\begin{equation}\label{eq:cond-ac-output-kernel-lower}
 \begin{aligned}
 \frac{S_{\beta,N}(\omega_1,\omega_2,\omega_3,\omega_4)}
      {4\sqrt{\omega_4}}
 =\frac{(\omega_1\omega_2\omega_3\omega_4)^\beta}{4}
 \sqrt{\frac{\min\{\omega_1,\omega_2,\omega_3,\omega_4\}}{\omega_4}}\ge\frac{(3r/4)^{4\beta}}4\sqrt{\frac{3r/4}{3r/2}}
 =\frac{(3r/4)^{4\beta}}{4\sqrt2}.
 \end{aligned}
\end{equation}
Since $f_N\ge w_N$ and
$\omega_1,\omega_2\ge r>\omega_4/2$ on this support,
\eqref{eq:cond-ac-convolution-lower} yields
\begin{equation}\label{eq:cond-ac-output-source}
 \mathcal G_N(t,\omega_4)
 \ge\frac{(3r/4)^{4\beta}}{4\sqrt2}
 \iint_{(0,\infty)^2}
 w_N(t,\omega_1)w_N(t,\omega_2)w_N(t,\omega_3)d\omega_1d\omega_2
 \ge\frac{c_0(3r/4)^{4\beta}}{4\sqrt2},
\end{equation}
for $0\le t\le s$ and $a\le\omega_4\le b$.

Let $u,C$ be given by \eqref{eq:cond-sine-profile} and
\eqref{eq:cond-profile-constant}.
Since $0\le u\le\mathbf1_{[a,b]}$, substituting
\eqref{eq:cond-ac-output-source} into \eqref{eq:cond-density-forced} gives
\begin{equation}\label{eq:cond-ac-forced-evolution}
 \begin{aligned}
 \partial_t f_N(t,\omega_4)
 \ge L_N(t)[f_N(t,\cdot)](\omega_4)
 -\omega_4^{-d}\ell_N(\omega_4)f_N(t,\omega_4)+\frac{c_0(3r/4)^{4\beta}}{4\sqrt2}u(\omega_4),
 \qquad 0\le t\le s.
 \end{aligned}
\end{equation}
Because $2/a<4/r$ and $3b/2<3r$,
\cref{lem:cond-sine-comparison}\textup{(ii)} applies on $[0,s]$ with $D=0$.
Evaluating \eqref{eq:cond-forced-sine} at $t=s$ gives
\begin{equation}\label{eq:cond-ac-sine-lower}
 \begin{aligned}
 f_N(s,\omega_4)
 \ge\frac{c_0(3r/4)^{4\beta}}{4\sqrt2}
 u(\omega_4)\int_0^s e^{-C(s-\tau)}d\tau=\frac{c_0(3r/4)^{4\beta}}{4\sqrt2\,C}
 (1-e^{-Cs})u(\omega_4).
 \end{aligned}
\end{equation}
Set
\begin{equation}\label{eq:cond-ac-interior-output-interval}
 a<\alpha_0=\frac{3a+b}{4}
 <\gamma_0=\frac{a+3b}{4}<b.
\end{equation}
Then $u(\omega_4)\ge1/\sqrt2$ for $\omega_4\in[\alpha_0,\gamma_0]$, and therefore
\begin{equation}\label{eq:cond-ac-final-lower}
 f_N(s,\omega_4)
 \ge\frac{c_0(3r/4)^{4\beta}}{8C}(1-e^{-Cs})=:c>0,
 \qquad\alpha_0\le\omega_4\le\gamma_0.
\end{equation}
Replace $c$ by $\min\{c,c_0(3r/4)^{4\beta}/(4\sqrt2)\}$.
Both conclusions follow from \eqref{eq:cond-ac-output-source} and
\eqref{eq:cond-ac-final-lower}. The constants
$s,a,b,\alpha_0,\gamma_0,c$ are independent of $N$.
\end{proof}

\subsection{Positive density at small frequencies}
\label{subsec:cond-density}

\begin{lemma}[Expansion of an interval of positive density]
\label{lem:cond-interval-expansion}
Let $\mathcal N_0=1$, $s\ge0$, $\omega_*>0$,
$0<w<3\omega_*/4$, and $A,D>0$. Suppose that, for all sufficiently
large $N$,
\[
 f_N(t,\omega_4)\ge Ae^{-Dt},
 \qquad t\ge s,\quad |\omega_4-\omega_*|\le w.
\]
Set
\[
 a=(\omega_*-5w/2)\vee(\omega_*/8),\qquad
 b=2\omega_*-a,\qquad
 c_w=\frac{w^2a^{4\beta+1/2}}{144\sqrt b}.
\]
There exists $A'>0$, independent of $N$, such that, for all
sufficiently large $N$,
\[
 \mathcal G_N(t,\omega_4)\ge c_wA^3e^{-3Dt},
 \qquad t\ge s,\quad a\le\omega_4\le b,
\]
and
\[
 f_N(t,\omega_4)\ge A'e^{-3Dt},
 \qquad t\ge s+1,\quad
 |\omega_4-\omega_*|\le2w\wedge(3\omega_*/4).
\]
\end{lemma}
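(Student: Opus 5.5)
The plan has two steps: a pointwise lower bound for the cubic gain term $\mathcal G_N$, and then the forced comparison principle of \cref{lem:cond-sine-comparison}\textup{(ii)}, following the template of Step~2 in the proof of \cref{lem:cond-ac-interval}.

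\emph{Lower bound for $\mathcal G_N$.} Fix $t\ge s$ and $\omega_4\in[a,b]$. In the definition of $\mathcal G_N$, keep only those $(\omega_1,\omega_2)$ with $|\omega_1-\omega_*|\le w$ and $|\omega_2-\omega_*|\le w$, and with $\omega_3=\omega_1+\omega_2-\omega_4$ satisfying $|\omega_3-\omega_*|\le w$. Since $w<3\omega_*/4$, each of $\omega_1,\omega_2$ exceeds $\omega_*/4$. The constraint on $\omega_4$ gives $\omega_4\le b\le 2\omega_*-\omega_*/8$, but this alone does not ensure $\omega_1,\omega_2>\omega_4/2$. So I would shrink the region: require $\omega_1,\omega_2\ge\omega_*-w/2$, or handle the condition directly. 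The key point is that $\omega_4\le \omega_*+5w/2$ while $\omega_1,\omega_2\ge\omega_*-w$, so
\[
\omega_4/2\le \omega_*/2+5w/4<\omega_*-w \quad\text{whenever } w<2\omega_*/9.
\]
When $w$ is larger, $a=\omega_*/8$ and the condition fails. In that case I would restrict $\omega_1,\omega_2$ further to $[\omega_*,\omega_*+w]$, which forces $\omega_1,\omega_2\ge\omega_*>\omega_4/2$ once $b/2<\omega_*$; this holds because $b=2\omega_*-a<2\omega_*$. Hence I will always restrict to $\omega_1,\omega_2\in[\omega_*,\omega_*+w]$.

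The measure of admissible pairs is the area of those $(\omega_1,\omega_2)\in[\omega_*,\omega_*+w]^2$ with $\omega_1+\omega_2-\omega_4\in[\omega_*-w,\omega_*+w]$. For $\omega_4$ with $|\omega_4-\omega_*|\le 5w/2$, this line strip meets the square in a set of area at least of order $w^2$. I must check that the constant works out to $w^2/36$-ish, and the clipping of $a$ at $\omega_*/8$ makes this harmless.

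On this region, every frequency is at least $a$ and at most about $b$, so $S_{\beta,N}=S_\beta\ge a^{4\beta}\sqrt a$ for $N$ large. This gives
\[
\mathcal G_N\ge \frac{a^{4\beta+1/2}}{4\sqrt b}\cdot(\text{area})\cdot A^3e^{-3Dt}\ge c_wA^3e^{-3Dt}.
\]
The main obstacle is this geometric area count: I must ensure the constant $1/144$ covers both the $w$-small and $a=\omega_*/8$ cases. I would compute the intersection of the strip $|\omega_1+\omega_2-\omega_4-\omega_*|\le w$ with the square explicitly, as a triangle or trapezoid of area at least $w^2/36$ when $|\omega_4-\omega_*|\le 5w/2$ is replaced by the actual range. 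If needed, I would shrink the square to side $w/2$ to gain room.

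\emph{Comparison step.} Insert the bound on $\mathcal G_N$ into \eqref{eq:cond-density-forced}. With $u$ the sine profile on $[a,b]$ from \cref{lem:cond-fixed-profile}, and using $u\le\mathbf 1_{[a,b]}$, we get \eqref{eq:cond-forced-assumption} with $c=c_wA^3$ and $3D$ in place of $D$, on every interval $[s,T]$, for $N\ge 2/a\vee(3b/2)^{4\beta+1/2}$. \cref{lem:cond-sine-comparison}\textup{(ii)} then gives
\[
f_N(t)\ge c_wA^3u\,e^{-Ct}\int_s^te^{(C-3D)\tau}d\tau.
\]
For $t\ge s+1$, restricting the integral to $[t-1,t]$ yields at least $c_wA^3e^{-3Dt}\min(1,e^{-C})\cdot\text{const}$, uniformly in $t$; concretely, $\int_{t-1}^te^{-C(t-\tau)-3D\tau}d\tau\ge e^{-3Dt}(1-e^{-C})/C$.

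Finally, $u$ is bounded below on $|\omega_4-\omega_*|\le 2w\wedge(3\omega_*/4)$, since this set lies strictly inside $(a,b)$. Indeed, $\omega_*-a\ge\min(5w/2,7\omega_*/8)$ exceeds $2w\wedge 3\omega_*/4$ by a margin comparable to $\min(w,\omega_*)$, so $u\ge\sin(\pi\theta)>0$ there for an explicit $\theta$. This gives $A'$, which is independent of $N$.
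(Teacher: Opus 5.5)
Your comparison step and the final positivity of $u$ are fine; your bound $\int_{t-1}^t e^{-C(t-\tau)-3D\tau}\,d\tau\ge e^{-3Dt}(1-e^{-C})/C$ is even slightly sharper than needed. The gap is in the lower bound for $\mathcal G_N$: the region $(\omega_1,\omega_2)\in[\omega_*,\omega_*+w]^2$ that you commit to cannot serve all of $[a,b]$.

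On that square $\omega_1+\omega_2\ge2\omega_*$. Hence $\omega_3=\omega_1+\omega_2-\omega_4\ge 2\omega_*-\omega_4>\omega_*+w$ whenever $\omega_4<\omega_*-w$. In both cases of the definition of $a$ we have $a<\omega_*-w$ (for $a=\omega_*/8$ this uses $w<3\omega_*/4$). So the admissible set is empty for every $\omega_4\in[a,\omega_*-w)$, and just above $\omega_*-w$ its area is only $O((\omega_4-\omega_*+w)^2)$.

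Consequently:
\begin{itemize}
\item the ``area of order $w^2$'' claim is false on the lower part of $[a,b]$, and the first assertion of the lemma fails there;
\item without a source on that part, the forced comparison with the sine profile on $[a,b]$ is unjustified;
\item in particular, the interval cannot be enlarged on the left, toward $\omega_*-(2w\wedge 3\omega_*/4)$.
\end{itemize}
For such $\omega_4$, every admissible pair satisfies $\omega_1+\omega_2=\omega_3+\omega_4<2\omega_*$. That is exactly the part of $I^2$ you discarded to secure $\omega_1,\omega_2>\omega_4/2$.

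The missing idea is to let the square move with $\omega_4$. Take $q=(\omega_4+2\omega_*)/3$. At $(q,q)$ the three frequencies $q,q,2q-\omega_4$ each deviate from $\omega_*$ by $|\omega_4-\omega_*|/3\le5w/6$, because $b-\omega_*=\omega_*-a\le5w/2$. On $R_{\omega_4}=(q-w/12,q+w/12)^2$, $\omega_1$ and $\omega_2$ move by less than $w/12$ and $\omega_3$ by less than $w/6$, so all three stay in $I$.

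The constraint you worried about then holds with no case split in $w$:
\[
q-w/12-\omega_4/2=(4\omega_*-\omega_4)/6-w/12\ge(\omega_*-w)/2>0 .
\]
The area $(w/6)^2=w^2/36$ times the kernel bound $a^{4\beta+1/2}/(4\sqrt b)$ gives exactly $c_w$, and the rest of your argument then goes through.
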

\begin{proof}
Suppose that, for some $s\ge0$, $0<w<3\omega_*/4$, and $A,D>0$
independent of $N$, all sufficiently large $N$ satisfy
\begin{equation}\label{eq:cond-density-lower-expansion-assumption}
 f_N(t,\omega_4)\ge Ae^{-Dt},
 \qquad t\ge s,\quad
 \omega_4\in I=[\omega_*-w,\omega_*+w].
\end{equation}
For all sufficiently large $N$, we will show that
$f_N(t,\omega_4)\ge A'e^{-3Dt}$ for $t\ge s+1$ and
$|\omega_4-\omega_*|\le2w\wedge(3\omega_*/4)$, with $A'>0$
independent of $N$.
Set $a=(\omega_*-5w/2)\vee(\omega_*/8)$ and $b=2\omega_*-a$.
Thus $I\subset(a,b)$ and $|\omega_4-\omega_*|\le5w/2$ for
$\omega_4\in[a,b]$.

Fix $\omega_4\in[a,b]$. We seek a square of pairs
$(\omega_1,\omega_2)$ for which all three frequencies
$\omega_1,\omega_2,\omega_3=\omega_1+\omega_2-\omega_4$ remain in $I$.
Choose $q=(\omega_4+2\omega_*)/3$. At the center $(q,q)$, the
three frequencies are $q,q,2q-\omega_4$, and
\[
 |q-\omega_*|=|2q-\omega_4-\omega_*|
 =\frac{|\omega_4-\omega_*|}{3}\le\frac{5w}{6}.
\]
Thus $q,2q-\omega_4\in[\omega_*-5w/6,\omega_*+5w/6]$.
If $|\omega_i-q|<w/12$ for $i=1,2$, then all three frequencies
$\omega_1,\omega_2,\omega_3$ lie in $I$. More precisely, set
\begin{equation}\label{eq:cond-density-lower-q-neighborhood}
 R_{\omega_4}=\left(q-\frac w{12},q+\frac w{12}\right)^2,
 \qquad |R_{\omega_4}|=\left(\frac w6\right)^2.
\end{equation}
On this square,
\[
 \begin{aligned}
 |\omega_i-\omega_*|&<\frac{5w}{6}+\frac w{12}
 =\frac{11w}{12}<w,\qquad i=1,2,\\
 |\omega_3-\omega_*|&<\frac{5w}{6}+\frac w{12}+\frac w{12}=w.
 \end{aligned}
\]
Moreover, $R_{\omega_4}\subset(\omega_4/2,\infty)^2$.
Indeed, $\omega_4\le\omega_*+5w/2$ gives
\[
 q-\frac w{12}-\frac{\omega_4}{2}
 =\frac{4\omega_*-\omega_4}{6}-\frac w{12}
 \ge\frac{\omega_*-w}{2}>0.
\]
One can refer to \cref{fig:cond-frequency-geometry}(a) to see that
$\omega_1,\omega_2,\omega_3\in I$ whenever
$(\omega_1,\omega_2)\in R_{\omega_4}$.

\begin{figure}[tbp]
\centering
\resizebox{\textwidth}{!}{%
\begin{tikzpicture}[
>=Stealth,line cap=round,line join=round,
  every node/.style={font=\small},
  axis/.style={->,line width=.55pt},
  guide/.style={densely dashed,line width=.45pt,black!65},
  note/.style={draw=black!60,line width=.45pt,fill=white,inner sep=5pt,align=left},
  band/.style={line width=4pt,line cap=butt,black!70}
]
  \node[anchor=west,font=\small\bfseries] at (0,6.25)
    {(a) Enlarging the positive interval};
  \node[anchor=west,font=\small\bfseries] at (8.0,6.25)
    {(b) Producing \(\omega_4\) near zero};

  % Representative geometry: omega_*=1, w=.2, omega_4=1.4.
  % Local coordinates subtract .6 from both frequency variables.
  \begin{scope}[shift={(.5,1.35)},x=5.3cm,y=5.3cm]
    \fill[black!5] (.2,.2) rectangle (.6,.6);
    \fill[black!26] (.4,.6)--(.6,.4)--(.6,.6)--cycle;
    \draw[line width=.65pt] (.2,.2) rectangle (.6,.6);
    \draw[axis] (0,0)--(.89,0) node[right] {$\omega_1$};
    \draw[axis] (0,0)--(0,.85) node[left=4pt] {$\omega_2$};
    % Breaks make clear that the displayed frequency axes are truncated.
    \draw[white,line width=2.5pt] (.06,0)--(.10,0) (0,.06)--(0,.10);
    \draw[line width=.55pt] (.055,-.009)--(.072,.009)
      (.087,-.009)--(.104,.009) (-.009,.055)--(.009,.072)
      (-.009,.087)--(.009,.104);
    \draw[guide] (.235,.765)--(.765,.235);
    \node at (.32,.32) {$I^2$};
    \draw[line width=.5pt] (.5166667,.5166667) rectangle (.55,.55);
    \fill (.5333333,.5333333) circle[radius=.0065];
    \draw[line width=.4pt] (.55,.55)--(.67,.69);
    \node[note,anchor=west,inner sep=3pt] at (.67,.70)
      {$R_{\omega_4}$\\side $w/6$};
    \draw[line width=.4pt] (.70,.30)--(.81,.15);
    \node[note,anchor=north,inner sep=3pt] at (.81,.15)
      {$\omega_3=\omega_*-w$};
    \foreach \t/\lab in {.2/{\omega_*-w},.6/{\omega_*+w}}{
      \draw (\t,0)--(\t,-.015) node[below=2pt] {$\lab$};
      \draw (0,\t)--(-.015,\t) node[left=2pt] {$\lab$};
    }
    \draw[line width=.4pt] (.4,.775)--(.505,.565);
    \node[note,anchor=south,inner sep=3pt] at (.4,.775)
      {$\omega_3\in I$};
  \end{scope}

  % The light band is the comparison interval [a,b]; here a=.5,b=1.5.
  % Local coordinates subtract .35 from the frequencies.
  \begin{scope}[shift={(.5,.15)},x=4cm,y=1cm]
    \draw[axis] (0,0)--(1.39,0) node[right] {$\omega$};
    \draw[line width=2.5pt,black!30] (.15,0)--(1.15,0);
    \draw[band] (.45,0)--(.85,0);
    \foreach \t in {.15,1.15}{\draw[line width=.5pt] (\t,-.045)--(\t,.045);}
    \fill (1.05,0) circle[radius=1.7pt];
    \node[below=6pt,font=\scriptsize] at (.15,0) {$a$};
    \node[below=6pt] at (.45,0) {$\omega_*-w$};
    \node[below=6pt] at (.85,0) {$\omega_*+w$};
    \node[above=5pt] at (.65,0) {$I$};
    \node[above=5pt] at (1.05,0) {$\omega_4$};
    \node[below=6pt,font=\scriptsize] at (1.15,0) {$b$};
  \end{scope}

  \begin{scope}[shift={(8.1,0)},x=3.8cm,y=1cm]
    % All rows use the same linear frequency scale omega/omega_*.
    \foreach \y in {4.85,3.6,2.35,1.1}{\draw[axis] (0,\y)--(1.9,\y);}
    \draw[guide,black!25] (.25,.82)--(.25,5.1);
    \draw[guide,black!25] (1.75,.82)--(1.75,5.1);
    \draw[line width=6pt,black!18,line cap=butt] (.25,4.85)--(1.75,4.85);
    \draw[line width=.6pt] (.25,4.74)--(.25,4.96);
    \draw[line width=.6pt] (1.75,4.74)--(1.75,4.96);
    \node[note,anchor=west] at (0,5.36) {Known positive-density interval};
    \node[below=3pt,fill=white] at (.25,4.85) {$1/4$};
    \node[below=3pt,fill=white] at (1.75,4.85) {$7/4$};
    \foreach \y in {3.6,2.35}{
      \draw[line width=6pt,black!18,line cap=butt] (.25,\y)--(1.75,\y);
    }
    \draw[band] (.5,3.6)--(.55,3.6);
    \draw[black!70] (.5,3.48)--(.5,3.72) (.55,3.48)--(.55,3.72);
    \node[anchor=north east,inner sep=1pt,yshift=-5pt] at (.5,3.6) {$1/2$};
    \node[anchor=north west,inner sep=1pt,yshift=-5pt] at (.55,3.6) {$11/20$};
    \node[note,anchor=west,inner sep=3pt] at (.40,4.10)
      {$\omega_1,\omega_2\in I_*=[1/2,11/20]\,\omega_*$};
    \draw[band] (.875,2.35)--(1.1,2.35);
    \foreach \t in {.875,1.1}{\draw[fill=white,line width=.5pt] (\t,2.35) circle[radius=1.6pt];}
    \node[anchor=north east,inner sep=1pt,yshift=-5pt] at (.875,2.35) {$7/8$};
    \node[anchor=north west,inner sep=1pt,yshift=-5pt] at (1.1,2.35) {$11/10$};
    \node[note,anchor=west,inner sep=3pt] at (0,2.73)
      {$\omega_3\in(7/8,11/10)\,\omega_*$};
    \draw[line width=4pt,black!70,line cap=butt] (0,1.1)--(.125,1.1);
    \foreach \t in {0,.125}{\draw[fill=white,line width=.5pt] (\t,1.1) circle[radius=1.6pt];}
    \node[note,anchor=west,inner sep=3pt] at (0,1.58) {$0<\omega_4<\omega_*/8$};
    \node[anchor=north east,inner sep=1pt,yshift=-5pt] at (0,1.1) {$0$};
    \node[anchor=north west,inner sep=1pt,yshift=-5pt,fill=white] at (.125,1.1) {$1/8$};
    \node[below=5pt,inner sep=1pt] at (1,1.1) {$1$};
    \draw (1,1.1)--(1,1.035);
    \node[anchor=east] at (1.9,.54) {$\omega/\omega_*$};
  \end{scope}

\end{tikzpicture}%
}
\caption{The frequency regions used in \cref{lem:cond-interval-expansion,lem:cond-low-frequency-source}.
(a) The large square is $I^2$, with
its darker triangle satisfies $\omega_3\in I$.
The dashed line is $\omega_3=\omega_*-w$.
The small square $R_{\omega_4}$ has center $(q,q)$ (the dot),
side length $w/6$, and lies in this shaded region.
Below, the dark band is $I$ and the light band is $[a,b]$.
(b) On a common scale $\omega/\omega_*$, the gray bands show the interval
where \eqref{eq:cond-density-lower-wide-interval} gives a density lower bound. The dark bands in the next
two rows show $I_*$ for $\omega_1,\omega_2$ and the uniform bounds for
$\omega_3$. The three frequencies stay in the gray interval while
$\omega_4$ in the last row approaches zero.}
\label{fig:cond-frequency-geometry}
\end{figure}

Assume in addition that $N\ge(2/a)\vee(3b/2)^{4\beta+1/2}$.
Since all four frequencies lie in $[a,b]$ on $R_{\omega_4}$, \eqref{eq:cutoff} gives
\begin{equation}\label{eq:cond-density-lower-expansion-kernel}
 \frac{S_{\beta,N}(\omega_1,\omega_2,\omega_3,\omega_4)}
      {4\sqrt{\omega_4}}
 =\frac{S_\beta(\omega_1,\omega_2,\omega_3,\omega_4)}
      {4\sqrt{\omega_4}}
 \ge\frac{a^{4\beta+1/2}}{4\sqrt b}.
\end{equation}
For $t\ge s$ and $\omega_4\in[a,b]$, since $R_{\omega_4}\subset(\omega_4/2,\infty)^2$, \eqref{eq:cond-density-lower-expansion-assumption}
yields
\begin{equation}\label{eq:cond-density-lower-expansion-gain}
 \begin{aligned}
 \mathcal G_N(t,\omega_4)
 &\ge\frac{A^3e^{-3Dt}}{4\sqrt{\omega_4}}
 \iint_{R_{\omega_4}}S_{\beta,N}d\omega_1d\omega_2\\
 &\ge\frac{a^{4\beta+1/2}}{4\sqrt b}
 A^3e^{-3Dt}\left(\frac w6\right)^2
 =\frac{w^2a^{4\beta+1/2}}{144\sqrt b}A^3e^{-3Dt}
 =:c_w A^3e^{-3Dt}.
 \end{aligned}
\end{equation}
Here $c_w>0$ is independent of $N$. Hence
\eqref{eq:cond-density-forced} gives
\begin{equation}\label{eq:cond-density-lower-expansion-evolution}
 \partial_t f_N\ge L_N(t)[f_N]
 -\omega_4^{-d}\ell_N(\omega_4)f_N
 +c_w A^3e^{-3Dt}\mathbf1_{[a,b]}(\omega_4),
 \qquad t\ge s.
\end{equation}
Let $u,C$ be in \eqref{eq:cond-sine-profile}, \eqref{eq:cond-profile-constant} on $[a,b]$.
Since $0\le u\le\mathbf1_{[a,b]}$, \eqref{eq:cond-forced-sine} gives
\begin{equation}\label{eq:cond-density-lower-expansion-comparison}
 f_N(t,\omega_4)\ge c_w A^3u(\omega_4)
 \int_s^t e^{-C(t-\tau)-3D\tau}d\tau.
\end{equation}
For $t\ge s+1$,
\begin{equation}\label{eq:cond-density-lower-expansion-time-integral}
 \begin{aligned}
 \int_s^t e^{-C(t-\tau)-3D\tau}d\tau
 \ge\int_{t-1}^t e^{-C(t-\tau)-3D\tau}d\tau\ge\int_{t-1}^t e^{-C-3Dt}d\tau
 =e^{-C}e^{-3Dt}.
 \end{aligned}
\end{equation}

It remains to show that $u$ is uniformly positive on
$[\omega_*-(2w\wedge(3\omega_*/4)),\,
\omega_*+(2w\wedge(3\omega_*/4))]$.
Since $\omega_*-a=(5w/2)\wedge(7\omega_*/8)$,
\begin{equation}\label{eq:cond-density-lower-target-inset}
 2w\wedge\frac{3\omega_*}{4}
 \le\frac67(\omega_*-a)=\frac{3(b-a)}7.
\end{equation}
Thus, for $|\omega_4-\omega_*|\le2w\wedge(3\omega_*/4)$,
\begin{equation}\label{eq:cond-density-lower-expansion-profile-min}
 u(\omega_4)
 =\cos\!\left(\frac{\pi(\omega_4-\omega_*)}{b-a}\right)
 \ge\cos\frac{3\pi}{7}>0.
\end{equation}
Combining these bounds with
\eqref{eq:cond-density-lower-expansion-comparison}, we obtain
\begin{equation}\label{eq:cond-density-lower-expanded}
 \begin{aligned}
 f_N(t,\omega_4)
 \ge c_w e^{-C}\cos\frac{3\pi}{7}\,A^3e^{-3Dt}
 =:A'e^{-3Dt},\quad t\ge s+1,\quad
 |\omega_4-\omega_*|\le2w\wedge\frac{3\omega_*}{4},
 \end{aligned}
\end{equation}
where $A'>0$ is independent of $N$.
\end{proof}

\begin{lemma}[A lower bound for the gain near zero]
\label{lem:cond-low-frequency-source}
Suppose that $\mathcal N_0=1$ and $g_0$ has a nonzero absolutely
continuous part. There exist $B,D,s,\omega_*>0$ and $N_0\ge1$,
depending only on $\beta,g_0$, such that, for every $N\ge N_0$,
\[
 \mathcal G_N(t,\omega_4)
 \ge Be^{-Dt}\omega_4^\beta\ell_N(\omega_4),
 \qquad t\ge s,\quad 0<\omega_4<\omega_*/8.
\]
\end{lemma}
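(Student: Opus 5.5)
We first produce a wide interval of positive density. Starting from \cref{lem:cond-ac-interval}, we have $f_N(s_0,\omega_4)\ge c$ on $[\alpha_0,\gamma_0]$ for all large $N$. Applying \cref{lem:cond-sine-comparison}(i) with the sine profile on a slightly smaller interval gives
\[
f_N(t,\omega_4)\ge A_0e^{-D_0t}
\]
on an interval $|\omega_4-\omega_*|\le w_0$, for $t\ge s_0$. Here $\omega_*=(\alpha_0+\gamma_0)/2$, and $w_0>0$ is small with $w_0<3\omega_*/4$. We then iterate \cref{lem:cond-interval-expansion}. Each application doubles the half-width, until it reaches $3\omega_*/4$. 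The cost per step is a shift of the time by $1$, a change of the constant $A\mapsto A'$, and a multiplication of $D$ by $3$. After $k\approx\log_2(3\omega_*/(4w_0))$ steps, which is a number depending only on $\beta$ and $g_0$, we obtain
\[
f_N(t,\omega_4)\ge A_ke^{-D_kt},\qquad t\ge s:=s_0+k,\quad |\omega_4-\omega_*|\le 3\omega_*/4.
\]
This is the gray interval $[\omega_*/4,7\omega_*/4]$ in \cref{fig:cond-frequency-geometry}(b).

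Next we bound $\mathcal G_N$ near zero by following panel (b). Fix $0<\omega_4<\omega_*/8$ and restrict the $\mathcal G_N$ integral to $(\omega_1,\omega_2)\in I_*^2$, where $I_*=[\omega_*/2,11\omega_*/20]$. Then $\omega_1,\omega_2>\omega_4/2$, and $\omega_3=\omega_1+\omega_2-\omega_4$ lies in $(7\omega_*/8,11\omega_*/10)$. So all of $\omega_1,\omega_2,\omega_3$ lie in $[\omega_*/4,7\omega_*/4]$, and the density lower bound gives
\[
f_N(t,\omega_1)f_N(t,\omega_2)f_N(t,\omega_3)\ge A_k^3e^{-3D_kt}.
\]
For the kernel, the minimal frequency is $\omega_4$. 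Hence
\[
S_{\beta,N}=\bigl(S_\beta\wedge N\bigr)\prod_j\ell_N(\omega_j).
\]
For $N\ge N_0$, chosen with $N_0\ge 4/\omega_*$, the factors $\ell_N(\omega_j)$ with $j\le3$ equal $1$. In addition, $S_\beta\le(2\omega_*)^{3\beta}\omega_4^{\beta}\sqrt{\omega_4}\le N$ once $N_0$ is large. Therefore
\[
\frac{S_{\beta,N}}{4\sqrt{\omega_4}}=\frac{(\omega_1\omega_2\omega_3)^\beta\omega_4^\beta\ell_N(\omega_4)}{4}\ge\frac{(\omega_*/2)^{3\beta}}{4}\,\omega_4^\beta\ell_N(\omega_4).
\]
The $\sqrt{\omega_4}$ in $\min_j\sqrt{\omega_j}$ cancels the prefactor $1/\sqrt{\omega_4}$; this is exactly why the bound has the form $\omega_4^\beta\ell_N$. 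Integrating over $I_*^2$, whose area is $(\omega_*/20)^2$, gives the claim with
\[
B=A_k^3\,(\omega_*/20)^2\,(\omega_*/2)^{3\beta}/4,\qquad D=3D_k.
\]

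The main obstacle is the first step: converting the lower bound at the single time $s_0$ into a bound valid for all $t\ge s_0$, with an explicit exponential rate and uniformly in $N$. This requires the cutoff condition $N\ge 2/a\vee(3b/2)^{4\beta+1/2}$ of \cref{lem:cond-fixed-profile} at every stage of the iteration. The intervals $[a,b]$ used in \cref{lem:cond-interval-expansion} always satisfy $a\ge\omega_*/8$ and $b\le 15\omega_*/8$, so a single threshold $N_0$ suffices for all stages. We also check that the hypothesis $w<3\omega_*/4$ holds at each stage. If $2w$ exceeds $3\omega_*/4$ at some step, we stop at the capped width $3\omega_*/4$; the final application is then applied with a width just below $3\omega_*/4$, and the cap $2w\wedge(3\omega_*/4)$ in its conclusion delivers the full interval.
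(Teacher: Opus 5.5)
Your proposal is correct and follows the paper's proof essentially step by step: first, persistence on the initial interval via the sine-profile comparison principle; next, finitely many applications of the interval-expansion lemma with $w_{k+1}=2w_k\wedge(3\omega_*/4)$ and $D\mapsto 3D$; finally, restriction of $\mathcal G_N$ to $I_*^2$ with $I_*=[\omega_*/2,11\omega_*/20]$, where $\omega_4$ is the smallest frequency, so $\min_j\sqrt{\omega_j}$ cancels the prefactor $1/\sqrt{\omega_4}$. The differences are cosmetic: you use a slightly smaller sine interval in the first step and the cruder kernel bound $(\omega_*/2)^{3\beta}$ instead of $(7\omega_*^3/32)^\beta$; also, the last expansion step is simply the one with $w_{K-1}<3\omega_*/4\le 2w_{K-1}$, not necessarily one with a width ``just below'' $3\omega_*/4$.
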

\begin{proof}
By \cref{lem:cond-ac-interval}, there are
$s_0>0$, $0<\alpha_0<\gamma_0$, and $c_0>0$ such that, for all
sufficiently large $N$,
\begin{equation}\label{eq:cond-density-lower-initial-interval}
 f_N(s_0,\omega_4)\ge c_0,
 \qquad \alpha_0\le\omega_4\le\gamma_0.
\end{equation}

\noindent\textit{Step 1. Persistence on the initial interval.}
Let $u_0$ be the function in \eqref{eq:cond-sine-profile} with
$a=\alpha_0$ and $b=\gamma_0$, and let $C_0$ be the corresponding
constant in \eqref{eq:cond-profile-constant}. Choose $N_0$ so that,
for every $N\ge N_0$, \eqref{eq:cond-density-lower-initial-interval}
holds and \eqref{eq:cond-fixed-profile} is valid with $u=u_0$ and $C=C_0$.
Since $0\le u_0\le1$, \eqref{eq:cond-density-lower-initial-interval} and
\eqref{eq:cond-persistent-sine} give
\begin{equation}\label{eq:cond-density-lower-persistent-initial}
 f_N(t,\omega_4)
 \ge c_0e^{-C_0(t-s_0)}u_0(\omega_4),
 \qquad t\ge s_0.
\end{equation}
Set
\begin{equation}\label{eq:cond-density-lower-center}
 \omega_*=\frac{\alpha_0+\gamma_0}{2},
 \qquad
 w_0=\frac{\gamma_0-\alpha_0}{4},
 \qquad
 I_0=[\omega_*-w_0,\omega_*+w_0].
\end{equation}
Since $0<\alpha_0<\gamma_0$, we have
$0<w_0<(\alpha_0+\gamma_0)/4=\omega_*/2$.
For $\omega_4\in I_0$,
$\pi/4\le\pi(\omega_4-\alpha_0)/(\gamma_0-\alpha_0)\le3\pi/4$,
so $u_0(\omega_4)\ge1/\sqrt2$. Thus
\begin{equation}\label{eq:cond-density-lower-I0}
 f_N(t,\omega_4)
 \ge\frac{c_0e^{C_0s_0}}{\sqrt2}e^{-C_0t}
 =A_0e^{-D_0t},
 \qquad t\ge s_0,\quad \omega_4\in I_0,
\end{equation}
where $A_0=c_0e^{C_0s_0}/\sqrt2$ and $D_0=C_0$ are independent of $N$.

\smallskip
\noindent\textit{Step 2. Repeated expansion of the interval.}
Set $w_k=2^kw_0\wedge(3\omega_*/4)$ for $k\ge0$, and let $K$ be
the first index for which $w_K=3\omega_*/4$.
Since $0<w_0<\omega_*/2$, we have $1\le K<\infty$.
For $0\le k<K$, the interval widths satisfy
$w_k<3\omega_*/4$ and $w_{k+1}=2w_k\wedge(3\omega_*/4)$.

We prove by induction that, for each $0\le k\le K$, there is a
constant $A_k>0$, independent of $N$, such that all sufficiently
large $N$ satisfy
\begin{equation}\label{eq:cond-density-lower-iteration-bound}
 \begin{aligned}
 f_N(t,\omega_4)&\ge A_ke^{-3^kD_0t},\qquad t\ge s_0+k,\qquad |\omega_4-\omega_*|\le w_k.
 \end{aligned}
\end{equation}
For $k=0$, this is \eqref{eq:cond-density-lower-I0}.
Suppose that the bound holds for some $k<K$.
Apply \cref{lem:cond-interval-expansion} with $w=w_k$, $s=s_0+k$, $A=A_k$, and $D=3^kD_0$.
It gives a constant $A_{k+1}>0$, independent of $N$, such that
\[
 \begin{aligned}
 f_N(t,\omega_4)&\ge A_{k+1}e^{-3^{k+1}D_0t},\qquad t\ge s_0+k+1,\quad |\omega_4-\omega_*|\le
 2w_k\wedge\frac{3\omega_*}{4}=w_{k+1}.
 \end{aligned}
\]
This proves \eqref{eq:cond-density-lower-iteration-bound}.
Since we only need to apply \cref{lem:cond-interval-expansion} a finite number of times, we may increase
$N_0$ so that this bound holds for every $N\ge N_0$ and every
$0\le k\le K$.

Taking $k=K$, so that $w_K=3\omega_*/4$, gives
\begin{equation}\label{eq:cond-density-lower-wide-interval}
 f_N(t,\omega_4)\ge A_Ke^{-3^KD_0t},
 \qquad t\ge s_0+K,\quad
 \frac{\omega_*}{4}\le\omega_4\le\frac{7\omega_*}{4}.
\end{equation}

\smallskip
\noindent\textit{Step 3. The gain at small frequencies.}
Fix $0<\omega_4<\omega_*/8$ and let
$\omega_1,\omega_2\in I_*=[\omega_*/2,11\omega_*/20]$.
Hence $\omega_1+\omega_2\in[\omega_*,11\omega_*/10]$,
\begin{equation}\label{eq:cond-density-lower-third-frequency}
 \frac{7\omega_*}{8}<\omega_3
 =\omega_1+\omega_2-\omega_4<\frac{11\omega_*}{10}.
\end{equation}
Thus $\omega_1,\omega_2,\omega_3\in[\omega_*/4,7\omega_*/4]$, where
\eqref{eq:cond-density-lower-wide-interval} applies.
Since $0<\omega_4<\omega_*/8$, we have
$\omega_1,\omega_2\ge\omega_*/2>\omega_4$
and $\omega_3>7\omega_*/8>\omega_4$.
Hence $I_*^2\subset(\omega_4/2,\infty)^2$, and $\omega_4$ is the
smallest of the four frequencies.
One can refer to \cref{fig:cond-frequency-geometry}(b) to see the
ranges of $\omega_1,\omega_2,\omega_3$ and $\omega_4$.

Increase $N_0$, if necessary, so that
$N_0\ge(2/\omega_*)\vee(7\omega_*/4)^{4\beta+1/2}$,
and let $N\ge N_0$.
For $(\omega_1,\omega_2)\in I_*^2$, we have
$\ell_N(\omega_j)=1$ for $j=1,2,3$, and
$S_{\beta,N}=S_\beta\ell_N(\omega_4)$.
Since the smallest frequency is $\omega_4$, 
\begin{equation}\label{eq:cond-density-lower-source-kernel}
 \begin{aligned}
 \frac{S_{\beta,N}(\omega_1,\omega_2,\omega_3,\omega_4)}
      {4\sqrt{\omega_4}}
 =\frac14(\omega_1\omega_2\omega_3)^\beta
   \omega_4^\beta\ell_N(\omega_4)\ge
 \frac14\left(\frac{\omega_*^2}{4}
              \frac{7\omega_*}{8}\right)^\beta
 \omega_4^\beta\ell_N(\omega_4).
 \end{aligned}
\end{equation}
For $t\ge s_0+K$ and $0<\omega_4<\omega_*/8$,  \eqref{eq:cond-density-lower-wide-interval} gives
\begin{equation}\label{eq:cond-density-lower-small-frequency-gain}
 \begin{aligned}
 \mathcal G_N(t,\omega_4)
 &\ge\frac1{4\sqrt{\omega_4}}
 \iint_{I_*^2}
 S_{\beta,N}A_K^3e^{-3^{K+1}D_0t}d\omega_1d\omega_2\\
 &\ge\frac14\left(\frac{7\omega_*^3}{32}\right)^\beta
 A_K^3e^{-3^{K+1}D_0t}\omega_4^\beta\ell_N(\omega_4)|I_*|^2
 =Be^{-Dt}\omega_4^\beta\ell_N(\omega_4),
 \end{aligned}
\end{equation}
where
\begin{equation}\label{eq:cond-density-lower-source-constants}
 B=\frac{A_K^3}{4}\left(\frac{7\omega_*^3}{32}\right)^\beta
       \left(\frac{\omega_*}{20}\right)^2>0,
 \qquad D=3^{K+1}D_0.
\end{equation}
Thus \eqref{eq:cond-density-forced} yields
\begin{equation}\label{eq:cond-density-source}
 \begin{aligned}
 \partial_t f_N(t,\omega_4)
 &\ge L_N(t)[f_N(t,\cdot)](\omega_4)
 -\omega_4^{-d}\ell_N(\omega_4)f_N(t,\omega_4)\\
 &\quad+Be^{-Dt}\omega_4^\beta\ell_N(\omega_4)
 \mathbf1_{(0,\omega_*/8)}(\omega_4),
 \qquad t\ge s_0+K.
 \end{aligned}
\end{equation}

Set $s=s_0+K$. This proves the assertion.
\end{proof}

\begin{lemma}[Positive density at small frequencies]\label{lem:cond-density-lower}
Suppose that $\mathcal N_0=1$ and $g_0$ has a nonzero absolutely
continuous part. There exist $c,D,s_*,r_*>0$, depending only on
$\beta,g_0$, with the following property. For every $a\in(0,r_*)$,
there is $N_a\ge1$, depending only on $a,\beta,g_0$, such that,
for every $N\ge N_a$,
\begin{equation}\label{eq:cond-density-lower}
 f_N(t,\omega_4)\ge ce^{-Dt}\omega_4^{1-3\beta},
 \qquad t\ge s_*,\quad a/2\le\omega_4\le3a/2.
\end{equation}
\end{lemma}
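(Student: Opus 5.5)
The plan is to combine the low-frequency source from \cref{lem:cond-low-frequency-source} with the comparison principle \cref{lem:cond-sine-comparison}(ii), applied on an interval of width comparable to $a$. By \cref{lem:cond-low-frequency-source}, there are $B,D,s,\omega_*$ and $N_0$ such that for $N\ge N_0$, $t\ge s$ and $0<\omega_4<\omega_*/8$,
\[
\partial_t f_N\ge L_N(t)[f_N]-\omega_4^{-d}\ell_N(\omega_4)f_N+Be^{-Dt}\omega_4^\beta\ell_N(\omega_4).
\]
Set $r_*=\omega_*/16$, so that the target interval $[a/2,3a/2]$ sits strictly inside $(0,\omega_*/8)$ with some room.

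Fix $a\in(0,r_*)$ and take the sine profile $u$ of \eqref{eq:cond-sine-profile} on $[a',b']=[a/4,7a/4]$. Note that $b'<\omega_*/8$. Take $N_a\ge N_0\vee 8/a\vee(3b'/2)^{4\beta+1/2}$. Then \cref{lem:cond-fixed-profile}(ii) applies, and $\ell_N=1$ on $[a',b']$ because $a'\ge 1/N$. On this interval the source is at least $B(a/4)^\beta e^{-Dt}u(\omega_4)$, since $0\le u\le\mathbf 1_{[a',b']}$.

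Applying \cref{lem:cond-sine-comparison}(ii) from time $s$ with $c=B(a/4)^\beta$ gives, for $t\ge s+1$,
\[
f_N(t,\omega_4)\ge B(a/4)^\beta u(\omega_4)\,e^{-C}e^{-Dt}.
\]
This uses the same one-unit-of-time estimate as \eqref{eq:cond-density-lower-expansion-time-integral}. On $[a/2,3a/2]$ the phase of $u$ lies in $[\pi/6,5\pi/6]$, so $u\ge 1/2$ there.

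\textbf{Key step: scaling of $C$.} Everything depends on the size of the constant $C$ from \eqref{eq:cond-profile-constant}. With $b'-a'=3a/2$, $b'=7a/4$ and $a'=a/4$,
\[
C=\frac{\pi^2(7a/4)^{2-d}}{4(3a/2)^2}+\frac54\,(a/4)^{-d}=\kappa\,a^{-d}
\]
for a constant $\kappa$ depending only on $\beta$. Hence $C\to\infty$ as $a\to0$, and the factor $e^{-C}$ would destroy the power law. So one unit of time must not be used. Instead, integrate over a time window of length $a^d$, which gives
\[
\int_s^t e^{-C(t-\tau)-D\tau}\,d\tau\ \ge\ e^{-Dt}\,\frac{1-e^{-\kappa}}{\kappa}\,a^{d}
\qquad\text{for } t\ge s+a^d .
\]
Since $a^d\le r_*^d$, taking $s_*=s+\max(1,r_*^d)$ makes this valid for all $t\ge s_*$ uniformly in $a$. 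The exponential $e^{-Dt}$ is unchanged.

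This yields
\[
f_N\ \ge\ c'\,e^{-Dt}\,a^{\beta+d}=c'\,e^{-Dt}\,a^{1-3\beta},
\]
using $\beta+d=1-3\beta$. Since $\omega_4\sim a$ on $[a/2,3a/2]$, this is comparable to $\omega_4^{1-3\beta}$ up to the factor $(3/2)^{1-3\beta}$. That gives \eqref{eq:cond-density-lower} with $c$ depending only on $\beta$, $B$ and $\kappa$.

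The main obstacle is exactly this bookkeeping. The comparison constant blows up like $a^{-d}$, and this blow-up must be absorbed into the relaxation time $1/C\sim a^d$ rather than into an exponential loss. The exponent $1-3\beta$ is then precisely the source size $a^\beta$ times the equilibrium time $a^{d}$.
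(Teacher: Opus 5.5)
Your proposal is correct and follows the paper's proof essentially step by step. It uses the same profile on $[a/4,7a/4]$ with comparison constant $C_\beta a^{-d}$, the same source bound $B4^{-\beta}a^\beta e^{-Dt}u_a$, and the same key step of integrating only over the window $[t-a^d,t]$, so that the relaxation factor $a^d$ combines with $a^\beta$ to give $a^{1-3\beta}$. The remaining differences are cosmetic: the paper takes $r_*=1\wedge(\omega_*/14)$, so that $a^d<1$ and $s_*=s+1$ suffices, whereas you take $r_*=\omega_*/16$ and compensate with $s_*=s+\max(1,r_*^d)$.
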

\begin{proof}
Let $B,D,s,\omega_*,N_0$ be as in
\cref{lem:cond-low-frequency-source}. Combining its gain bound with
\eqref{eq:cond-density-forced} gives \eqref{eq:cond-density-source}
for $t\ge s$.
Set $r_*=1\wedge(\omega_*/14)$ and fix $0<a<r_*$ and $N\ge N_a$, where
\begin{equation}\label{eq:cond-density-lower-final-cutoff}
 N_a=\max\left\{N_0,\frac8a,
                 \left(\frac{21a}{8}\right)^{4\beta+1/2}\right\}.
\end{equation}
Since $a<r_*$, we have $a<1$ and
$7a/4<7\omega_*/56=\omega_*/8$.
Define
\begin{equation}\label{eq:cond-density-lower-small-profile}
 u_a(\omega_4)
 =\sin\!\left(
 \frac{2\pi}{3a}\left(\omega_4-\frac a4\right)\right)
 \mathbf1_{[a/4,\,7a/4]}(\omega_4).
\end{equation}
This is the function in \eqref{eq:cond-sine-profile} with endpoints
$a/4$ and $7a/4$, whose difference is $3a/2$.
In particular, $0\le u_a\le\mathbf1_{[a/4,7a/4]}$.
The constant in \eqref{eq:cond-profile-constant} is therefore
\begin{equation}\label{eq:cond-density-lower-small-profile-constant}
 \frac{\pi^2(7a/4)^{2-d}}{4(3a/2)^2}
  +\frac5{4(a/4)^d}
 =\left[\frac{\pi^2(7/4)^{2-d}}9+5\cdot4^{d-1}\right]a^{-d}
 =C_\beta a^{-d}.
\end{equation}Hence \cref{lem:cond-fixed-profile} and
$0\le\ell_N\le1$ give, for every $t\ge0$ and $\omega_4>0$,
\begin{equation}\label{eq:cond-density-lower-small-profile-bound}
 \begin{aligned}
 L_N(t)[u_a](\omega_4)
 -\omega_4^{-d}\ell_N(\omega_4)u_a(\omega_4)
 &\ge L_N(t)[u_a](\omega_4)-\omega_4^{-d}u_a(\omega_4)\\
 &\ge-C_\beta a^{-d}u_a(\omega_4).
 \end{aligned}
\end{equation}

Moreover, $N^{-1}\le a/8<a/4$, so $\ell_N=1$ on
$[a/4,7a/4]$ by \eqref{eq:cutoff}. On this interval,
$\omega_4^\beta\ge(a/4)^\beta=4^{-\beta}a^\beta$, since $\beta>0$.
Using $[a/4,7a/4]\subset(0,\omega_*/8)$ and
$0\le u_a\le\mathbf1_{[a/4,7a/4]}$, we obtain
\begin{equation}\label{eq:cond-density-lower-small-source}
 \begin{aligned}
 Be^{-Dt}\omega_4^\beta\ell_N(\omega_4)
 \mathbf1_{(0,\omega_*/8)}(\omega_4)
 &\ge B4^{-\beta}a^\beta e^{-Dt}
 \mathbf1_{[a/4,7a/4]}(\omega_4)\\
 &\ge B4^{-\beta}a^\beta e^{-Dt}u_a(\omega_4).
 \end{aligned}
\end{equation}
Combining \eqref{eq:cond-density-source} with
\eqref{eq:cond-density-lower-small-source} verifies
\eqref{eq:cond-forced-assumption} with
$u=u_a$, $C=C_\beta a^{-d}$, and $c=B4^{-\beta}a^\beta$.
Thus \cref{lem:cond-sine-comparison}\textup{(ii)} gives, for $t\ge s$,
\begin{equation}\label{eq:cond-density-lower-small-comparison}
 f_N(t,\omega_4)
 \ge B4^{-\beta}a^\beta u_a(\omega_4)
 \int_{s}^t
 e^{-C_{\beta}a^{-d}(t-\tau)-D\tau} d\tau.
\end{equation}
Set $s_*=s+1$. For $t\ge s_*$, since $0<a<1$ and $d=1-4\beta>0$,
we have $a^d<1$ and $[t-a^d,t]\subset[s,t]$.
Making
the change of variables $r=(t-\tau)/a^d$ gives
\begin{equation}\label{eq:cond-density-lower-small-time-integral}
 \begin{aligned}
 \int_{s}^t e^{-C_\beta a^{-d}(t-\tau)-D\tau}d\tau
 &\ge e^{-Dt}\int_{t-a^d}^t
 e^{-C_\beta a^{-d}(t-\tau)}d\tau
 =\frac{1-e^{-C_\beta}}{C_\beta}a^de^{-Dt}.
 \end{aligned}
\end{equation}
Inserting this bound into \eqref{eq:cond-density-lower-small-comparison}
gives
\begin{equation}\label{eq:cond-density-lower-small-final}
 f_N(t,\omega_4)
 \ge\frac{B4^{-\beta}(1-e^{-C_\beta})}{C_\beta}
 a^{\beta+d}e^{-Dt}u_a(\omega_4).
\end{equation}
If $a/2\le\omega_4\le3a/2$, then
\[
 \frac\pi6\le\frac{2\pi}{3a}\left(\omega_4-\frac a4\right)
 \le\frac{5\pi}6.
\]
Hence
$u_a(\omega_4)\ge1/2$.
Also, $a\ge2\omega_4/3$ and $\beta+d=1-3\beta>0$, whence
\[
 a^{\beta+d}\ge\left(\frac{2\omega_4}{3}\right)^{\beta+d}
 =\left(\frac23\right)^{\beta+d}\omega_4^{1-3\beta}.
\]
Substituting these two bounds into
\eqref{eq:cond-density-lower-small-final} gives
\begin{equation}\label{eq:cond-density-lower-final-constant}
 f_N(t,\omega_4)
 \ge\frac{B4^{-\beta}(1-e^{-C_\beta})}{2C_\beta}
 \left(\frac23\right)^{\beta+d}
 e^{-Dt}\omega_4^{1-3\beta}
 =ce^{-Dt}\omega_4^{1-3\beta},
\end{equation}
for $t\ge s_*$ and $a/2\le\omega_4\le3a/2$.
Here $c$ is the positive coefficient in
\eqref{eq:cond-density-lower-final-constant}.
The constants $B,D,s,\omega_*$ are supplied by
\cref{lem:cond-low-frequency-source} and depend only on $\beta,g_0$,
while $C_\beta$ depends only on $\beta$.
Thus $c,D,s_*,r_*$ are independent of $a,N$ and depend only on
$\beta,g_0$. This proves \eqref{eq:cond-density-lower}.
\end{proof}

\subsection{Transfer to smaller frequency intervals}
\label{subsec:cond-transfer}

Let $L=L(\beta)$ be as in \cref{lem:kernel}. For
$0<r\le R/2<\infty$ and $0<\varepsilon<1$, set $a=r/(8L)$ and define
\begin{equation}\label{eq:cond-transfer-sine-profile}
 \psi_a(\omega)=\cos\!\left(\frac{\pi(\omega-a)}a\right)
 \mathbf1_{[a/2,3a/2]}(\omega).
\end{equation}
\begin{equation}\label{eq:cond-local-weighted-mass}
 0\le J_N(t)=\int_{(0,2La)}(\eta/a)^\beta\ell_N(\eta)g_t^N(d\eta)
 \le(2L)^\beta g_t^N((0,2La))\le(2L)^\beta.
\end{equation}
\begin{equation}\label{eq:cond-growth-rate}
 \lambda=\frac\beta{48}a^\beta r^{\beta+1/2}R^{2\beta-3/2}\varepsilon^2.
\end{equation}
Choose $H>2R\vee\lambda^{-1/d}$ and an integer
$N_0\ge(4/a)\vee(2H)^{4\beta+1/2}$, and let $N\ge N_0$.
Here $N_0$ depends only on $\beta,r,R,\varepsilon$.
Throughout this subsection, $C>0$ denotes constants depending only on $\beta$.

\begin{lemma}[Lower bounds for $V_N$ and $L_N\psi_a$]
\label{lem:cond-potential-jump-lower}
There exists $C>0$, depending only on $\beta$, such that, for
$0<r\le R/2$, $0<\varepsilon<1$,
$N\ge N_0(\beta,r,R,\varepsilon)$, $s\ge0$, and $\tau>0$, the condition
\begin{equation}\label{eq:cond-transfer-annular-mass}
 g_t^N([r,R])\ge\varepsilon,\qquad s\le t\le s+\tau,
\end{equation}
implies, for $s\le t\le s+\tau$,
\[
 V_N(t,\omega)\ge\lambda-Ca^{-d}J_N(t),
 \qquad a/2\le\omega\le3a/2,
\]
\[
 L_N(t)[\psi_a](\omega)\ge-Ca^{-d}J_N(t)\psi_a(\omega),
 \qquad\omega>0,
\]
and
\[
 L_N(t)[\psi_a](\omega)+V_N(t,\omega)\psi_a(\omega)
 \ge[\lambda-Ca^{-d}J_N(t)]\psi_a(\omega),
 \qquad\omega>0.
\]
\end{lemma}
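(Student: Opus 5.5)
We would prove the three inequalities in order; the third is the sum of the first two, except that on $\{\psi_a=0\}$ we must check $L_N(t)[\psi_a]\ge0$ separately.

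\emph{Lower bound for $V_N$.} Fix $\omega=\omega_4\in[a/2,3a/2]$ and split $V_N$ into a positive and a negative part. The negative part is the second integral in \eqref{eq:cond-potential}, over pairs $(\omega_2,\omega_3)$ with $\omega_2\vee\omega_3\ge\omega_4/2$. Write the positive part, after symmetrization in $(\omega_1,\omega_2)$, in terms of the ordered triple $\operatorname{sort}(\omega_1,\omega_2,\omega_4)$.

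The configurations producing $\lambda$ are those with one input in $[r,R]$, a second input also in $[r,R]$ (or comparable), and $\omega_4$ the smallest frequency. Because $r\ge 8La\ge L\cdot(3a/2)\cdot 2$, every such triple satisfies $Lx\le y\le z$ with $x=\omega_4$. We then pair the gain coefficient $G_3$ against the loss coefficients $G_2,G_1$ coming from the negative integral with the same $(y,z)$; this matches the two integrals term by term. \Cref{lem:kernel}(ii) gives $G_3-G_2-G_1\ge\beta2^{\beta-1}x^\beta y^{\beta+1/2}z^{2\beta-3/2}$. For $x\ge a/2$ and $y,z\in[r,R]$ this is at least of order $\beta a^\beta r^{\beta+1/2}R^{2\beta-3/2}$. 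Integrating against $g_t^N\otimes g_t^N$ over $[r,R]^2$ gives a factor $\varepsilon^2$, and the bookkeeping constants give the $1/48$ in $\lambda$.

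The remaining negative contributions are loss terms in which the other two frequencies are not both large. These have one partner frequency $\eta<2La$, the small one. By \eqref{eq:cond-potential-negative-kernel} they are bounded by $2^{3/2}\omega^{-d}(\eta/\omega)^\beta\ell_N(\eta)$. Integrating in $\eta$ against $g^N_t$ and using total mass one for the other variable, we get $Ca^{-d}J_N(t)$. The cutoff factors must be identified with their true values for $N\ge N_0$. On the relevant region all frequencies lie in $[a/4,2H]$, so $\ell_N=1$ and $S_\beta\le N$ (by the choice of $N_0$ and $H$). The contributions with some frequency above $H$ are handled by the positivity/monotonicity structure, or else are absorbed because $H>\lambda^{-1/d}$.

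\emph{Lower bound for $L_N\psi_a$.} We follow \cref{lem:cond-fixed-profile}. Symmetrize as in \eqref{eq:cond-symmetrized-jump}, use the explicit kernels \eqref{eq:cond-kernel-small-inputs}, and apply the paired bound \eqref{eq:cond-paired-sine-bound}. Here the profile is the cosine bump on $[a/2,3a/2]$ of width $a$, so $\psi_a(z\pm h)-\psi_a(z)\ge-\pi^2h^2a^{-2}\psi_a(z)$. We use \eqref{eq:cond-kernel-first-difference}--\eqref{eq:cond-kernel-second-difference} differently from before. Since $x\le y<z/2\le 3a/4$, we bound $x^\beta y^{\beta-1/2}(y-x)^2\le x^\beta y^{\beta+3/2}$ and keep $x^\beta\le a^\beta(x/a)^\beta$. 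Combined with $z^{2\beta-1/2}$, this yields $a^{-d}(x/a)^\beta$. Integrating in the pair $(x,y)$, both in $(0,2La)$, bounds one marginal by $J_N$ and the other by one, giving $-Ca^{-d}J_N(t)\psi_a$. When $\psi_a(\omega)=0$ the gain term is nonnegative, exactly as in \eqref{eq:cond-sine-outside-support}.

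\emph{Main obstacle.} The delicate step is the exact cancellation in the $V_N$ bound. We must show that every loss configuration involving two frequencies in $[r,R]$ is dominated by a matching gain configuration through \cref{lem:kernel}(ii), with no uncontrolled remainder. We would first try to write $V_N$ directly as $\tfrac{1}{4}\iint$ of $(G_3-G_2-G_1)$ over ordered pairs $y\le z$ from the partner frequencies, plus terms containing a partner below $2La$. If that bookkeeping fails, the fallback is cruder: keep only the $[r,R]^2$ part of the positive term, and bound every remaining loss piece either by $J_N$ or by \eqref{eq:cond-potential-negative-kernel} times the mass outside $[r,R]$.
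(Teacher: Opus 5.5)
Your proposal is correct and is essentially the paper's proof. You symmetrize the loss integral. Where the smaller partner lies below $L\omega$, you drop the gain and bound the loss by \eqref{eq:cond-potential-negative-kernel} and $J_N$. On $\{\omega_1\wedge\omega_2\ge L\omega\}$ you match $G_{3,N}$ against $G_{2,N}+G_{1,N}$: \cref{lem:kernel}(ii) applies on $[r,R]^2$, and for $z>H$ you use $G_{3,N}\ge G_{2,N}$ and $G_{1,N}\le 2^\beta y^{-d}$ with $y>H/2$, so the error is at most $H^{-d}\le\lambda$. You then rerun \cref{lem:cond-fixed-profile} for $L_N\psi_a$.

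Your variant of bounding the jump loss by $a^{-d}(x/a)^\beta\ell_N(x)$, with $x$ the smaller partner, gives $J_N$ directly instead of $J_N^2\le(2L)^\beta J_N$, and is fine. Discard the fallback, though: loss terms with both partners $\ge L\omega$ but outside $[r,R]^2$ are of size $a^{-d}\gg\lambda$. They are controlled only by the matching, not by $J_N$ or by the mass outside $[r,R]$.
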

\begin{proof}
Fix $t\in[s,s+\tau]$.

\noindent\textit{Step 1. The lower bound for $V_N$.}
Fix $a/2\le\omega_4\le3a/2$. In the first integral below,
$\omega_3=\omega_1+\omega_2-\omega_4$; in the second,
$\omega_1=\omega_3+\omega_4-\omega_2$. In the second integral,
rename the integration variables $(\omega_2,\omega_3)$ as
$(\omega_2,\omega_1)$, then average under
$(\omega_1,\omega_2)\mapsto(\omega_2,\omega_1)$.
By \eqref{eq:cond-potential} and the symmetry of $S_{\beta,N}$,
\begin{equation}\label{eq:cond-symmetrized-potential}
 \begin{aligned}
 V_N(t,\omega_4)
 &=\frac14\iint_{(0,\infty)^2}
 \frac{S_{\beta,N}(\omega_1,\omega_2,\omega_3,\omega_4)}
      {\sqrt{\omega_1\omega_2\omega_4}}
 \,g_t^N(d\omega_1)g_t^N(d\omega_2)\\
 &\quad-\frac12\iint_{(0,\infty)^2\setminus(0,\omega_4/2)^2}
 \frac{S_{\beta,N}(\omega_1,\omega_2,\omega_3,\omega_4)}
      {\sqrt{\omega_2\omega_3\omega_4}}
 \,g_t^N(d\omega_2)g_t^N(d\omega_3)\\
 &=\frac14\iint_{(0,\infty)^2}K_{\beta,N}(\omega_1,\omega_2,\omega_4)
 \,g_t^N(d\omega_1)g_t^N(d\omega_2)\\
 &\quad-\frac14\iint_{\substack{0<\omega_1\wedge\omega_2\\\omega_1\vee\omega_2\ge\omega_4/2}}
 \bigl[K_{\beta,N}(\omega_4,\omega_1,\omega_2)
       +K_{\beta,N}(\omega_4,\omega_2,\omega_1)\bigr]
 \,g_t^N(d\omega_1)g_t^N(d\omega_2).
 \end{aligned}
\end{equation}
By \eqref{eq:cutoff},
$K_{\beta,N}(\omega_4,\omega_1,\omega_2)
\le\ell_N(\omega_1)\ell_N(\omega_2)K_\beta(\omega_4,\omega_1,\omega_2)$.
Since $L\omega_4\le3La/2<2La$, \eqref{eq:cond-potential-negative-kernel}
gives
\begin{equation}
 \begin{aligned}
 &\frac14\iint_{\substack{0<\omega_1\wedge\omega_2<L\omega_4\\
                          \omega_1\vee\omega_2\ge\omega_4/2}}
 \bigl[K_{\beta,N}(\omega_4,\omega_1,\omega_2)
       +K_{\beta,N}(\omega_4,\omega_2,\omega_1)\bigr]
       \,g_t^N(d\omega_1)g_t^N(d\omega_2)\\
 &\quad\le\sqrt2\,\omega_4^{-d-\beta}g_t^N((0,\infty))
          \int_{(0,L\omega_4)}\eta^\beta\ell_N(\eta)g_t^N(d\eta)
 \le\sqrt2\,\omega_4^{-d-\beta}a^\beta J_N(t)
 \le C a^{-d}J_N(t).
 \end{aligned}
\end{equation}

Recall the definition in \cref{sec:measure}:
\begin{equation}
 (x,y,z)=\operatorname{sort}(\omega_1,\omega_2,\omega_3)
 =\left(\min_{1\le j\le3}\omega_j,\,
        \operatorname{med}_{1\le j\le3}\omega_j,\,
        \max_{1\le j\le3}\omega_j\right).
\end{equation}
For $\omega_1\wedge\omega_2\ge L\omega_4$, we have
$\omega_4<\omega_1\wedge\omega_2\le\omega_1\vee\omega_2$, so
\begin{equation}
 \begin{aligned}
 &K_{\beta,N}(\omega_1,\omega_2,\omega_4)
 -K_{\beta,N}(\omega_4,\omega_1,\omega_2)
 -K_{\beta,N}(\omega_4,\omega_2,\omega_1)\\
 &\quad=(G_{3,N}-G_{2,N}-G_{1,N})
 (\omega_4,\omega_1\wedge\omega_2,\omega_1\vee\omega_2).
 \end{aligned}
\end{equation}
For $a/2\le x\le3a/2$ and $Lx\le y\le z$, we have
$N^{-1}<x\le x+z-y\le y+z-x$. Thus \eqref{eq:cutoff} gives
\begin{equation}
 \begin{aligned}
 G_{3,N}(x,y,z)
 &=\frac{\bigl[(xyz)^\beta\sqrt x\,(y+z-x)^\beta\bigr]\wedge N}{\sqrt{xyz}}\\
 &\ge\frac{\bigl[(xyz)^\beta\sqrt x\,(x+z-y)^\beta\bigr]\wedge N}{\sqrt{xyz}}
 =G_{2,N}(x,y,z).
 \end{aligned}
\end{equation}
If $z\le H$, then $y+z-x\le2H$, and hence
\begin{equation}
 \begin{gathered}
 S_\beta(x,y,z,y+z-x)\le(2H)^{4\beta+1/2}\le N,\\
 G_{3,N}(x,y,z)=G_3(x,y,z),\qquad
 G_{2,N}(x,y,z)=G_2(x,y,z).
 \end{gathered}
\end{equation}
Using $G_{1,N}\le G_1$ and \eqref{eq:cond-high-potential}, we obtain
\begin{equation}
 \begin{aligned}
 (G_{3,N}-G_{2,N}-G_{1,N})(x,y,z)
 &\ge(G_3-G_2-G_1)(x,y,z)\\
 &\ge\beta2^{\beta-1}x^\beta y^{\beta+1/2}z^{2\beta-3/2}\ge0.
 \end{aligned}
\end{equation}
Since $L\omega_4\le3La/2=3r/16<r\le R<H$ and $2\beta-3/2<0$,
\eqref{eq:cond-transfer-annular-mass} gives
\begin{equation}\label{eq:cond-uncut-positive-contribution}
 \begin{aligned}
 &\frac14\iint_{\substack{\omega_1\wedge\omega_2\ge L\omega_4\\\omega_1\vee\omega_2\le H}}
 (G_{3,N}-G_{2,N}-G_{1,N})
 (\omega_4,\omega_1\wedge\omega_2,\omega_1\vee\omega_2)
 \,g_t^N(d\omega_1)g_t^N(d\omega_2)\\
 &\quad\ge\frac14\iint_{[r,R]^2}(G_3-G_2-G_1)
 (\omega_4,\omega_1\wedge\omega_2,\omega_1\vee\omega_2)
 \,g_t^N(d\omega_1)g_t^N(d\omega_2)\\
 &\quad\ge\frac\beta8a^\beta r^{\beta+1/2}R^{2\beta-3/2}
       \bigl[g_t^N([r,R])\bigr]^2\ge2\lambda.
 \end{aligned}
\end{equation}

For $z>H$, $G_{3,N}\ge G_{2,N}$ and
\eqref{eq:condensation-kernel-bound} give
\begin{equation}
 (G_{2,N}+G_{1,N}-G_{3,N})_+(x,y,z)
 \le G_{1,N}(x,y,z)
 \le2^\beta y^{-d}\mathbf1_{\{z<x+y\}}.
\end{equation}
On the set $z<x+y$, we have
\begin{equation}
 y>z-x>H-3a/2>H/2.
\end{equation}
Since $\beta+d-2=-1-3\beta<0$ and $g_t^N([0,\infty))=1$,
\begin{equation}
 \begin{aligned}
 &\frac14\iint_{\substack{\omega_1\wedge\omega_2\ge L\omega_4\\\omega_1\vee\omega_2>H}}
 (G_{2,N}+G_{1,N}-G_{3,N})_+
 (\omega_4,\omega_1\wedge\omega_2,\omega_1\vee\omega_2)
 \,g_t^N(d\omega_1)g_t^N(d\omega_2)\\
 &\quad\le2^{\beta-2}\iint_{\substack{\omega_1\wedge\omega_2\ge L\omega_4\\\omega_1\vee\omega_2>H,\ |\omega_1-\omega_2|<\omega_4}}
 (\omega_1\wedge\omega_2)^{-d}\,g_t^N(d\omega_1)g_t^N(d\omega_2)\\
 &\quad\le2^{\beta+d-2}H^{-d}\bigl[g_t^N((H/2,\infty))\bigr]^2
 \le H^{-d}\le\lambda.
 \end{aligned}
\end{equation}
Combining these estimates with \eqref{eq:cond-symmetrized-potential},
we obtain
\begin{equation}\label{eq:cond-potential-lower}
 \begin{aligned}
 V_N(t,\omega_4)
 &\ge\frac14\iint_{\{\omega_1\wedge\omega_2\ge L\omega_4\}}
 (G_{3,N}-G_{2,N}-G_{1,N})
 (\omega_4,\omega_1\wedge\omega_2,\omega_1\vee\omega_2)\\
 &\hspace{4em}\times g_t^N(d\omega_1)g_t^N(d\omega_2)
 -C a^{-d}J_N(t)\\
 &=\frac14\iint_{\substack{\omega_1\wedge\omega_2\ge L\omega_4\\
                         \omega_1\vee\omega_2\le H}}
 (G_{3,N}-G_{2,N}-G_{1,N})
 (\omega_4,\omega_1\wedge\omega_2,\omega_1\vee\omega_2)\\
 &\hspace{4em}\times g_t^N(d\omega_1)g_t^N(d\omega_2)\\
 &\quad+\frac14\iint_{\substack{\omega_1\wedge\omega_2\ge L\omega_4\\
                         \omega_1\vee\omega_2>H}}
 (G_{3,N}-G_{2,N}-G_{1,N})
 (\omega_4,\omega_1\wedge\omega_2,\omega_1\vee\omega_2)\\
 &\hspace{4em}\times g_t^N(d\omega_1)g_t^N(d\omega_2)
 -C a^{-d}J_N(t)\\
 &\ge2\lambda-\lambda-C a^{-d}J_N(t)
 =\lambda-C a^{-d}J_N(t),\qquad a/2\le\omega_4\le3a/2.
 \end{aligned}
\end{equation}

\smallskip
\noindent\textit{Step 2. The lower bound for $L_N\psi_a$.}
Fix $a/2\le\omega_4\le3a/2$ and $0<\omega_2\le\omega_3<\omega_4/2$.
The choice of $N_0$ gives $N\ge4/a$ and
$N\ge(2H)^{4\beta+1/2}\ge(9a/4)^{4\beta+1/2}$.
Applying \eqref{eq:cond-paired-sine-bound} to $\psi_a$, we obtain
\begin{equation}
 \begin{aligned}
 &\sum_{\sigma=\pm1}\left(1+\sigma\frac{\omega_3-\omega_2}{\omega_4}\right)^\beta
 [\psi_a(\omega_4+\sigma(\omega_3-\omega_2))-\psi_a(\omega_4)]\\
 &\quad\ge-\left[\frac{\pi^2(\omega_3-\omega_2)^2}{a^2}
              +\frac{\omega_3-\omega_2}{\omega_4}\right]\psi_a(\omega_4)
 \ge-C\frac{\omega_3-\omega_2}{a}\psi_a(\omega_4).
 \end{aligned}
\end{equation}
Here $0\le\omega_3-\omega_2\le\omega_3<3a/4$ and $\omega_4\ge a/2$.
Since $2\beta-1/2<0$ and $d=1-4\beta$,
\begin{equation}\label{eq:cond-transfer-jump-weight}
 \begin{aligned}
 \omega_4^{2\beta-1/2}\omega_2^\beta\omega_3^{\beta-1/2}
 \frac{\omega_3-\omega_2}{a}
 &\le C a^{2\beta-3/2}\omega_2^\beta\omega_3^{\beta+1/2}\\
 &\le C a^{2\beta-1}\omega_2^\beta\omega_3^\beta
 =C a^{-d}(\omega_2/a)^\beta(\omega_3/a)^\beta.
 \end{aligned}
\end{equation}
For $\omega_3<\omega_2$, exchange the integration variables
$\omega_2,\omega_3$. Substituting \eqref{eq:cond-sine-kernel-coefficients}
into \eqref{eq:cond-symmetrized-jump} and using
$\omega_4/2<2La$ and $J_N(t)\le(2L)^\beta$, we obtain
\begin{equation}\label{eq:cond-transfer-jump-loss}
 \begin{aligned}
 &L_N(t)[\psi_a](\omega_4)\\
 &\quad\ge-Ca^{-d}\psi_a(\omega_4)
 \iint_{(0,\omega_4/2)^2}(\omega_2/a)^\beta(\omega_3/a)^\beta
 \ell_N(\omega_2)\ell_N(\omega_3)\,g_t^N(d\omega_2)g_t^N(d\omega_3)\\
 &\quad=-Ca^{-d}\psi_a(\omega_4)
 \left[\int_{(0,\omega_4/2)}(\eta/a)^\beta\ell_N(\eta)g_t^N(d\eta)\right]^2\\
 &\quad\ge-Ca^{-d}J_N(t)^2\psi_a(\omega_4)
 \ge-Ca^{-d}J_N(t)\psi_a(\omega_4).
 \end{aligned}
\end{equation}
For $\omega_4>0$ outside $[a/2,3a/2]$, $\psi_a(\omega_4)=0$.
With $\omega_1=\omega_3+\omega_4-\omega_2$,
\eqref{eq:cond-jump-operator} gives
\begin{equation}
 L_N(t)[\psi_a](\omega_4)
 =\frac12\iint_{(0,\omega_4/2)^2}
 \frac{S_{\beta,N}(\omega_1,\omega_2,\omega_3,\omega_4)}
      {\sqrt{\omega_2\omega_3\omega_4}}
 \psi_a(\omega_1)\,g_t^N(d\omega_2)g_t^N(d\omega_3)\ge0.
\end{equation}
Combining this with \eqref{eq:cond-potential-lower}, we obtain, for every $\omega_4>0$,
\begin{equation}\label{eq:cond-fixed-profile-jump}
 L_N(t)[\psi_a](\omega_4)+V_N(t,\omega_4)\psi_a(\omega_4)
 \ge[\lambda-Ca^{-d}J_N(t)]\psi_a(\omega_4).
\end{equation}
\end{proof}

\begin{lemma}[Time integral of $J_N$]
\label{lem:cond-weighted-mass-integral}
Let $0<r\le R/2$, $0<\varepsilon<1$,
$N\ge N_0(\beta,r,R,\varepsilon)$, $s\ge0$, and $\tau>0$.
Assume \eqref{eq:cond-transfer-annular-mass}.
For $\phi(\omega)=(1-2\omega/r)_+$ and $s\le t\le s+\tau$,
\[
 \frac{d}{dt}\int_{[0,\infty)}\phi(\omega)g_t^N(d\omega)
 \ge\lambda J_N(t),
\]
and
\[
 \lambda\int_s^tJ_N(u)\,du
 \le\int_{[0,\infty)}\phi(\omega)g_t^N(d\omega)
 -\int_{[0,\infty)}\phi(\omega)g_s^N(d\omega)
 \le1.
\]
\end{lemma}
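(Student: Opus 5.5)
The plan is to use the symmetrized evolution identity \eqref{eq:ordered} from \cref{lem:ordered}(i) with the test function $\phi(\omega)=(1-2\omega/r)_+$. This $\phi$ is Borel, bounded by $1$, and hence of linear growth. It is also convex, nonincreasing, and supported in $[0,r/2]$. By \cref{lem:ordered}(ii), $F_{\beta,N}[\phi]\ge0$ everywhere on $\mathcal D_{\mathrm{ord}}$. So in
\[
\frac{d}{dt}\int\phi\,dg_t^N=\frac1{12}\int_{\mathcal D_{\mathrm{ord}}}F_{\beta,N}[\phi]\,d\bigl(\operatorname{sort}_\#(g_t^N)^{\otimes3}\bigr)
\]
we may discard every region except
\[
\mathcal R=\{0<x<2La,\ r\le y\le z\le R\},
\]
and we only need a pointwise lower bound for $F_{\beta,N}[\phi]$ on $\mathcal R$.

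\emph{Pointwise bound on $\mathcal R$.} Recall $2La=r/4$. On $\mathcal R$ we have $\phi(y)=\phi(z)=0$, $\phi(y+z-x)=0$ (since $y+z-x\ge r$), and $\phi(x)\ge1/2$.
\begin{itemize}
\item $\Delta_3\phi=\phi(x)$.
\item $\Delta_1\phi=\phi(x+y-z)-\phi(x)\ge0$, because $x+y-z\le x$ and $\phi$ is nonincreasing. If $x+y\le z$, then $\Delta_1\phi=0$ by convention.
\item $\Delta_2\phi=\phi(x+z-y)-\phi(x)\ge-\phi(x)$.
\end{itemize}
Hence
\[
F_{\beta,N}[\phi]\ge(G_{3,N}-G_{2,N})\phi(x)\ge\tfrac12(G_{3,N}-G_{2,N}).
\]
Next we remove the cutoff. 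Since $N\ge N_0\ge 4/a$, we get $\ell_N(y)=\ell_N(z)=1$, and also $\ell_N=1$ at $x+z-y$ and $y+z-x$ whenever these are at least $1/N$. All frequencies are at most $2R<2H$, so the truncation $\wedge N$ is inactive because $N\ge(2H)^{4\beta+1/2}$. This gives
\[
G_{i,N}=\ell_N(x)G_i
\]
on $\mathcal R$, at least for $i=2,3$; if $x+z-y<1/N$, the inequality only improves, since then $G_{2,N}$ is even smaller. Because $2x<r/2\le y$, \cref{lem:kernel}(i) yields
\[
F_{\beta,N}[\phi]\ge\beta2^{\beta-2}\ell_N(x)x^\beta y^{\beta+1/2}z^{2\beta-3/2}\ge\beta2^{\beta-2}a^\beta r^{\beta+1/2}R^{2\beta-3/2}(x/a)^\beta\ell_N(x),
\]
using $y\ge r$, $z\le R$, and $2\beta-3/2<0$.

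\emph{Integration against the sorted measure.} The preimage under $\operatorname{sort}$ of $\mathcal R$ contains the three disjoint sets in which one chosen coordinate lies in $(0,2La)$ and the other two lie in $[r,R]$. Therefore
\[
\int_{\mathcal R}h(x)\,d\operatorname{sort}_\#(g_t^N)^{\otimes3}\ge3\,g_t^N([r,R])^2\int_{(0,2La)}h\,dg_t^N
\]
for nonnegative $h$. Using \eqref{eq:cond-transfer-annular-mass}, we obtain
\[
\frac{d}{dt}\int\phi\,dg_t^N\ge\frac{3}{12}\,\beta2^{\beta-2}a^\beta r^{\beta+1/2}R^{2\beta-3/2}\varepsilon^2 J_N(t)\ge\lambda J_N(t),
\]
since $\beta2^{\beta-2}/4\ge\beta/16\ge\beta/48$. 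The second claim follows by integrating over $[s,t]$ and noting
\[
\int\phi\,dg_t^N-\int\phi\,dg_s^N\le\int\phi\,dg_t^N\le g_t^N([0,\infty))=1,
\]
because $0\le\phi\le1$.

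The only delicate point is the cutoff bookkeeping in the pointwise bound, namely checking that $G_{3,N}-G_{2,N}\ge\ell_N(x)(G_3-G_2)$ on $\mathcal R$. I would verify it exactly as in Step 1 of \cref{lem:cond-potential-jump-lower}: both kernels share the factor $\ell_N(x)$, the truncation is inactive because all four frequencies are at most $2H$, and any extra factor $\ell_N(x+z-y)\le1$ only decreases $G_{2,N}$.
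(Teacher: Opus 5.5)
Your proposal is correct and follows the paper's proof essentially step for step. You use convexity of $\phi$ to restrict \eqref{eq:ordered} to the region $x<r/4$, $r\le y\le z\le R$, and do the same cutoff bookkeeping ($G_{3,N}=\ell_N(x)G_3$, $G_{2,N}\le\ell_N(x)G_2$). You then apply \cref{lem:kernel}(i) and integrate against the annular mass. The only cosmetic difference is that you count all three preimages under $\operatorname{sort}$, gaining a factor $3$, whereas the paper integrates over the single set $(0,r/4)\times[r,R]^2$, which already gives $\lambda J_N(t)$.
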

\begin{proof}
Set $\phi(\omega)=(1-2\omega/r)_+$. For $0<x<r/4$ and
$r\le y\le z\le R$, we have $\phi(x)\ge1/2$ and
$\phi(y)=\phi(z)=\phi(y+z-x)=0$.
Since $N^{-1}\le a/4<r$ and $(2R)^{4\beta+1/2}\le N$,
\eqref{eq:cutoff} gives
\begin{equation}
 \begin{aligned}
 G_{3,N}(x,y,z)&=\ell_N(x)G_3(x,y,z),\\
 G_{2,N}(x,y,z)&=\ell_N(x)\ell_N(x+z-y)G_2(x,y,z)
 \le\ell_N(x)G_2(x,y,z).
 \end{aligned}
\end{equation}
Since $\phi$ is nonincreasing and $(x+y-z)_+\le x$, by \eqref{eq:cond-kernel-gain-difference}, $2x<y$, and
\eqref{eq:cond-growth-rate},
\begin{equation}\label{eq:cond-hinge-kernel-lower-bound}
 \begin{aligned}
 F_{\beta,N}[\phi](x,y,z)
 &=(G_{3,N}-G_{2,N})(x,y,z)\phi(x)+G_{2,N}(x,y,z)\phi(x+z-y)\\
 &\quad+G_{1,N}(x,y,z)\bigl[\phi((x+y-z)_+)-\phi(x)\bigr]\\
 &\ge\ell_N(x)(G_3-G_2)(x,y,z)\phi(x)\\
 &\ge\frac\beta4x^\beta\ell_N(x)r^{\beta+1/2}R^{2\beta-3/2}
 =\frac{12\lambda}{\varepsilon^2}(x/a)^\beta\ell_N(x).
 \end{aligned}
\end{equation}
Since $\phi$ is convex, \cref{lem:ordered} gives
$F_{\beta,N}[\phi]\ge0$. Using \eqref{eq:ordered},
$2La=r/4$, and \eqref{eq:cond-transfer-annular-mass}, we obtain
\begin{equation}\label{eq:cond-hinge-ordered-evolution}
 \begin{aligned}
 &\frac{d}{dt}\int_{[0,\infty)}\phi(\omega)g_t^N(d\omega)\\
 &\quad=\frac1{12}\iiint_{[0,\infty)^3}
 F_{\beta,N}[\phi](\operatorname{sort}(\omega_1,\omega_2,\omega_3))
 \,g_t^N(d\omega_1)g_t^N(d\omega_2)g_t^N(d\omega_3)\\
 &\quad\ge\frac1{12}\iiint_{(0,r/4)\times[r,R]^2}
 F_{\beta,N}[\phi](\omega_1,\omega_2\wedge\omega_3,\omega_2\vee\omega_3)
 \,g_t^N(d\omega_1)g_t^N(d\omega_2)g_t^N(d\omega_3)\\
 &\quad\ge\frac{\lambda}{\varepsilon^2}
 \iiint_{(0,r/4)\times[r,R]^2}(\omega_1/a)^\beta\ell_N(\omega_1)
 \,g_t^N(d\omega_1)g_t^N(d\omega_2)g_t^N(d\omega_3)\\
 &\quad=\frac{\lambda}{\varepsilon^2}J_N(t)\bigl[g_t^N([r,R])\bigr]^2
 \ge\lambda J_N(t).
 \end{aligned}
\end{equation}
Integrating over $[s,t]$ and using $0\le\phi\le1$ gives
\begin{equation}\label{eq:cond-hinge-budget}
 \lambda\int_s^tJ_N(u)\,du
 \le\int_{[0,\infty)}\phi(\omega)g_t^N(d\omega)
 -\int_{[0,\infty)}\phi(\omega)g_s^N(d\omega)
 \le1,\qquad s\le t\le s+\tau.
\end{equation}
\end{proof}

\begin{lemma}[Exponential lower bound for the density]
\label{lem:cond-density-amplification}
Let $0<r\le R/2$, $0<\varepsilon<1$,
$N\ge N_0(\beta,r,R,\varepsilon)$, $s\ge0$, $\tau>0$, and $c_0>0$.
Assume \eqref{eq:cond-transfer-annular-mass} and
\begin{equation}\label{eq:cond-amplification-initial}
 f_N(s,\omega)\ge c_0\omega^{1-3\beta},
 \qquad a/2\le\omega\le3a/2.
\end{equation}
There exists $C>0$, depending only on $\beta$, such that, for
$s\le t\le s+\tau$ and almost every $\omega>0$,
\[
 \begin{aligned}
 f_N(t,\omega)
 &\ge c_0(a/2)^{1-3\beta}
 \exp\!\left(\lambda(t-s)-Ca^{-d}\int_s^tJ_N(u)\,du\right)\psi_a(\omega)\\
 &\ge c_0(a/2)^{1-3\beta}
 e^{\lambda(t-s)-Ca^{-d}/\lambda}\psi_a(\omega).
 \end{aligned}
\]
\end{lemma}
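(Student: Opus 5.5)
We argue by comparison, following the weighted-$L^1$ positive-part argument of \cref{lem:cond-sine-comparison}, but now with the full inequality \eqref{eq:cond-output-gain} instead of the crude bound \eqref{eq:cond-density-homogeneous}: the potential $V_N$ is kept, because on $[a/2,3a/2]$ it supplies the growth rate $\lambda$ from \cref{lem:cond-potential-jump-lower}. Set
\[
 \Theta(t)=\lambda(t-s)-Ca^{-d}\int_s^tJ_N(u)\,du,\qquad
 v(t,\omega)=c_0(a/2)^{1-3\beta}e^{\Theta(t)}\psi_a(\omega),
\]
with $C$ the constant of \cref{lem:cond-potential-jump-lower}. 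Since $\omega^{1-3\beta}\ge(a/2)^{1-3\beta}$ on $[a/2,3a/2]$ and $0\le\psi_a\le\mathbf 1_{[a/2,3a/2]}$, assumption \eqref{eq:cond-amplification-initial} gives $v(s,\cdot)\le f_N(s,\cdot)$. The function $J_N$ is continuous in $t$, because $g^N\in C^1([0,\infty);\mathcal M_1)$ and the integrand $(\eta/a)^\beta\ell_N(\eta)\mathbf 1_{(0,2La)}(\eta)$ is bounded. Hence $v\in C^1([s,s+\tau];L^1((0,\infty),\sqrt\omega\,d\omega))$ with
\[
 \partial_tv=[\lambda-Ca^{-d}J_N(t)]v .
\]
By linearity of $L_N(t)$ and the third inequality of \cref{lem:cond-potential-jump-lower},
\[
 L_N(t)[v]+V_Nv\ge[\lambda-Ca^{-d}J_N]v=\partial_tv .
\]
So $v$ is a subsolution of the linear problem $\partial_t h=L_N(t)[h]+V_Nh$. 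Dropping $\mathcal G_N\ge0$ in \eqref{eq:cond-output-gain} shows that $f_N$ is a supersolution of the same problem.

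Next, put $w=v-f_N$. Subtracting the two inequalities gives $\partial_tw\le L_N(t)[w]+V_Nw$ together with $w(s)\le0$. We then repeat Steps~2--3 of the proof of \cref{lem:cond-sine-comparison}.
\begin{enumerate}[label=\textup{(\alph*)},leftmargin=*]
\item Take the time-absolutely-continuous representative of $w$ and apply the positive-part chain rule.
\item Use $\mathbf 1_{\{w>0\}}L_N[w]\le L_N[w_+]$, which follows from the nonnegativity of the kernel together with the loss term $-w_+(\omega_4)$.
\item Use $\mathbf 1_{\{w>0\}}V_Nw=V_Nw_+$.
\end{enumerate}
These give
\[
 \partial_tw_+\le L_N(t)[w_+]+V_Nw_+ .
\]
The one difference from \cref{lem:cond-sine-comparison} is that $V_N$ now has both signs, but this is harmless: \eqref{eq:cond-bounded-cutoff-coefficients} gives $|V_N|\le 3N(2N)^{3/2}/4$, and \eqref{eq:cond-jump-operator-boundedness} bounds $L_N(t)$ on the weighted $L^1$ space. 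Therefore $\frac{d}{dt}\int\sqrt\omega\,w_+\le C_N\int\sqrt\omega\,w_+$. Gronwall's inequality with $w_+(s)=0$ gives $w_+\equiv0$ on $[s,s+\tau]$, which is the first inequality of the lemma, valid a.e.\ in $\omega$.

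The second inequality follows from \cref{lem:cond-weighted-mass-integral}. Under \eqref{eq:cond-transfer-annular-mass} it gives $\lambda\int_s^tJ_N\le1$, hence
\[
 -Ca^{-d}\int_s^tJ_N\ge-Ca^{-d}/\lambda .
\]
The step I expect to need the most care is justifying the subsolution inequality for $v$ pointwise for every $\omega>0$, including outside $[a/2,3a/2]$. There it reduces to $L_N[\psi_a]\ge0$, which the third estimate of \cref{lem:cond-potential-jump-lower} already contains. A second point to check is that $N\ge N_0$ is large enough for all three estimates of \cref{lem:cond-potential-jump-lower} to hold throughout $[s,s+\tau]$; this is guaranteed by the hypotheses of that lemma.
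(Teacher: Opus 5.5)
Your proposal is correct and follows the paper's proof essentially step by step. You use the same comparison function $v=c_0(a/2)^{1-3\beta}\exp\bigl(\lambda(t-s)-Ca^{-d}\int_s^tJ_N\bigr)\psi_a$, the subsolution property from the third estimate of \cref{lem:cond-potential-jump-lower}, the weighted-$L^1$ positive-part Gronwall argument with $\mathcal G_N$ discarded, and \cref{lem:cond-weighted-mass-integral} for the second inequality. The only cosmetic difference is that you bound $\int\sqrt\omega\,L_N(t)[w_+]$ by the full operator norm and use $|V_N|$, whereas the paper drops the loss term and uses $V_N^+$; both give a finite Gronwall constant.
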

\begin{proof}
By \cref{lem:cond-potential-jump-lower,lem:cond-weighted-mass-integral},
\eqref{eq:cond-fixed-profile-jump} and \eqref{eq:cond-hinge-budget} hold
on $[s,s+\tau]$. We use \eqref{eq:cond-output-gain}; the nonnegative source
$\mathcal G_N$ can be discarded for this lower bound.
Fix $C$ as in \eqref{eq:cond-fixed-profile-jump} and set
\begin{equation}
 v(t,\omega_4)=c_0(a/2)^{1-3\beta}
 \exp\!\left(\lambda(t-s)-Ca^{-d}\int_s^tJ_N(u)\,du\right)\psi_a(\omega_4).
\end{equation}
By \eqref{eq:cond-amplification-initial}, $1-3\beta>0$, and $0\le\psi_a\le1$,
\begin{equation}
 v(s,\omega_4)\le c_0\omega_4^{1-3\beta}
 \mathbf1_{[a/2,3a/2]}(\omega_4)\le f_N(s,\omega_4),\qquad\omega_4>0.
\end{equation}
Set $w=v-f_N$. Equations \eqref{eq:cond-fixed-profile-jump}
and \eqref{eq:cond-output-gain} give
\begin{equation}
 \begin{aligned}
 \partial_tw(t,\omega_4)
 &=[\lambda-Ca^{-d}J_N(t)]v(t,\omega_4)-\partial_tf_N(t,\omega_4)\\
 &\le L_N(t)[w(t,\cdot)](\omega_4)+V_N(t,\omega_4)w(t,\omega_4)
       -\mathcal G_N(t,\omega_4)\\
 &\le L_N(t)[w(t,\cdot)](\omega_4)+V_N(t,\omega_4)w(t,\omega_4).
 \end{aligned}
\end{equation}
Since $J_N$ is continuous, the definition of $v$ and
\eqref{eq:cond-density-time-regularity} give
$w\in C^1([s,s+\tau];L^1((0,\infty),\sqrt\omega\,d\omega))$.
We choose a representative such that $w(\cdot,\omega_4)$ is absolutely
continuous on $[s,s+\tau]$ for almost every $\omega_4>0$.
For $\omega_1,\omega_4>0$,
\begin{equation}
 \mathbf1_{\{w(t,\omega_4)>0\}}
 [w(t,\omega_1)-w(t,\omega_4)]
 \le w_+(t,\omega_1)-w_+(t,\omega_4).
\end{equation}
Using \eqref{eq:cond-jump-operator} and $S_{\beta,N}\ge0$, we obtain,
for almost every $(t,\omega_4)$,
\begin{equation}
 \begin{aligned}
 \partial_tw_+(t,\omega_4)
 &=\mathbf1_{\{w(t,\omega_4)>0\}}\partial_tw(t,\omega_4)\\
 &\le\mathbf1_{\{w(t,\omega_4)>0\}}L_N(t)[w(t,\cdot)](\omega_4)
       +V_N(t,\omega_4)w_+(t,\omega_4)\\
 &\le L_N(t)[w_+(t,\cdot)](\omega_4)
       +V_N^+(t,\omega_4)w_+(t,\omega_4).
 \end{aligned}
\end{equation}
Using \eqref{eq:cond-comparison-integral-bound} and
\eqref{eq:cond-bounded-cutoff-coefficients}, we obtain, for almost every $t$,
\begin{equation}\label{eq:cond-transfer-comparison}
 \begin{aligned}
 \frac{d}{dt}\int_0^\infty\sqrt\omega\,w_+(t,\omega)\,d\omega
 &\le\left(\frac{N(2N)^{3/2}}2+\|V_N^+(t,\cdot)\|_\infty\right)
       \int_0^\infty\sqrt\omega\,w_+(t,\omega)\,d\omega\\
 &\le\frac{5N(2N)^{3/2}}4
       \int_0^\infty\sqrt\omega\,w_+(t,\omega)\,d\omega.
 \end{aligned}
\end{equation}
Since $w_+(s,\cdot)=0$, Gronwall's inequality yields
\begin{equation}
 0\le\int_0^\infty\sqrt\omega\,w_+(t,\omega)\,d\omega
 \le e^{5N(2N)^{3/2}(t-s)/4}
 \int_0^\infty\sqrt\omega\,w_+(s,\omega)\,d\omega=0.
\end{equation}
Consequently, \eqref{eq:cond-hinge-budget} gives
\begin{equation}\label{eq:cond-transfer-density-growth}
 f_N(t,\omega_4)\ge v(t,\omega_4)
 \ge c_0(a/2)^{1-3\beta}e^{\lambda(t-s)-Ca^{-d}/\lambda}\psi_a(\omega_4).
\end{equation}
\end{proof}

\begin{lemma}[Transfer to a smaller interval]\label{lem:cond-smooth-transfer}
There exists $C_\beta>0$ such that, for every $0<r\le R/2<\infty$
and $0<\varepsilon<1$, there is an integer
$N_0=N_0(\beta,r,R,\varepsilon)\ge1$ with the following property.
For $N\ge N_0$, $s\ge0$, $\varepsilon<m\le1$, and $c_0>0$, set
$a=r/(8L)$ and
\begin{equation}\label{eq:cond-stage-duration}
 \tau=C_\beta R^d(R/r)^2\varepsilon^{-4}
 (1+|\log r|+|\log c_0|).
\end{equation}
If
\begin{equation}\label{eq:cond-transfer-initial-assumptions}
 \int_{[0,\infty)}(1-\omega/R)_+ g_s^N(d\omega)\ge m,
 \qquad
 f_N(s,\omega)\ge c_0\omega^{1-3\beta}
 \quad\text{for }a/2\le\omega\le3a/2,
\end{equation}
then $g_t^N([0,r])\ge m-\varepsilon$ for some $t\in[s,s+\tau]$.
\end{lemma}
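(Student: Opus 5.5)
We argue by contradiction. Suppose $g_t^N([0,r])<m-\varepsilon$ for every $t\in[s,s+\tau]$. We first show that this forces the annular mass condition \eqref{eq:cond-transfer-annular-mass}. The test function $\chi_R(\omega)=(1-\omega/R)_+$ is convex and belongs to $\operatorname{Lip}_c$. By \cref{lem:ordered} and \eqref{eq:ordered}, $t\mapsto\int\chi_R\,dg_t^N$ is nondecreasing, so $\int\chi_R\,dg_t^N\ge m$ for all $t\ge s$. Since $\chi_R\le\mathbf 1_{[0,R]}$, this gives $g_t^N([0,R])\ge m$. Hence $g_t^N((r,R])\ge m-(m-\varepsilon)=\varepsilon$ on $[s,s+\tau]$, and in particular $g_t^N([r,R])\ge\varepsilon$. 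With $N\ge N_0$ as fixed before \cref{lem:cond-potential-jump-lower}, all of \cref{lem:cond-potential-jump-lower,lem:cond-weighted-mass-integral,lem:cond-density-amplification} are available on $[s,s+\tau]$.

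Next, the hypothesis on $f_N(s,\cdot)$ is exactly \eqref{eq:cond-amplification-initial}. \cref{lem:cond-density-amplification} therefore gives
\[
f_N(t,\omega)\ge c_0(a/2)^{1-3\beta}e^{\lambda(t-s)-Ca^{-d}/\lambda}\psi_a(\omega),\qquad s\le t\le s+\tau.
\]
Integrating against $\sqrt\omega\,d\omega$ over $[a/2,3a/2]\subset[0,r]$, and using that $\int\sqrt\omega\,\psi_a\,d\omega\gtrsim a^{3/2}$, we get
\[
g_t^N([0,r])\ge c\,c_0\,a^{5/2-3\beta}\exp\bigl(\lambda(t-s)-Ca^{-d}/\lambda\bigr).
\]
Mass conservation bounds the left side by $1$. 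It therefore suffices to pick $t=s+\tau$ with
\[
\lambda\tau> Ca^{-d}/\lambda+\log\bigl(1/(c\,c_0a^{5/2-3\beta})\bigr),
\]
which contradicts $g_t^N([0,r])\le 1$, and in fact contradicts $g_t^N([0,r])<m-\varepsilon$.

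The remaining work is to check that the $\tau$ in \eqref{eq:cond-stage-duration} satisfies this inequality. Since $a=r/(8L)$ with $L=L(\beta)$, we have $\lambda\simeq_\beta r^{2\beta+1/2}R^{2\beta-3/2}\varepsilon^2$. The dominant requirement is
\[
\tau\gtrsim a^{-d}\lambda^{-2}\simeq r^{-d}\,r^{-4\beta-1}R^{3-4\beta}\varepsilon^{-4}=R^{d}(R/r)^{2}\,\varepsilon^{-4}\,(R/r)^{0},
\]
where the exponents are combined using $d=1-4\beta$. Indeed,
\[
r^{-(1-4\beta)-4\beta-1}=r^{-2},\qquad R^{3-4\beta}=R^{2}R^{1-4\beta}=R^{2}R^{d},
\]
so the product is $R^d(R/r)^2\varepsilon^{-4}$, matching the stated form. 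The logarithmic requirement is $\tau\gtrsim\lambda^{-1}(1+|\log r|+|\log c_0|)$. Here we need $\lambda^{-1}\lesssim R^d(R/r)^2\varepsilon^{-4}$. Since $\lambda^{-1}\simeq r^{-2\beta-1/2}R^{3/2-2\beta}\varepsilon^{-2}$, the ratio of $R^d(R/r)^2\varepsilon^{-4}$ to $\lambda^{-1}$ is
\[
R^{1-4\beta+2-3/2+2\beta}\,r^{-2+2\beta+1/2}\,\varepsilon^{-2}=(R/r)^{3/2-2\beta}\varepsilon^{-2}\ge1,
\]
using $R\ge 2r$. The constant $a^{5/2-3\beta}$ contributes only $|\log r|$, since $a\simeq r$.

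The main obstacle is this exponent bookkeeping: checking that a single $C_\beta$ absorbs both the $Ca^{-d}/\lambda^2$ term and the logarithmic term uniformly in $r\le R/2$, $\varepsilon<1$ and $c_0$. One must also make sure that $N_0$ depends only on $(\beta,r,R,\varepsilon)$ and not on $c_0$ or $m$, which holds by the definition of $N_0$ preceding \cref{lem:cond-potential-jump-lower}.
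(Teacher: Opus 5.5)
Your proposal is correct and follows essentially the same route as the paper: argue by contradiction, use the monotonicity of $\int(1-\omega/R)_+\,dg_t^N$ to get $g_t^N([r,R])\ge\varepsilon$ on $[s,s+\tau]$, apply the exponential lower bound of \cref{lem:cond-density-amplification}, and combine the unit-mass bound with the scaling $\lambda^{-1}+a^{-d}\lambda^{-2}\lesssim R^d(R/r)^2\varepsilon^{-4}$. The only cosmetic difference is in the mass comparison: the paper integrates the density bound over $[3a/4,5a/4]$ and compares with the total absolutely continuous mass, while you integrate over $[a/2,3a/2]\subset[0,r]$ and use $g_t^N([0,r])\le1$.
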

\begin{proof}
Choose $N_0$ as at the beginning of this subsection.

Suppose to the contrary, that
$g_t^N([0,r])<m-\varepsilon$ for every $t\in[s,s+\tau]$.
Since $(1-\omega/R)_+$ is convex,
\cref{lem:ordered} and \eqref{eq:cond-transfer-initial-assumptions} give
\begin{equation}
 g_t^N([0,R])\ge\int_{[0,\infty)}(1-\omega/R)_+g_t^N(d\omega)
 \ge\int_{[0,\infty)}(1-\omega/R)_+g_s^N(d\omega)\ge m.
\end{equation}
Subtracting $g_t^N([0,r])$ yields
\begin{equation}\label{eq:cond-outer-mass}
 g_t^N([r,R])\ge g_t^N((r,R])>\varepsilon,
 \qquad s\le t\le s+\tau.
\end{equation}
In the rest of the proof, all estimates are made on \([s,s+\tau]\).
By \cref{lem:cond-potential-jump-lower,lem:cond-weighted-mass-integral,lem:cond-density-amplification},
\eqref{eq:cond-transfer-density-growth} holds for $s\le t\le s+\tau$.

On $[3a/4,5a/4]$, we have $\psi_a\ge1/2$ and $\sqrt\omega\ge\sqrt a/2$.
Since $(a/2)^{1-3\beta}\ge a^{1-3\beta}/2$, integrating
\eqref{eq:cond-transfer-density-growth} gives
\begin{equation}\label{eq:cond-transfer-mass-contradiction}
 \begin{aligned}
 1\ge\int_0^\infty\sqrt\omega\,f_N(t,\omega)\,d\omega
 &\ge c_0(a/2)^{1-3\beta}e^{\lambda(t-s)-Ca^{-d}/\lambda}
       \int_{3a/4}^{5a/4}\sqrt\omega\,\psi_a(\omega)\,d\omega\\
 &\ge\frac{c_0a^{5/2-3\beta}}{16}e^{\lambda(t-s)-Ca^{-d}/\lambda}.
 \end{aligned}
\end{equation}
Taking logarithms gives
\begin{equation}\label{eq:cond-occupation-bound}
 t-s\le\frac1\lambda\log\frac{16}{c_0a^{5/2-3\beta}}+\frac{Ca^{-d}}{\lambda^2}
 \le\frac C\lambda(1+|\log a|+|\log c_0|)+\frac{Ca^{-d}}{\lambda^2}.
\end{equation}
Substituting $a=r/(8L)$ and $d=1-4\beta$ into
\eqref{eq:cond-growth-rate}, and using $R/r\ge2$ and $0<\varepsilon<1$, we obtain
\begin{equation}\label{eq:cond-transfer-rate-scaling}
 \lambda^{-1}+\frac{a^{-d}}{\lambda^2}
 \le CR^d\bigl[(R/r)^{2\beta+1/2}\varepsilon^{-2}+(R/r)^2\varepsilon^{-4}\bigr]
 \le CR^d(R/r)^2\varepsilon^{-4}.
\end{equation}
Since $|\log a|\le|\log r|+\log(8L)$, choosing $C_\beta=2C$ in
\eqref{eq:cond-stage-duration} and setting $t=s+\tau$ in
\eqref{eq:cond-occupation-bound} gives
\begin{equation}
 \tau\le CR^d(R/r)^2\varepsilon^{-4}
 (1+|\log r|+|\log c_0|)=\frac\tau2<\tau.
\end{equation}
Since $\tau>0$, this contradicts the initial assumption.
Hence $g_t^N([0,r])\ge m-\varepsilon$ for some $t\in[s,s+\tau]$.
\end{proof}

\subsection{Iteration and proof of the condensation theorem}\label{subsec:cond-proof}

\begin{lemma}[Transfer to arbitrarily small intervals in finite time]
\label{lem:cond-summable-transfer}
Suppose that $g_0$ has a nonzero absolutely continuous part and
$\mathcal N_0=1$. There exists $C>0$, depending only on $\beta,g_0$,
such that, for every $0<\zeta\le1/2$, there exist
$R_n\downarrow0$, increasing times $s_n\uparrow T_\zeta<\infty$,
and integers $\widehat N_n\ge1$ such that, with
\[
 m_n=1-\frac\zeta2+\frac\zeta4\,2^{-nd/12},
 \qquad \chi_n(\omega)=(1-\omega/R_n)_+,
\]
\[
 \int_{[0,\infty)}\chi_n(\omega)g_t^N(d\omega)\ge m_n,
 \qquad n\ge0,\quad t\ge s_n,\quad N\ge\widehat N_n,
\]
and
\[
 1+T_\zeta\le C\zeta^{-40/d}.
\]
\end{lemma}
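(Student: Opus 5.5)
We build the sequence inductively by repeated use of \cref{lem:cond-smooth-transfer}. The density lower bound comes from \cref{lem:cond-density-lower}, and monotonicity of $\int\chi_n\,dg_t^N$ comes from convexity (\cref{lem:ordered}).

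\emph{Base step.} Since $\mathcal N_0=1$ and $\mathcal E(t)\le\mathcal E_0$ for the cutoff solutions, we have $g_t^N((R,\infty))\le\mathcal E_0/R$. Hence $\int(1-\omega/R_0)_+\,dg_0\ge 1-\zeta/4=m_0$ once $R_0=C\zeta^{-1}$ is large enough, say $R_0\ge8(1+\mathcal E_0)/\zeta$. Because $(1-\omega/R_0)_+$ is convex, \cref{lem:ordered} makes $t\mapsto\int\chi_0\,dg_t^N$ nondecreasing for every $N$. So the claim holds for $n=0$ with $s_0=0$ and $\widehat N_0=1$.

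\emph{Inductive step.} Suppose $R_n,s_n,\widehat N_n$ are chosen. Set $\varepsilon_n=m_n-m_{n+1}-\delta_n$, of order $\zeta 2^{-nd/12}$ (up to a constant), leaving a small margin. Choose $R_{n+1}\le R_n/4$, and put $r_n=R_{n+1}/2$, so that $g_t^N([0,r_n])\le\int\chi_{n+1}\,dg_t^N+\cdots$. More precisely, $\chi_{n+1}\ge1/2$ on $[0,r_n]$ is too lossy, so instead take $r_n=\theta_nR_{n+1}$ with $\theta_n$ small. Then $\chi_{n+1}\ge(1-\theta_n)\mathbf 1_{[0,r_n]}$, and the loss $\theta_n$ is absorbed into $\delta_n$ by taking $\theta_n\sim\zeta2^{-nd/12}$.

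\cref{lem:cond-density-lower} gives $f_N(t,\omega)\ge ce^{-Dt}\omega^{1-3\beta}$ on $[a_n/2,3a_n/2]$ with $a_n=r_n/(8L)$, for $t\ge s_*$ and $N\ge N_{a_n}$. At time $s=s_n\vee s_*$ we therefore have $c_0=ce^{-Ds}$. \cref{lem:cond-smooth-transfer} with $(r,R,\varepsilon,m)=(r_n,R_n,\varepsilon_n,m_n)$ then yields some $t\in[s,s+\tau_n]$ with $g_t^N([0,r_n])\ge m_n-\varepsilon_n$. Hence $\int\chi_{n+1}\,dg_t^N\ge m_{n+1}$, and monotonicity (convexity again) propagates this to all later times. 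Set $s_{n+1}=s+\tau_n$ and $\widehat N_{n+1}=\max(\widehat N_n,N_0(\beta,r_n,R_n,\varepsilon_n),N_{a_n})$. The time $t$ at which the lemma fires depends on $N$, but $s_{n+1}$ is an upper bound uniform in $N$, which is all we need.

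\emph{Summability (main obstacle).} We must control $\tau_n=C_\beta R_n^d(R_n/r_n)^2\varepsilon_n^{-4}(1+|\log r_n|+|\log c_0|)$. The troublesome factor is $|\log c_0|=|\log c|+Ds_n$, which grows with $s_n$. This makes the recursion $s_{n+1}\le s_n+\tau_n$ nearly multiplicative, and the saving factor $R_n^d$ must beat it. We choose $R_n=R_0 2^{-kn}$ geometrically, with $k$ a fixed multiple of $1/d$ to be fixed below, so that $R_n/r_n$ is of order $\zeta^{-1}2^{nd/12}$ and $\varepsilon_n^{-4}$ is of order $\zeta^{-4}2^{nd/3}$. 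Then
\[
\tau_n\lesssim R_0^d\zeta^{-6}2^{-n(kd-d/2)}(1+n+|\log\zeta|+s_n).
\]
With $kd$ large enough that $2^{-n(kd-d/2)}$ is summable and $\prod_n(1+C2^{-n(kd-d/2)})<\infty$, a discrete Gronwall argument gives $\sup_n s_n\le C\zeta^{-c}$, after which the linear $s_n$ dependence only affects constants. Tracking exponents — $R_0^d\sim\zeta^{-d}$, the factor $\zeta^{-6}$, and $s_*$ — should produce the stated $1+T_\zeta\le C\zeta^{-40/d}$. The exponent $40/d$ would be reached by choosing $k$ of order $1/d$ and bounding the Gronwall product. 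I expect this exponent bookkeeping, with $D$ entering through $|\log c_0|$, to be the delicate point. If the product estimate fails, I would instead restart the density bound at each stage via \cref{lem:cond-density-amplification}, which avoids the $e^{-Ds_n}$ decay.
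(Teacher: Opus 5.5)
There is a genuine gap in the summability step. Your stage mechanism matches the paper's: the base step from $\mathcal E_0/R_0$, then \cref{lem:cond-density-lower} at scale $a_n=r_n/(8L)$ with $c_0=ce^{-Ds}$, then \cref{lem:cond-smooth-transfer}, then $\chi_{n+1}\ge1-\theta_n$ on $[0,r_n]$ together with convex monotonicity. The trouble is that the summability step carries the whole content of $1+T_\zeta\le C\zeta^{-40/d}$, and it fails for $R_n=R_02^{-kn}$ with $k$ independent of $\zeta$.

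By your own estimate, the coefficient of $s_n$ in $\tau_n$ is $A_\zeta D2^{-n\gamma}$, where $A_\zeta\sim R_0^d\zeta^{-6}\sim\zeta^{-(6+d)}$ and $\gamma=kd-d/2$ is fixed. So the product you need is $\prod_n(1+A_\zeta D2^{-n\gamma})$, not $\prod_n(1+C2^{-n\gamma})$ with a $\zeta$-free $C$. For the first $n_0\approx(6+d)\log_2(1/\zeta)/\gamma$ stages this coefficient exceeds $1$. Your recursion then gives $s_{n+1}\gtrsim A_\zeta D2^{-n\gamma}s_n$, so $s_{n_0}$ has size $\exp\bigl(c\log^2(1/\zeta)/\gamma\bigr)$. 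This is not slack in the bookkeeping: the times you define really are super-polynomial in $\zeta^{-1}$.

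Two smaller points. You never check $a_n<r_*$, which \cref{lem:cond-density-lower} requires; with $R_0\sim\zeta^{-1}$ and a $\zeta$-independent ratio, $r_0$ is of order $2^{-k}(1+\mathcal E_0)$ and need not be small. Your fallback through \cref{lem:cond-density-amplification} does not help either. That lemma only propagates a density bound at the fixed scale $a$ it starts from, and only while the annular mass condition holds. It gives nothing at the next, smaller scale.

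The missing idea is that a large $\zeta$-dependent coefficient may appear only in the first stage. There it multiplies the fixed time $s_0=s_*$, so it costs only an additive polynomial amount. Every later coefficient must be bounded and summable uniformly in $\zeta$.

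The paper achieves this with a single large jump followed by halving:
\begin{itemize}
\item $R_0=1\vee4\mathcal E_0/\zeta$ and $R_1=(1/2\wedge r_*)\zeta^{7/d}$;
\item $R_n=R_12^{-(n-1)}$, with $r_n=\varepsilon_nR_{n+1}$ and $\varepsilon_n\sim\zeta2^{-nd/12}$.
\end{itemize}
With these choices the first stage costs $s_1-s_*\le C\zeta^{-(8+d+14/d)}(1+|\log\zeta|)$. For $n\ge1$ one has $R_n^d(R_n/R_{n+1})^2\varepsilon_n^{-6}\le CR_1^d\zeta^{-6}2^{-nd/2}\le C\zeta2^{-nd/2}$. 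Using $\zeta(1+|\log\zeta|)\le1$ to absorb the $|\log r_n|$ terms gives the $\zeta$-free recursion $s_{n+1}-s_n\le C(n+1)2^{-nd/2}(1+s_n)$. Hence $1+T_\zeta\le C(1+s_1)\le C\zeta^{-40/d}$. The choice $R_1\le r_*$ also gives $a_n<R_1<r_*$ for every $n$.
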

\begin{proof}
Fix $0<\zeta\le1/2$.
Let $c,D,s_*,r_*$ be as in \cref{lem:cond-density-lower}, with $s_*\ge1$.
Throughout this proof, $C>0$ denotes constants depending only on $\beta,g_0$.

\smallskip
\noindent\textit{Step 1. Iteration of the transfer estimate.}
Set
\begin{equation}\label{eq:cond-iteration-radii}
 R_0=1\vee\frac{4\mathcal E_0}{\zeta},\qquad
 R_1=(1/2\wedge r_*)\zeta^{7/d},\qquad
 R_n=R_1 2^{-(n-1)} \qquad n\geq 1,
\end{equation}
and, for $n\ge0$,
\begin{equation}\label{eq:cond-iteration-errors-and-masses}
 \varepsilon_n=\frac{1-2^{-d/12}}8\zeta 2^{-nd/12},\qquad
 r_n=\varepsilon_nR_{n+1},\qquad
 m_n=1-\frac\zeta2+\frac\zeta4\,2^{-nd/12}.
\end{equation}
Then
\begin{equation}
 m_0=1-\frac\zeta4,\qquad m_n-m_{n+1}=2\varepsilon_n,\qquad
 m_n\downarrow1-\frac\zeta2.
\end{equation}
In particular, $0<\varepsilon_n\le\zeta/8<m_n\le1$,
$0<r_n<R_{n+1}\le R_n/2$, and $r_n/(8L)<R_1<r_*$.

Let $\chi_n(\omega)=(1-\omega/R_n)_+$.
By \eqref{eq:ordered} and \eqref{eq:convex},
\begin{equation}\label{eq:cond-iteration-monotonicity}
 \begin{aligned}
 &\frac{d}{dt}\int_{[0,\infty)}\chi_n(\omega)g_t^N(d\omega)\\
 &\quad=\frac1{12}\iiint_{[0,\infty)^3}
 F_{\beta,N}[\chi_n](\operatorname{sort}(\omega_1,\omega_2,\omega_3))
 \,g_t^N(d\omega_1)g_t^N(d\omega_2)g_t^N(d\omega_3)\ge0.
 \end{aligned}
\end{equation}
Starting with $s_0=s_*$, define
\begin{equation}\label{eq:cond-summable-durations}
 s_{n+1}-s_n
 =C_\beta R_n^d(R_n/R_{n+1})^2\varepsilon_n^{-6}
 (1+|\log r_n|+|\log c|+Ds_n),\qquad n\ge0,
\end{equation}
where $C_\beta$ is the constant in \eqref{eq:cond-stage-duration}.
We prove by induction that, for each $n\ge0$, there is an integer
$\widehat N_n\ge1$ such that
\begin{equation}\label{eq:cond-stage-convex-mass}
 \int_{[0,\infty)}\chi_n(\omega)g_{s_n}^N(d\omega)\ge m_n,
 \qquad N\ge\widehat N_n.
\end{equation}
For $n=0$, we may take $\widehat N_0=1$, since
\begin{equation}\label{eq:cond-initial-low-mass}
 \begin{aligned}
 \int_{[0,\infty)}\chi_0(\omega)g_{s_*}^N(d\omega)
 &\ge\int_{[0,\infty)}\chi_0(\omega)g_0(d\omega)\\
 &\ge\int_{[0,\infty)}(1-\omega/R_0)g_0(d\omega)
 =1-\frac{\mathcal E_0}{R_0}\ge m_0.
 \end{aligned}
\end{equation}
Suppose that \eqref{eq:cond-stage-convex-mass} holds for $n$.
Choose an integer
$\widehat N_{n+1}\ge\widehat N_n\vee N_{r_n/(8L)}
\vee N_0(\beta,r_n,R_n,\varepsilon_n)$,
where the last two bounds are from \cref{lem:cond-density-lower}
and \cref{lem:cond-smooth-transfer}.
For $N\ge\widehat N_{n+1}$, \cref{lem:cond-density-lower} gives
\begin{equation}
 f_N(s_n,\omega)\ge ce^{-Ds_n}\omega^{1-3\beta},
 \qquad \frac{r_n}{16L}\le\omega\le\frac{3r_n}{16L}.
\end{equation}
Since $r_n=\varepsilon_nR_{n+1}$ and
$|\log c-Ds_n|\le|\log c|+Ds_n$,
\begin{equation}
 C_\beta R_n^d(R_n/r_n)^2\varepsilon_n^{-4}
 (1+|\log r_n|+|\log c-Ds_n|)\le s_{n+1}-s_n.
\end{equation}
Apply \cref{lem:cond-smooth-transfer} at $s_n$ with
$R=R_n$, $r=r_n$, $\varepsilon=\varepsilon_n$, $m=m_n$,
and $c_0=ce^{-Ds_n}$.
There is $t\in[s_n,s_{n+1}]$ such that
$g_t^N([0,r_n])\ge m_n-\varepsilon_n$.
On $[0,r_n]$, $\chi_{n+1}\ge1-r_n/R_{n+1}=1-\varepsilon_n$.
Thus \eqref{eq:cond-iteration-monotonicity} gives
\begin{equation}\label{eq:cond-iteration-hinge-step}
 \begin{aligned}
 \int_{[0,\infty)}\chi_{n+1}(\omega)g_{s_{n+1}}^N(d\omega)
 &\ge\int_{[0,\infty)}\chi_{n+1}(\omega)g_t^N(d\omega)\\
 &\ge(1-\varepsilon_n)g_t^N([0,r_n])
 \ge(1-\varepsilon_n)(m_n-\varepsilon_n)\\
 &=m_n-(1+m_n)\varepsilon_n+\varepsilon_n^2
 \ge m_n-2\varepsilon_n=m_{n+1}.
 \end{aligned}
\end{equation}
This proves \eqref{eq:cond-stage-convex-mass} for $n+1$.

\smallskip
\noindent\textit{Step 2. Summability of the transfer times.}
The definitions give $R_0\le C\zeta^{-1}$ and
$1+|\log r_0|\le C(1+|\log\zeta|)$.
Since $s_0=s_*$, \eqref{eq:cond-summable-durations} yields
\begin{equation}\label{eq:cond-iteration-first-wait}
 \begin{aligned}
 s_1-s_*&\le C R_0^{d+2}R_1^{-2}\varepsilon_0^{-6}(1+|\log\zeta|)\\
 &\le C\zeta^{-(d+2)}\zeta^{-14/d}\zeta^{-6}(1+|\log\zeta|)
 =C\zeta^{-(8+d+14/d)}(1+|\log\zeta|).
 \end{aligned}
\end{equation}
For $n\ge1$, $R_n/R_{n+1}=2$ and $r_n=r_0\,2^{-n(1+d/12)}$, so
\begin{equation}\label{eq:cond-iteration-tail-coefficients}
 \begin{aligned}
 R_n^d(R_n/R_{n+1})^2\varepsilon_n^{-6}
 &=4R_1^d2^{-d(n-1)}\varepsilon_0^{-6}2^{nd/2}
 \le C\zeta\,2^{-nd/2},\\
 1+|\log r_n|
 &\le1+|\log r_0|+n(1+d/12)\log2
 \le C(n+1)(1+|\log\zeta|).
 \end{aligned}
\end{equation}
Using $\zeta(1+|\log\zeta|)\le1$ and \eqref{eq:cond-summable-durations},
\begin{equation}\label{eq:cond-iteration-time-product}
 s_{n+1}-s_n
 \le C\zeta\,2^{-nd/2}\bigl[(n+1)(1+|\log\zeta|)+s_n\bigr]
 \le C(n+1)2^{-nd/2}(1+s_n).
\end{equation}
Iterating this inequality gives, for $k\ge2$,
\begin{equation}
 \begin{aligned}
 1+s_k
 &\le(1+s_1)\prod_{n=1}^{k-1}\bigl(1+C(n+1)2^{-nd/2}\bigr)\\
 &\le(1+s_1)\exp\!\left(C\sum_{n=1}^{\infty}(n+1)2^{-nd/2}\right)
 \le C(1+s_1).
 \end{aligned}
\end{equation}
Since $d>0$ and $s_n$ is increasing, $T_\zeta=\lim_{n\to\infty}s_n<\infty$.
By \eqref{eq:cond-iteration-first-wait},
\begin{equation}\label{eq:cond-common-time}
 1+T_\zeta\le C(1+s_1)
 \le C\zeta^{-(8+d+14/d)}(1+|\log\zeta|)
 \le C\zeta^{-40/d}.
\end{equation}
Here $1+|\log\zeta|\le C\zeta^{-1}$ and
$9+d+14/d<40/d$ for $0<d<1$.

By \eqref{eq:cond-iteration-monotonicity}, the lower bound
\eqref{eq:cond-stage-convex-mass} remains valid for every $t\ge s_n$.
This proves the lemma.
\end{proof}

\begin{proof}[Proof of \cref{thm:smooth-condensation}]
Assume first that $\mathcal N_0=1$ and fix $0<\zeta\le1/2$.
The case $1/2<\zeta<1$ follows by taking $\zeta=1/2$.
Let $R_n,s_n,T_\zeta,m_n,\widehat N_n$ and $\chi_n$ be as in
\cref{lem:cond-summable-transfer}.
Let $g^{N_j}\to g$ as in \eqref{eq:uniform-narrow-convergence},
and fix $t\ge T_\zeta$.
For each fixed $n$, $N_j\ge\widehat N_n$ for all sufficiently large $j$.
Since $\chi_n$ is bounded and continuous and $t\ge s_n$,
\eqref{eq:cond-iteration-monotonicity} and \eqref{eq:cond-stage-convex-mass} give
\begin{equation}\label{eq:cond-iteration-limit-hinge}
 \begin{aligned}
 \int_{[0,\infty)}\chi_n(\omega)g_t(d\omega)
 &=\lim_{j\to\infty}\int_{[0,\infty)}\chi_n(\omega)g_t^{N_j}(d\omega)\\
 &\ge\liminf_{j\to\infty}\int_{[0,\infty)}\chi_n(\omega)g_{s_n}^{N_j}(d\omega)
 \ge m_n.
 \end{aligned}
\end{equation}
Now $R_n\downarrow0$ and $0\le\chi_n\le1$, with
$\chi_n(\omega)\to\mathbf1_{\{0\}}(\omega)$.
Dominated convergence yields
\begin{equation}\label{eq:cond-limit-atom-bound}
 g_t(\{0\})
 =\lim_{n\to\infty}\int_{[0,\infty)}\chi_n(\omega)g_t(d\omega)
 \ge\lim_{n\to\infty}m_n
 =1-\frac\zeta2\ge1-\zeta,\qquad t\ge T_\zeta.
\end{equation}
Since the bounds above are independent of the chosen subsequence, they hold
for every constructed solution from $g_0$.

Since $g_t([0,\infty))=1$ and the measures
$g_t|_{(0,\infty)}$ and $(1-g_t(\{0\}))\delta_0$ are mutually singular,
\begin{equation}\label{eq:cond-tv-atom-identity}
 \|g_t-\delta_0\|_{\mathrm{TV}}
 =g_t((0,\infty))+1-g_t(\{0\})
 =2(1-g_t(\{0\})).
\end{equation}
Fix $C\ge1$ such that $T_\zeta\le C\zeta^{-40/d}$ in
\eqref{eq:cond-common-time}.
For $t\ge C2^{40/d}$, take $\zeta=(C/t)^{d/40}\le1/2$.
Then $T_\zeta\le t$, and
\begin{equation}
 \|g_t-\delta_0\|_{\mathrm{TV}}
 =2(1-g_t(\{0\}))\le\zeta=C^{d/40}t^{-d/40}.
\end{equation}
For $0\le t<C2^{40/d}$, use $\|g_t-\delta_0\|_{\mathrm{TV}}\le2$.
Increasing $C$, we obtain
\begin{equation}
 \|g_t-\delta_0\|_{\mathrm{TV}}\le C(1+t)^{-d/40},\qquad t\ge0.
\end{equation}

For general mass $\mathcal N_0>0$, use
$\widetilde g_\tau=\mathcal N_0^{-1}g_{\tau/\mathcal N_0^2}$
from \eqref{eq:cond-mass-normalization}.
If $\widetilde T_\zeta$ is the time obtained for initial measure
$g_0/\mathcal N_0$, the unit-mass bounds give
\begin{equation}
 g_t(\{0\})=\mathcal N_0\widetilde g_{\mathcal N_0^2t}(\{0\})
 \ge(1-\zeta)\mathcal N_0,
 \qquad t\ge\widetilde T_\zeta/\mathcal N_0^2,
\end{equation}
and
\begin{equation}
 \|g_t-\mathcal N_0\delta_0\|_{\mathrm{TV}}
 =\mathcal N_0\|\widetilde g_{\mathcal N_0^2t}-\delta_0\|_{\mathrm{TV}}
 \le C\mathcal N_0(1+\mathcal N_0^2t)^{-d/40}.
\end{equation}
Here $C$ depends only on $\beta$ and $g_0/\mathcal N_0$.
This proves \eqref{eq:cond-general-conclusion} and \eqref{eq:cond-tv-rate}.
\end{proof}

\begin{proof}[Proof of \cref{thm:action}]
For $1/4\le\beta\le3/4$, \cref{thm:measure-main} and
\cref{thm:action-preservation} give, for every constructed solution,
\begin{equation}
 g_t(\{0\})=g_0(\{0\}),\qquad t\ge0.
\end{equation}
This proves the first assertion.

For $0<\beta<1/4$, suppose that $g_0$ has a nonzero absolutely continuous part.
Then $\mathcal N_0>0$, and \cref{thm:smooth-condensation}, with $d=1-4\beta$, gives
\begin{equation}
 \begin{aligned}
 g_t(\{0\})&\ge(1-\zeta)\mathcal N_0,\qquad t\ge T(\beta,g_0,\zeta),\\
 \|g_t-\mathcal N_0\delta_0\|_{\mathrm{TV}}
 &\le C\mathcal N_0(1+\mathcal N_0^2t)^{-(1-4\beta)/40},\qquad t\ge0.
 \end{aligned}
\end{equation}
For each $0<\zeta<1$, the time $T$ is finite and the same for all constructed
solutions; $C$ depends only on $\beta$ and $g_0/\mathcal N_0$.
These are \eqref{eq:main-condensation} and \eqref{eq:main-condensation-rate}.
If $g_0(\{0\})=0$, taking $\zeta=1/2$ gives
$g_T(\{0\})\ge\mathcal N_0/2>0$, proving finite-time condensation.
\end{proof}

\appendix

\section{Proofs of kernel estimates}
\label[appendix]{app:measure-estimates}

\begin{proof}[Proof of \cref{lem:ordered}]
\noindent\textit{(i) Symmetrization.}
For $0<x\le y\le z$, 
\begin{align}
(K_{\beta,N}\Delta\phi)(x,y,z)
&=(K_{\beta,N}\Delta\phi)(y,x,z)=G_{1,N}\Delta_1\phi,\\
(K_{\beta,N}\Delta\phi)(x,z,y)
&=(K_{\beta,N}\Delta\phi)(z,x,y)=G_{2,N}\Delta_2\phi,\\
(K_{\beta,N}\Delta\phi)(y,z,x)
&=(K_{\beta,N}\Delta\phi)(z,y,x)=G_{3,N}\Delta_3\phi.
\end{align}
The same equalities hold with $K_\beta$ and $G_i$ in place
of $K_{\beta,N}$ and $G_{i,N}$.
Writing $\mathcal S_3$ for the permutations of $\{1,2,3\}$,
we obtain
\begin{align}
\frac16\sum_{\sigma\in\mathcal S_3}
(K_{\beta,N}\Delta\phi)
(\omega_{\sigma(1)},\omega_{\sigma(2)},\omega_{\sigma(3)})
&=\frac13\bigl(F_{\beta,N}[\phi]\circ\operatorname{sort}\bigr)
(\omega_1,\omega_2,\omega_3),\\
\frac16\sum_{\sigma\in\mathcal S_3}
(K_\beta\Delta\phi)
(\omega_{\sigma(1)},\omega_{\sigma(2)},\omega_{\sigma(3)})
&=\frac13\bigl(F_\beta[\phi]\circ\operatorname{sort}\bigr)
(\omega_1,\omega_2,\omega_3).
\end{align}
Since both sides vanish when
$\min\{\omega_1,\omega_2,\omega_3\}=0$,
the identities hold on $[0,\infty)^3$.

Let $|\phi(\omega)|\le C_\phi(1+\omega)$. With
$C_N=N(2N)^{3/2}$, the estimate in
\cref{prop:integrated-cutoff-existence} gives
\[
\frac14\iiint_{[0,\infty)^3}
K_{\beta,N}|\Delta\phi|\,d(g_t^N)^{\otimes3}
\le C_\phi C_N\mathcal N_0^2
(\mathcal N_0+\mathcal E_0)<\infty.
\]
Since $(g_t^N)^{\otimes3}$ is invariant under
permutations, 
\begin{align*}
\dfrac{d}{dt}\int_{[0,\infty)}\phi\,dg_t^N
&=\frac14\iiint_{[0,\infty)^3}
K_{\beta,N}\Delta\phi\,d(g_t^N)^{\otimes3}\\
&=\frac1{24}\sum_{\sigma\in\mathcal S_3}
\iiint_{[0,\infty)^3}
(K_{\beta,N}\Delta\phi)
(\omega_{\sigma(1)},\omega_{\sigma(2)},\omega_{\sigma(3)})
\,d(g_t^N)^{\otimes3}\\
&=\frac1{12}\iiint_{[0,\infty)^3}
\bigl(F_{\beta,N}[\phi]\circ\operatorname{sort}\bigr)
\,d(g_t^N)^{\otimes3}\\
&=\frac1{12}\int_{\mathcal D_{\mathrm{ord}}}
F_{\beta,N}[\phi](x,y,z)\,
d\bigl(\operatorname{sort}_\#(g_t^N)^{\otimes3}\bigr).
\end{align*}
This proves \eqref{eq:ordered}. For the uncut kernel, the
assumed absolute integrability permits the same permutation
average. The invariance of $g^{\otimes3}$ under permutations
and the identity proved above give the stated formula for
$F_\beta[\phi]$.

\smallskip
\noindent\textit{(ii) Convex test functions.}
Let $\phi$ be convex. Then for $0\le u\le v$ and \(a,b\in [u,v]\) such that \(a+b=u+v\),
\begin{equation}
\phi(a)+\phi(b)\leq \phi(u)+\phi(v).
\end{equation}
For $0<x\le y\le z$, we have
\begin{align*}
&x,y\in[x+y-z,z],
&&x+y=(x+y-z)+z,
&&\qquad x+y>z,\\
&y,x+z-y\in[x,z],
&&y+(x+z-y)=x+z,\\
&y,z\in[x,y+z-x],
&&y+z=x+(y+z-x).
\end{align*}
Applying the preceding convexity inequality to these three
pairs gives
\begin{align}
\Delta_1\phi
&=\phi(z)+\phi(x+y-z)-\phi(x)-\phi(y)\ge0,
\qquad x+y>z,\\
\Delta_2\phi
&=\phi(y)+\phi(x+z-y)-\phi(x)-\phi(z)\le0,\\
\Delta_3\phi
&=\phi(x)+\phi(y+z-x)-\phi(y)-\phi(z)\ge0.
\end{align}
For $x+y\le z$, we have $\Delta_1\phi=0$ by definition.
Since $(x+z-y)+(y+z-x)=2z$, convexity also gives
\begin{equation}
\Delta_2\phi+\Delta_3\phi
=\phi(x+z-y)+\phi(y+z-x)-2\phi(z)
\ge2\phi(z)-2\phi(z)=0.
\end{equation}
For these fixed $x,y,z$, \eqref{eq:cutoff} gives
\begin{equation}
S_{\beta,N}(x,y,z,\omega_4)
=\bigl((xyz)^\beta\omega_4^\beta
(\sqrt{x} \wedge \sqrt{\omega_4}) \wedge N\bigr)
\ell_N(\omega_4)\ell_N(x)\ell_N(y)\ell_N(z),
\qquad \omega_4\ge0,
\end{equation}
and every factor depending on $\omega_4$ is nonnegative and
nondecreasing. Since $y+z-x\ge x+z-y$, we obtain
\begin{align}
G_{3,N}-G_{2,N}
&=\frac{S_{\beta,N}(x,y,z,y+z-x)
-S_{\beta,N}(x,y,z,x+z-y)}{\sqrt{xyz}}\ge0,\\
G_3-G_2
&=x^\beta(yz)^{\beta-1/2}
\bigl((y+z-x)^\beta-(x+z-y)^\beta\bigr)\ge0.
\end{align}
Hence
\begin{align}
F_{\beta,N}[\phi]
&=G_{1,N}\Delta_1\phi
+G_{2,N}(\Delta_2\phi+\Delta_3\phi)
+(G_{3,N}-G_{2,N})\Delta_3\phi\ge0,\\
F_\beta[\phi]
&=G_1\Delta_1\phi
+G_2(\Delta_2\phi+\Delta_3\phi)
+(G_3-G_2)\Delta_3\phi\ge0.
\end{align}
Since $\Delta_2\phi\le0$ and $G_{2,N},G_2\ge0$, we also have
\begin{align*}
F_{\beta,N}[\phi]
&=G_{1,N}\Delta_1\phi+G_{3,N}\Delta_3\phi
+G_{2,N}\Delta_2\phi
\le G_{1,N}\Delta_1\phi+G_{3,N}\Delta_3\phi,\\
F_\beta[\phi]
&=G_1\Delta_1\phi+G_3\Delta_3\phi+G_2\Delta_2\phi
\le G_1\Delta_1\phi+G_3\Delta_3\phi.
\end{align*}
On $\{x=0\}$, all coefficients vanish, so both inequalities
hold there as well. This proves \eqref{eq:convex} throughout
$\mathcal D_{\mathrm{ord}}$.
\end{proof}

\begin{proof}[Proof of \cref{lem:compact}]
Choose $L\ge1$ such that $\phi=0$ on $[L,\infty)$.
Throughout the proof, $C$ denotes a positive constant depending
only on $\beta$, $L$, $\|\phi\|_\infty$, and
$\operatorname{Lip}(\phi)$, whose value may change from line to line.

Since $0<\beta\le3/4$,
\begin{equation}
\beta-\frac12\le\frac{3\beta-1}{2}
\le2\beta-\frac12\le1,
\qquad
\frac{3\beta-1}{2}\le5/8 \wedge \beta.
\end{equation}
Hence
$0\le q\le p\le1$, $q\le5/8$, and $q\le\beta$.
We also use $q\ge\beta-1/2$ and $3\beta-1\le2q$.

Since all the coefficients vanish at $x=0$, without loss of generality we assume
$0<x\le y\le z$.
Since \(G_{i,N}\leq G_i\), it suffices to prove
\eqref{eq:raw-test-bound} for \(G_i\). We then prove
\eqref{eq:compact-envelope} and \eqref{eq:compact-small-frequency}
for both the cutoff and uncut coefficients.

\smallskip
\noindent\textit{Step 1. Estimate for \(G_1|\Delta_1\phi|\).}
If $G_1|\Delta_1\phi|\ne0$, then
$0<x+y-z\le x \wedge L$.
Indeed, $x+y-z$ is the smallest frequency,
and all four test values vanish when $x+y-z\ge L$.
Since $z-y=x-(x+y-z)$, Lipschitz continuity gives
\begin{align}
|\Delta_1\phi|\le|\phi(z)-\phi(y)|+|\phi(x+y-z)-\phi(x)|\le2\operatorname{Lip}(\phi)(z-y)\le2\operatorname{Lip}(\phi)x.
\end{align}
Together with $|\Delta_1\phi|\le4\|\phi\|_\infty$, this yields
$|\Delta_1\phi|\le C(x \wedge 1)$.
Since
$x \wedge L\le(1+L)x/(1+x)$ and
$x \wedge 1\le2x/(1+x)$,
\begin{equation}
G_1|\Delta_1\phi|\leq C(xyz)^{\beta-1/2}(x \wedge L)^{\beta+1/2}(x \wedge 1)
\le C(xyz)^{\beta-1/2}
\left(\frac{x}{1+x}\right)^{\beta+3/2}.
\end{equation}
For every $u\ge x$, we have
\[
u^{\beta-1/2}
=(1+u)^{\beta-1/2}
\left(\frac{u}{1+u}\right)^\beta
\sqrt{\frac{1+u}{u}}
\le(1+u)^{\beta-1/2}\sqrt{\frac{1+x}{x}}.
\]
Applying this inequality with $u=x,y,z$ and using
$\beta-1/2\le q$, we obtain
\begin{align}
G_1|\Delta_1\phi|
&\le C\left(\frac{x}{1+x}\right)^\beta
\bigl((1+x)(1+y)(1+z)\bigr)^{\beta-1/2}\notag\\
&\le C\left(\frac{x}{1+x}\right)^\beta
(1+x)^q(1+y)^q(1+z)^q.
\end{align}

\smallskip
\noindent\textit{Step 2. Estimate of \(G_2|\Delta_2\phi|+G_3|\Delta_3\phi|\).}
Since every argument in $\Delta_2\phi$ and $\Delta_3\phi$ is at least
$x$, both increments vanish when $x\ge L$.
We may therefore assume $0<x\le L$.
Hence
\begin{align}
|\Delta_2\phi|
&\le|\phi(y)-\phi(x)|+|\phi(x+z-y)-\phi(z)|
\le2\operatorname{Lip}(\phi)(y-x),\\
|\Delta_3\phi|
&\le|\phi(x)-\phi(y)|+|\phi(y+z-x)-\phi(z)|
\le2\operatorname{Lip}(\phi)(y-x).
\end{align}
Combining these bounds with $|\Delta_i\phi|\le4\|\phi\|_\infty$
gives $|\Delta_i\phi|\le C(y \wedge 1)$ for $i=2,3$.
Moreover,
\begin{equation}
0\le G_2\le G_3
=x^\beta(yz)^{\beta-1/2}(y+z-x)^\beta
\le2^\beta x^\beta y^{\beta-1/2}z^{2\beta-1/2}.
\end{equation}
Consequently,
\begin{equation}
\sum_{i=2}^3G_i|\Delta_i\phi|
\le Cx^\beta y^{\beta-1/2}z^{2\beta-1/2}(y \wedge 1).
\end{equation}

Since $y \wedge 1\le2y/(1+y)$ and
$0<y/(1+y)\le z/(1+z)<1$, we have
\begin{align*}
\frac{y^{\beta-1/2}z^{2\beta-1/2}(y \wedge 1)}
{(1+y)^{\beta-1/2}(1+z)^{2\beta-1/2}}
\le2\left(\frac{y}{1+y}\right)^{\beta+1/2}
       \left(\frac{z}{1+z}\right)^{2\beta-1/2}\le2\left(\frac{z}{1+z}\right)^{3\beta}\le2.
\end{align*}
It follows that
\begin{align}
\sum_{i=2}^3G_i|\Delta_i\phi|\le Cx^\beta(1+y)^{\beta-1/2}(1+z)^{2\beta-1/2}\le CL^\beta(1+y)^p(1+z)^p,
\end{align}
where the last inequality uses
$p\ge2\beta-1/2\ge\beta-1/2$.
Together with Step~1, $q\le p$,
\begin{equation}
\sum_{i=1}^3G_{i,N}|\Delta_i\phi|
\le\sum_{i=1}^3G_i|\Delta_i\phi|
\le C(1+x)^p(1+y)^p(1+z)^p,
\end{equation}
which is \eqref{eq:raw-test-bound}.

\smallskip
\noindent\textit{Step 3. Estimates with exponent $q$.}
It remains to prove \eqref{eq:compact-envelope} and
\eqref{eq:compact-small-frequency}.
Since Step~1 already gives the required bound for \(G_1\Delta_1\phi\),
we then estimate
$G_{2,N}\Delta_2\phi+G_{3,N}\Delta_3\phi$
and $G_2\Delta_2\phi+G_3\Delta_3\phi$.
Again, we assume
$0<x\le L$, since they both vanish when \(x>L.\)

Suppose first that $z\le2(y+L)$.
Then $1\le(1+z)/(1+y)\le2L+1$, and hence
\begin{align*}
(1+y)^{\beta-1/2}(1+z)^{2\beta-1/2}
&=(1+y)^{3\beta-1}
\left(\frac{1+z}{1+y}\right)^{2\beta-1/2}\\
&\le(2L+1)^p(1+y)^{3\beta-1}\\
&\le C(1+y)^{2q}
\le C(1+y)^q(1+z)^q.
\end{align*}
The estimate from Step~2 therefore gives
\begin{equation}
\sum_{i=2}^3G_i|\Delta_i\phi|
\le Cx^\beta(1+y)^q(1+z)^q.
\end{equation}

Suppose now that $z>2(y+L)$.
Since $x\le y$, we have $x+y<z$, and
\(
z/2<x+z-y\le y+z-x<2z.
\)
Thus
\begin{equation}
G_1=G_{1,N}=0,\qquad
\Delta_2\phi=\phi(y)-\phi(x),\qquad
\Delta_3\phi=\phi(x)-\phi(y),
\end{equation}
and
\begin{equation}
F_{\beta,N}[\phi]
=(G_{3,N}-G_{2,N})(\phi(x)-\phi(y)),
\qquad
F_\beta[\phi]=(G_3-G_2)(\phi(x)-\phi(y)).
\end{equation}

For every $s\in[x+z-y,y+z-x]$, we have $s\ge x$ and
$s>z/2>L\ge1$.
Therefore $\ell_N(s)=1$ for every $N\ge1$, and
\begin{equation}
S_{\beta,N}(x,y,z,s)
=\ell_N(x)\ell_N(y)\ell_N(z)
((xyz)^\beta\sqrt{x}\,s^\beta \wedge N).
\end{equation}
Using $0\le\ell_N\le1$ and
$0\le(v \wedge N)-(u \wedge N)\le v-u$ for $0\le u\le v$,
we obtain
\begin{align}
0\le G_{3,N}-G_{2,N}\le
\frac{(xyz)^\beta\sqrt{x}}{\sqrt{xyz}}
\bigl((y+z-x)^\beta-(x+z-y)^\beta\bigr)=G_3-G_2.
\end{align}
Since $\beta<1$ and $x+z-y>z/2$,
\begin{equation}\label{eq:ordered-coefficient-difference}
\begin{aligned}
G_3-G_2
&=\beta x^\beta(yz)^{\beta-1/2}
\int_{x+z-y}^{y+z-x}s^{\beta-1}\,ds\\
&\le2\beta x^\beta(yz)^{\beta-1/2}
(z/2)^{\beta-1}(y-x)\\
&=2^{2-\beta}\beta
x^\beta y^{\beta-1/2}z^{2\beta-3/2}(y-x).
\end{aligned}
\end{equation}
Consequently,
\begin{align}
|F_{\beta,N}[\phi]|
&\le(G_3-G_2)|\phi(x)-\phi(y)|
=|F_\beta[\phi]|\notag\\
&\le Cx^\beta y^{\beta-1/2}z^{2\beta-3/2}(y-x)
\le Cx^\beta y^{\beta+1/2}z^{2\beta-3/2}.
\end{align}
Here we used $|\phi(x)-\phi(y)|\le2\|\phi\|_\infty$.
Finally, $q\le\beta$, $3\beta-1\le2q$, and $z>1$ give
\begin{align*}
y^{\beta+1/2}z^{2\beta-3/2}
=y^q(y/z)^{\beta+1/2-q}z^{3\beta-1-q}\le y^qz^{3\beta-1-q}
\le y^qz^q
\le(1+y)^q(1+z)^q.
\end{align*}

We have therefore bounded the absolute value of the sum of
the second and third terms by
$Cx^\beta(1+y)^q(1+z)^q$ whenever $x\le L$,
for both the cutoff and uncut coefficients.
For the first term, Step~1 gives \eqref{eq:compact-envelope}
because $(x/(1+x))^\beta\le1$. It also gives
\eqref{eq:compact-small-frequency}, since
$(x/(1+x))^\beta(1+x)^q\le2^q x^\beta$ for $0<x\le1$.
For the other two terms, the preceding bound gives
\eqref{eq:compact-envelope} because
$x^\beta\le L^\beta(1+x)^q$ on $0<x\le L$,
and it gives \eqref{eq:compact-small-frequency} directly.
These terms vanish for $x\ge L$, so both estimates hold
throughout $\mathcal D_{\mathrm{ord}}$.
\end{proof}

\begin{proof}[Proof of \cref{cor:boundary-removal}]

\smallskip
\noindent\textit{Step 1. Continuity and local uniform convergence.}
Fix $R\ge1$ and $0<\varepsilon\le1$.
For $\varepsilon\le x\le y\le z\le R$, we have
\begin{equation}
x\le x+z-y\le y+z-x\le2R,
\qquad
\sup_{0\le s\le2R}S_\beta(x,y,z,s)
\le(2R)^{4\beta+1/2}.
\end{equation}
Thus, if $N\varepsilon\ge1$ and $N>(2R)^{4\beta+1/2}$,
\eqref{eq:cutoff} gives
\begin{equation}
G_{2,N}=G_2,\qquad G_{3,N}=G_3,\qquad
G_{1,N}=\ell_N((x+y-z)_+)G_1.
\end{equation}
Consequently,
\begin{equation}
|F_{\beta,N}[\phi]-F_\beta[\phi]|
=\bigl(1-\ell_N((x+y-z)_+)\bigr)G_1|\Delta_1\phi|.
\end{equation}
Using $\ell_N(s)=1$ for $s\ge N^{-1}$,
$G_1=(xyz)^{\beta-1/2}(x+y-z)_+^{\beta+1/2}$,
and $|\Delta_1\phi|\le4\|\phi\|_\infty$, we obtain
\begin{equation}
\sup_{\varepsilon\le x\le y\le z\le R}
|F_{\beta,N}[\phi]-F_\beta[\phi]|
\le C_{\varepsilon,R,\phi,\beta}N^{-\beta-1/2}.
\end{equation}
For $0\le x\le\varepsilon$ and $x\le y\le z\le R$,
\eqref{eq:compact-small-frequency} gives
\begin{equation}
|F_{\beta,N}[\phi]-F_\beta[\phi]|
\le |F_{\beta,N}[\phi]|+|F_\beta[\phi]|
\le C_{\phi,\beta}(1+R)^{2q}\varepsilon^\beta.
\end{equation}
Therefore,
\begin{equation}
\sup_{0\le x\le y\le z\le R}
|F_{\beta,N}[\phi]-F_\beta[\phi]|
\le C_{\varepsilon,R,\phi,\beta}N^{-\beta-1/2}
+C_{\phi,\beta}(1+R)^{2q}\varepsilon^\beta.
\end{equation}
Letting $N\to\infty$ and then $\varepsilon\downarrow0$
proves local uniform convergence on $\mathcal D_{\mathrm{ord}}$.

Fix $N\ge1$, one can check $F_{\beta,N}[\phi]\in C(\mathcal D_{\mathrm{ord}})$.
Local uniform convergence now gives
$F_\beta[\phi]\in C(\mathcal D_{\mathrm{ord}})$.

\smallskip
\noindent\textit{Step 2. Integral estimates.}
Let $g\ge0$ satisfy $\|g\|_{\mathcal M_1}\le M$, and set
$\mu=\operatorname{sort}_\#g^{\otimes3}$. Hence
\begin{equation}
\int_{\mathcal D_{\mathrm{ord}}}
(1+x)(1+y)(1+z)\,d\mu
=
\left(\int_{[0,\infty)}(1+\omega)\,dg\right)^3
\le M^3.
\end{equation}

Applying Tonelli's theorem, we obtain
\begin{align}
\iiint_{[0,\infty)^3}
K_\beta|\Delta\phi|\,dg^{\otimes3}
&=\frac13\int_{\mathcal D_{\mathrm{ord}}}
\sum_{i=1}^3G_i|\Delta_i\phi|\,d\mu\notag\\
&\le C_{\phi,\beta}
\int_{\mathcal D_{\mathrm{ord}}}
(1+x)^p(1+y)^p(1+z)^p\,d\mu\notag\\
&\le C_{\phi,\beta}
\int_{\mathcal D_{\mathrm{ord}}}
(1+x)(1+y)(1+z)\,d\mu\notag\\
&\le C_{\phi,\beta}M^3,
\end{align}
where we used \eqref{eq:raw-test-bound} and $p\le1$.
This proves \eqref{eq:absolute-collision-bound}.

For $z>R$, since $0\le q\le1$,
\begin{equation}
(1+x)^q(1+y)^q(1+z)^q
\le (1+R)^{q-1}(1+x)(1+y)(1+z).
\end{equation}
Hence \eqref{eq:compact-envelope} yields
\begin{align}
\int_{\mathcal D_{\mathrm{ord}}\cap\{z>R\}}
|F_\beta[\phi]|\,d\mu
\le C_{\phi,\beta}(1+R)^{q-1}
\int_{\mathcal D_{\mathrm{ord}}}
(1+x)(1+y)(1+z)\,d\mu\le C_{\phi,\beta}M^3(1+R)^{q-1}.
\end{align}
This proves \eqref{eq:compact-high-tail}.

Finally, $0\le K_{\beta,N}\le K_\beta$ gives the same absolute
integrability bound for $K_{\beta,N}$.
Estimate \eqref{eq:compact-high-tail} for $F_{\beta,N}[\phi]$
follows by applying \eqref{eq:compact-envelope}
in the preceding argument.
\end{proof}

\begin{proof}[Proof of \cref{lem:cond-kernel-upper}]
Fix $0<\beta<1/4$ and $d=1-4\beta$.
For $0<x\le y\le z$ with $x+y>z$, we have
$0<x+y-z\le x$ and $x+y-z\le x+z-y$. Hence
\begin{equation}
 \frac{G_1(x,y,z)}{G_2(x,y,z)}
 =\left(\frac{x+y-z}{x}\right)^{1/2}
  \left(\frac{x+y-z}{x+z-y}\right)^\beta\le1.
\end{equation}
If $x+y\le z$, then $G_1(x,y,z)=0$.
Since $x+z-y\le y+z-x\le2z$,
\eqref{eq:cond-G2} and \eqref{eq:cond-G3} give
\begin{equation}\label{eq:cond-ordered-kernel-upper}
 0\le G_1(x,y,z)\le G_2(x,y,z)\le G_3(x,y,z)
 \le2^\beta x^\beta y^{\beta-1/2}z^{2\beta-1/2}.
\end{equation}

For part \textup{(i)}, set
$(x,y,z)=\operatorname{sort}(\omega_1,\omega_2,\omega_3)$.
The symmetry of $K_\beta$ in its first two arguments gives
$K_\beta(\omega_1,\omega_2,\omega_3)
\in\{G_1(x,y,z),G_2(x,y,z),G_3(x,y,z)\}$.
Since $2\beta-1/2<0$ and $x\le y\le z$,
\begin{equation}\label{eq:cond-median-power-bound}
 2^\beta x^\beta y^{\beta-1/2}z^{2\beta-1/2}
 \le2^\beta x^\beta y^{3\beta-1}
 \le2^\beta y^{4\beta-1}=2^\beta y^{-d}.
\end{equation}
Combining \eqref{eq:cond-median-power-bound} with \eqref{eq:cond-ordered-kernel-upper}
and $0\le K_{\beta,N}\le K_\beta$ proves
\eqref{eq:condensation-kernel-bound}.

For part \textup{(ii)}, use $x^\beta\le y^\beta$ in
\eqref{eq:cond-ordered-kernel-upper}. Since $2\beta-1/2=-d/2$
and $\omega_i\omega_j\le yz$ for distinct $i,j\in\{1,2,3\}$,
\begin{equation}
 K_\beta(\omega_1,\omega_2,\omega_3)
 \le2^\beta(yz)^{-d/2}
 \le2^\beta(\omega_i\omega_j)^{-d/2}.
\end{equation}
To prove the estimate with $\omega_4^{-d/2}$, use
$xyz=\omega_1\omega_2\omega_3$ in \eqref{eq:kernel}:
\begin{equation}\label{eq:cond-negative-moment-ratio}
 \frac{K_\beta(\omega_1,\omega_2,\omega_3)\omega_4^{-d/2}}
      {(\omega_1\omega_2\omega_3)^{-d/2}}
 =\frac{\omega_4^{3\beta-1/2}\sqrt{x \wedge \omega_4}}
       {(xyz)^\beta}.
\end{equation}
If $0<\omega_4\le x$, then
\begin{equation}\label{eq:cond-negative-moment-small-output}
 \frac{\omega_4^{3\beta-1/2}\sqrt{x \wedge \omega_4}}
      {(xyz)^\beta}
 =\left(\frac{\omega_4^3}{xyz}\right)^\beta\le1.
\end{equation}
If $\omega_4\ge x$ and $3\beta\le1/2$, then
\begin{equation}\label{eq:cond-negative-moment-low-beta}
 \frac{\omega_4^{3\beta-1/2}\sqrt{x}}{(xyz)^\beta}
 \le\frac{x^{3\beta}}{(xyz)^\beta}\le1.
\end{equation}
If $\omega_4\ge x$ and $3\beta>1/2$, use
$\omega_4=\omega_1+\omega_2-\omega_3\le2z$ to obtain
\begin{equation}\label{eq:cond-negative-moment-high-beta}
 \frac{\omega_4^{3\beta-1/2}\sqrt{x}}{(xyz)^\beta}
 \le2^{3\beta-1/2}
 \left(\frac{x}{y}\right)^{1/2-\beta}
 \left(\frac{y}{z}\right)^{1/2-2\beta}
 \le2^{3\beta-1/2}.
\end{equation}
Here $1/2-\beta>0$, $1/2-2\beta>0$, and $2^{3\beta-1/2}<2$.
Substituting \eqref{eq:cond-negative-moment-small-output}, \eqref{eq:cond-negative-moment-low-beta}, and \eqref{eq:cond-negative-moment-high-beta}
into \eqref{eq:cond-negative-moment-ratio} proves the estimate
with $\omega_4^{-d/2}$ in \eqref{eq:cond-negative-moment-kernel}.

For part \textup{(iii)}, set
$(x,y,z)=\operatorname{sort}(\omega,\omega_1,\omega_2)$.
The assumption $\omega_1 \vee \omega_2\ge\omega/2$ gives
$y\ge\omega/2$, and $z\ge\omega$.
Both $K_\beta(\omega,\omega_1,\omega_2)$ and
$K_\beta(\omega,\omega_2,\omega_1)$ are at most $G_3(x,y,z)$.
As $\beta-1/2<0$ and $2\beta-1/2<0$,
\begin{align}
 &K_\beta(\omega,\omega_1,\omega_2)
  +K_\beta(\omega,\omega_2,\omega_1)\notag\\
 &\quad\le2^{\beta+1}x^\beta y^{\beta-1/2}z^{2\beta-1/2}
 \le2^{\beta+1}x^\beta(\omega/2)^{\beta-1/2}
                  \omega^{2\beta-1/2}\notag\\
 &\quad=2^{3/2}x^\beta\omega^{3\beta-1}
 =2^{3/2}\omega^{-d}(x/\omega)^\beta.
\end{align}
Since $x=\min\{\omega,\omega_1,\omega_2\}$, this proves
\eqref{eq:cond-potential-negative-kernel}.
Finally, $K_{\beta,N}\le K_\beta$ gives
\eqref{eq:cond-negative-moment-kernel} and \eqref{eq:cond-potential-negative-kernel}
for $K_{\beta,N}$.
\end{proof}

\begin{proof}[Proof of \cref{lem:kernel}]
Let $0<2x\le y\le z$. Since $2(y-x)\ge y$ and
$x+z-y\le y+z-x\le2z$, we have
\begin{align}
 (y+z-x)^\beta-(x+z-y)^\beta
 &=\beta\int_{x+z-y}^{y+z-x}u^{\beta-1}\,du\notag\\
 &\ge2\beta(y-x)(2z)^{\beta-1}
 \ge\beta2^{\beta-1}yz^{\beta-1}.
 \label{eq:cond-kernel-power-difference}
\end{align}
Using \eqref{eq:cond-G2} and \eqref{eq:cond-G3}, we obtain
\begin{align}
 G_3(x,y,z)-G_2(x,y,z)
 &=x^\beta(yz)^{\beta-1/2}
   \bigl[(y+z-x)^\beta-(x+z-y)^\beta\bigr]\notag\\
 &\ge\beta2^{\beta-1}x^\beta y^{\beta+1/2}z^{2\beta-3/2}.
\end{align}
This proves \eqref{eq:cond-kernel-gain-difference}.

Now assume $Lx\le y\le z$.
If $z-y\ge x$, then $G_1(x,y,z)=0$ by \eqref{eq:cond-G1},
and \eqref{eq:cond-high-potential} follows from
\eqref{eq:cond-kernel-gain-difference}.
If $z-y<x$, then
$0<x+y-z\le x$, $x+z-y<2x$, and $y+z-x\ge y$.
By \eqref{eq:cond-G1}, \eqref{eq:cond-G2}, and \eqref{eq:cond-G3},
\begin{equation}\label{eq:cond-kernel-near-diagonal}
 \begin{aligned}
 G_1(x,y,z)&\le x^{2\beta}(yz)^{\beta-1/2},\\
 G_2(x,y,z)&\le2^\beta x^{2\beta}(yz)^{\beta-1/2},\\
 G_3(x,y,z)&\ge x^\beta y^\beta(yz)^{\beta-1/2}.
 \end{aligned}
\end{equation}
Since $y\ge Lx$ and $L^\beta=2(2^\beta+1)$,
\begin{align}
 &G_3(x,y,z)-G_2(x,y,z)-G_1(x,y,z)\notag\\
 &\quad\ge x^\beta(yz)^{\beta-1/2}
       \bigl[y^\beta-(2^\beta+1)x^\beta\bigr]\notag\\
 &\quad\ge\tfrac12x^\beta y^{2\beta-1/2}z^{\beta-1/2}
 =\tfrac12(z/y)^{1-\beta}
        x^\beta y^{\beta+1/2}z^{2\beta-3/2}\notag\\
 &\quad\ge\tfrac12x^\beta y^{\beta+1/2}z^{2\beta-3/2}.
\end{align}
As $0<\beta<1/4$ implies $\beta2^{\beta-1}<1/2$,
\eqref{eq:cond-high-potential} follows.
\end{proof}

\begin{proof}[Proof of \cref{lem:cond-kernel-small-inputs}]
Let $0<x\le y<z/2$. By \eqref{eq:cutoff} and the symmetry of
$S_{\beta,N}$,
\begin{align}
 \sqrt z\,K_{\beta,N}(z,y,x)
 &=\frac{S_{\beta,N}(z,y,x,z+y-x)}{\sqrt{xy}}\notag\\
 &=\frac{S_{\beta,N}(z+y-x,x,y,z)}{\sqrt{xy}}
 =\sqrt{z+y-x}\,K_{\beta,N}(z+y-x,x,y),\\
 \sqrt z\,K_{\beta,N}(z,x,y)
 &=\frac{S_{\beta,N}(z,x,y,z+x-y)}{\sqrt{xy}}\notag\\
 &=\frac{S_{\beta,N}(z+x-y,y,x,z)}{\sqrt{xy}}
 =\sqrt{z+x-y}\,K_{\beta,N}(z+x-y,y,x).
\end{align}
This proves \eqref{eq:cond-kernel-pairing}.

Now assume $N\ge2/z \vee (3z/2)^{4\beta+1/2}$.
Since $0\le y-x<z/2$,
$z/2<z+x-y\le z+y-x<3z/2$.
Thus
\begin{equation}
 \begin{aligned}
 S_\beta(x,y,z,z+y-x)&\le(3z/2)^{4\beta+1/2}\le N,\\
 S_\beta(x,y,z,z+x-y)&\le(3z/2)^{4\beta+1/2}\le N.
 \end{aligned}
\end{equation}
Also, $N\ge2/z$ gives
$\ell_N(z)=\ell_N(z+y-x)=\ell_N(z+x-y)=1$.
The bounds $z/2>y\ge x$ and $z+x-y>z/2$ imply
$\min\{x,y,z,z+y-x\}=\min\{x,y,z,z+x-y\}=x$.
Consequently, \eqref{eq:cutoff} gives
\begin{align}
 K_{\beta,N}(z,y,x)
 &=\frac{(xyz)^\beta(z+y-x)^\beta\sqrt{x}}{\sqrt{xyz}}
   \ell_N(x)\ell_N(y)\notag\\
 &=z^{2\beta-1/2}\left(1+\frac{y-x}{z}\right)^\beta
   x^\beta y^{\beta-1/2}\ell_N(x)\ell_N(y),\\
 K_{\beta,N}(z,x,y)
 &=\frac{(xyz)^\beta(z+x-y)^\beta\sqrt{x}}{\sqrt{xyz}}
   \ell_N(x)\ell_N(y)\notag\\
 &=z^{2\beta-1/2}\left(1-\frac{y-x}{z}\right)^\beta
   x^\beta y^{\beta-1/2}\ell_N(x)\ell_N(y).
\end{align}
This proves \eqref{eq:cond-kernel-small-inputs}.

Finally, $0\le y-x\le y<z$ and $\beta>0$ give
\begin{align}
 x^\beta y^{\beta-1/2}(y-x)
 &\le x^\beta y^{\beta+1/2}\le z^{2\beta+1/2},\\
 x^\beta y^{\beta-1/2}(y-x)^2
 &\le x^\beta y^{\beta+3/2}\le z^{2\beta+3/2}.
\end{align}
These are \eqref{eq:cond-kernel-first-difference} and
\eqref{eq:cond-kernel-second-difference}.
\end{proof}

\section*{Acknowledgments}
S. Luo thanks the Department of Mathematics at Duke University
for its hospitality during his visit in the summer of 2026,
when part of this work was completed.

\medskip
\noindent\textbf{Declaration of AI use.}
Parts of the proofs in Sections~4 and~5 were generated by
OpenAI's ChatGPT 6 Astra.
ChatGPT was also used for literature searches and
English-language editing.
The authors take full responsibility for the correctness
of all mathematical arguments and the entire content
of this manuscript.

\bibliographystyle{amsalpha}
\bibliography{references}

@article{DNPZ92,
  author        = {Dyachenko, S. and Newell, A. C. and Pushkarev, A. and Zakharov, V. E.},
  title         = {{Optical turbulence: weak turbulence, condensates and collapsing filaments in the nonlinear Schr{\"{o}}dinger equation}},
  journal       = {Physica D: Nonlinear Phenomena},
  year          = {1992},
  volume        = {57},
  number        = {1--2},
  pages         = {96--160},
  doi           = {10.1016/0167-2789(92)90090-a},
  url           = {https://doi.org/10.1016/0167-2789(92)90090-a},
}

@book{ZLF92,
  author        = {Zakharov, V. and Lvov, V. and Falkovich, G.},
  title         = {{Kolmogorov-Zakharov Spectra of Turbulence: Wave Turbulence}},
  series        = {Graduate Texts in Physics},
  publisher     = {Springer},
  address       = {Cham},
  year          = {2025},
  edition       = {2},
  doi           = {10.1007/978-3-031-82729-7},
  url           = {https://doi.org/10.1007/978-3-031-82729-7},
}

@book{Nazarenko11,
  author        = {Nazarenko, S.},
  title         = {{Wave Turbulence}},
  series        = {Lecture Notes in Physics},
  publisher     = {Springer},
  address       = {Berlin, Heidelberg},
  year          = {2011},
  volume        = {825},
  doi           = {10.1007/978-3-642-15942-8},
  url           = {https://doi.org/10.1007/978-3-642-15942-8},
}

@article{MMT97,
  author        = {Majda, A. J. and McLaughlin, D. W. and Tabak, E. G.},
  title         = {{A one-dimensional model for dispersive wave turbulence}},
  journal       = {Journal of Nonlinear Science},
  year          = {1997},
  volume        = {7},
  number        = {1},
  pages         = {9--44},
  doi           = {10.1007/bf02679124},
  url           = {https://doi.org/10.1007/bf02679124},
}

@article{AL25,
  author        = {Ampatzoglou, I. and L{\'{e}}ger, T.},
  title         = {{On the ill-posedness of kinetic wave equations}},
  journal       = {Nonlinearity},
  year          = {2025},
  volume        = {38},
  number        = {11},
  pages         = {115004},
  doi           = {10.1088/1361-6544/ae19bf},
  url           = {https://doi.org/10.1088/1361-6544/ae19bf},
}

@misc{Zhu26,
  author        = {Zhu, X.},
  title         = {{Sharp decay thresholds in weighted $L^\infty$ for wave kinetic equations with power-law dispersion}},
  year          = {2026},
  note          = {arXiv preprint arXiv:2607.14892},
  eprint        = {2607.14892},
  archivePrefix = {arXiv},
  primaryClass  = {math.AP},
  url           = {https://arxiv.org/abs/2607.14892},
}

@article{EV15,
  author        = {Escobedo, M. and Vel{\'{a}}zquez, J. J. L.},
  title         = {{On the theory of Weak Turbulence for the Nonlinear Schr{\"{o}}dinger Equation}},
  journal       = {Memoirs of the American Mathematical Society},
  year          = {2015},
  volume        = {238},
  number        = {1124},
  doi           = {10.1090/memo/1124},
  url           = {https://doi.org/10.1090/memo/1124},
}

@article{MerinoAceituno16,
  author        = {Merino-Aceituno, S.},
  title         = {{Isotropic wave turbulence with simplified kernels: Existence, uniqueness, and mean-field limit for a class of instantaneous coagulation-fragmentation processes}},
  journal       = {Journal of Mathematical Physics},
  year          = {2016},
  volume        = {57},
  number        = {12},
  pages         = {121501},
  doi           = {10.1063/1.4968814},
  url           = {https://doi.org/10.1063/1.4968814},
}

@article{ST26,
  author        = {Staffilani, G. and Tran, M.-B.},
  title         = {{Condensation and Non-condensation Times for 4-Wave Kinetic Equations}},
  journal       = {Archive for Rational Mechanics and Analysis},
  year          = {2026},
  volume        = {250},
  number        = {4},
  pages         = {58},
  doi           = {10.1007/s00205-026-02217-0},
  url           = {https://doi.org/10.1007/s00205-026-02217-0},
}

@misc{STMixed25,
  author        = {Staffilani, G. and Tran, M.-B.},
  title         = {{Finite time energy cascade for mixed 3- and 4-wave kinetic equations}},
  year          = {2025},
  note          = {arXiv preprint arXiv:2512.19531},
  eprint        = {2512.19531},
  archivePrefix = {arXiv},
  primaryClass  = {math-ph},
  url           = {https://arxiv.org/abs/2512.19531},
}

@article{Peierls29,
  author        = {Peierls, R.},
  title         = {{Zur kinetischen Theorie der W{\"{a}}rmeleitung in Kristallen}},
  journal       = {Annalen der Physik},
  year          = {1929},
  volume        = {395},
  number        = {8},
  pages         = {1055--1101},
  doi           = {10.1002/andp.19293950803},
  url           = {https://doi.org/10.1002/andp.19293950803},
}

@article{Phillips60,
  author        = {Phillips, O. M.},
  title         = {{On the dynamics of unsteady gravity waves of finite amplitude Part 1. The elementary interactions}},
  journal       = {Journal of Fluid Mechanics},
  year          = {1960},
  volume        = {9},
  number        = {2},
  pages         = {193--217},
  doi           = {10.1017/s0022112060001043},
  url           = {https://doi.org/10.1017/s0022112060001043},
}

@article{Hasselmann62,
  author        = {Hasselmann, K.},
  title         = {{On the non-linear energy transfer in a gravity-wave spectrum Part 1. General theory}},
  journal       = {Journal of Fluid Mechanics},
  year          = {1962},
  volume        = {12},
  number        = {4},
  pages         = {481--500},
  doi           = {10.1017/s0022112062000373},
  url           = {https://doi.org/10.1017/s0022112062000373},
}

@article{BS66,
  author        = {Benney, D. J. and Saffman, P. G.},
  title         = {{Nonlinear interactions of random waves in a dispersive medium}},
  journal       = {Proceedings of the Royal Society of London. Series A. Mathematical and Physical Sciences},
  year          = {1966},
  volume        = {289},
  number        = {1418},
  pages         = {301--320},
  doi           = {10.1098/rspa.1966.0013},
  url           = {https://doi.org/10.1098/rspa.1966.0013},
}

@article{BN69,
  author        = {Benney, D. J. and Newell, A. C.},
  title         = {{Random Wave Closures}},
  journal       = {Studies in Applied Mathematics},
  year          = {1969},
  volume        = {48},
  number        = {1},
  pages         = {29--53},
  doi           = {10.1002/sapm196948129},
  url           = {https://doi.org/10.1002/sapm196948129},
}

@article{Zakharov65,
  author        = {Zakharov, V. E.},
  title         = {{Weak turbulence in media with a decay spectrum}},
  journal       = {Journal of Applied Mechanics and Technical Physics},
  year          = {1965},
  volume        = {6},
  number        = {4},
  pages         = {22--24},
  doi           = {10.1007/bf01565814},
  url           = {https://doi.org/10.1007/bf01565814},
}

@article{Zakharov68,
  author        = {Zakharov, V. E.},
  title         = {{Stability of periodic waves of finite amplitude on the surface of a deep fluid}},
  journal       = {Journal of Applied Mechanics and Technical Physics},
  year          = {1968},
  volume        = {9},
  number        = {2},
  pages         = {190--194},
  doi           = {10.1007/bf00913182},
  url           = {https://doi.org/10.1007/bf00913182},
}

@article{GIT20,
  author        = {Germain, P. and Ionescu, A. D. and Tran, M.-B.},
  title         = {{Optimal local well-posedness theory for the kinetic wave equation}},
  journal       = {Journal of Functional Analysis},
  year          = {2020},
  volume        = {279},
  number        = {4},
  pages         = {108570},
  doi           = {10.1016/j.jfa.2020.108570},
  url           = {https://doi.org/10.1016/j.jfa.2020.108570},
}

@article{STUV26,
  author        = {Staffilani, G. and Tran, M.-B.},
  title         = {{On the Energy Transfer toward Large Values of Wavenumbers for Solutions of 4-Wave Kinetic Equations}},
  journal       = {SIAM Journal on Mathematical Analysis},
  year          = {2026},
  volume        = {58},
  number        = {4},
  pages         = {4276--4310},
  doi           = {10.1137/25m1748032},
  url           = {https://doi.org/10.1137/25m1748032},
}

@article{GLZ25,
  author        = {Germain, P. and La, J. and Zhang, K. Z.},
  title         = {{Local Well-posedness for the Kinetic MMT Model}},
  journal       = {Communications in Mathematical Physics},
  year          = {2025},
  volume        = {406},
  number        = {1},
  pages         = {18},
  doi           = {10.1007/s00220-024-05205-1},
  url           = {https://doi.org/10.1007/s00220-024-05205-1},
}

@article{Lu04,
  author        = {Lu, X.},
  title         = {{On Isotropic Distributional Solutions to the Boltzmann Equation for Bose-Einstein Particles}},
  journal       = {Journal of Statistical Physics},
  year          = {2004},
  volume        = {116},
  number        = {5--6},
  pages         = {1597--1649},
  doi           = {10.1023/b:joss.0000041750.11320.9c},
  url           = {https://doi.org/10.1023/b:joss.0000041750.11320.9c},
}

@article{Lu13,
  author        = {Lu, X.},
  title         = {{The Boltzmann Equation for Bose-Einstein Particles: Condensation in Finite Time}},
  journal       = {Journal of Statistical Physics},
  year          = {2013},
  volume        = {150},
  number        = {6},
  pages         = {1138--1176},
  doi           = {10.1007/s10955-013-0725-9},
  url           = {https://doi.org/10.1007/s10955-013-0725-9},
}

@article{EV14,
  author        = {Escobedo, M. and Vel{\'{a}}zquez, J. J. L.},
  title         = {{On the Blow Up and Condensation of Supercritical Solutions of the Nordheim Equation for Bosons}},
  journal       = {Communications in Mathematical Physics},
  year          = {2014},
  volume        = {330},
  number        = {1},
  pages         = {331--365},
  doi           = {10.1007/s00220-014-2034-9},
  url           = {https://doi.org/10.1007/s00220-014-2034-9},
}

@article{EVN15,
  author        = {Escobedo, M. and Vel{\'{a}}zquez, J. J. L.},
  title         = {{Finite time blow-up and condensation for the bosonic Nordheim equation}},
  journal       = {Inventiones mathematicae},
  year          = {2015},
  volume        = {200},
  number        = {3},
  pages         = {761--847},
  doi           = {10.1007/s00222-014-0539-7},
  url           = {https://doi.org/10.1007/s00222-014-0539-7},
}

@article{CL19,
  author        = {Cai, S. and Lu, X.},
  title         = {{The Spatially Homogeneous Boltzmann Equation for Bose--Einstein Particles: Rate of Strong Convergence to Equilibrium}},
  journal       = {Journal of Statistical Physics},
  year          = {2019},
  volume        = {175},
  number        = {2},
  pages         = {289--350},
  doi           = {10.1007/s10955-019-02258-8},
  url           = {https://doi.org/10.1007/s10955-019-02258-8},
}

@article{CL26,
  author        = {Cai, S. and Lu, X.},
  title         = {{Convergence and non-convergence to Bose--Einstein condensation}},
  journal       = {Journal of Mathematical Physics},
  year          = {2026},
  volume        = {67},
  number        = {6},
  pages         = {063301},
  doi           = {10.1063/5.0258701},
  url           = {https://doi.org/10.1063/5.0258701},
}

@misc{LL26BEC,
  author        = {Luo, S. and Liu, J.-G.},
  title         = {{Bose--Einstein Condensation without an Initial Low-Energy Concentration Assumption}},
  year          = {2026},
  note          = {arXiv preprint arXiv:2609.01436},
  eprint        = {2609.01436},
  archivePrefix = {arXiv},
  primaryClass  = {math.AP},
  url           = {https://arxiv.org/abs/2609.01436},
}

@article{Spohn06,
  author        = {Spohn, H.},
  title         = {{The Phonon Boltzmann Equation, Properties and Link to Weakly Anharmonic Lattice Dynamics}},
  journal       = {Journal of Statistical Physics},
  year          = {2006},
  volume        = {124},
  number        = {2--4},
  pages         = {1041--1104},
  doi           = {10.1007/s10955-005-8088-5},
  url           = {https://doi.org/10.1007/s10955-005-8088-5},
}

@article{GST20,
  author        = {Gamba, I. M. and Smith, L. M. and Tran, M.-B.},
  title         = {{On the wave turbulence theory for stratified flows in the ocean}},
  journal       = {Mathematical Models and Methods in Applied Sciences},
  year          = {2020},
  volume        = {30},
  number        = {1},
  pages         = {105--137},
  doi           = {10.1142/s0218202520500037},
  url           = {https://doi.org/10.1142/s0218202520500037},
}

@article{NT18,
  author        = {Nguyen, T. T. and Tran, M.-B.},
  title         = {{On the Kinetic Equation in Zakharov's Wave Turbulence Theory for Capillary Waves}},
  journal       = {SIAM Journal on Mathematical Analysis},
  year          = {2018},
  volume        = {50},
  number        = {2},
  pages         = {2020--2047},
  doi           = {10.1137/17m1125042},
  url           = {https://doi.org/10.1137/17m1125042},
}

@article{CTSB20,
  author        = {Tran, M.-B. and Craciun, G. and Smith, L. M. and Boldyrev, S.},
  title         = {{A reaction network approach to the theory of acoustic wave turbulence}},
  journal       = {Journal of Differential Equations},
  year          = {2020},
  volume        = {269},
  number        = {5},
  pages         = {4332--4352},
  doi           = {10.1016/j.jde.2020.03.034},
  url           = {https://doi.org/10.1016/j.jde.2020.03.034},
}

@article{SoTr18,
  author        = {Soffer, A. and Tran, M.-B.},
  title         = {{On the dynamics of finite temperature trapped Bose gases}},
  journal       = {Advances in Mathematics},
  year          = {2018},
  volume        = {325},
  pages         = {533--607},
  doi           = {10.1016/j.aim.2017.12.007},
  url           = {https://doi.org/10.1016/j.aim.2017.12.007},
}

@article{SoTr20,
  author        = {Soffer, A. and Tran, M.-B.},
  title         = {{On the Energy Cascade of 3-Wave Kinetic Equations: Beyond Kolmogorov--Zakharov Solutions}},
  journal       = {Communications in Mathematical Physics},
  year          = {2020},
  volume        = {376},
  number        = {3},
  pages         = {2229--2276},
  doi           = {10.1007/s00220-019-03651-w},
  url           = {https://doi.org/10.1007/s00220-019-03651-w},
}

@article{LS11,
  author        = {Lukkarinen, J. and Spohn, H.},
  title         = {{Weakly nonlinear Schr{\"{o}}dinger equation with random initial data}},
  journal       = {Inventiones mathematicae},
  year          = {2011},
  volume        = {183},
  number        = {1},
  pages         = {79--188},
  doi           = {10.1007/s00222-010-0276-5},
  url           = {https://doi.org/10.1007/s00222-010-0276-5},
}

@article{BGHS21,
  author        = {Buckmaster, T. and Germain, P. and Hani, Z. and Shatah, J.},
  title         = {{Onset of the wave turbulence description of the longtime behavior of the nonlinear Schr{\"{o}}dinger equation}},
  journal       = {Inventiones mathematicae},
  year          = {2021},
  volume        = {225},
  number        = {3},
  pages         = {787--855},
  doi           = {10.1007/s00222-021-01039-z},
  url           = {https://doi.org/10.1007/s00222-021-01039-z},
}

@article{DH21,
  author        = {Deng, Y. and Hani, Z.},
  title         = {{On the derivation of the wave kinetic equation for NLS}},
  journal       = {Forum of Mathematics, Pi},
  year          = {2021},
  volume        = {9},
  pages         = {e6},
  doi           = {10.1017/fmp.2021.6},
  url           = {https://doi.org/10.1017/fmp.2021.6},
}

@article{CGHom,
  author        = {Collot, C. and Germain, P.},
  title         = {{On the derivation of the homogeneous kinetic wave equation}},
  journal       = {Communications on Pure and Applied Mathematics},
  year          = {2025},
  volume        = {78},
  number        = {4},
  pages         = {856--909},
  doi           = {10.1002/cpa.22232},
  url           = {https://doi.org/10.1002/cpa.22232},
}

@article{CGLong,
  author        = {Collot, C. and Germain, P.},
  title         = {{Derivation of the homogeneous kinetic wave equation: Longer time scales}},
  journal       = {Journal of Functional Analysis},
  year          = {2026},
  volume        = {290},
  number        = {1},
  pages         = {111179},
  doi           = {10.1016/j.jfa.2025.111179},
  url           = {https://doi.org/10.1016/j.jfa.2025.111179},
}

@article{DH23,
  author        = {Deng, Y. and Hani, Z.},
  title         = {{Full derivation of the wave kinetic equation}},
  journal       = {Inventiones mathematicae},
  year          = {2023},
  volume        = {233},
  number        = {2},
  pages         = {543--724},
  doi           = {10.1007/s00222-023-01189-2},
  url           = {https://doi.org/10.1007/s00222-023-01189-2},
}

@article{DH26Scaling,
  author        = {Deng, Y. and Hani, Z.},
  title         = {{Derivation of the Wave Kinetic Equation: Full Range of Scaling Laws}},
  journal       = {Memoirs of the American Mathematical Society},
  year          = {2026},
  volume        = {320},
  number        = {1631},
  doi           = {10.1090/memo/1631},
  url           = {https://doi.org/10.1090/memo/1631},
}

@misc{DHLong23,
  author        = {Deng, Y. and Hani, Z.},
  title         = {{Long time justification of wave turbulence theory}},
  year          = {2023},
  note          = {arXiv preprint arXiv:2311.10082},
  eprint        = {2311.10082},
  archivePrefix = {arXiv},
  primaryClass  = {math.AP},
  url           = {https://arxiv.org/abs/2311.10082},
}

@misc{STKdV21,
  author        = {Staffilani, G. and Tran, M.-B.},
  title         = {{On the wave turbulence theory for a stochastic KdV type equation}},
  year          = {2021},
  note          = {arXiv preprint arXiv:2106.09819},
  eprint        = {2106.09819},
  archivePrefix = {arXiv},
  primaryClass  = {math.AP},
  url           = {https://arxiv.org/abs/2106.09819},
}

@article{ACGInhom,
  author        = {Ampatzoglou, I. and Collot, C. and Germain, P.},
  title         = {{Derivation of the kinetic wave equation for quadratic dispersive problems in the inhomogeneous setting}},
  journal       = {American Journal of Mathematics},
  year          = {2025},
  volume        = {147},
  number        = {4},
  pages         = {1053--1158},
  doi           = {10.1353/ajm.2025.a966292},
  url           = {https://doi.org/10.1353/ajm.2025.a966292},
}

@misc{HRSTInhom,
  author        = {Hannani, A. and Rosenzweig, M. and Staffilani, G. and Tran, M.-B.},
  title         = {{On the wave turbulence theory for a stochastic KdV type equation -- Generalization for the inhomogeneous kinetic limit}},
  year          = {2022},
  note          = {arXiv preprint arXiv:2210.17445},
  eprint        = {2210.17445},
  archivePrefix = {arXiv},
  primaryClass  = {math.AP},
  url           = {https://arxiv.org/abs/2210.17445},
}

@article{DKKinetic,
  author        = {Dymov, A. and Kuksin, S.},
  title         = {{Formal Expansions in Stochastic Model for Wave Turbulence 1: Kinetic Limit}},
  journal       = {Communications in Mathematical Physics},
  year          = {2021},
  volume        = {382},
  number        = {2},
  pages         = {951--1014},
  doi           = {10.1007/s00220-021-03955-w},
  url           = {https://doi.org/10.1007/s00220-021-03955-w},
}

@article{GHDriven,
  author        = {Grande, R. and Hani, Z.},
  title         = {{Rigorous Derivation of Damped--Driven Wave Turbulence Theory}},
  journal       = {Archive for Rational Mechanics and Analysis},
  year          = {2026},
  volume        = {250},
  number        = {3},
  pages         = {27},
  doi           = {10.1007/s00205-026-02181-9},
  url           = {https://doi.org/10.1007/s00205-026-02181-9},
}

@article{Lu14,
  author        = {Lu, X.},
  title         = {{The Boltzmann Equation for Bose--Einstein Particles: Regularity and Condensation}},
  journal       = {Journal of Statistical Physics},
  year          = {2014},
  volume        = {156},
  number        = {3},
  pages         = {493--545},
  doi           = {10.1007/s10955-014-1026-7},
  url           = {https://doi.org/10.1007/s10955-014-1026-7},
}
\end{document}